\documentclass[reqno,11pt]{amsart}
\usepackage[en-GB]{datetime2}
\usepackage{amssymb}
\usepackage{caption}
\usepackage{enumitem}
\setlist[itemize]{leftmargin=2.5em}
\setlist[enumerate]{leftmargin=2.5em,label=\upshape(\arabic*)}
\usepackage[T1]{fontenc}
\usepackage{setspace}
\usepackage{stix2}

\DeclareSymbolFont{mdcharterletters}{OML}{mdbch}{m}{it}
\SetSymbolFont{mdcharterletters}{bold}{OML}{mdbch}{b}{it}

\DeclareMathSymbol{\ell}{\mathord}{mdcharterletters}{"60}
\setbox0=\hbox{$x$}
\usepackage{tikz}
\usetikzlibrary{matrix,intersections,calc,shapes.geometric,patterns}
\tikzset{ 
	table/.style={
		matrix of nodes,
		nodes={rectangle,text width=1.75em,align=center},
		text depth=1.25ex,
		text height=2.5ex,
		nodes in empty cells
	}
}
\usepackage{mathtools,amsthm,thmtools}
\usepackage[noadjust,nocompress]{cite}

\usepackage{hyperref}
\hypersetup{
	colorlinks=true,
	allcolors=blue,
	pdftitle={Erd\H os--Hajnal for excluding paths},
	pdfauthor={Tung H. Nguyen}
}
\usepackage[capitalise,sort]{cleveref}

\makeatletter

\def\pdf@Cref#1{%
	\ifcsname r@#1@cref\endcsname
	\expandafter\expandafter\expandafter\pdf@Cref@aux
	\csname r@#1@cref\endcsname\pdf@Cref@end
	\else
	??%
	\fi
}
\def\pdf@Cref@aux#1#2#3\pdf@Cref@end{%
	\pdf@Cref@parse#1\@nil
}
\def\pdf@Cref@parse[#1][#2][#3]#4\@nil{%
	\csname Cref@#1@name\endcsname\space#4%
}

\def\pdf@cref#1{%
	\ifcsname r@#1@cref\endcsname
	\expandafter\expandafter\expandafter\pdf@cref@aux
	\csname r@#1@cref\endcsname\pdf@cref@end
	\else
	??%
	\fi
}
\def\pdf@cref@aux#1#2#3\pdf@cref@end{%
	\pdf@cref@parse#1\@nil
}
\def\pdf@cref@parse[#1][#2][#3]#4\@nil{%
	\csname cref@#1@name\endcsname\space#4%
}

\makeatother

\usepackage{geometry}
\newtheorem{theorem}{Theorem}[section]
\newtheorem{lemma}[theorem]{Lemma}
\newtheorem{claim}{Claim}[theorem]
\Crefname{claim}{Claim}{Claims}
\newenvironment{clm}[1]
{\renewcommand\theclaim{#1}\claim}
{\endclaim}
\newtheorem{conjecture}[theorem]{Conjecture}

\Crefname{prop}{Proposition}{Propositions}
\Crefname{subsection}{Subsection}{Subsections}
\Crefname{problem}{Problem}{Problems}
\allowdisplaybreaks

\newcommand{\blackqed}{%
	\leavevmode
	\hbox to .77777em{\hfil
		\vrule width .6em height .6em depth 0em
		\hfil}%
}

\expandafter\let\expandafter\oldproof\csname\string\proof\endcsname
\let\oldendproof\endproof
\renewenvironment{proof}[1][\proofname]{%
	\oldproof[\normalfont\bfseries #1]%
}{\oldendproof}
\newenvironment{subproof}[1][\normalfont\it Subproof]{%
	\begin{proof}[#1]%
	}{%
	\end{proof}%
}

\newcommand\mac[1]{\text{\usefont{OMS}{cmsy}{m}{n}#1}\mskip 2.5mu}
\newcommand{\mact}{\mac T\,}
\newcommand{\mab}{\mathbb}

\newcommand{\eps}{\varepsilon}

\newcommand{\poly}{\operatorname{poly}}
\newcommand{\ind}{\operatorname{ind}}
\renewcommand{\subset}{\subseteq}
\renewcommand{\supset}{\supseteq}
\newcommand{\erh}{Erd\H{o}s--Hajnal}

\DeclarePairedDelimiter\abs{\lvert}{\rvert}%
\DeclarePairedDelimiter\ceil{\lceil}{\rceil}%
\DeclarePairedDelimiter\floor{\lfloor}{\rfloor}%
\makeatletter
\newcommand{\leqnomode}{\tagsleft@true}
\newcommand{\reqnomode}{\tagsleft@false}
\makeatother

\DeclareSymbolFont{timesletters}{OML}{ztmcm}{m}{it}

\makeatletter
\newcommand*\greekrange[3]{%
	\@tempcnta=#2\relax
	\@for\g:=#3\do{%
		\expandafter\DeclareMathSymbol\csname\g\endcsname
		{\mathalpha}{#1}{\the\@tempcnta}%
		\advance\@tempcnta\@ne}}
\makeatother

\greekrange{timesletters}{11}{alpha,beta,gamma,delta,epsilon,zeta,eta,theta,iota,kappa,lambda,mu,nu,xi,pi,rho,sigma,tau,upsilon,phi,chi,psi,omega}
\greekrange{timesletters}{34}{varepsilon,vartheta,varpi,varrho,varsigma,varphi}

\newcommand{\compactsubs}[1]{%
	\fontdimen16\textfont2=#1\fontdimen16\textfont2
	\fontdimen17\textfont2=#1\fontdimen17\textfont2
	\fontdimen16\scriptfont2=#1\fontdimen16\scriptfont2
	\fontdimen17\scriptfont2=#1\fontdimen17\scriptfont2
	\fontdimen16\scriptscriptfont2=#1\fontdimen16\scriptscriptfont2
	\fontdimen17\scriptscriptfont2=#1\fontdimen17\scriptscriptfont2
}

\compactsubs{.8}
\begin{document}
	
	\title{
		Erd\H os--Hajnal for excluding paths
	}
	\author{Tung H. Nguyen}
	\address{Mathematical Institute and Christ Church, University of Oxford, Oxford, UK}
	\email{\href{mailto:nguyent@maths.ox.ac.uk}{nguyent@maths.ox.ac.uk}}
	\thanks{Supported by a Titchmarsh Research Fellowship and a Christ Church Research Centre Grant.}
	
	\begin{abstract}
		We prove the Erd\H os--Hajnal conjecture for excluding an arbitrary path.
		A major step in our proof is a novel Gy\'arf\'as path argument based on the ideas of bookkeeping blockades and splitting paths, which may be of independent interest.
	\end{abstract}
	\maketitle
	
	\section{Introduction}
	\label{sec:intro}
	All graphs in this paper are finite and simple.
	For a graph $G$ with vertex set $V(G)$, let $\abs G:=\abs{V(G)}$.
	For a graph $H$, a {\em copy} of $H$ in $G$ is an injective map $\varphi\colon V(H)\to V(G)$ such that for all $u,v\in V(H)$, $uv\in E(H)$ if and only if $\varphi(u)\varphi(v)\in E(G)$.
	We say that $G$ is {\em $H$-free} if there is no copy of $H$ in $G$; or equivalently if $G$ has no induced subgraph isomorphic to $H$.
	The graph $H$ has the {\em Erd\H os--Hajnal} property if there exists $b\ge2$ such that every $H$-free graph $G$ has a clique or stable set of size at least $\abs G^{1/b}$; and in this case we say that $b$ {\em witnesses} the \erh{} property of $H$.
	The following conjecture was proposed by Erd\H os and Hajnal in 1977~\cite{eh1977,eh1989} (see~\cite{chudnovsky2014,gyarfas1997} for surveys, and the author's PhD thesis~\cite{2025thes} for an account of recent progress):
	
	\begin{conjecture}
		[Erd\H os and Hajnal]
		\label{conj:eh}
		Every graph $H$ has the \erh{} property.
	\end{conjecture}
	
	This conjecture remains open, and has been confirmed for several special cases of $H$; see for instance~\cite{cs2008,csss2023a,density4,density6,density7,aps2001}.
	Erd\H os and Hajnal~\cite{eh1977,eh1989} first obtained a general bound of $2^{\Omega(\sqrt{\log\abs G})}$, which was recently improved by Buci\'c, Scott, Seymour, and the author~\cite{bnss2024} to $2^{\Omega(\sqrt{\log \abs G\log\log \abs G})}$.
	
	For an integer $t\ge1$, let $P_t$ denote the $t$-vertex path. Prior to this work, Scott, Seymour, and the author~\cite{density5} proved that every $P_t$ has the `near' \erh{} property (meaning with a bound of $2^{(\log \abs G)^{1-o(1)}}$ instead of $\abs G^{\Omega(1)}$).
	Our main result upgrades this to the full \erh{} property of all paths, as follows.
	
	\begin{theorem}
		\label{thm:path}
		$P_t$ has the \erh{} property for all $t\ge1$.
	\end{theorem}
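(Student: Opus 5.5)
We plan to use the now-standard reduction of the \erh{} property to a ``polynomial R\"odl'' or ``pure pair'' statement. By the results of Fox--Sudakov type and the framework of Nguyen--Scott--Seymour, it suffices to prove the following for every $t$. There is a constant $c>0$ such that for every $\eps\in(0,1/2)$, every $P_t$-free graph $G$ has one of two outcomes: either an induced subgraph on at least $\eps^{c}\abs G$ vertices with maximum degree at most $\eps$ times its size, or the same in the complement. Since $P_t$ is not self-complementary in general, we work with both $P_t$ and $\overline{P_t}$. The complementary sparse case is handled symmetrically once we prove the ``sparse'' version. That version says: if $G$ is $P_t$-free and $\eps$-sparse, then $G$ contains a pure pair $(A,B)$ with $\abs A,\abs B\ge \eps^{c}\abs G$ that is anticomplete. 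Iterating pure pairs with polynomial size bounds then gives $\abs G^{1/b}$ cliques or stable sets by the usual substitution/iteration argument.

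The core is therefore a polynomial-size anticomplete pair in sparse $P_t$-free graphs. We argue by induction on $t$, using a Gy\'arf\'as path argument. Take a sparse connected piece of $G$ and grow an induced path $v_1v_2\cdots$. At each step, keep a large connected component $C_i$ of $G$ minus the neighbourhood of the path so far, with $v_i$ having neighbours in $C_i$. If the path reaches $t$ vertices we have a contradiction, so the process must stop. When it stops, the component sizes must drop sharply somewhere, and a drop yields a pure pair. The catch is that a naive analysis loses only a factor $2^{O(\log^2)}$ or $2^{(\log n)^{1-o(1)}}$, which is exactly the previous near-\erh{} bound. To get polynomial bounds we maintain a \emph{blockade}: a sequence of disjoint sets $B_1,\dots,B_k$, each of size polynomial in $\eps$ times $\abs G$, with sparse edges between different blocks. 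We do the bookkeeping of how much of each block the current path's neighbourhood has consumed. Each path vertex is charged only against the blocks it meets, with sparseness making the total loss geometric rather than multiplicative over $t$ steps.

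The key steps, in order, are as follows.
\begin{enumerate}
\item Use sparseness and a density-increment lemma to find, in polynomial size, either an anticomplete pair or a blockade of length growing with $1/\eps$ whose blocks are connected in a robust sense.
\item Run the path argument across blocks. When the path cannot be extended within the surviving part, split the path into two halves. Each half avoids a different large portion of the blockade, and recursion on shorter path lengths (induction on $t$, using the hypothesis for $P_{\lceil t/2\rceil}$-type structures relative to the blockade) produces the anticomplete pair.
\item Check that every step loses only a factor $\eps^{O(1)}$, so that $c$ depends only on $t$.
\end{enumerate}

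The main obstacle is step 2. We must make the splitting compatible with the bookkeeping, so that the neighbourhood of the first half of the path does not destroy the blocks needed for the second half. The plan is to first try choosing the split point where the consumed mass crosses one half. We then argue via the sparse bipartite structure between blocks that few vertices see both halves' regions. Making this charging argument yield polynomial rather than quasi-polynomial losses is where we expect the real difficulty to lie.
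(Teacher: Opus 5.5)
There is a genuine gap, and it sits at the sentence ``the complementary sparse case is handled symmetrically''. Your reduction to a polynomial R\"odl statement is fine, since that is equivalent to the \erh{} property. The trouble is the half you then set out to prove, namely that $\eps$-sparse $P_t$-free graphs contain large anticomplete pairs. That is already known in a much stronger, linear form (\cref{thm:sparsetree}; for paths it is due to Bousquet, Lagoutte, and Thomass\'e, via exactly a Gy\'arf\'as path argument). So your steps 1--3 would at best re-derive a weaker version of a known fact, and no bookkeeping is needed there.

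The real content is the other half: $(1-\eps)$-dense $P_t$-free graphs, i.e.\ sparse $\overline{P_t}$-free graphs. This case is not symmetric, for two reasons.
\begin{itemize}
\item For $t\ge5$ the graph $\overline{P_t}$ contains a triangle. Hence complements of random graphs of large girth are dense, $P_t$-free, and have no linear pure pair.
\item Your path-growing mechanism does not transfer. In a $(1-\eps)$-dense graph, deleting the neighbourhood of a single path vertex leaves fewer than $\eps\abs G$ vertices, so there is no large component left to grow into.
\end{itemize}
The whole difficulty of the theorem lies in this dense case, which the proposal does not touch. Separately, step 2 is not yet an argument. Splitting the path into halves and recursing on $P_{\lceil t/2\rceil}$ has no stated mechanism, and you yourself flag the polynomial-versus-quasipolynomial loss as unresolved.

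For comparison, the paper treats the dense case by induction from $t$ to $t+1$, in three steps.
\begin{itemize}
\item \emph{Purity.} $P_{t+1}$ is pure (\cref{lem:pathpure}), via the splitting lemma \cref{lem:split}. For disjoint $A,B$, either some induced $P_r$ has one end in $A$ and the rest in $B$, or $A$ is $x$-pure to half of $B$, or $G[B]$ contains an $x$-pure $(k,\abs B/k^d)$-blockade. This is proved by induction on $r$. The induction hypothesis is applied to neighbourhoods and non-neighbourhoods inside $B$. The comb lemma is also used, and its teeth are handled by bookkeeping pure blockades with the conversion theorem \cref{thm:convsemi}, rather than by generating density.
\item \emph{Niceness.} Purity together with the virality of $P_t$ (induction hypothesis plus \cref{thm:bfp}) gives niceness of $P_{t+1}$, i.e.\ polynomial semidense blockades.
\item \emph{Iterative densification} in $(1-y)$-dense $P_{t+1}$-free graphs. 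Suppose a vertex is mixed on many connected blocks of a semidense blockade. Endpoint switching (\cref{lem:switch}) then shows the pattern graph is $H_t$-free, where $H_t$ is an iterated blow-up of $P_t$ and has the \erh{} property by \cref{thm:aps}. A large clique or stable set in the pattern gives a much denser subgraph or an anticomplete blockade. Otherwise some block is complete to almost all of $G$, and this is iterated.
\end{itemize}
Note that this uses the \erh{} property of $P_t$ itself, through its blow-ups, not of paths of length about $t/2$.
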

	
	The method used to prove \cref{thm:path} can be developed to give more; here we provide an application.
	In what follows, let {\em E} be the graph obtained from $P_5$ by adding a pendant edge to the middle vertex, and let the {\em bird} be the graph obtained from E by adding an edge between a vertex of degree two and the middle leaf.
	Huang, Ju, and Zhou~\cite{hjz2026} proved that E and the bird have the \erh{} property.
	We provide a common generalisation of their result and \cref{thm:path} as follows.
	For integers $m,n,t\ge1$, let $S_{m,n,t}$ be the graph obtained from the claw $K_{1,3}$ by subdividing each of its edges $m-1$ times, $n-1$ times, and $t-1$ times, respectively.
	For $m=1$ and $n=2$, one can represent $S_{1,2,t}$ as the tree with vertex set $\{r_0,q_1\}\cup\{p_1,p_2\}\cup\{z_1,\ldots,z_t\}$ and paths
	\[r_0\text-q_1, \qquad r_0\text-p_1\text-p_2,\qquad r_0\text-z_1\text-\cdots\text-z_t.\]
	Let $S_{1,2,t}'$ be obtained from $S_{1,2,t}$ by adding the edge $p_1q_1$.
	Then $S_{1,2,2}$ is E, $S'_{1,2,2}$ is the bird, and $S_{1,2,1}'$ is the well-known {\em bull}.
	Our common generalisation asserts that:
	\begin{theorem}
		\label{thm:extend}
		$S_{1,2,t}$ and $S_{1,2,t}'$ have the \erh{} property for all $t\ge1$.
	\end{theorem}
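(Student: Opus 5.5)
The plan is to derive \cref{thm:extend} from the same machinery that proves \cref{thm:path}, namely the bookkeeping-blockade form of the Gy\'arf\'as path argument, together with the standard reduction of the \erh{} property to a polynomial ``pure pair'' statement. Concretely, I would aim for the following: for every $t$ there exists $c>0$ such that every $\eps$-sparse $S_{1,2,t}$-free (respectively $S_{1,2,t}'$-free) graph $G$ with $\abs G\ge \eps^{-c}$ has a pure pair, meaning two anticomplete sets of size at least $\eps^{c}\abs G$. The sets are required to be anticomplete rather than complete because $G$ is sparse. A lemma of the Fox--Sudakov/Erd\H os--Hajnal type, which the paper presumably uses for paths as well, converts such a polynomial pure-pair theorem for $H$ and for $\overline H$ into the \erh{} property. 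Since the \erh{} property is closed under complementation, it is enough to supply pure pairs on the sparse side for $H$, and on the dense side to apply the statement to $\overline G$.

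The sparse case comes first. I would run the path-growing argument that proves \cref{thm:path}: in a sparse graph with no polynomial pure pair, build a blockade $(B_1,\dots,B_k)$ with bookkeeping, and grow a long induced path $z_1\text-\cdots\text-z_t$ whose vertices lie in distinct blocks. Each new vertex should have many neighbours in the next unused block and few neighbours in all the others. Once the path $P=r_0\text-z_1\text-\cdots\text-z_t$ is in hand, I need to attach at $r_0$ a pendant vertex $q_1$ and a pendant path $p_1\text-p_2$, both anticomplete to $\{z_1,\dots,z_t\}$ and anticomplete to each other; for the bird, I instead require $p_1q_1$ to be an edge. The point is that $r_0$ can be chosen with polynomially many neighbours in a reserved block $B_0$, while the $z_i$ have few neighbours there. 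Inside $N(r_0)\cap B_0$, after deleting the neighbourhoods of the $z_i$, I need a vertex $p_1$ and a vertex $q_1$ such that $p_1$ has a neighbour $p_2$ outside $N(r_0)\cup N(q_1)\cup\bigcup_i N(z_i)$.

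Producing this configuration is where splitting paths comes in. Sparsity gives many non-adjacent pairs in $N(r_0)\cap B_0$, which supplies $q_1,p_1$ for $S_{1,2,t}$. For $S_{1,2,t}'$ I need an adjacent pair instead, and I would choose it so that some neighbour of $p_1$ lies outside $N[r_0]\cup N(q_1)$. If no such pair exists, then the set $N(r_0)\cap B_0$ is highly structured: every edge $p_1q_1$ inside it has $N(p_1)\setminus N[r_0]\subseteq N(q_1)$. I would try to show that this structure yields a polynomial pure pair directly. Sparsity forces $N(p_1)$ to be small, so the resulting nested-neighbourhood structure should let one split $B_0$ into two anticomplete parts of linear size.

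The dense case is handled by applying the sparse statement to $\overline G$, which requires the sparse pure-pair theorem for $\overline H$. That would be too much to expect directly. Instead I expect to use the now-standard route: the ``$H$-free and $\overline H$-free both sparse'' reduction via the R\"odl-type lemma, which gives that $G$ or $\overline G$ is $\eps$-sparse on a large induced subgraph. In the dense case, with $\overline G$ sparse, I would need $\overline H$-freeness to give pure pairs. I would therefore either prove the sparse statement for $\overline{S_{1,2,t}}$ as well, or appeal to a general theorem asserting that the polynomial sparse pure-pair property for $H$ alone suffices when $H$ is a forest. (For $S'_{1,2,t}$, which contains a triangle, an analogous bull-type statement would be used.)

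The main obstacle I anticipate is the bird case $S_{1,2,t}'$. There the triangle $r_0p_1q_1$ means the configuration cannot be found by simple sparsity counting, and the bookkeeping has to ensure that the extension $p_2$ avoids both $N(q_1)$ and all $N(z_i)$ simultaneously. I would handle this by reserving two blocks for the claw centre and iterating the splitting-path step once more.
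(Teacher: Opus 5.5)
There is a genuine gap: your target statement works in the wrong regime, and part of it is false. For the forest $S_{1,2,t}$, the sparse statement is already given by \cref{thm:sparsetree}, with anticomplete pairs of linear size rather than just $\eps^c\abs G$. So your sparse path-growing argument proves nothing new, and all the difficulty sits in the dense case. For that case you offer two options, and neither works.
\begin{itemize}
\item The ``general theorem'' that the sparse pure-pair property suffices for a forest does not exist. Since \cref{thm:sparsetree} covers every forest, such a theorem would give \erh{} for all forests, which is open.
\item Proving your sparse statement for $\overline{S_{1,2,t}}$ is impossible. The three leaves of $S_{1,2,t}$ are pairwise nonadjacent, so $\overline{S_{1,2,t}}$ contains a triangle. $S'_{1,2,t}$ contains the triangle $r_0p_1q_1$ as well. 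Hence every triangle-free graph with maximum degree $o(\abs G)$ and no linear anticomplete pair refutes your statement for both graphs at every fixed $\eps$. Such graphs exist: delete one edge from each triangle of $G(n,n^{-2/3})$.
\end{itemize}
Conceptually, your two sparse statements together with \cref{thm:rodl} at a fixed $\eps$ would give linear pure pairs in every $H$-free graph (strong \erh{}). That fails as soon as $H$ or $\overline H$ contains a cycle. In particular, your bird-case plan of extracting a pure pair from the nested-neighbourhood structure in $N(r_0)\cap B_0$ is aiming at a false statement.

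What your outline lacks is a mechanism producing restricted induced subgraphs with polynomial shrinkage. That property is equivalent to \erh{} itself (\cref{thm:bfp}), so it cannot be assumed. The paper proceeds by induction on $t$ in three steps:
\begin{enumerate}
\item It proves purity of $S_{1,2,t}$ and $S'_{1,2,t}$ (\cref{lem:spiderpure}). This uses the splitting-path lemma \cref{lem:splitspider}, rooted at the principal leaf $z_r$, whose base case comes from \cref{lem:splite,lem:splitbird} and hence from \cref{lem:p3}.
\item For $S_{1,2,t}$, it gets niceness from purity plus virality of $S_{1,2,t-1}$ via \cref{lem:nice}. $S'_{1,2,t}$ contains a triangle and so cannot be nice; there it uses the weaker \cref{lem:nearnice}, handling the sparse outcome with \cref{lem:antiviral} and virality of $S'_{1,2,t-1}$.
\item It converts to full \erh{} by iterative densification in \emph{dense} graphs (\cref{lem:niceehspider}). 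A vertex mixed on many blocks of a connected semidense blockade is ruled out because the pattern graph must avoid an iterated blow-up of $S_{1,2,t-1}$. That blow-up has \erh{} by induction and \cref{thm:aps}, and the forcing step is \cref{lem:switchin} with \cref{lem:forcesubstar}. For $S'_{1,2,t}$, a `dense-mixed' refinement is added to embed the paw.
\end{enumerate}
None of these ingredients appears in your proposal.
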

	The proof of this is a technical extension of the proof of \cref{thm:path}, without using the \erh{} property of E and the bird.
	As such, the rest of this paper is devoted to \cref{thm:path}; and we will explain the modifications to prove \cref{thm:extend} along the way.
	
	We will review preliminary materials in \cref{sec:prelim}, and will give a more detailed organisation of the rest of this paper at the end of \cref{subsec:keylemmas}.

	\subsection*{Concurrent work}
	During the preparation of this paper, we learned that the following projects were carried out independently of our research:
	\begin{itemize}
		\item While we were preparing the current paper, a team of researchers at Google Research and DeepMind (including Junehyuk Jung, Joonkyung Lee, Thang Luong, and Aranyak Mehta) contacted and informed us that a DeepMind internal model had produced a proof of the \erh{} property of $P_6$; the proof was verified by Joonkyung Lee and accompanied by a Lean formalisation.
		
		We then notified the Google team about our $P_t$ result (\cref{thm:path}) and our unfinished draft at the time, and suggested that both parties announce their papers on the arXiv independently on the same day while acknowledging each other's work.
		
		A few days later, the Google team replied to us that in parallel to our notification, they continued exploring several special cases of the \erh{} conjecture using their internal model, and obtained further results including the $P_t$ case.
		
		After discussing how to present our respective $P_t$ results, both parties agreed to independently publicise their papers, but on different platforms:
		Our preprint would be submitted to the arXiv; meanwhile, the Google team would upload their AI-generated manuscript (together with the accompanying Lean proof) to \href{https://github.com/google-deepmind/superhuman/tree/main/FutureMath/Erdos-Hajnal}{their GitHub repository}, since they wanted to ensure that their material on the arXiv meets the standards of the mathematical community.
		At the time of submission, neither team had seen the other's manuscript.
		
		\item We also learned that \href{https://github.com/machine-qed/erdos-hajnal-six-vertex-path}{an anonymous GitHub repository} contains an AI-generated manuscript claiming to prove the \erh{} property of $P_6$, produced by Claude Opus 5.5; and \href{https://github.com/OctopusTide/erdos-hajnal-all-paths}{another GitHub repository} by Yuwen Zhou contains a manuscript claiming to prove \cref{thm:path} (also available on \href{https://zenodo.org/records/23270286}{Zenodo}), assisted by Claude Opus 5.5.
		According to the information given in these hyperlinks, both manuscripts are accompanied by corresponding Lean formalisations.
		We are not aware of independent expert reviews of either manuscript.
	\end{itemize}

	\section{Preliminaries}
	\label{sec:prelim}
	In this section, we collect relevant notation and definitions, together with several facts that will be used in this paper.
	These facts are well-known or can be deduced more or less straightforwardly from existing results and techniques in the literature.
	At the end of the section (\cref{subsec:keylemmas}), we will introduce the two key lemmas, \cref{lem:pathpure,lem:niceeh}, and quickly deduce \cref{thm:path} from these two lemmas and the collected facts.
	\subsection{Notation and definitions}
	For an integer $k\ge0$, let $[k]:=\{1,\ldots,k\}$ if $k\ge1$ and $[k]:=\empty$ if $k=0$. We use $\log$ to denote the binary logarithm.
	For sets $X,Y$, a map $f\colon X\to Y$, and $Z\subset X$, let $f\vert_Z$ denote the restriction of $f$ to $Z$, and let $f(Z):=\{f(z):z\in Z\}$.
	
	For a graph $G$ with $v\in V(G)$, let $N_G(v)$ be the set of its neighbours in $G$, and let $N_G[v]:=N_G(v)\cup\{v\}$.
	Let $\overline G$ be the complement graph of $G$, that is, the graph with vertex set $V(G)$ and edge set $\{uv:v\in V(G),u\in V(G)\setminus N_G[v]\}$.
	A {\em component} of $G$ is a maximal connected induced subgraph of $G$.
	For $S\subset V(G)$, let $G[S]$ be the subgraph induced on $S$, let $G\setminus S:=G[V(G)\setminus S]$, and let $G\setminus v:=G\setminus\{v\}$ for all $v\in V(G)$.
	If $v\not\in S$, we also say that $v$ is {\em pure} to $S$ if either $S\subset N_G(v)$ or $S\cap N_G(v)=\emptyset$, and {\em mixed} on $S$ if it is not pure to $S$.
	
	For disjoint $A,B\subset V(G)$, we say that $(A,B)$ is a {\em complete} pair in $G$, or $A$ is {\em complete} to $B$ in $G$, if $uv\in E(G)$ for all $u\in A$ and $v\in B$; we also say that $(A,B)$ is an {\em anticomplete} pair in $G$, or $A$ is {\em anticomplete} to $B$ in $G$, if $G$ has no edge between $A$ and $B$.
	We say that $A$ is {\em pure} to $B$ if every vertex in $A$ is pure to $B$.
	The pair $(A,B)$ is {\em pure} in $G$ if it is either complete or anticomplete in $G$; thus, $(A,B)$ is pure in $G$ if and only if $A,B$ are pure to each other in $G$.
	
	A {\em blockade} in $G$ is a sequence $\mac B=(B_1,\ldots,B_k)$ of pairwise disjoint and nonempty subsets of $V(G)$; its {\em length} is $k$, its {\em width} is $\min_{i\in[k]}\abs{B_i}$, and its {\em blocks} are $B_1,\ldots,B_k$.
	For $\ell,w\ge0$, we say that $\mac B$ is an {\em $(\ell,w)$-blockade} if its length is at least $\ell$ and its width is at least $w$.
	The blockade $\mac B$ is {\em complete} if its blocks are pairwise complete to each other, is {\em anticomplete} if its blocks are anticomplete to each other, and {\em pure} if its blocks are pairwise pure to each other.
	
	\subsection{Basic facts}
	For two graphs $H_1$ and $H_2$, and for $v\in V(H_1)$, {\em substituting $H_2$ for $v$ in $H_1$} means creating a new graph by taking a disjoint union of $H_1\setminus v$ and $H_2$, then adding an edge between every vertex in $H_2$ and every neighbour of $v$ in $H_1$.
	We need the following fundamental result of Alon, Pach, and Solymosi~\cite{aps2001} on substitution and \erh{}.
	
	\begin{theorem}
		[Alon, Pach, and Solymosi]
		\label{thm:aps}
		If $H_1$ and $H_2$ have the \erh{} property, then so does the graph obtained by substituting $H_2$ for any given vertex of $H_1$.
	\end{theorem}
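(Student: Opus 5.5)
We follow the classical Alon--Pach--Solymosi argument. Let $H$ be obtained by substituting $H_2$ for $v\in V(H_1)$, with $h_i:=\abs{H_i}$, and let $b_1,b_2$ witness the \erh{} property of $H_1,H_2$. The aim is to show that every $H$-free graph $G$ on $n$ vertices has a clique or stable set of size at least $n^{1/b}$ for some $b$ depending only on $H_1,H_2$. The cases of small $n$ are absorbed by taking $b$ large, since a single vertex already gives a clique of size $1\ge n^{1/b}$ once $b\ge\log n$ for bounded $n$.

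The first step is to count copies of $H_1$. If $G$ has fewer than about $n^{h_1}/n^{\delta}$ copies, we expect to find a large homogeneous set directly, and the natural tool is the density (R\"odl-type) dichotomy. But that dichotomy does not give polynomial bounds in general, so instead we induct on $n$, following Alon--Pach--Solymosi. Consider any induced subgraph $G'$ on $m\ge n^{c}$ vertices that contains no copy of $H_1$. Then $G'$ has a homogeneous set of size $m^{1/b_1}\ge n^{c/b_1}$, and we are done by choosing $b\ge b_1/c$.

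So assume every induced subgraph of size $n^{c}$ contains a copy of $H_1$. We then run the supersaturation counting. Partition $V(G)$ arbitrarily into $n^{1-c}$ sets of size $n^{c}$ and greedily extract disjoint copies. Averaging over random $h_1$-subsets of size-$n^{c}$ sets shows that $G$ has at least $\epsilon n^{h_1}$ copies of $H_1$, where $\epsilon=n^{-c h_1}$ up to constants. Hence there is a fixed choice of images for the vertices of $V(H_1)\setminus\{v\}$ that extends to at least $\epsilon n$ vertices. These extension vertices all have the same adjacency to the fixed $h_1-1$ images as $v$ would, so any copy of $H_2$ among them yields a copy of $H$.

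The extension set $X$ therefore induces an $H_2$-free graph on $\abs X\ge n^{1-ch_1}/C$ vertices. It thus has a homogeneous set of size $\abs X^{1/b_2}\ge n^{(1-ch_1)/b_2}/C'$. Choosing $c=1/(2h_1)$ gives size at least $n^{1/(2b_2)}/C'$, and $b:=\max(2b_1h_1,\,4b_2)$ together with large-$n$ absorption finishes the argument.

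The main obstacle is the supersaturation step: deducing many copies of $H_1$ from the fact that every $n^{c}$-subset contains one. The plan is double counting. Each $n^c$-subset contains a copy, and each copy lies in $\binom{n-h_1}{n^c-h_1}$ such subsets. The number of copies is therefore at least $\binom{n}{n^c}/\binom{n-h_1}{n^c-h_1}\approx (n/n^c)^{h_1}=n^{h_1(1-c)}$. Pigeonholing over the $n^{h_1-1}$ placements of the other vertices then gives an extension set of size at least $n^{1-ch_1}/h_1!$, as required. This removes the need for any regularity-type input and keeps all bounds polynomial.
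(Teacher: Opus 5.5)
Your argument is correct and is essentially the original Alon--Pach--Solymosi double-counting proof; the paper only cites this result and does not reprove it. The ``partition and greedily extract'' sentence is superfluous, since the bound $\binom{n}{m}/\binom{n-h_1}{m-h_1}\ge (n/m)^{h_1}$, followed by pigeonholing on the labelled restriction to $V(H_1)\setminus\{v\}$, already does all the work.

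One small fix is needed for bounded $n\ge 2$: a single vertex gives $1\ge n^{1/b}$ only when $n=1$. Instead, use that any two vertices form a clique or stable set, and take $b\ge\log_2 N_0$ (with $N_0$ the threshold beyond which your constants are absorbed), so that $n^{1/b}\le 2$ for all $n\le N_0$.
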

	
	For an integer $k\ge1$ and a real $w>0$, a {\em $(k,w)$-comb} in a graph $G$ is a sequence of pairs $((a_i,B_i):i\in[k])$ such that $\abs{B_i}\ge w$ for all $i\in[k]$; $B_i\subset N_G(a_i)$ for all $i\in[k]$; and $a_i\ne a_j$ and $B_i\subset N_G(a_i)\setminus N_G[a_j]$ for all distinct $i,j\in [k]$.
	A {\em $(k,w)$-anticomb} in $G$ is a $(k,w)$-comb in $\overline G$.
	We require the following useful special case of the `comb' lemma~\cite{csss2023a}, which was used also in~\cite{density7}. 
	
	\begin{lemma}
		[Chudnovsky, Scott, Seymour, and Spirkl]
		\label{lem:comb}
		Let $G$ be a graph and let $A,B\subseteq V(G)$ be nonempty and disjoint, such that
		each vertex in $A$ has at most $\Delta>0$ neighbours in $B$.
		Then either:
		\begin{itemize}
			\item at most $20\sqrt{\abs B\Delta}$ vertices in $B$ have a neighbour in $A$; or
			
			\item
			for some integer $k\ge1$, there is a $(k,\abs B/k^2)$-comb $((a_i,B_i):i\in[k])$ in $G$ where $a_i\in A$ and $B_i\subset B$ for all $i\in[k]$.
		\end{itemize} 
	\end{lemma}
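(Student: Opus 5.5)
The plan is a random-sparsification argument on a minimal cover, combined with a dyadic pigeonhole on how many cover vertices each vertex of $B$ sees. Let $B'\subset B$ be the set of vertices with a neighbour in $A$, and assume $\abs{B'}>20\sqrt{\abs B\Delta}$. First we replace $A$ by a minimal subset $A'\subset A$ such that every vertex of $B'$ has a neighbour in $A'$. Set $m:=\abs{A'}$. Every $a\in A'$ still has at most $\Delta$ neighbours in $B$, so $\abs{B'}\le m\Delta$. For $b\in B'$ write $d_b\ge1$ for its number of neighbours in $A'$. For each $j\ge0$ let $B_j:=\{b\in B': 2^j\le d_b<2^{j+1}\}$. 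Double counting edges between $A'$ and $B_j$ gives
\[\abs{B_j}2^j\le m\Delta.\]

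The key step is to find a $j$ with $\abs{B_j}\cdot m2^{-j}\ge C\abs B$ for a suitable absolute constant $C$. Suppose this fails for every $j$. Then
\[\abs{B_j}\le\min\bigl(C\abs B2^j/m,\ m\Delta 2^{-j}\bigr)\quad\text{for all } j.\]
Summing over $j$, both branches are geometric series that meet near $2^j\approx\sqrt{m^2\Delta/(C\abs B)}$. Hence
\[\abs{B'}=\sum_j\abs{B_j}=O\bigl(\sqrt{C\abs B\Delta}\bigr).\]
With $C$ chosen appropriately this contradicts $\abs{B'}>20\sqrt{\abs B\Delta}$. Fixing the constants so that exactly $20$ comes out is routine bookkeeping, and I expect it to be the fiddliest part, though not a conceptual obstacle.

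Now fix such a $j$ and put $p:=2^{-j-1}$. Choose $S\subset A'$ by including each vertex independently with probability $p$. A vertex $b\in B_j$ is \emph{privately covered} if exactly one neighbour of $b$ lies in $S$. This happens with probability $d_bp(1-p)^{d_b-1}\ge c$ for an absolute constant $c>0$, since $d_bp\in[1/2,1)$. Hence
\[\mathbb E[\#\text{privately covered } b\in B_j]\ge c\abs{B_j},\qquad \mathbb E\abs S=pm.\]
Consider the random variable $X:=\#\text{privately covered} - (c\abs{B_j}/(2pm))\abs S$. Its expectation is at least $c\abs{B_j}/2$, so we can fix an outcome with $X\ge c\abs{B_j}/2$. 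In that outcome the number of privately covered vertices is at least $c\abs{B_j}/2$, and $k_0:=\abs S\le 2pm\cdot(\text{private count})/(c\abs{B_j})$. For $a\in S$ let $P(a)$ be the set of vertices of $B_j$ whose unique neighbour in $S$ is $a$. Then $P(a)\subset N(a)\setminus N[a']$ for all other $a'\in S$, which is exactly the comb condition.

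Finally we prune and set $k$. Set $w:=\abs B/k_0^2$, discard every $a\in S$ with $\abs{P(a)}<w$, and let $k\le k_0$ be the number of survivors. The discarded sets contain at most $k_0\cdot w=\abs B/k_0$ privately covered vertices in total. The choice of $j$ (with $C$ large relative to $1/c$), together with the bound on $k_0$ in terms of the private count, makes $\abs B/k_0$ smaller than half the private count. So $k\ge1$ survivors remain, each with $\abs{P(a)}\ge\abs B/k_0^2\ge\abs B/k^2$. The surviving pairs $(a,P(a))$ then form the required $(k,\abs B/k^2)$-comb, with $a_i\in A$ and $B_i\subset B$.
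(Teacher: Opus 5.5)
\textbf{There is a genuine gap in the final pruning step.} The dyadic pigeonhole and the sampling step are sound. For $d_b\in[2^j,2^{j+1})$ and $p=2^{-j-1}$ one has $d_bp(1-p)^{d_b-1}\ge\frac12(1-p)^{2^j-1}>0.3$. And if your key inequality fails for every $j$, then $|B'|\le 3\sqrt{C|B|\Delta}$.

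To guarantee a survivor after pruning, you need $|B|/k_0<\text{priv}$. That requires a \emph{lower} bound on $k_0=|S|$, comparable to $pm$. The only information your alteration gives about $k_0$ is the \emph{upper} bound $k_0\le 2pm\cdot\text{priv}/(c|B_j|)$, which points the wrong way. Together with $\text{priv}\ge c|B_j|/2$, it is consistent with $k_0=1$. For instance, if some $a\in A'$ is complete to $B_j$, the outcome $S=\{a\}$ has $X=|B_j|-\lambda\ge c|B_j|/2$. But then $w=|B|$, and nothing survives unless $B_j=B$. Since you only know that \emph{some} outcome with $X\ge\mathbb{E}X$ exists, you cannot rule this out. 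The penalty $-\lambda|S|$ is in fact counterproductive: the lemma places no upper bound on $k$, and a larger $k_0$ only lowers the threshold $|B|/k_0^2$.

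\textbf{How to repair it.} Drop the penalty and instead restrict to outcomes where $|S|$ is large.
Your choice of $j$ gives $pm=m2^{-j-1}\ge C|B|/(2|B_j|)\ge C/2$. By Chernoff, $\Pr[|S|<pm/4]\le e^{-9pm/32}\le e^{-9C/64}$. Hence $\mathbb{E}\bigl[\text{priv}\cdot\mathbf{1}\{|S|\ge pm/4\}\bigr]\ge (c-e^{-9C/64})|B_j|$, and you can fix an outcome attaining this. In that outcome $k_0\ge pm/4$. Using the choice of $j$ once more, $|B|/k_0\le 4|B|/(pm)\le 8|B_j|/C$.
With $c=0.3$ and $C=40$ you get $8/C=0.2<0.29<c-e^{-9C/64}$ and $3\sqrt{40}<20$. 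So your pruning goes through and the constant $20$ survives. Note that the two constraints on $C$ pull in opposite directions, so this is not quite `routine'.
The paper does not reprove this lemma: it imports it from Chudnovsky--Scott--Seymour--Spirkl, whose proof is based on a greedy covering of $B$ by $A$. There is therefore no in-paper argument to match yours against.
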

	
	\subsection{Density}
	For $x\in(0,1)$ and a graph $G$ with disjoint $A,B\subseteq V(G)$, we say that $B$ is {\em $x$-sparse} to $A$ (or $(A,B)$ is {\em $x$-sparse} in $G$) if every vertex in $B$ has fewer than $x\abs A$ neighbours in $A$, {\em $x$-dense} to $A$ (or $(A,B)$ is {\em $x$-dense} in $G$) if every vertex in $B$ has more than $x\abs A$ neighbours in $A$, and {\em $x$-pure} to $A$ if every vertex in $B$ is either $x$-sparse or $(1-x)$-dense to $A$.
	We say that $(A,B)$ is {\em $x$-pure} in $G$ if either $A,B$ are $x$-sparse to each other or $(1-x)$-dense to each other. The following lemma is an extension of the aforesaid fact that $(A,B)$ is pure in $G$ if and only if $A,B$ are pure to each other in $G$.
	\begin{lemma}
		\label{lem:xpure}
		Let $x\in(0,\frac12]$, and let $G$ be a graph with nonempty disjoint $A,B\subset V(G)$ that are $x^3$-pure to each other. Then there exist $X\subset A$ and $Y\subset B$ such that $\abs X\ge(1-x^2)\abs A$, $\abs Y\ge (1-x^2)\abs B$, and $(X,Y)$ is $x^2$-pure in $G$.
	\end{lemma}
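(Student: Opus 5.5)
The plan is to split each side according to which alternative of $x^3$-purity its vertices satisfy, and to show by double counting that the two sides must mostly agree on the alternative. Let $A_s$ be the set of vertices of $A$ that are $x^3$-sparse to $B$, and $A_d:=A\setminus A_s$. By hypothesis every vertex of $A_d$ is $(1-x^3)$-dense to $B$. Define $B_s$ and $B_d$ in the same way with the roles of $A$ and $B$ swapped.

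\textbf{Step 1: a sparse part and a dense part cannot both be large.} I claim that $A_d$ and $B_s$ cannot both have size exceeding an $x^2$-fraction of their sides. Suppose $\abs{A_d}>x^2\abs A$ and $\abs{B_s}>x^2\abs B$, and count the edges between $A_d$ and $B_s$.
\begin{itemize}
\item Each vertex of $A_d$ has more than $(1-x^3)\abs B$ neighbours in $B$. So it has more than $\abs{B_s}-x^3\abs B$ neighbours in $B_s$.
\item Each vertex of $B_s$ has fewer than $x^3\abs A$ neighbours in $A$.
\end{itemize}
Comparing the two counts gives
\[x^2\abs A\,(\abs{B_s}-x^3\abs B)<x^3\abs A\,\abs{B_s}.\]
Hence $\abs{B_s}(1-x)<x^3\abs B$, so $\abs{B_s}<\frac{x^3}{1-x}\abs B\le 2x^3\abs B\le x^2\abs B$, using $x\le\frac12$ twice. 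This contradicts the assumption on $\abs{B_s}$.

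Applying the same argument in $\overline G$ swaps the roles of sparse and dense. So at least one of $\abs{A_s}\le x^2\abs A$ and $\abs{B_d}\le x^2\abs B$ also holds.

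\textbf{Step 2: case analysis.} Suppose first that $\abs{A_d}\le x^2\abs A$. Then $\abs{A_s}\ge(1-x^2)\abs A>x^2\abs A$, so Step 1 forces $\abs{B_d}\le x^2\abs B$. Take $X:=A_s$ and $Y:=B_s$; both have at least a $(1-x^2)$-fraction of their sides, and in particular they are nonempty.

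Each vertex of $Y$ has fewer than $x^3\abs A$ neighbours in $X$. Since $\abs X\ge(1-x^2)\abs A\ge x\abs A$, this is at most $x^2\abs X$. Symmetrically, each vertex of $X$ has fewer than $x^2\abs Y$ neighbours in $Y$. So $X$ and $Y$ are $x^2$-sparse to each other.

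Otherwise $\abs{B_s}\le x^2\abs B$. Then $B_d$ is large, so Step 1 forces $\abs{A_s}\le x^2\abs A$. Take $X:=A_d$ and $Y:=B_d$. Each vertex of $X$ has fewer than $x^3\abs B\le x^2\abs Y$ non-neighbours in $Y$, and symmetrically for vertices of $Y$. So $X$ and $Y$ are $(1-x^2)$-dense to each other.

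In both cases $(X,Y)$ is $x^2$-pure, as required. The only real work is the double count in Step 1, and in particular checking that $\frac{x^3}{1-x}\le x^2$, which is exactly where $x\le\frac12$ is used. The rest is routine bookkeeping.
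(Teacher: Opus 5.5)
Your proof is correct, and it takes a somewhat more streamlined route than the paper. The paper argues in two stages. First, for each side separately, it establishes that one type dominates: by averaging it finds $B'\subset B$ with $\abs{B'}\ge(1-2x^2)\abs B$ that is $x$-sparse to $A_0$ and $(1-x)$-dense to $A_1$. The $x^3$-purity of a single vertex of $B'$ then forces $\min(\abs{A_0},\abs{A_1})\le 2x^3\abs A$, and the same holds on the $B$ side. Second, a final edge count between the two dominant parts shows that their types agree.

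Your single double count between the dense part of one side and the sparse part of the other, run in both $G$ and $\overline G$, does both jobs at once. It rules out mismatched large parts, and your case split then shows that both sides are dominated by the same type. This avoids the auxiliary set $B'$ and treats the degenerate case (where one part is empty) uniformly. The paper's route gives slightly stronger constants: discarded parts of at most a $2x^3$-fraction, and $2x^3$-purity. The statement does not need these. Your route gets exactly the $x^2$ bounds, using $\abs X\ge x\abs A$ to turn $x^3\abs A$ into at most $x^2\abs X$.

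One small step is worth stating explicitly. Passing from $\abs{A_d}(\abs{B_s}-x^3\abs B)<x^3\abs A\,\abs{B_s}$ to your displayed inequality uses $\abs{B_s}-x^3\abs B>0$. This holds because $\abs{B_s}>x^2\abs B$.
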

	\begin{proof}
		Since $A$ is $x^3$-pure to $B$, it admits a partition $(A_0,A_1)$ such that $A_0$ is $x^3$-sparse to $B$ and $A_1$ is $(1-x^3)$-dense to $B$.
		If one of $A_0,A_1$ is empty then we are done; so we assume that they are nonempty.
		Then there exists $B'\subset B$ with $\abs{B'}\ge(1-2x^2)\abs B>0$, such that $B'$ is $x$-sparse to $A_0$ and $(1-x)$-dense to $A_1$. Since $B'\subset B$ is $x^3$-pure to $A$, it follows that $\min(\abs{A_0},\abs{A_1})\le \frac{x^3}{1-x}\abs A\le 2x^3\abs A$.
		Hence, there exists $X\in\{A_0,A_1\}$ such that $\abs X\ge (1-2x^3)\abs A\ge (1-x^2)\abs A\ge\frac12\abs A$ and $X$ is $x^3$-sparse or $(1-x^3)$-dense to $B$.
		Similarly, there exists $Y\subset B$ with $\abs Y\ge (1-2x^3)\abs B\ge(1-x^2)\abs B\ge\frac12\abs B$ such that $Y$ is $x^3$-sparse or $(1-x^3)$-dense to $A$.
		Counting edges between $X$ and $Y$ then implies that $(X,Y)$ is $2x^3$-pure in $G$.
		Since $2x^3\le x^2$, this proves \cref{lem:xpure}.
	\end{proof}
	
	For $\eps\in(0,1)$, a graph $G$ is {\em $\eps$-sparse} if its maximum degree is less than $\eps\abs G$, is {\em $\eps$-dense} if its complement $\overline G$ is $(1-\eps)$-sparse, and is {\em $\eps$-restricted} if it is $\eps$-sparse or $(1-\eps)$-dense.
	The following fundamental result of R\"odl~\cite{rodl1986} says that graphs with a forbidden induced subgraph have substantially uneven edge distribution.
	
	\begin{theorem}
		[R\"odl]
		\label{thm:rodl}
		For every $\eps\in(0,\frac12]$ and every graph $H$, there exists $\delta>0$ such that every $H$-free graph $G$ contains an $\eps$-restricted induced subgraph on at least $\delta\abs G$ vertices.
	\end{theorem}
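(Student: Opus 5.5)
The plan is the standard density-increment argument, phrased through the complement. Fix $\eps\in(0,\frac12]$ and $H$. It suffices to find, in any $H$-free $G$ with $\abs G$ large enough, an induced subgraph $G'$ with $\abs{G'}\ge\delta\abs G$ such that $G'$ or $\overline{G'}$ has maximum degree less than $\eps\abs{G'}$. Small $G$ are handled by taking a single vertex and shrinking $\delta$. Since both the hypothesis and the conclusion are invariant under complementation, the real content is this dichotomy: either $G$ has a large $\eps$-sparse induced subgraph, or it has a large induced subgraph whose complement is $\eps$-sparse.

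I would not prove this from scratch with the regularity lemma. Instead I would use the Fox--Sudakov / Nikiforov-style argument: \emph{iterated density increment}, combined with a \emph{counting lemma} for copies of $H$.

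\begin{enumerate}
\item Let $h=\abs H$. Fix a small $\eta=\eta(\eps,h)$.
\item Show that if $G$ has no induced subgraph on at least $\eta\abs G$ vertices whose edge density differs from that of $G$ by more than $\eta$, then $G$ is ``$\eta$-quasirandom with density $p$''. In that case a random injection $V(H)\to V(G)$ is an induced copy of $H$ with probability roughly $p^{e(H)}(1-p)^{\binom h2-e(H)}$, up to an error $O(h^2\eta^{c})$.
\item If $p\in[\eps/2,1-\eps/2]$, this probability is positive once $\eta$ is small, contradicting $H$-freeness. So either some subset is denser or sparser by $\eta$ (density increment), or $p<\eps/2$ or $p>1-\eps/2$.
\item Iterate the increment. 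Each step multiplies the size by at least $\eta$ and moves the density by $\eta$ in one direction. One needs a potential argument, since the direction may alternate; I would track $\abs{p-\frac12}$, or run the argument separately for ``sparser'' and ``denser'' moves. This gives a set $S$ with $\abs S\ge\eta^{O(1/\eta)}\abs G$ and density below $\eps/2$ or above $1-\eps/2$.
\item Finally convert average density to maximum degree. If $G[S]$ has density below $\eps/2$, delete the vertices of degree at least $\eps\abs S/2$. By Markov, at most a constant fraction (at most half) are removed. The remaining set $S'$ has maximum degree below $\eps\abs S/2\le\eps\abs{S'}$, so $G[S']$ is $\eps$-sparse. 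The dense case is the same argument applied to $\overline G$.
\end{enumerate}

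The main obstacle is step 2, the counting lemma under only the weak ``no density deviation on linear-size subsets'' hypothesis. I would try to prove it by induction on $h$. Embed the vertices one at a time. At each stage, keep candidate sets of linear size for the remaining vertices of $H$. Restrict each candidate set to the neighbourhood or the non-neighbourhood of the newly embedded vertex. The no-deviation hypothesis, applied to the candidate sets (which have size at least $(\eps/2)^{h}\abs G\gg\eta\abs G$), guarantees that all but a few vertices have the expected degree into each candidate set, so the sets shrink by roughly factor $p$ or $1-p$ at each step.

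A cleaner route, if it is permitted to cite it, is to invoke R\"odl's original argument via Szemer\'edi regularity. In that argument a regular pair with density in $[\eps/2,1-\eps/2]$ across $h$ parts would force an induced $H$. So a cluster graph with all pairs of extreme density gives, by Tur\'an/Ramsey on the reduced graph, a large set of parts that are pairwise of low density (or pairwise of high density). Their union, after the same Markov cleanup, yields the $\eps$-restricted subgraph.
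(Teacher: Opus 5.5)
The paper gives no proof of this statement. It is quoted from R\"odl's 1986 paper, whose argument is the regularity-lemma proof you sketch in your last paragraph. That fallback is sound in outline:
\begin{itemize}
\item Tur\'an passes to many pairwise regular clusters.
\item The induced counting lemma rules out $\abs H$ clusters with pairwise medium densities.
\item Ramsey then gives $m$ clusters that are pairwise of low or pairwise of high density, and their within-cluster edges contribute at most a $1/m$ fraction.
\end{itemize}
Your main density-increment route, however, has a genuine gap at step~4.

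Writing $d(\cdot)$ for edge density, steps 2--3 give $S_{i+1}\subseteq S_i$ with $\abs{S_{i+1}}\ge\eta\abs{S_i}$ and $\abs{d(S_{i+1})-d(S_i)}>\eta$. The sign of the change is dictated by $G$, not by you. Nothing in your argument prevents the densities from alternating indefinitely, say between $p$ and $p+\eta$. Then $\abs{S_i}\ge\eta^i\abs G$ gives no bound independent of $\abs G$, so the claimed $\eta^{O(1/\eta)}$ presupposes a monotonicity you do not have.

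Tracking $\abs{p-\frac12}$ does not help, since a move towards $\frac12$ decreases it. Nor can you commit to one direction, because step~2 needs two-sided control and one-sided control is genuinely weaker. Two disjoint cliques on $n/2$ vertices each form a $P_3$-free graph of density about $\frac12$, and every linear-size subset has density at least about $\frac12$. So a scheme making only sparser moves stalls at a medium-density $H$-free graph. Likewise, the complement $K_{n/2,n/2}$ stalls a scheme making only denser moves.

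Making the iteration terminate needs a different device. One option is an energy (mean-square density) increment over partitions, which is the regularity route again. Another is an Erd\H{o}s--Hajnal-type lemma producing a linear-size pair $(A,B)$ with $B$ vertex-wise sparse or dense to $A$, iterated inside $B$ to build many blocks. After pigeonholing the type and trimming the blocks randomly to a common size, a large number of them are pairwise sparse or pairwise dense.

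Separately, the cleanup in step~5 is off by a factor of two. If $G[S]$ has density below $\eps/2$, Markov only shows that fewer than $\abs S$ vertices have degree at least $\eps\abs S/2$. Aim for density below $\eps/4$ instead. Then fewer than $\abs S/2$ vertices are deleted, and the remaining set $S'$ has maximum degree below $\eps\abs S/2<\eps\abs{S'}$. The same adjustment applies to the cleanup in your regularity sketch.
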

	A graph $H$ has the {\em polynomial R\"odl} property if there exists $d\ge1$ such that, for every $\eps\in(0,\frac12]$, the $\delta$ in \cref{thm:rodl} can be taken as $\eps^d$; and Fox and Sudakov~\cite{fs2008} conjectured that every graph $H$ is polynomial R\"odl, which remains open.
	We will not need the polynomial R\"odl property in this paper, but instead the more flexible viral property, defined as follows.
	For graphs $G$ and $H$, let $\ind_H(G)$ denote the number of copies of $H$ in $G$.
	We say that $H$ is {\em viral} if there exists $d\ge2$ such that for every $\eps\in(0,\frac12]$, every graph $G$ with $\ind_H(G)<(\eps^d\abs G)^{\abs H}$ has an $\eps$-restricted induced subgraph $F$ with $\abs F\ge \eps^d\abs G$;
	in this case, we also say that $d$ {\em witnesses} the viral property of $H$.
	The viral property was first introduced by Fox, Scott, Seymour, and the author~\cite{density2} as an extension of the \erh{} property; and
	Buci\'c, Fox, and Pham~\cite{bfp2024} proved that the two properties are actually equivalent (and so equivalent to the polynomial R\"odl property), as follows.
	
	\begin{theorem}
		[Buci\'c, Fox, and Pham]
		\label{thm:bfp}
		A graph has the \erh{} property if and only if it is viral.
	\end{theorem}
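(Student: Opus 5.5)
The statement has two directions. One is a short deduction; the other is the real content of the Buci\'c--Fox--Pham theorem.

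\emph{Viral implies \erh{}.} Suppose $d\ge2$ witnesses that $H$ is viral, and let $G$ be $H$-free with $n:=\abs G$. Then $\ind_H(G)=0<(\eps^d n)^{\abs H}$ for every $\eps\in(0,\frac12]$, so $G$ has an $\eps$-restricted induced subgraph $F$ with $\abs F\ge\eps^d n$. I would take $\eps:=n^{-1/(d+1)}$, which gives $\abs F\ge n^{1/(d+1)}$. Passing to $\overline F$ if necessary, we may assume $F$ is $\eps$-sparse, so its maximum degree is less than $\eps\abs F$. Greedily choosing vertices and deleting their neighbourhoods then produces a stable set of size at least $\abs F/(\eps\abs F+1)$, which is $\Omega(\min(1/\eps,\abs F))=\Omega(n^{1/(d+1)})$. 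So some $b$ (say $b=d+2$, with small $n$ handled trivially) witnesses the \erh{} property.

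\emph{\erh{} implies viral: sampling step.} Let $b$ witness the \erh{} property of $H$. Let $G$ satisfy $\ind_H(G)<(\eps^d n)^{\abs H}$, where $d$ is to be chosen large in terms of $b$ and $\abs H$. Take a uniformly random set $S$ of $m:=\eps^{-c}$ vertices, with $c$ to be chosen. The expected number of copies of $H$ inside $S$ is at most $\ind_H(G)(m/n)^{\abs H}<(\eps^{d-c})^{\abs H}$, which is tiny once $d>c$. So most $m$-sets induce $H$-free graphs. Each of these contains a clique or stable set of size $m^{1/b}$. The conclusion of this step is that a positive proportion of all $m^{1/b}$-subsets of $V(G)$ that arise this way are homogeneous. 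In other words, $G$ has ``polynomially many'' homogeneous sets of polynomial size, in a density sense.

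\emph{\erh{} implies viral: bootstrapping step.} Assume for contradiction that $G$ has no $\eps$-restricted induced subgraph on $\eps^d n$ vertices. I would then run a density-increment argument in the style of Fox--Sudakov. First, the abundance of homogeneous $m^{1/b}$-sets, combined with a Kővári--Sós--Turán-type count, forces two linear-sized sets $A,B$ with $\abs A,\abs B\ge\eps^{O(1)}n$ such that $(A,B)$ is $\eps^{O(1)}$-sparse or $(1-\eps^{O(1)})$-dense. Next, \cref{lem:xpure} cleans such pairs into genuinely $x$-pure pairs. Then I would iterate inside $A$ and $B$ to build a long blockade whose blocks are pairwise $x$-pure and mostly of one type. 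By Ramsey on the types, a long sub-blockade has all pairs sparse (or all pairs dense). The union of its blocks is then an $\eps$-restricted induced subgraph of size $\eps^{O(1)}n$, which is the desired contradiction once $d$ is large.

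The main obstacle is the bootstrapping step. The difficulty is converting ``many homogeneous sets of size $\eps^{-c/b}$'' into a single \emph{linear-sized} near-pure pair, while losing only polynomial factors in $\eps$. A naive iteration loses $\eps^{\log(1/\eps)}$, so the argument has to keep the number of rounds bounded independently of $\eps$. I would first try choosing $m$ so that a homogeneous subset of the sample already spans a positive fraction of the structure. Then I would average over vertices outside it, using that each outside vertex is typically $\eps$-sparse or $(1-\eps)$-dense to a random homogeneous set (failing this, one finds many copies of $H$). This would yield the pure pair in $O(1)$ rounds.
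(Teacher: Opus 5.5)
Your argument for viral $\Rightarrow$ \erh{} is correct, apart from a bookkeeping slip. With $b=d+2$, the range $2^{d+2}<\abs G<2^{(d+1)(d+2)}$ is not trivial. Taking $b:=(d+1)(d+2)$ fixes this: for $\abs G\le 2^b$, any two vertices already give a homogeneous set of size at least $\abs G^{1/b}$, and for larger $\abs G$ one has $\eps\le\frac12$ and $\abs G^{1/(d+1)}/2\ge\abs G^{1/b}$. Note that the paper does not prove \cref{thm:bfp}; it imports it from \cite{bfp2024}. The direction the paper actually uses is \erh{} $\Rightarrow$ viral (to make $P_t$ viral in the induction proving \cref{thm:path}), and that is exactly the direction your proposal leaves unproved.

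In that direction there is a genuine gap at every step where the \erh{} property is supposed to do work.

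(i) The sampling conclusion is false. From `most $m$-sets contain a homogeneous $s$-set, $s=m^{1/b}$', double counting only shows that a uniformly random $s$-set is homogeneous with probability at least about $\binom{m}{s}^{-1}=e^{-O(s\log m)}$. This is superpolynomially small in $\eps$, so there is no polynomial abundance for a K\H{o}v\'ari--S\'os--Tur\'an count to exploit.

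(ii) The pair $(A,B)$ you aim for (an $\eps^{O(1)}$-sparse or $(1-\eps^{O(1)})$-dense pair with both sides of size $\eps^{O(1)}\abs G$) exists for every $H$. This is the greedy-embedding theorem of Erd\H{o}s and Hajnal \cite{eh1989} (cf.\ \cite[Lemma 2.1]{fs2008}), whose proof also works when $G$ merely has few copies of $H$. So this step uses nothing about $H$. Iterating it into a blockade of length about $1/\eps$ costs width $\eps^{O(\log(1/\eps))}\abs G$, which is just the Fox--Sudakov quasi-polynomial bound.

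(iii) `Ramsey on the types' shrinks length $\ell$ to $O(\log\ell)$. The right tool is the \erh{} property applied to the pattern graph, which inherits few copies of $H$ once $x\ll\abs{H}^{-1}$. But that presupposes an $x$-pure blockade of length $\poly(1/\eps)$ and width $\eps^{O(1)}\abs G$, i.e.\ that $H$ is pure in the sense of \cref{sec:prelim}. Given the \erh{} property, this is essentially equivalent to the conclusion you want; the paper spends all of \cref{sec:split} establishing it for paths alone.

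(iv) Your proposed remedy has no justification. For general $H$, a vertex mixed on a clique or stable set creates no induced copy of $H$. Moreover, the homogeneous set is selected by the \erh{} property inside the sample, so it is not random relative to the other vertices and no concentration applies. Nor do you explain how a sparse/dense dichotomy relative to a set of size $\poly(1/\eps)$ would produce a restricted induced subgraph of size $\eps^{O(1)}\abs G$ in $O(1)$ rounds.

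So you have correctly located the obstacle: converting polynomial-size homogeneous sets into an $\eps$-restricted induced subgraph with only polynomial loss. But the idea that overcomes it, which is the entire content of the Buci\'c--Fox--Pham theorem, is missing.
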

	
	We need the following lemma on counting induced subgraphs on blockades, whose proof is essentially the `using the leaf' argument in~\cite[Section 5]{density4}.
	
	\begin{lemma}
		\label{lem:blockcount}
		Let $H$ be a graph with a vertex $v$ of degree one.
		Let $x\in(0,(2\abs H)^{-1}]$, and let $G$ be an $H$-free graph with a blockade $(B_1,\ldots,B_k)$ where $k\in[2,1/x]$.
		Let $I$ be the set of ordered pairs $(i,j)$ with $1\le i<j\le k$ such that $B_j$ is $x^2$-sparse to $B_i$ in $G$.
		Then for every $y\in(0,1)$, either:
		\begin{itemize}
			\item there exist $i\in[k]$ and $D_i\subset B_i$ such that $\abs{D_i}\ge\abs{B_i}/2$ and $\ind_{H\setminus v}(G[D_i])<y\abs{D_i}^{\abs H-1}$; or
			
			\item for each $i\in[k]$, there exists $D_i\subset B_i$ with $\abs{D_i}\ge y\abs{B_i}/2$, such that $D_j$ is anticomplete to $D_i$ for all $(i,j)\in I$.
		\end{itemize}
	\end{lemma}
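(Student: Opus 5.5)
The plan is to use the leaf $v$ of $H$ together with its unique neighbour $u$, and to choose each $D_i$ inside $B_i$ alone, with no interaction between different blocks. Anticompleteness will then come from $H$-freeness. Write $h:=\abs H$. Assume the first outcome fails. Then for every $i\in[k]$ and every $C\subset B_i$ with $\abs C\ge\abs{B_i}/2$ we have $\ind_{H\setminus v}(G[C])\ge y\abs C^{h-1}$. We only need this for $C=B_i$, which gives $\ind_{H\setminus v}(G[B_i])\ge y\abs{B_i}^{h-1}$.

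\emph{Step 1 (rooting at $u$).} For $c\in B_i$, let $d(c)$ be the number of copies $\varphi$ of $H\setminus v$ in $G[B_i]$ with $\varphi(u)=c$. Then $\sum_c d(c)\ge y\abs{B_i}^{h-1}$ and $d(c)\le\abs{B_i}^{h-2}$ for every $c$. Let $D_i$ be the set of $c\in B_i$ with $d(c)\ge \frac y2\abs{B_i}^{h-2}$. Vertices outside $D_i$ contribute fewer than $\frac y2\abs{B_i}^{h-1}$ copies in total. Since $d(c)\le\abs{B_i}^{h-2}$ on $D_i$, it follows that $\abs{D_i}\ge y\abs{B_i}/2$, which is exactly the required size.

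\emph{Step 2 (anticompleteness via the leaf).} Let $(i,j)\in I$ and suppose some $b\in D_j$ is adjacent to some $c\in D_i$. Since $b\in B_j$ and $B_j$ is $x^2$-sparse to $B_i$, $b$ has fewer than $x^2\abs{B_i}$ neighbours in $B_i$. We count the copies $\varphi$ rooted at $c$ (that is, with $\varphi(u)=c$) in which some vertex of $H\setminus\{u,v\}$ is mapped into $N_G(b)$. Choosing which of the at most $h-2$ vertices is sent there, its image among fewer than $x^2\abs{B_i}$ options, and the images of the remaining $h-3$ vertices crudely, there are at most $(h-2)x^2\abs{B_i}^{h-2}$ such copies. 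If this is less than $\frac y2\abs{B_i}^{h-2}$, some copy rooted at $c$ meets $N_G(b)$ only in $c$. Extending that copy by $v\mapsto b$ gives a copy of $H$, since $b\notin B_i$ and $b$ is adjacent to $c=\varphi(u)$ and to nothing else in the image. This contradicts $H$-freeness, so $D_j$ is anticomplete to $D_i$ for every $(i,j)\in I$, and the second outcome holds.

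\emph{Main obstacle.} Step 2 only works when $y>2hx^2$, but the lemma is stated for every $y\in(0,1)$. The remedy I would try first is a refined count. I would bound the rooted copies meeting $N_G(b)$ elsewhere more cleverly: count those with a vertex in the small set $N_G(b)\cap B_i$ relative to the copies actually present, rather than relative to $\abs{B_i}^{h-3}$. Alternatively, I would apply the failure of the first outcome to suitable half-size subsets $C\subset B_i$ avoiding part of $N_G(b)$, so that the rooted density is re-established after deleting $N_G(b)$. I expect the bound $k\le1/x$ and the hypothesis $x\le(2\abs H)^{-1}$ to enter here. They should ensure that the deletions caused by the at most $k$ blocks sparse to $B_i$ remove at most a $kx^2\le x$ fraction of $B_i$, so the shrunken sets stay above $\abs{B_i}/2$ and the density hypothesis still applies to them. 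If that fails, the fallback is a sequential version: process $i=1,\dots,k$ in order, shrinking each later block to the vertices with no neighbours in the chosen $D_i$. Each step would lose only a small fraction, and the $k\le1/x$ steps together would lose at most half of each block.
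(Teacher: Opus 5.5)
There is a genuine gap. Your Steps 1--2 are correct, but, as you note, they only prove the lemma when $y$ exceeds about $2\abs Hx^2$. The lemma is actually applied in the opposite regime $y\ll x^2$: in \cref{lem:antiviral} with $y=k^{-a}$, and in \cref{lem:nearnice} with $y=\eps^{4a\abs H}$ while $x=\eps^a$.

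The problem is not only the counting; your sets $D_i$ themselves fail there. Take $H=P_3$ with $v$ an end, let $\abs{B_1}=\abs{B_2}=n$ be large, and let $G$ be a disjoint union of cliques, each meeting $B_1$ and $B_2$ in $x^2n/2$ vertices. Then:
\begin{itemize}
\item $G$ is $P_3$-free and $(1,2)\in I$;
\item by convexity the first outcome fails for every $y\le x^2/4$;
\item every $c$ has $d(c)=x^2n/2-1\ge\frac y2 n$, so your construction returns $D_1=B_1$ and $D_2=B_2$, which are not anticomplete.
\end{itemize}

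None of your remedies repairs this. Deleting $N_G(b)$ handles one $b$, but $b$ ranges over all of $D_j$, and the union of their neighbourhoods in $B_i$ is uncontrolled. Shrinking $B_j$ to the vertices with no neighbour in $D_i$ can empty it: in the example, every vertex of $B_2$ has a neighbour in your $D_1$. This is because nothing bounds $\abs{N_G(D_i)\cap B_j}$; the hypothesis only bounds degrees from $B_j$ into $B_i$. Your instinct that total losses of order $k\abs Hx^2\le\frac12$ are affordable is right. However, it works only if each deletion is the neighbourhood of a \emph{bounded} set of vertices lying in the later block $B_j$.

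The missing idea is to pigeonhole the other way and to swap the roles of the two blocks. The paper processes $r=k,k-1,\ldots,1$. Formally, $r$ is minimal such that there are $D_i\subset B_i$ with:
\begin{itemize}
\item $\abs{D_i}\ge(1-(k-r)\abs Hx^2)\abs{B_i}$ for $i\le r$;
\item $\abs{D_j}\ge y\abs{B_j}/2$ for $j>r$;
\item $D_j$ anticomplete to $D_i$ for all $(i,j)\in I$ with $j>r$.
\end{itemize}

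At step $r$ we have $\abs{D_r}\ge\abs{B_r}/2$, so the failure of the first outcome gives $\ind_{H\setminus v}(G[D_r])\ge y\abs{D_r}^{\abs H-1}$. Hence some copy of $H\setminus\{u,v\}$ in $D_r$ extends to at least $y\abs{D_r}$ copies of $H\setminus v$; its image $A$ has only $\abs H-2$ vertices. Let $D_r'$ be the set of the corresponding images of $u$; it has size at least $y\abs{B_r}/2$.

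For each $i<r$ with $(i,r)\in I$, delete from $D_i$ every vertex with a neighbour in $A$. Since $A\subset B_r$ and $B_r$ is $x^2$-sparse to $B_i$, this removes at most $(\abs H-2)x^2\abs{B_i}$ vertices. Afterwards, any edge between the new $D_i'$ and $D_r'$ would give a copy of $H$ with the leaf in $D_i'$.

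Unprocessed blocks therefore keep at least $(1-k\abs Hx^2)\abs{B_i}\ge\abs{B_i}/2$ vertices. This is where $k\le 1/x$ and $x\le(2\abs H)^{-1}$ are used. It is also why the first outcome must be applied to the shrunken sets $D_r$, not just to $B_r$ as in your plan.

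Your orientation, with the body of the copy in $B_i$ and the leaf in $B_j$, cannot be used this way. Fixing the body $A\subset B_i$ would require bounding $N_G(A)\cap B_j$, which the one-sided sparsity hypothesis does not provide.
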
	
	\begin{proof}
		Let $u$ be the unique neighbour of $v$ in $H$.
		Assume that the first outcome fails.
		Let $r\in\{0,1,\ldots,k\}$ be minimal such that there exists $D_i\subset B_i$ for each $i\in[k]$ satisfying:
		\begin{itemize}
			\item $\abs{D_i}\ge (1-(k-r)\abs Hx^2)\abs{B_i}$ for all $i\in[r]$, and $\abs{D_j}\ge y\abs{B_j}/2$ for all $j\in[k]\setminus[r]$; and
			
			\item for all $(i,j)\in I$ such that $i<j$ and $j\in[k]\setminus[r]$, $D_j$ is anticomplete to $D_i$.
		\end{itemize}
		(These conditions are clearly satisfied for $r=k$.)
		
		If $r=0$ then the second outcome holds; so suppose for a contradiction that $r\ge1$.
		Since $k\le 1/x$ and $x\le(2\abs H)^{-1}$, we see that for all $i\in[r]$,
		\[\abs{D_i}\ge(1-(k-r)\abs Hx^2)\abs{B_i}\ge(1-(k-1)\abs Hx^2)\abs{B_i}\ge(1-\abs Hx)\abs{B_i}\ge\abs{B_i}/2.\]
		In particular, since the first outcome fails, we have $\ind_{H\setminus v}(G[D_r])\ge y\abs{D_r}^{\abs H-1}$.
		Let $L$ be the set of copies $\phi$ of $H\setminus\{u,v\}$ in $G[D_r]$; and for each $\phi\in L$, let $S_{\phi}$ be the set of copies $\varphi$ of $H\setminus v$ in $G[D_r]$ with $\varphi\vert_{V(H)\setminus\{u,v\}}=\phi$.
		Then $\ind_{H\setminus v}(G[D_r])=\sum_{\phi\in L}\abs{S_\phi}$; and so there exists $\phi\in L$ such that 
		\[\abs{S_\phi}\ge \ind_{H\setminus v}(G[D_r])/\abs L\ge y\abs{D_r}\ge y\abs{B_r}/2.\]
		Let $A:=\phi(V(H)\setminus\{u,v\})$, and let $D_r':=\{\varphi(u):\varphi\in S_\phi\}$; then $\abs{D_r'}=\abs{S_\phi}\ge y\abs{B_r}/2$.
		
		Now, for each $i\in[r-1]$ with $(i,r)\in I$, let $D_i'$ be the set of vertices in $D_i$ with no neighbour in $A$; then
		\[\abs{D_i'}\ge\abs{D_i}-(\abs H-2)x^2\abs{B_i}\ge (1-(k-r+1)\abs H x^2)\abs{B_i}.\]
		For each such $i$, if there is an edge between $D_i'$ and $D_r'$ then there would be a copy of $H$ in $G$, violating the $H$-freeness of $G$.
		Hence, $D_r'$ is anticomplete to $D_i'$ for all $i\in[r-1]$ with $(i,r)\in I$.
		For every remaining $i\in[k]\setminus\{r\}$, set $D_i':=D_i$; the resulting sets contradict the minimality of $r$.
		This proves \cref{lem:blockcount}.
	\end{proof}
	
	For $x\in(0,1)$ and a graph $G$ with a blockade $\mac B=(B_1,\ldots,B_k)$, we say that $\mac B$ is {\em $x$-sparse} if $B_j$ is $x$-sparse to $B_i$ for all $1\le i<j\le k$, and {\em $x$-dense} if $B_j$ is $x$-dense to $B_i$ for all $1\le i<j\le k$.
	We also say that $\mac B$ is {\em $x$-pure} if $(B_i,B_j)$ is $x$-pure for all distinct $i,j\in[k]$, and that $\mac B$ is {\em $x$-semidense} if for all distinct $i,j\in[k]$, either $B_i$ is anticomplete to $B_j$, or $B_i,B_j$ are $x$-dense to each other.
	(The notion of semidense is a `complement' analogue of the notion of semisparse introduced in~\cite{density7}.)
	Thus, for $x\in(0,\frac12]$, the $(1-x)$-semidense property is stronger than $x$-pureness, because sparse pairs become fully anticomplete.
	
	In \cref{lem:blockcount}, letting $y=k^{-a}$ and $I=\{(i,j):1\le i<j\le k\}$ yields the following lemma, which essentially implies that if $H\setminus v$ is viral, then every sparse $H$-free graph contains either a long and wide anticomplete blockade, or a much more restricted induced subgraph with polynomial shrinkage factor.
	
	\begin{lemma}
		\label{lem:antiviral}
		Let $H$ be a graph with a vertex $v$ of degree one.
		Let $x\in(0,(2\abs H)^{-1}]$, and let $G$ be an $H$-free graph with an $x^2$-sparse blockade $(B_1,\ldots,B_k)$ where $k\in[2,1/x]$.
		Then for every $a\ge1$, either:
		\begin{itemize}
			\item there exist $i\in[k]$ and $D_i\subset B_i$ such that $\abs{D_i}\ge\abs{B_i}/2$ and $\ind_{H\setminus v}(G[D_i])<\abs{D_i}^{\abs H-1}/k^a$; or
			
			\item there is an anticomplete blockade $(D_1,\ldots,D_k)$ such that for all $i\in[k]$, $D_i\subset B_i$ and $\abs{D_i}\ge \abs{B_i}/(2k^a)$.
		\end{itemize}
	\end{lemma}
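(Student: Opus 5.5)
This follows directly from \cref{lem:blockcount}: we choose $y:=k^{-a}$ and take $I$ to be the set of all ordered pairs $(i,j)$ with $1\le i<j\le k$. The hypotheses transfer verbatim: $H$ has a vertex $v$ of degree one, $x\in(0,(2\abs H)^{-1}]$, $G$ is $H$-free, and $k\in[2,1/x]$. Since $k\ge2$ and $a\ge1$, we have $y=k^{-a}\in(0,1)$, so $y$ is admissible.

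Because the blockade is $x^2$-sparse, $B_j$ is $x^2$-sparse to $B_i$ for every $1\le i<j\le k$. Hence the set $I$ defined in \cref{lem:blockcount}, namely the pairs $(i,j)$ with $i<j$ and $B_j$ $x^2$-sparse to $B_i$, is exactly the set of all pairs $i<j$. We now apply \cref{lem:blockcount} and read off its two outcomes.

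In the first outcome, some $D_i\subset B_i$ has $\abs{D_i}\ge\abs{B_i}/2$ and
\[\ind_{H\setminus v}(G[D_i])<y\abs{D_i}^{\abs H-1}=\abs{D_i}^{\abs H-1}/k^a,\]
which is precisely the first bullet of \cref{lem:antiviral}.

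In the second outcome, each $D_i\subset B_i$ has $\abs{D_i}\ge y\abs{B_i}/2=\abs{B_i}/(2k^a)$. Moreover, $D_j$ is anticomplete to $D_i$ for every $(i,j)\in I$, that is, for all $i<j$. Anticompleteness is symmetric, so the $D_i$ are pairwise anticomplete. They are pairwise disjoint because the $B_i$ are, and they are nonempty because $\abs{B_i}\ge1$ and the bound $\abs{D_i}\ge\abs{B_i}/(2k^a)>0$ holds. Therefore $(D_1,\ldots,D_k)$ is an anticomplete blockade, which is the second bullet.

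There is no real obstacle; the only points needing care are that the choice $y=k^{-a}$ lies in $(0,1)$, and that the $D_i$ are nonempty, which the strict positivity of the size bound guarantees.
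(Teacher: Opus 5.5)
Your proposal is correct and is exactly the paper's argument: the paper obtains this lemma by applying \cref{lem:blockcount} with $y=k^{-a}$ and $I$ equal to all pairs $1\le i<j\le k$. Your checks that $y\in(0,1)$ and that each $D_i$ is nonempty are the only details requiring care, and you handle both correctly.
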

	
	A graph $H$ is {\em pure} if there exists $d\ge2$ such that for every $\eps\in(0,\frac12]$, every $H$-free graph $G$ with $\abs G>\eps^{-d}$ contains an $\eps$-pure $(\eps^{-1},\eps^d\abs G)$-blockade.
	The argument of Fox, Pach, and Suk~\cite{fps2019} shows that every pure graph has the `near' \erh{} property.
	We use \cref{lem:blockcount} to deduce the following result.
	
	\begin{lemma}
		\label{lem:nearnice}
		Let $H$ be a graph with a vertex $v$ of degree one, such that $H\setminus v$ is viral and $H$ is pure. 
		Then there exists $d\ge2$ such that for every $\eps\in(0,\frac12]$, 
		every $H$-free graph $G$ contains either an $\eps$-sparse induced subgraph with at least $\eps^d\abs G$ vertices, or a $(1-\eps)$-semidense $(\eps^{-1},\eps^d\abs G)$-blockade.
	\end{lemma}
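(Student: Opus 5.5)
The plan is to combine the purity of $H$ with \cref{lem:blockcount}, using \cref{lem:blockcount} to upgrade the sparse pairs of a pure blockade to anticomplete pairs. If that upgrade fails, the viral property of $H\setminus v$ should give a large restricted subgraph instead. All parameters will be polynomial in $\eps$.

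\emph{Parameters.} Let $d_0$ witness purity of $H$, and let $d_1$ witness the viral property of $H\setminus v$. Given $\eps\in(0,\frac12]$, set $k:=\ceil{\eps^{-1}}$ and $\eta:=\eps^2/2$. Choose a constant $a\ge1$ (depending only on $d_1$ and $\abs H$) with $k^{-a}\le(\eta^{d_1}/2)^{\abs H-1}$, and put $y:=k^{-a}$. Finally take $x:=\eps^{c}$ for a large constant $c$, so that
\[x\le(2\abs H)^{-1},\qquad x^{-1}\ge k,\qquad x^3\le \eps y/2 .\]
The output exponent $d$ will be a large multiple of $c d_0$.

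\emph{Small $G$.} If $\abs G\le x^{-3d_0}$, then $\eps^d\abs G\le1$. A single vertex is an $\eps$-sparse induced subgraph, since its maximum degree is $0<\eps$, so we are done.

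\emph{Large $G$: the blockade.} Otherwise, purity applied with parameter $x^3$ gives an $x^3$-pure blockade of length at least $x^{-3}\ge k$ and width at least $x^{3d_0}\abs G$. Keep its first $k$ blocks $B_1,\ldots,B_k$. Each pair is either mutually $x^3$-sparse or mutually $(1-x^3)$-dense. Let $I$ be the set of pairs $(i,j)$ with $i<j$ that are sparse, so $B_j$ is in particular $x^2$-sparse to $B_i$. Now apply \cref{lem:blockcount} with this $I$ and this $y$.

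\emph{First outcome.} Some $D_i\subset B_i$ has $\abs{D_i}\ge\abs{B_i}/2$ and $\ind_{H\setminus v}(G[D_i])<y\abs{D_i}^{\abs H-1}\le(\eta^{d_1}\abs{D_i})^{\abs H-1}$. The viral property then gives an $\eta$-restricted induced subgraph $F$ of $G[D_i]$ with $\abs F\ge\eta^{d_1}\abs{D_i}$, which is polynomial in $\eps$ times $\abs G$.
\begin{itemize}
\item If $F$ is $\eta$-sparse, it is $\eps$-sparse and we are done.
\item If $F$ is $(1-\eta)$-dense, split $V(F)$ into $k$ parts of size about $\abs F/k\ge\eps\abs F/2$. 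Each vertex has fewer than $\eta\abs F\le\eps\cdot(\eps\abs F/2)$ non-neighbours in $F$. Hence each vertex is $(1-\eps)$-dense to every part other than its own, and we get a $(1-\eps)$-dense, hence $(1-\eps)$-semidense, $(\eps^{-1},\mathrm{poly}(\eps)\abs G)$-blockade.
\end{itemize}

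\emph{Second outcome.} We get $D_i\subset B_i$ with $\abs{D_i}\ge y\abs{B_i}/2$, and every sparse pair is now anticomplete. The step I expect to be the main obstacle is the dense pairs: shrinking the blocks by the factor $y/2$ could destroy their density. This is exactly why $x$ is chosen far below $y$. A vertex of $D_j$ has at most $x^3\abs{B_i}\le \eps y\abs{B_i}/2\le\eps\abs{D_i}$ non-neighbours in $D_i$, so it is $(1-\eps)$-dense to $D_i$, and symmetrically for vertices of $D_i$. So $(D_1,\ldots,D_k)$ is a $(1-\eps)$-semidense blockade of width at least $y x^{3d_0}\abs G/2$, which is at least $\eps^d\abs G$ for suitable $d$.

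It remains to check that every width bound obtained is at least $\eps^d\abs G$ with one fixed $d$. This holds because $c$, $a$, $d_0$ and $d_1$ are constants independent of $\eps$.
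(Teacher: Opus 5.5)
Your proposal is correct and is essentially the paper's proof: you apply purity at a parameter far below $y$, use \cref{lem:blockcount} with $I$ the sparse pairs, and invoke the viral property of $H\setminus v$ in the first outcome (splitting a dense $F$ into $\lceil\eps^{-1}\rceil$ parts), while in the second outcome the dense pairs stay $(1-\eps)$-dense because $x^3\le\eps y/2$; only the parameter bookkeeping differs from the paper's. One small fix is needed: take the trivial case to be $\abs G\le\eps^{-d}$ rather than $\abs G\le x^{-3d_0}$, as the paper does, because otherwise the $(1-\eta)$-dense $F$ may have fewer than $k$ vertices and cannot be split into $k$ parts of size at least $\eps\abs F/2$, whereas with $\abs G>\eps^{-d}$ and $d$ large, $\abs F$ is large enough for the rounding to work.
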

	\begin{proof}
		Let $a\ge2$ witness the viral property of $H\setminus v$ and the purity of $H$; by increasing $a$ if necessary, we may assume $2^a\ge 2\abs H$. 
		Let $b:=8a\abs H$.
		We claim that $d:=2ab$ suffices.
		Thus, it suffices to show that for every $\eps\in(0,\frac12]$, every $H$-free graph $G$ contains either an $\eps$-sparse induced subgraph with at least $\eps^d\abs G$ vertices, or a $(1-\eps)$-semidense $(\eps^{-1},\eps^d\abs G)$-blockade.
		If $\abs G\le \eps^{-d}$ then the first outcome holds by taking any induced subgraph of $G$ with $\ceil{\eps^d\abs G}\in\{0,1\}$ vertices; and so we may assume $\abs G>\eps^{-d}$.
		Hence $\abs G>\eps^{-d}\ge\eps^{-ab}$ by the choice of $d$; and the choice of $a$ gives an $\eps^{b}$-pure $(\eps^{-b},\eps^{ab}\abs G)$-blockade $\mac B$ in $G$.
		Let $k:=\floor{\eps^{-a}}\ge\frac12\eps^{-a}\ge\frac12\eps^{-2}\ge\eps^{-1}$, and choose $k$ blocks $B_1,\ldots,B_k$ from $\mac B$.
		Let $I$ be the set of all ordered pairs $(i,j)$ with $1\le i<j\le k$ such that $B_j$ is $\eps^{b}$-sparse to $B_i$ in $G$.
		By \cref{lem:blockcount} with $x=\eps^a\in(0,(2\abs H)^{-1}]$ and $y=\eps^{4a\abs H}=\eps^{b/2}$, either:
		\begin{itemize}
			\item there exist $i\in[k]$ and $D_i\subset B_i$ such that $\abs{D_i}\ge\abs{B_i}/2$ and $\ind_{H\setminus v}(G[D_i])<y\abs{D_i}^{\abs H-1}$; or
			
			\item for each $i\in[k]$, there exists $D_i\subset B_i$ with $\abs{D_i}\ge y\abs{B_i}/2$, such that $D_j$ is anticomplete to $D_i$ for all $(i,j)\in I$.
		\end{itemize}
		
		If the second bullet holds, then $\abs{D_i}\ge y\abs{B_i}/2\ge \eps^{4a\abs H+ab}\abs G/2\ge \eps^{2ab}\abs G\ge \eps^d\abs G$ for all $i\in[k]$.
		Thus, since $\eps^b/(y/2)=2\eps^{4a\abs H}\le \eps$ and $B_i,B_j$ are $(1-\eps^b)$-dense to each other for all $i,j\in[k]$ with $i<j$ and $(i,j)\not\in I$, we deduce that $(D_1,\ldots,D_k)$ is a $(1-\eps)$-semidense $(\eps^{-1},\eps^d\abs G)$-blockade in $G$, as desired.
		
		Hence, we may assume the first bullet holds. Then $\ind_{H\setminus v}(G[D_i])<y\abs{D_i}^{\abs H-1}\le (\eps^{4a}\abs {D_i})^{\abs{H}-1}$;
		and so the choice of $a$ gives an $\eps^4$-restricted induced subgraph $F$ of $G[D_i]$ with
		\[\abs F\ge \eps^{4a}\abs{D_i}\ge\eps^{4a}\abs{B_i}/2\ge \eps^{4a+ab}\abs G/2\ge\eps^{4a+ab+1}\abs G\ge\eps^{d-3}\abs G\]
		by the choice of $d$.
		Thus, if $F$ is $\eps^4$-sparse then we are done; and so we may assume that $F$ is $(1-\eps^4)$-dense.
		Let $\ell:=\ceil{\eps^{-1}}$. Since $\ell\cdot\ceil{\eps^{3}\abs F}\le 4\eps^{2}\abs F\le \abs F$,
		there is an $(\ell,\eps^{3}\abs F)$-blockade in $F$ whose blocks are pairwise $(1-\eps)$-dense to each other. Since $\eps^3\abs F\ge\eps^d\abs G$,
		this proves \cref{lem:nearnice}. 
	\end{proof}
	
	In this paper, we will frequently apply the following iteration result to extract long and wide blockades, which implicitly appeared in previous works on \erh{} such as~\cite{bnss2024,density4,density5,density6,density7}.
	
	\begin{lemma}
		\label{lem:iteration}
		Let $x,y\in(0,\frac12]$, and let $p\ge1$. Let $G$ be a graph, such that every induced subgraph $F$ of $G$ with $\abs F\ge \frac14\abs G$ contains an $x$-sparse pair $(X,Y)$ with $\abs X\ge p$ and $\abs Y\ge(1-y)\abs F$.
		Then $G$ contains an $x$-sparse blockade of length at least $1/y$ and width at least $p$.
	\end{lemma}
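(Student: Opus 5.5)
We build the blockade greedily, peeling off one block at a time and keeping a large ``reservoir'' that every earlier block sees sparsely. Let $n:=\abs G$ and $k:=\ceil{1/y}$; we may assume $k\ge 2$.

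Start with $F_0:=G$. Suppose we have disjoint sets $Y_1,\ldots,Y_{i-1}$, each of size at least $p$, and a reservoir $F_{i-1}$ with $\abs{F_{i-1}}\ge(1-y)^{i-1}n$. Suppose also that for each $j<i$, every vertex of the reservoir has fewer than $x\abs{Y_j}$ neighbours in $Y_j$.

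As long as $\abs{F_{i-1}}\ge\frac14 n$, apply the hypothesis to $G[F_{i-1}]$. This gives an $x$-sparse pair $(X,Y)$ inside $F_{i-1}$ with $\abs X\ge p$ and $\abs Y\ge(1-y)\abs{F_{i-1}}$. Set $Y_i:=X$ and $F_i:=Y$.

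The direction of sparseness matters here. By definition, $(X,Y)$ being $x$-sparse means every vertex of $Y$ has fewer than $x\abs X$ neighbours in $X$. So every vertex of the new reservoir $F_i$ is $x$-sparse to $Y_i$. Since $F_i\subset F_{i-1}$, it is also still $x$-sparse to each earlier $Y_j$.

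Now check that the finished sequence $(Y_1,\ldots,Y_k)$ is an $x$-sparse blockade. For $j<i$, we have $Y_i\subset F_{i-1}\subset F_j$. Hence every vertex of $Y_i$ has fewer than $x\abs{Y_j}$ neighbours in $Y_j$, so $Y_i$ is $x$-sparse to $Y_j$, exactly as the definition requires. The blocks are disjoint because each $Y_i$ lies in $F_{i-1}$ while $Y_{i'}$, for $i'>i$, lies in $F_i$, and $F_i$ is disjoint from $Y_i$ since $X$ and $Y$ are disjoint. Each block has width at least $p$.

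The main obstacle is the length count. Naively, $\abs{F_i}\ge(1-y)^i n$, and $(1-y)^{1/y}\approx e^{-1}\ge\frac14$ in the relevant range, so we could afford about $1/y$ steps. However, $(1-y)^{1/y}\ge\frac14$ needs checking for $y\le\frac12$. At $y=\frac12$ it is exactly $\frac14$, and the function is decreasing in $y$ toward $e^{-1}$ as $y\to0$, so the bound $\ge\frac14$ holds throughout.

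The length of a blockade is an integer, so we need $\ceil{1/y}$ blocks. Producing $Y_{i}$ only requires $\abs{F_{i-1}}\ge\frac14 n$. So to produce $k$ blocks we need $(1-y)^{k-1}\ge\frac14$. Since $k-1<1/y$, we have $(1-y)^{k-1}\ge(1-y)^{1/y}\ge\frac14$, which suffices.

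If some reservoir became empty we could not continue, but sizes stay at least $\frac14 n>0$ throughout. So the iteration runs for $k\ge 1/y$ steps and yields the required blockade.
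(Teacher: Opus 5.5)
Your proof is correct and is essentially the paper's argument: the paper takes a maximal $x$-sparse blockade $(B_0,\ldots,B_k)$ whose last block is the reservoir, of size at least $(1-y)^k\abs G$, and splits that reservoir using the hypothesis, which is exactly your explicit greedy iteration. The numerical step also matches. The paper uses $1-y\ge 4^{-y}$ for $y\le\frac12$ (immediate from convexity of $4^{-y}$), which is equivalent to your bound $(1-y)^{1/y}\ge\frac14$; you assert the monotonicity without proof, and the function in fact increases towards $e^{-1}$ as $y\to0$.
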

	\begin{proof}
		Let $k\ge0$ be maximal such that $G$ contains an $x$-sparse blockade $(B_0,B_1,\ldots,B_k)$ where $\abs{B_k}\ge (1-y)^k\abs G$ and $\abs{B_{i-1}}\ge p$ for all $i\in[k]$. If $k\ge 1/y$ then we are done; and so we may assume that $k<1/y$. Then
		\[\abs{B_k}\ge (1-y)^k\abs G\ge 4^{-yk}\abs G\ge\abs G/4\]
		where we used the inequality $1-y\ge 4^{-y}$ since $y\le\frac12$. Thus, the hypothesis with $F=G[B_k]$ gives an $x$-sparse pair $(X,Y)$ in $G[B_k]$ such that $\abs X\ge p$ and $\abs Y\ge (1-y)\abs{B_k}\ge (1-y)^{k+1}\abs G$.
		Hence $(B_0,B_1,\ldots,B_{k-1},X,Y)$ violates the maximality of $k$. This proves \cref{lem:iteration}.
	\end{proof}
	
	The proof of \cref{thm:path} involves the method of iterative sparsification introduced in~\cite{density4}, which played an important role in recent progress on \erh{} such as~\cite{density3,density6,density7}.
	In this paper, we need the following `sparsification' variant of~\cite[Lemma 3.2]{density6}.
	
	\begin{lemma}
		\label{lem:sparsify}
		Let $c\in(0,1)$, $a\ge2$, and $t\ge1$.
		Suppose that $x\in(0,c]$, and $G$ is a graph satisfying:
		\begin{itemize}
			\item there is a $c$-sparse induced subgraph of $G$ with at least $c^t\abs G$ vertices; and
			
			\item for every $y\in[x,c]$ and every $y$-sparse induced subgraph $F$ of $G$ with $\abs F\ge y^{2t}\abs G$,
			there is a $y^a$-sparse induced subgraph of $F$ with at least $y^{at}\abs F$ vertices.
		\end{itemize}
		Then $G$ contains an $x$-sparse induced subgraph with at least $x^{2at}\abs G$ vertices.
	\end{lemma}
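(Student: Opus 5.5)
We will iterate the second hypothesis, starting from the subgraph supplied by the first one and repeatedly squaring down the sparsity parameter. Set $y_0:=c$, and let $F_0$ be a $c$-sparse induced subgraph of $G$ with $\abs{F_0}\ge c^t\abs G$. Inductively, given a $y_i$-sparse induced subgraph $F_i$ with $y_i\in[x,c]$ and $\abs{F_i}\ge y_i^{2t}\abs G$, we apply the second bullet to $F=F_i$ with $y=y_i$. This yields a $y_i^a$-sparse induced subgraph $F_{i+1}$ of $F_i$ with $\abs{F_{i+1}}\ge y_i^{at}\abs{F_i}$. Put $y_{i+1}:=y_i^a$; thus $y_i=c^{a^i}$. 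We stop at the first index $m$ with $y_m\le x$.

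The key step is the size invariant, namely that $\abs{F_i}\ge y_i^{2t}\abs G$ as long as $y_i\ge x$ (indeed for all $i\le m$). We prove the stronger bound $\abs{F_i}\ge c^{t(1+a+\cdots+a^{i})}\abs G$. This holds for $i=0$. Passing from $i$ to $i+1$ multiplies by $y_i^{t}{}^{a}=c^{ta^{i+1}}$, so the bound is maintained. Since $a\ge2$, the geometric sum satisfies $1+a+\cdots+a^i\le a^i\cdot\frac{a}{a-1}\le 2a^i$, which gives $\abs{F_i}\ge c^{2ta^i}\abs G=y_i^{2t}\abs G$. So the hypothesis applies at every step with $y_i\in[x,c]$ (we need $y_i\ge x$, which holds for $i<m$).

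At the stopping index, $F_m$ is $y_m$-sparse with $y_m\le x$, hence $x$-sparse, since the maximum degree is less than $y_m\abs{F_m}\le x\abs{F_m}$. For the size, minimality of $m$ gives $y_{m-1}>x$, so $y_m=y_{m-1}^a\ge x^a$ (if $m=0$ then $c\le x$, so $x=c$ and $F_0$ suffices directly since $c^t\ge x^{2at}$). Therefore
\[
\abs{F_m}\ge y_m^{2t}\abs G\ge x^{2at}\abs G,
\]
as required.

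The only delicate point is the exponent bookkeeping. We must check that the cumulative shrinkage stays within $y_i^{2t}$, which uses $a\ge2$ through $a/(a-1)\le2$, and that the final overshoot below $x$ costs only a factor $a$ in the exponent. Neither step is a real obstacle.
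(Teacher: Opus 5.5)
Your proof is correct and is essentially the paper's argument; the only thing left implicit is that the stopping index $m$ exists, which holds because $c^{a^i}\to 0$. The paper frames the same iteration along $y=c^{a^s}$ as choosing the minimal such $y\in[x^a,c]$ that admits a $y$-sparse induced subgraph on at least $y^{2t}\abs G$ vertices, and it keeps that invariant one step at a time via $y^{at}\cdot y^{2t}\ge (y^a)^{2t}$ (since $a+2\le 2a$) instead of your geometric-series bound.
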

	\begin{proof}
		By the first condition of the lemma, there exists $y\in[x^a,c]$ minimal of the form $c^{a^s}$ for some integer $s\ge0$, such that $G$ has a $y$-sparse induced subgraph $F$ with $\abs F\ge y^{2t}\abs G$.
		If $y\ge x$, then by the second condition of the lemma and since $a\ge2$, $F$ has a $y^a$-sparse induced subgraph with at least $y^{at}\abs F\ge y^{at+2t}\abs G\ge y^{2at}\abs G$ vertices;
		but this contradicts the minimality of $y$ since $x^a\le y^a<y$.
		Thus $x^a\le y<x$; and so $F$ is $x$-sparse, and $\abs F\ge y^{2t}\abs G\ge x^{2at}\abs G$.
		This proves \cref{lem:sparsify}.
	\end{proof}
	To deduce the \erh{} property, we will apply the following simple result that is essentially~\cite[Lemma 3.3]{density6}. We give a proof for completeness.
	
	\begin{lemma}
		\label{lem:eh}
		Let $\mac C$ be a class of graphs closed under taking induced subgraphs.
		Suppose that there exists $b\ge2$ such that for every $x\in(0,2^{-b}]$ and every $G\in\mac C$,
		either:
		\begin{itemize}
			\item $G$ has an $x$-restricted induced subgraph with at least $x^b\abs G$ vertices; or
			
			\item there is a complete or anticomplete $(k,\abs G/k^b)$-blockade in $G$, for some integer $k\ge2$.
		\end{itemize}
		Then for $a=4b^2$, every graph $G\in \mac C$ has a clique or stable set of size at least $\abs G^{1/a}$.
	\end{lemma}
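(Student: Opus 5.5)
Induction on $\abs G$ proves that every $G\in\mac C$ contains a clique or a stable set of size at least $\abs G^{1/a}$, with $a=4b^2$. Write $\omega(G)$ for the clique number and $\alpha(G)$ for the stability number, and let $h(G):=\max(\omega(G),\alpha(G))$. We may assume $\abs G$ is larger than a constant depending on $b$, since otherwise one vertex suffices. Fix $G\in\mac C$ and apply the hypothesis with a parameter $x\in(0,2^{-b}]$ depending on $\abs G$. I would first try $x:=\abs G^{-1/(2b)}$, or some similar power, chosen so that the first outcome is usable. If that makes $x>2^{-b}$, then $\abs G$ is bounded and we are done trivially.

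\emph{Second outcome.} Suppose $G$ has a complete or anticomplete $(k,\abs G/k^b)$-blockade $(B_1,\ldots,B_k)$ with $k\ge2$. Apply induction to each $G[B_i]$. In the anticomplete case we take the union of stable sets, one from each block; in the complete case we take the union of cliques. Each $G[B_i]$ has such a set of size at least $(\abs G/k^b)^{1/a}$. Only one type (cliques or stable sets) is useful here, and we pick the one matching the blockade type, so we lose nothing by taking a single kind. We do lose something if the set found in a block is of the other type. To handle this, I would instead strengthen the induction to the statement $\omega(G)\alpha(G)\ge\abs G^{2/a}$, or more simply $\max(\omega,\alpha)$ in a multiplicative form. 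The cleanest choice is $\alpha(G)\,\omega(G)\ge \abs G^{1/a'}$ with $a'=a/2$. Then the anticomplete case gives $\alpha(G)\ge\sum_i\alpha(G[B_i])$ and $\omega(G)\ge\max_i\omega(G[B_i])$. Choosing the block $i$ with the largest clique, we get
\[\alpha\omega\ge \sum_i\alpha_i\omega_i\cdot(\text{appropriately})\ge k\cdot(\abs G/k^b)^{1/a'},\]
where the last step uses the minimum over $i$ of $\alpha_i\omega_i$ together with $\omega\ge\omega_i$. This is at least $\abs G^{1/a'}$ provided $k^{1-b/a'}\ge1$, which holds since $a'\ge b$. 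The complete case is symmetric.

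\emph{First outcome.} Suppose $G$ has an $x$-restricted induced subgraph $F$ with $\abs F\ge x^b\abs G$; passing to $\overline G$ if necessary (the class need not be closed under complements, but we only use $F$ itself), say $F$ is $x$-sparse. Then by Turán, $F$ has a stable set of size at least $\abs F/(x\abs F+1)\ge 1/(2x)$. With $x=\abs G^{-1/(2b)}$ this gives $\alpha\ge \frac12\abs G^{1/(2b)}$, while $\omega\ge1$. This suffices once $1/(2b)>1/a'$ with room for the factor $\frac12$, which holds because $a'=2b^2$. The case where $F$ is $(1-x)$-dense gives a clique in the same way. The condition $x\le2^{-b}$ only fails for bounded $\abs G$.

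Finally, $\max(\alpha,\omega)\ge(\alpha\omega)^{1/2}\ge\abs G^{1/(2a')}=\abs G^{1/a}$. The main obstacle is the bookkeeping in the blockade case: that is exactly what forces the product formulation rather than $\max$. The other point to check is that the choice of $x$ is compatible with the range $(0,2^{-b}]$ and with the constant losses when $\abs G$ is small. If the product form gave trouble, the fallback is the standard Fox–Pach–Suk style induction.
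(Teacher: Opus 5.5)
Your argument is correct and is essentially the paper's proof: the same choice $x=\abs G^{-1/(2b)}$, Turán in the restricted case, and induction across the complete or anticomplete blockade, except that you carry the invariant $\alpha(G)\omega(G)\ge\abs G^{1/(2b^2)}$ where the paper carries \emph{an induced cograph on at least $\abs G^{1/(2b^2)}$ vertices} and then uses the square-root clique-or-stable-set bound for cographs (whose standard proof is exactly your product computation), so both routes give $a=4b^2$. The only slip is in your base case: for $2\le\abs G<2^{2b^2}$ one vertex does not suffice, but any two vertices give $\alpha(G)\omega(G)\ge2>\abs G^{1/(2b^2)}$, so the induction still goes through.
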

	\begin{proof}
		A {\em cograph} is a graph with no induced four-vertex path;
		and it is well-known that every $k$-vertex cograph has a clique or stable set of size at least $k^{1/2}$.
		Thus, it suffices to prove by induction that every $G\in\mac C$ contains an induced cograph of size at least $\abs G^{1/(2b^2)}$.
		We may assume $\abs G> 2^{2b^2}$.
		Let $x:=\abs G^{-1/(2b)}\in(0,2^{-b})$.
		By the hypothesis, $G$ contains either:
		\begin{itemize}
			\item an $x$-restricted induced subgraph $F$ with $\abs F\ge x^b\abs G$; or
			
			\item a complete or anticomplete $(k,\abs G/k^b)$-blockade, for some integer $k\ge2$.
		\end{itemize}
		
		If the first bullet holds, then since $\abs F\ge x^b\abs G\ge x^{-1}$,
		Tur\'an's theorem gives a clique or stable set in $F$ of size at least
		$(2x)^{-1}>x^{-1/2}=\abs G^{1/(4b)}$.
		If the second bullet holds, then by induction and since $k\ge2$, the graph $G$ contains an induced cograph of size at least
		$$k(\abs G/k^b)^{1/(2b^2)}=k^{1-1/(2b)}\abs G^{1/(2b^2)}>\abs G^{1/(2b^2)}.$$
		This proves \cref{lem:eh}.
	\end{proof}

	\subsection{Forests and niceness}
	We now collect facts on excluding forests.
	We require the following fundamental result of Chudnovsky, Scott, Seymour, and Spirkl~\cite{csss2020} on linear anticomplete pairs in sparse graphs with a forbidden induced forest; a simpler proof for excluding an induced path had previously been obtained by Bousquet, Lagoutte, and Thomass\'e~\cite{blt2015}.
	
	\begin{theorem}
		[Chudnovsky, Scott, Seymour, and Spirkl]
		\label{thm:sparsetree}
		For every forest $H$, there exists $\eta\in(0,\frac12]$ such that every $\eta$-sparse $H$-free graph $G$ with $\abs G>\eta^{-1}$ contains an anticomplete $(2,\eta\abs G)$-blockade.
	\end{theorem}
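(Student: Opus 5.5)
Since every forest is an induced subgraph of some tree (join the components by new paths), and $H$-freeness implies $T$-freeness whenever $H$ is an induced subgraph of $T$, it suffices to prove \cref{thm:sparsetree} when $H$ is a tree. I would argue by contradiction. Suppose $G$ is $\eta$-sparse with $\abs G>\eta^{-1}$ and has no anticomplete $(2,\eta\abs G)$-blockade. Then $G$ is \emph{uniformly connected}: for every $X$ with $\abs X\ge 2\eta\abs G$, the graph $G[X]$ has a component containing all but fewer than $\eta\abs G$ vertices of $X$. Indeed, otherwise grouping components would give two anticomplete sets of size at least $\eta\abs G$. The goal is then to build an induced copy of $T$ inside $G$ from this connectivity.

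For this I would prove a stronger, inductive statement about rooted trees. Fix a rooted tree $T$ with root $r$ and child subtrees $T_1,\ldots,T_k$ rooted at $r_1,\ldots,r_k$. The claim, with constants $\eta_T$ chosen by induction on $\abs T$, is: in such a $G$, for every $Z\subset V(G)$ with $\abs Z\ge \gamma_T\abs G$, there is a copy of $T$ whose root lies in $Z$ and whose remaining vertices lie outside a prescribed small set. A more robust variant would ask instead for many roots in $Z$, each extendable to a copy of $T$.

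The induction step would go as follows.
\begin{enumerate}
\item By uniform connectivity, find a vertex $v\in Z$ together with pairwise disjoint sets $W_1,\ldots,W_k\subset N(v)$ (or nearby), each attached to large disjoint regions $R_1,\ldots,R_k$ of $G\setminus N[v]$. Sparsity makes $N[v]$ tiny, so $G\setminus N[v]$ is still uniformly connected.
\item Apply the induction hypothesis for $T_i$ inside $R_i$, with the root set being the vertices of $R_i$ adjacent to $v$.
\item Make the $k$ copies pairwise anticomplete. Each copy has bounded size, and every vertex has at most $\eta\abs G$ neighbours. So, choosing the copies one at a time, I delete the closed neighbourhoods of already chosen copies, together with the vertices adjacent to $v$ that must be avoided. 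This removes only $O(\abs T\eta\abs G)$ vertices, which is negligible against $\gamma_{T_i}\abs G$ once $\eta$ is small compared with the $\gamma$'s.
\end{enumerate}

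The main obstacle is step (1). The copies of the $T_i$ must all be rooted at \emph{neighbours} of the single vertex $v$, but $N(v)$ has fewer than $\eta\abs G$ vertices, so the induction hypothesis cannot be applied directly with $Z=N(v)$. I would handle this with a Gy\'arf\'as-type path argument. Grow an induced path $v_1\text-v_2\text-\cdots$ such that, at each stage, the last vertex has neighbours in a component $C$ of $G\setminus N[\{v_1,\ldots,v_{j-1}\}]$ that contains almost all of the graph; sparsity guarantees the path can be extended for $\abs T$ steps. This alone produces the path case, recovering the Bousquet--Lagoutte--Thomass\'e argument.

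For general trees I would instead use a counting version. Assign to each vertex $u$ the set of large components of $G\setminus N[u]$ that it touches, and use a double-counting or averaging argument, in the spirit of \cref{lem:comb}, to find a vertex $v$ whose neighbours reach $k$ disjoint linear-size regions through disjoint sets of neighbours. If no such vertex exists, the neighbourhood structure concentrates, and this should yield an anticomplete $(2,\eta\abs G)$-blockade, contradicting our assumption. Getting the constants in this averaging step to close the induction is where I expect the real work to lie.
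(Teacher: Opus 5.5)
\paragraph{Verdict.} The paper does not prove \cref{thm:sparsetree}. It quotes the result from Chudnovsky, Scott, Seymour, and Spirkl, and notes that the path case had earlier been settled by Bousquet, Lagoutte, and Thomass\'e. Your reduction from forests to trees is fine, and your Gy\'arf\'as-type path argument does recover the path case. There is one harmless slip: grouping components only gives ``uniform connectivity'' for $\abs X\ge 3\eta\abs G$, not for $\abs X\ge 2\eta\abs G$. For general trees, however, the proposal stops exactly where the difficulty of the theorem begins, so there is a genuine gap.

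\paragraph{Where it fails.} The problem is step (2).
\begin{itemize}
\item Your inductive statement places the root of $T_i$ in a set $Z$ with $\abs Z\ge\gamma_{T_i}\abs G$. Step (3) needs $\eta\ll\gamma_{T_i}$ so that deleting $O(\abs T\eta\abs G)$ vertices is negligible.
\item But the root of the copy of $T_i$ must be a neighbour of $v$. It is therefore confined to a set of size less than $\eta\abs G$, about which your hypothesis says nothing. As written, ``the vertices of $R_i$ adjacent to $v$'' is even empty, since $R_i\subset V(G)\setminus N[v]$.
\item The ``many roots'' variant does not help. The good roots are only guaranteed to form a linear-size set, and that set depends on $v$ through the deleted set $N[v]$ and the earlier copies. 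Nothing stops it from missing $N(v)$ entirely.
\item For the same reason, a low-degree $w_i$ may lose all of its attachments to the giant part when you delete the closed neighbourhoods of the earlier copies in step (3).
\end{itemize}
Your last paragraph proposes to find $v$ whose neighbourhood branches into $k$ robust directions, and otherwise to extract an anticomplete pair because ``the neighbourhood structure concentrates''. That is an expectation, not an argument. It is also precisely the part that separates the forest theorem from the path case.

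\paragraph{What is needed.} To close the gap you need two things, and neither is supplied.
\begin{itemize}
\item An induction hypothesis that can be invoked with the root confined to a set of size less than $\eta\abs G$, such as $N(v)$. For instance, the root could lie outside the host region and only be required to be suitably attached to a giant connected part of it, as in your path argument.
\item A proof that some vertex has $k$ pairwise nonadjacent neighbours whose attachments survive the deletions of step (3).
\end{itemize}
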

	
	Iterating \cref{thm:sparsetree} gives:
	\begin{lemma}
		\label{lem:treesparse}
		For every forest $H$, there exists $s\ge2$ such that for every $\eps\in(0,\frac12]$ and for
		every $\eps^s$-sparse $H$-free graph $G$ with $\abs G>\eps^{-s}$,
		there is an anticomplete $(\eps^{-1},\eps^s\abs G)$-blockade in $G$.
	\end{lemma}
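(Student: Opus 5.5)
We derive the statement from \cref{thm:sparsetree} via \cref{lem:iteration}. Let $\eta\in(0,\frac12]$ be given by \cref{thm:sparsetree} for $H$. Choose $s\ge2$ large enough that $\eps^{s}\le \eta\cdot(\text{small power of }\eps)$ throughout; we fix it below. Given $\eps\in(0,\frac12]$ and an $\eps^s$-sparse $H$-free graph $G$ with $\abs G>\eps^{-s}$, we apply \cref{lem:iteration} with $x$ a small positive number (anticomplete pairs are $x$-sparse for every $x$). Then we use the $x$-sparse blockade only as an ordering device, or, better, we run the iteration directly with anticomplete pairs. The maximality argument in the proof of \cref{lem:iteration} works verbatim if ``$x$-sparse'' is replaced by ``anticomplete''. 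So the plan is to verify the following. For every induced subgraph $F$ of $G$ with $\abs F\ge\frac14\abs G$, there is an anticomplete pair $(X,Y)$ in $F$ with $\abs X\ge \eps^s\abs G$ and $\abs Y\ge(1-\eps)\abs F$. Iterating then yields an anticomplete blockade $(B_0,\dots,B_k)$ with $k\ge 1/\eps$, all blocks of size at least $\eps^s\abs G$, in which every earlier block is anticomplete to every later one, so the blockade is anticomplete.

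To produce such a pair in $F$, first take a maximal collection of pairwise disjoint sets $X_1,\dots,X_m\subset V(F)$ as follows. Start with $F_0:=F$. While the current graph $F_j$ still has at least $\frac12\abs F$ vertices, apply \cref{thm:sparsetree} to it to get an anticomplete pair $(P,Q)$ with $\abs P,\abs Q\ge\eta\abs{F_j}$. Each $F_j$ is $H$-free. It is also $\eta$-sparse: its maximum degree is less than $\eps^s\abs G\le 4\eps^s\abs F\le 8\eps^s\abs{F_j}\le\eta\abs{F_j}$ for $s$ large, and $\abs{F_j}\ge\frac18\abs G>\eta^{-1}$. This one-shot pair is only linear on both sides, so it does not directly give $\abs Y\ge(1-\eps)\abs F$. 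To boost $Y$, use sparsity instead. Take a set $X$ with $\eps^s\abs G\le\abs X\le 2\eps^{s'}\abs F$, where $X\subset P$ for a suitable exponent $s'<s$. Its neighbourhood in $F$ has size less than $\abs X\cdot\eps^s\abs G$, which is tiny compared with $\eps\abs F$ provided $\abs X\le \eps^{1-s}/8$. This is the subtlety: $\abs X$ must be both at least $\eps^s\abs G$ and at most about $\eps^{1-s}$, which is impossible when $\abs G$ is large. So the naive neighbourhood bound fails, and this is the main obstacle.

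To overcome it, use \cref{thm:sparsetree} repeatedly inside the graph, in the standard way. Suppose no anticomplete pair $(X,Y)$ exists with $\abs X\ge\eps^s\abs G$ and $\abs Y\ge(1-\eps)\abs F$. Repeatedly split: apply \cref{thm:sparsetree} to get anticomplete $(P,Q)$ with both sides at least $\eta\abs F$. Then recurse into whichever of $P$ or $Q$ is relevant, keeping track of the vertices that have neighbours on the other side. The cleaner route is a ``halving'' argument. Let $Z$ be the largest set of vertices of $F$ such that there is $X\subset V(F)\setminus Z$, $\abs X\ge\eps^s\abs G$, anticomplete to $Z$. Applying \cref{thm:sparsetree} to $F\setminus Z$ (still sparse and large while $\abs Z<(1-\eps)\abs F$) produces an anticomplete pair inside the complement. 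Combining it with the current witness (choose the side $P$ and enlarge $Z$ by $Q$, losing at most a factor $\eta$ in $\abs X$ at each step) strictly grows $Z$ by $\eta\eps\abs F$. Hence after at most $O(1/(\eta\eps))$ steps we reach $\abs Z\ge(1-\eps)\abs F$, while $\abs X\ge\eta^{O(1/(\eta\eps))}\abs F$. This loss is exponential in $1/\eps$, not polynomial. So the obstacle is to make the loss polynomial. I would instead grow $Z$ multiplicatively: at each step, halve the uncovered part $V(F)\setminus Z$ using $\eta$-balanced anticomplete pairs applied to the uncovered part. Only $O(\log(1/\eps)/\eta)$ steps are then needed, giving $\abs X\ge\eta^{O(\log(1/\eps))}\abs F=\eps^{O(\log(1/\eta))}\abs F$. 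Choosing $s$ of order $\log(1/\eta)$ accommodates this, and the polynomial dependence follows.
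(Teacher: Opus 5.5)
\textbf{There is a genuine gap.} Your reduction through \cref{lem:iteration}, with ``anticomplete'' in place of ``$x$-sparse'', is sound. But it moves the whole difficulty into a lopsided statement: every induced $F$ with $\abs F\ge\frac14\abs G$ should contain an anticomplete pair $(X,Y)$ with $\abs X\ge\eps^s\abs G$ and $\abs Y\ge(1-\eps)\abs F$. That statement already implies the lemma, and your argument for it does not work. In the growth step you apply \cref{thm:sparsetree} to $F\setminus Z$ to get an anticomplete pair $(P,Q)$. You then keep ``the side $P$'', add $Q$ to $Z$, and assert that $\abs X$ drops by at most a factor $\eta$. Nothing justifies this. \cref{thm:sparsetree} produces a balanced pair somewhere in the graph it is applied to, with no control over how it meets your current witness $X$. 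Since $\abs X$ can be as small as $\eps^s\abs G$ while $\abs{F\setminus Z}\ge\eps\abs F$, both $P$ and $Q$ may miss $X$ entirely. The alternatives fail too:
\begin{itemize}
\item Taking $P$ itself as the new witness loses anticompleteness to the old $Z$.
\item Applying \cref{thm:sparsetree} inside $X$ only ever adds subsets of the shrinking witness to $Z$, so the vertices of $N(X)\setminus Z$ are never absorbed.
\end{itemize}
To make this step work you would need a bipartite form of \cref{thm:sparsetree}, giving linear anticomplete subsets of two prescribed disjoint sets of very different sizes, and that is not available here. Your ``multiplicative halving'' variant has the same defect. Separately, defining $Z$ as a largest set with a witness of size at least $\eps^s\abs G$ is incompatible with a witness that shrinks at every step.

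The missing observation is that no lopsided pair is needed. Anticompleteness between blocks passes to subsets, so you can split all blocks at once. Let $s:=2\log\frac1\eta$, and suppose $G$ has an anticomplete $(2^k,\eta^k\abs G)$-blockade with $k<\lceil\log\frac1\eps\rceil$. Each block $B$ then satisfies $\abs B\ge\eta^k\abs G\ge\eta^{-1}\eta^{2\log(1/\eps)}\abs G=\eta^{-1}\eps^s\abs G>\eta^{-1}$. Hence $G[B]$ is $H$-free and $\eta$-sparse, since its maximum degree is below $\eps^s\abs G\le\eta\abs B$. So \cref{thm:sparsetree} splits $G[B]$ into two anticomplete sets of size at least $\eta\abs B$. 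Together these pieces form an anticomplete $(2^{k+1},\eta^{k+1}\abs G)$-blockade. After $\lceil\log\frac1\eps\rceil$ rounds you have at least $\eps^{-1}$ pairwise anticomplete blocks, each of size at least $\eta^{\lceil\log(1/\eps)\rceil}\abs G\ge\eps^s\abs G$. This is the paper's argument: the logarithmic number of halving rounds you were aiming for is applied to the blockade itself, not to a lopsided pair.
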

	\begin{proof}
		We claim that $s:=2\log\eta^{-1}$ suffices,
		where $\eta\in(0,\frac12]$ is given by \cref{thm:sparsetree}.
		To see this, let $G$ be an $\eps^s$-sparse $H$-free graph;
		and let $k\in\{0,1,\ldots,\ceil{\log\frac1\eps}\}$ be maximal such that there is an anticomplete $(2^k,\eta^{k}\abs G)$-blockade $(B_1,\ldots,B_{2^k})$ in $G$.
		
		If $k<\ceil{\log\frac1\eps}$, then
		for every $i\in[2^k]$, since 
		$\abs{B_i}\ge \eta^k\abs G\ge \eta^{-1} \cdot\eta^{2\log\frac1\eps}\abs G=\eta^{-1}\cdot \eps^s\abs G>\eta^{-1}$,
		$G[B_i]$ is $\eta$-sparse;
		and so there are disjoint and anticomplete $D_{2i-1},D_{2i}\subset B_i$ with $\abs{D_{2i-1}},\abs{D_{2i}}\ge \eta\abs{B_i}\ge \eta^{k+1}\abs G$.
		Then $(D_1,D_2,\ldots,D_{2^{k+1}})$ is an anticomplete $(2^{k+1},\eta^{k+1}\abs G)$-blockade in $G$ while $k+1\le\ceil{\log\frac1\eps}$,
		contrary to the maximality of $k$.
		
		Hence $k=\ceil{\log\frac1\eps}$.
		Now, since $2^k\ge 2^{\log\frac1\eps}=\eps^{-1}$
		and $\eta^k\ge\eta^{2\log\frac1\eps}=\eps^s$,
		there is an anticomplete $(2^k,\eta^k\abs G)$-blockade in $G$.
		This proves \cref{lem:treesparse}.
	\end{proof}
	
	A graph $H$ is {\em nice} if there exists $d\ge1$ such that for every $\eps\in(0,\frac12]$, every $H$-free graph $G$ with $\abs G>\eps^{-d}$ contains a $(1-\eps)$-semidense $(\eps^{-1},\eps^d\abs G)$-blockade.
	Niceness was introduced in~\cite{density7} (up to a slight change in the `noise' between non-pure pairs) as a natural step towards verifying the \erh{} property for all forests; and in~\cite{density7} it was also noted that if $H$ is nice then it is a forest.
	The following result asserts that, in order to show that a forest $H$ is nice, it suffices to prove its purity and the viral property of any forest obtained by removing a leaf of $H$.
	We omit the proof, which is a straightforward combination of \cref{lem:nearnice,lem:treesparse}.
	
	\begin{lemma}
		\label{lem:nice}
		Let $H$ be a forest with a leaf $v$.
		If $H\setminus v$ is viral and $H$ is pure, then $H$ is nice.
	\end{lemma}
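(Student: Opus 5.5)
Let $H$ be a forest with a leaf $v$ such that $H\setminus v$ is viral and $H$ is pure. We combine \cref{lem:nearnice,lem:treesparse}. First let $d_0\ge2$ be the constant from \cref{lem:nearnice} for $H$. Let $s\ge2$ be the constant from \cref{lem:treesparse} for the forest $H$. Given $\eps\in(0,\frac12]$ and an $H$-free graph $G$ with $\abs G>\eps^{-d}$, where $d$ is to be chosen, we apply \cref{lem:nearnice} with $\eps^s$ in place of $\eps$. This yields one of two outcomes.
\begin{itemize}
\item An $\eps^s$-sparse induced subgraph $F$ with $\abs F\ge\eps^{sd_0}\abs G$.
\item A $(1-\eps^s)$-semidense $(\eps^{-s},\eps^{sd_0}\abs G)$-blockade.
\end{itemize}

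In the second outcome we are done directly. Since $\eps^s\le\eps$, every pair $(B_i,B_j)$ that is $(1-\eps^s)$-dense to each other is also $(1-\eps)$-dense. Hence the blockade is $(1-\eps)$-semidense, and its length is at least $\eps^{-s}\ge\eps^{-1}$. Its width is at least $\eps^{sd_0}\abs G\ge\eps^d\abs G$, provided $d\ge sd_0$.

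In the first outcome, $F$ is an $\eps^s$-sparse $H$-free graph. We have $\abs F\ge\eps^{sd_0}\abs G>\eps^{sd_0-d}\ge\eps^{-s}$ once $d\ge sd_0+s$. So \cref{lem:treesparse} gives an anticomplete $(\eps^{-1},\eps^s\abs F)$-blockade in $F$, and hence in $G$. An anticomplete blockade is trivially $(1-\eps)$-semidense, because every pair of blocks is anticomplete. Its width is at least $\eps^{s+sd_0}\abs G\ge\eps^d\abs G$.

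Taking $d:=s(d_0+1)$ therefore handles both outcomes and proves that $H$ is nice. The only step needing care is the bookkeeping of exponents: the sparse subgraph must be large enough, namely $\abs F>\eps^{-s}$, for \cref{lem:treesparse} to apply. This is ensured by the hypothesis $\abs G>\eps^{-d}$ with $d$ chosen as above. No genuine obstacle is expected.
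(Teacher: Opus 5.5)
Your proof is correct and is exactly the combination the paper has in mind. The paper omits the proof, calling it a straightforward combination of \cref{lem:nearnice,lem:treesparse}. You apply \cref{lem:nearnice} with $\eps^s$ in place of $\eps$, apply \cref{lem:treesparse} to the sparse outcome, and take $d=s(d_0+1)$. This choice gives $\abs F>\eps^{-s}$ and makes both widths at least $\eps^d\abs G$, so your exponent bookkeeping fills in the omitted details correctly.
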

	
	\subsection{Two key lemmas}
	\label{subsec:keylemmas}
	Given the results we have collected so far, it suffices to prove \cref{thm:path} via the following two intermediate results.
	
	\begin{lemma}
		\label{lem:pathpure}
		For every integer $t\ge3$, $P_t$ is pure.
	\end{lemma}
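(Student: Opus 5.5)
We prove \cref{lem:pathpure} by induction on $t$, with the R\"odl-type dichotomy handled through sparsification and the complement. Fix $t\ge3$ and $\eps\in(0,\frac12]$, and let $G$ be $P_t$-free with $\abs G>\eps^{-d}$, where $d$ is large in terms of $t$. The goal is an $\eps$-pure $(\eps^{-1},\eps^d\abs G)$-blockade.

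\emph{Step 1 (restrict).} By the near \erh{} machinery of~\cite{density5}, or directly by \cref{thm:rodl} with a polynomial dependence, pass to an induced subgraph $F$ that is $\eps^{s}$-restricted with $\abs F\ge\eps^{O(1)}\abs G$. Obtaining polynomial dependence on $\eps$ here is itself part of what must be shown. The plan is to get it from the iterative sparsification of \cref{lem:sparsify}, using the inductive hypothesis that $P_{t-1}$ is handled, and \cref{lem:iteration} to convert sparse pairs into blockades.

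\emph{Step 2, sparse case.} If $F$ is $\eps^{s}$-sparse, \cref{lem:treesparse} applied to the forest $P_t$ immediately gives an anticomplete $(\eps^{-1},\eps^{s}\abs F)$-blockade. Such a blockade is $\eps$-pure.

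\emph{Step 2, dense case.} If $F$ is $(1-\eps^s)$-dense, then $\overline F$ is sparse and $\overline{P_t}$-free. Since $\overline{P_t}$ is not a forest, \cref{lem:treesparse} does not apply. Instead, we split $F$ greedily into $\eps^{-1}$ blocks of size $\eps^{s-2}\abs F$, as in the end of the proof of \cref{lem:nearnice}. In a $(1-\eps^s)$-dense graph, any two such blocks are $(1-\eps)$-dense to each other after discarding few vertices, via \cref{lem:xpure}. This yields an $\eps$-pure blockade directly.

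So the dense case is trivial, and the whole difficulty is Step~1: producing, polynomially, either a large $\eps$-sparse or a large $(1-\eps)$-dense induced subgraph in a $P_t$-free graph.

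\emph{Handling Step 1.} If there is no large sparse piece, we want a Gy\'arf\'as-path argument: grow an induced path $v_1\text-v_2\text-\cdots$ while maintaining a large set $C_i$ of vertices still ``available'', anticomplete to $v_1,\ldots,v_{i-1}$ and with $v_i$ having many neighbours in $C_i$. When the path cannot extend by a polynomial fraction, we extract a pair $(X,Y)$ that is $\eps$-sparse, with $\abs X$ polynomial and $\abs Y\ge(1-y)\abs F$. Iterating via \cref{lem:iteration} then produces a sparse blockade, which we clean with \cref{lem:blockcount} using the leaf of $P_t$ and induction on $P_{t-1}$ being viral via \cref{thm:bfp}.

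The main obstacle is keeping every loss polynomial in $\eps$ throughout the Gy\'arf\'as path process. The naive argument loses a factor of $\eps$ per step and across the $t$ iterations compounds to $2^{(\log)^{1-o(1)}}$-type bounds, as in~\cite{density5}. I would first try to control this with a bookkeeping blockade: keep a blockade of candidate continuation sets at each level, so that failure at any level yields a sparse pair against a $(1-y)$-fraction rather than a shrinking set. I expect this step to require a genuinely new idea.
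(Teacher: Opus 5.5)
\textbf{The gap is Step~1, and it is not a technicality.} Your Step~2 is fine, but it only shows that purity follows from the \emph{polynomial R\"odl} property. Step~1 asks that every $P_t$-free graph contain an $\eps^{s}$-restricted induced subgraph on $\eps^{O(1)}\abs G$ vertices for every $\eps$. By \cref{thm:bfp} and the remarks around it, this is equivalent to the \erh{} property of $P_t$, i.e.\ to \cref{thm:path} itself. In the paper, purity is a strictly weaker stepping stone toward that theorem: purity of $P_{t+1}$ together with virality of $P_t$ gives niceness via \cref{lem:nice}, and only then does \cref{lem:niceeh} give \erh{}. Building the proof around Step~1 therefore puts the entire difficulty of the paper into a step you leave open, and in the paper's scheme it is circular.

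Neither tool you cite supplies Step~1. \cref{thm:rodl} has no polynomial dependence, and the near-\erh{} method of~\cite{density5} gives only quasi-polynomial shrinkage. The paper says this explicitly: asking for shrinkage $\poly(y)$ ``would amount to assuming the polynomial R\"odl property that is essentially our desired conclusion''. Your sketch for Step~1 also uses virality of $P_{t-1}$. An induction on purity does not provide that, since purity yields only near-\erh{}. Finally, you concede that the key idea is still missing.

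\textbf{What you need instead} is to aim for $x$-pure blockades directly, never passing through density. The paper proves the splitting-paths lemma \cref{lem:split}: for disjoint $A,B$ with $\abs B>x^{-d}$, one of the following holds.
\begin{itemize}
\item $A\to_r B$.
\item $A$ is $x$-pure to some $E\subset B$ with $\abs E\ge\abs B/2$.
\item $G[B]$ contains an $x$-pure $(k,\abs B/k^d)$-blockade with $k\in[2,x^{-1}]$.
\end{itemize}
The proof is by induction on $r$, not on $t$, and runs as follows.
\begin{itemize}
\item A vertex of $A$ with at least $\delta\abs B$ neighbours and non-neighbours in $B$ is handled by applying the $r$ case to its neighbourhood versus its non-neighbourhood. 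This is \cref{claim:rodl}, which replaces R\"odl's theorem.
\item The sparse and dense parts of $A$ are then cleaned using the comb lemma \cref{lem:comb}.
\item A resulting comb or anticomb with teeth $B_1,\ldots,B_k$ is handled by the bookkeeping \cref{claim:bookkeep}. It shrinks each tooth via the conversion theorem \cref{thm:convsemi} and applies the $r$ case to $(\bigcup_{j\ne i}C_j,C_i)$.
\item So either some $B_i\to_r B_j$, which the apex extends to an induced $P_{r+1}$, or an $x$-pure blockade appears.
\end{itemize}
Purity is then immediate. For every large induced subgraph $F$, apply \cref{lem:split} to a balanced partition of $V(F)$. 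The first outcome is excluded by path-freeness, and the other two give an $x$-pure $(k,\abs F/k^b)$-blockade with $k\in[2,1/x]$ (a pure pair when $k=2$). \cref{thm:convsemi} then lengthens this to an $\eps$-pure $(\eps^{-1},\eps^K\abs G)$-blockade. This route needs no restricted subgraphs and no information about shorter paths.
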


	\begin{lemma}
		\label{lem:niceeh}
		For every integer $t\ge2$, if $P_t$ has the \erh{} property and $P_{t+1}$ is nice, then $P_{t+1}$  has the \erh{} property.
	\end{lemma}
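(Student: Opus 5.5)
The plan is to verify the hypothesis of \cref{lem:eh} for the class $\mac C$ of $P_{t+1}$-free graphs. By \cref{thm:bfp}, the \erh{} property of $P_t$ means $P_t$ is viral; let $a$ witness this. Let $d$ witness the niceness of $P_{t+1}$, and let $s$ be given by \cref{lem:treesparse} for $H=P_{t+1}$. For a given $x\in(0,2^{-b}]$ and $P_{t+1}$-free $G$, I want either an $x$-restricted induced subgraph on at least $x^b\abs G$ vertices, or a complete or anticomplete $(k,\abs G/k^b)$-blockade. The constant $b$ will be chosen large in terms of $a,d,s,t$.

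\emph{Sparse side.} I would use iterative sparsification via \cref{lem:sparsify}, with $c$ a small constant. Let $F$ be a $y$-sparse induced subgraph with $y\in[x,c]$ and $\abs F\ge y^{2t}\abs G$. Apply niceness to $F$ with $\eps:=y^{1/(2d)}$. This gives a $(1-\eps)$-semidense $(\eps^{-1},\eps^d\abs F)$-blockade. Each dense pair would force vertices of degree greater than $(1-\eps)\eps^d\abs F>y\abs F$, which is impossible because $F$ is $y$-sparse. So every pair is anticomplete, and we obtain an anticomplete blockade of length $k=\ceil{\eps^{-1}}$ and width at least $\eps^{d}y^{2t}\abs G\ge\abs G/k^{b}$ once $b\ge 4td+d$.

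Thus either the second bullet of \cref{lem:sparsify} holds for every such $F$, or we already have the blockade outcome of \cref{lem:eh}. In the former case, \cref{lem:sparsify} yields an $x$-sparse induced subgraph of size $x^{2at}\abs G$. If instead niceness fails because $\abs F$ is too small, that only happens when $\abs G$ is bounded polynomially in $1/x$, and then a single vertex suffices. (Alternatively, \cref{lem:treesparse} applied inside a $y^{s}$-sparse $F$ gives the anticomplete blockade directly.)

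\emph{Starting point and dense side.} By \cref{thm:rodl}, $G$ has a $c$-restricted induced subgraph on $\delta\abs G$ vertices. If it is $c$-sparse, the first hypothesis of \cref{lem:sparsify} holds after adjusting $t$ so that $c^t\le\delta$. If it is $(1-c)$-dense, I run the symmetric sparsification inside $\overline G$. Here the target is a $(1-y^a)$-dense induced subgraph, or a complete blockade, inside every $(1-y)$-dense $F$. This is where the \erh{} property of $P_t$ enters. If $\ind_{P_t}(F)<(y^{a'}\abs F)^t$, virality of $P_t$ gives a $y^{a'}$-restricted subgraph of size $y^{a'}\abs F$. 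Since $F$ is very dense, it cannot be sparse, so this subgraph is $(1-y^{a'})$-dense, which is exactly the progress needed. Otherwise $F$ has many induced copies of $P_t$. I would then run the `using the leaf' argument of \cref{lem:blockcount}, on a blockade in $F$ obtained by niceness, to extend copies of $P_t$ by a leaf into $P_{t+1}$. The aim is to show that every non-complete pair of blocks can be cleaned to be complete, yielding a complete $(k,\abs G/k^b)$-blockade.

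\emph{Main obstacle.} The main obstacle is this dense step. Niceness only gives pairs that are anticomplete or $(1-\eps)$-dense, and in a very dense $F$ the dense pairs must be made fully complete. A single leaf extension of $P_t$ needs a vertex adjacent to one endpoint and nonadjacent to the other $t-1$ vertices, which is rare in a dense graph. My first attempt would therefore be the following. Among the many $P_t$-copies, fix one copy $\phi$ of $P_t$ minus its last two vertices, as in the proof of \cref{lem:blockcount}. Restrict the other blocks to the common non-neighbourhood of $\phi$, which is a small but polynomially sized set because the ambient pairs are dense. Then argue that a mixed edge pattern there creates $P_{t+1}$, so that a polynomially smaller blockade becomes complete. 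If the sizes do not balance, the fallback is to apply \cref{lem:iteration} in $\overline F$ to obtain the required long complete blockade one piece at a time.
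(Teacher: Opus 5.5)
\textbf{There is a genuine gap: the dense step.} Several parts of your plan are sound and match the paper's architecture:
\begin{itemize}
\item the reduction to \cref{lem:eh};
\item the sparse side (in a $y$-sparse graph niceness can only return anticomplete pairs, or you can use \cref{lem:treesparse});
\item the plan to densify via \cref{lem:sparsify} in $\overline G$, with \cref{lem:iteration} as the assembly tool.
\end{itemize}
The dense step, however, is essentially the whole content of the lemma, and the mechanism you sketch for it fails. The `using the leaf' argument of \cref{lem:blockcount} works because sparse pairs make almost every vertex of another block miss the fixed partial copy $\phi$. In a $(1-y)$-dense graph the opposite holds. Every vertex has fewer than $y\abs F$ non-neighbours, so the common non-neighbourhood of the $t-2$ vertices of $\phi$ in another block is tiny and can be empty; it is not `polynomially sized'. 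Passing to the complement does not help either, since $\overline{P_{t+1}}$ has no vertex of degree one once $t\ge4$.

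Having many induced copies of $P_t$ in $F$ does not by itself yield a complete pair. Your fallback through \cref{lem:iteration} in $\overline F$ presupposes exactly the missing statement: that every large induced subgraph contains a complete pair $(X,Y)$ with $\abs X\ge\poly(y)\abs G$ and $\abs Y\ge(1-O(y))\abs F$. There is also a smaller error. The claim `since $F$ is very dense, it cannot be sparse' is false, because a $(1-y)$-dense graph may contain a stable set of size close to $y\abs F\gg y^{a'}\abs F$. This is repairable by sending sparse outputs to the anticomplete-blockade outcome, as in \cref{claim:nosparse}.

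\textbf{The missing idea} is to apply the hypothesis on $P_t$ not to $F$ but to a pattern graph of blocks, and to root the path at an outside vertex. In \cref{lem:pathdense1} the paper proceeds as follows.
\begin{enumerate}
\item Take $S$ with $\abs S\approx y\abs G$. Use niceness in $G[S]$ and \cref{lem:conn} to obtain a $(1-\eps)$-semidense blockade in $G[S]$ whose blocks induce connected subgraphs.
\item Consider a vertex $v\notin S$ that is mixed on many blocks. Connectivity plus $P_{t+1}$-freeness makes the first covering pair of a $P_t$-pattern complete (\cref{lem:complend}).
\item The endpoint-switching argument (\cref{lem:switching,lem:switch}) then shows that the pattern graph of the blocks on which $v$ is mixed contains no copy of the iterated blow-up $H_t$ of $P_t$. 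This sidesteps the obstacle you identified. The non-adjacencies to $v$ are engineered from mixedness and the completeness just obtained, and at the last step one chooses whether $v$ attaches via a neighbour or a non-neighbour in $C_1$.
\item Since $H_t$ has the \erh{} property by \cref{thm:aps}, the pattern graph has either a large stable set or a large clique. A stable set gives an anticomplete blockade. A clique gives small, pairwise $(1-\eps)$-dense blocks that, after dyadic trimming, form a $(1-y^3)$-dense induced subgraph of size $\poly(y)\abs G$.
\item If neither happens, every vertex outside $S$ is mixed on few blocks, so some block $B_i$ has few mixed vertices outside $S$. By $(1-y)$-density, almost all of $V(G)\setminus S$ is then complete to $B_i$.
\end{enumerate}
This last outcome is the complete pair with $\abs Y\ge(1-4y)\abs G$ that \cref{lem:iteration} needs.
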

	
	\begin{proof}
		[Proof of \cref{thm:path}, assuming \cref{lem:pathpure,lem:niceeh}]
		We proceed by induction on $t\ge1$. The theorem is true for $t\in\{1,2\}$.
		For $t\ge2$, we aim to prove the theorem for $t+1$, assuming that it holds for $t$.
		Now, since $P_t$ has the \erh{} property, it is viral by \cref{thm:bfp}. Thus, since $P_{t+1}$ is pure by \cref{lem:pathpure}, we deduce that $P_{t+1}$ is nice by \cref{lem:nice}.
		Therefore, \cref{lem:niceeh} implies that $P_{t+1}$ has the \erh{} property. This proves \cref{thm:path}.
	\end{proof}
	The rest of this paper is mainly devoted to the proofs of \cref{lem:pathpure,lem:niceeh}.
	We will explain the proofs of \cref{lem:pathpure,lem:niceeh} in \cref{sec:split,sec:nicetoeh}, respectively.
	
	\section{The Gy\'arf\'as path argument via bookkeeping blockades and splitting paths}
	\label{sec:split}
	This section provides the proof of \cref{lem:pathpure}.
	To explain the key conceptual ideas behind the proof, we will assume familiarity with the well-known Gy\'arf\'as path argument~\cite{gyarfas1987}, the `near' \erh{} proof in~\cite{density5}, and the proof that $P_5$ is nice as shown in~\cite[Sections 5 and 6]{density7}.
	To make our exposition consistent and as clear as possible, we will focus on dense $P_t$-free graphs, as in~\cite{density5}.
	As such, any explanation involving~\cite[Sections 5 and 6]{density7} will assume dense $P_5$-free graphs (instead of sparse $\overline{P_5}$-free graphs); and it may be more intuitive to say `iterative densification' instead of `iterative sparsification' (needless to say, the fundamental ideas remain unchanged).
	For notational convenience, for a graph $G$, a {\em poly-blockade} in $G$ is a blockade in $G$ of width at least $\abs G/\poly(k)$, where $k$ is its length.
	
	\subsection{Bookkeeping blockades}
	\label{subsec:bookkeep}
	This subsection explains the first key idea towards \cref{lem:pathpure}.
	To describe the thought process leading to our idea, we will carefully review the material from~\cite{density5} and~\cite[Sections 5 and 6]{density7}.
	
	To begin our explanation, let us summarise the argument in~\cite[Section 5]{density7}, as follows.
	We are given a density threshold $x$ and a density parameter $y$ with $x\ll y$ sufficiently small, and a large $(1-y)$-dense induced subgraph $G$ of some $P_5$-free host graph $\Gamma$, with $\abs G\ge \poly(y)\abs{\Gamma}$.
	Our plan is to perform iterative densification; that is, either $G$ contains a sufficiently large induced subgraph with much higher density (so we can eventually push $y$ down to $x$), or it contains some pure or $(1-x)$-dense poly-blockade of length between $\poly(y^{-1})$ and $\poly(x^{-1})$ (so that this blockade would also be a poly-blockade in $\Gamma$).
	To carry out this process, we first observe that if $G$ is actually $(1-y^2)$-dense then we are done with the densification step; so we may assume there exists $v\in V(G)$ with at least $y^2\abs G$ non-neighbours in $G$.
	Let $A:=N_G(v)$ and $B:=V(G)\setminus N_G[v]$; then $\abs A\ge (1-O(y))\abs G$ and $\abs B\ge y^2\abs G$.
	Then $B$ is very dense to $A$; and so after a few averaging steps, an application of \cref{lem:comb} implies that either:
	\begin{itemize}
		\item we can remove a negligible portion from $A$ and at most half of $B$ to get $A'\subset A$ and $B_0\subset B$, such that $\abs{A'}\ge (1-O(y^{1/2}))\abs A\ge(1-O(y^{1/2}))\abs G$, $\abs{B_0}\ge \abs B/2\ge \Omega(y^2)\abs G$, and $A'$ is $(1-x)$-dense to $B_0$; or
		
		\item there is an anticomb $((a_i,B_i):i\in[k])$ from $A$ to $B$, where $y^{-1/2}\le k\le \poly(x^{-1})$ and $\abs{B_i}\ge \abs B/\poly(k)$ for all $i\in[k]$.
	\end{itemize}
	
	If the second bullet holds, then after a couple of structural arguments involving $P_5$, we can get a long and wide poly-blockade $\mac B$ of length $k$, such that $\mac B$ either is an anticomplete blockade inside some $G[B_i]$, or $\mac B$ is a pure sub-blockade of $(B_1,\ldots,B_k)$ (see \cref{subsec:split} below for more discussion).
	Thus, provided that we can never get such blockades $\mac B$, the first bullet above holds; and so, since $G[A']$ is still very dense, we can iterate this procedure (by applying \cref{lem:iteration} in the complement) to extract a $(1-x)$-dense blockade in $G$ of length at least $\Omega(y^{-1/2})$ and width at least $\poly(y)\abs G\ge \poly(y)\abs{\Gamma}$.
	Hence, $G$ contains either a pure or a $(1-x)$-dense poly-blockade of length between $\poly(y^{-1})$ and $\poly(x^{-1})$.
	
	Now, for $\Gamma$, we apply R\"odl's theorem \ref{thm:rodl} with some small constant density parameter $\eps>0$. The sparse case yields a linear anticomplete pair; and the dense case outputs either a pure or $(1-x)$-dense poly-blockade of length at most $\poly(x^{-1})$, or a $(1-x)$-dense induced subgraph with $\poly(x)\abs{\Gamma}$ vertices.
	Thus, we may assume that $\Gamma$ always contains a pure or $(1-x)$-dense poly-blockade of length at most $\poly(x^{-1})$; and it then remains to apply the conversion theorem~\cite[Theorem 6.1]{density7} to $\Gamma$ to prove the niceness of $P_5$. 
	
	Let us now delve deeper into the above iteration inside $G$. Given the aforesaid $A'$, let $F:=G[A']$. The iteration essentially says that either:
	\begin{itemize}
		\item $F$ is $(1-y^2)$-dense; or 
		
		\item there exists $v'\in A'$ with at least $y^2\abs F$ non-neighbours in $F$, such that for $B'=V(F)\setminus N_F[v']$, one of the following holds:
		\begin{itemize}
			\item there exists $A''\subset N_F(v')$ and $B_0'\subset B'$ such that $\abs{A''}\ge (1-O(y^{1/2}))\abs F$, $\abs{B_0'}\ge \abs{B'}/2\ge\Omega(y^2)\abs F$, and $A''$ is $(1-x)$-dense to $B_0'$; or
			
			\item there is an anticomb $((a_i',B_i'):i\in[k'])$ from $N_F(v')$ to $B'$, where $\Omega(y^{-1/2})\le k'\le \poly(x^{-1})$ and $\abs{B_i'}\ge \abs {B'}/\poly(k')$ for all $i\in[k']$.
		\end{itemize}
	\end{itemize}
	
	The hidden structure from this configuration is the extra vertex $v'$ with a decent number of non-neighbours in $F$, which plays the role of the aforesaid $v$.
	Our first change here is to pull out such vertices $v$ and $v'$ repeatedly for about $\poly(y^{-1})$ steps.
	Under the assumption that $G$ is large and has no $(1-y^2)$-dense induced subgraph with at least $\abs G/100$ vertices say, this will produce a `half' anticomb $((v_i,U_i):i\in[\ell])$ where $\ell\approx y^{-c}$ for some appropriately small $c>0$, such that:
	\begin{itemize}
		\item for all $i\in[\ell]$, $\abs{U_i}\ge\Omega(y^2)\abs G$ and $U_i\subset V(G)\setminus N_G[v_i]$ (so $\abs{U_i}<y\abs G$); and
		
		\item for all $i,j\in[\ell]$ with $i<j$, $\{v_j\}\cup U_j$ is a subset of $N_G(v_i)$, and is $(1-y^{2c})$-dense to $U_i$.
	\end{itemize}
	(Here, $v_1$ and $v_2$ are actually $v$ and $v'$ in the above discussion.)
	
	Given this `half' anticomb, for each $i\in[\ell]$ we can safely discard a small portion in $U_i$ consisting of those with a non-neighbour in $\{v_j:j>i\}$ to obtain $V_i\subset U_i$ with $\abs{V_i}\ge\abs{U_i}/2$, such that the sub-blockade $(V_1,\ldots,V_\ell)$ of $(U_1,\ldots,U_\ell)$ satisfies:
	\begin{itemize}
		\item for all $i\in[\ell]$, $\Omega(y^2)\abs G\le \abs{V_i}< y\abs G$ and $V_i\subset V(G)\setminus N_G[v_i]$;
		
		\item for all {\em distinct} $i,j\in[\ell]$, $\{v_j\}\cup V_j\subset N_G(v_i)$; and
		
		\item $(V_1,\ldots,V_\ell)$ is $(1-y^c)$-dense.
	\end{itemize}
	
	The pruning step is illustrated in \cref{fig:half-anticomb} below.
	\begin{figure}[htbp]
		\centering
		\begin{tikzpicture}[
			font=\small,
			cell/.style={draw=black!50,minimum width=.79cm,minimum height=.57cm,inner sep=0pt},
			certain/.style={cell,fill=blue!16},
			almost/.style={cell,fill=orange!17},
			absent/.style={cell,fill=black!7}
			]
			\node[font=\small\bfseries] at (2.1,3.2) {Half anticomb};
			\node[font=\small\bfseries] at (8.5,3.2) {After pruning};
			\foreach \j/\x in {1/1.2,2/2.15,3/3.1} {
				\node at (\x,2.66) {$U_{\j}$};
			}
			\foreach \j/\x in {1/7.6,2/8.55,3/9.5} {
				\node at (\x,2.66) {$V_{\j}$};
			}
			\foreach \j/\y in {1/2.12,2/1.38,3/.64} {
				\node at (.32,\y) {$v_{\j}$};
				\node at (6.72,\y) {$v_{\j}$};
			}
			\node[absent]  at (1.2,2.12) {$0$};
			\node[certain] at (2.15,2.12) {$1$};
			\node[certain] at (3.1,2.12) {$1$};
			\node[almost]  at (1.2,1.38) {$\sim1$};
			\node[absent]  at (2.15,1.38) {$0$};
			\node[certain] at (3.1,1.38) {$1$};
			\node[almost]  at (1.2,.64) {$\sim1$};
			\node[almost]  at (2.15,.64) {$\sim1$};
			\node[absent]  at (3.1,.64) {$0$};
			\node[absent]  at (7.6,2.12) {$0$};
			\node[certain] at (8.55,2.12) {$1$};
			\node[certain] at (9.5,2.12) {$1$};
			\node[certain] at (7.6,1.38) {$1$};
			\node[absent]  at (8.55,1.38) {$0$};
			\node[certain] at (9.5,1.38) {$1$};
			\node[certain] at (7.6,.64) {$1$};
			\node[certain] at (8.55,.64) {$1$};
			\node[absent]  at (9.5,.64) {$0$};
			\draw[->,line width=.7pt] (4.1,1.38) -- node[above,font=\footnotesize] {prune} (5.97,1.38);
			\node[font=\footnotesize,anchor=west] at (.35,-.12) {$1$: complete};
			\node[font=\footnotesize,anchor=west] at (3.55,-.12) {$\sim1$: $(1-y^{2c})$-dense};
			\node[font=\footnotesize,anchor=west] at (7.8,-.12) {$0$: anticomplete};
		\end{tikzpicture}
		\caption{Apex-tooth adjacencies for three indices; the general pattern is the same. Initially, later apexes are only almost complete to earlier teeth. Removing from each $U_i$ the vertices missed by a later apex gives $V_i\subset U_i$ and makes every off-diagonal apex-tooth pair complete. The teeth still form a $(1-y^c)$-dense blockade; their cross-pairs need not be complete.}
		\label{fig:half-anticomb}
	\end{figure}
	
	Such an anticomb was considered in~\cite[Chapter 6]{2025thes}, and also implicitly appeared in~\cite{density3} (both in the sparse setting, by passing to the complement).
	A special feature of this anticomb is that the `teeth' poly-blockade $(V_1,\ldots,V_\ell)$ has its length $\ell\approx y^{-c}$, width $\poly(y)\abs G\ge \poly(y)\abs{\Gamma}$, and `cross' density $1-y^c$ all {\em inherited} from the global density parameter $y$ of $G$.
	Now we can apply \cref{lem:comb} to each pair $(V_j,V_i)$ with $i<j$, to obtain either a $(1-x)$-dense poly-sub-blockade of $(V_1,\ldots,V_\ell)$, or an anticomb from $V_j$ to $V_i$ for some $i<j$ (in other words, a small anticomb `inside' a large anticomb).
	The discussion on our change so far is actually a proof sketch of the following standalone lemma, which may be of independent interest and whose proof we provide in \cref{sec:buildcombs}.
	
	\begin{lemma}
		\label{lem:getcomb0}
		Let $x,y\in(0,2^{-99}]$ with $x\le y$. Then every $(1-y)$-dense graph $G$ with $\abs G> y^{-1}$ contains either:
		\begin{itemize}
			
			\item a $(1-y^2)$-dense induced subgraph with at least $\abs G/2$ vertices;
			
			\item a $(1-x)$-dense $(y^{-1/6},y^3\abs G)$-blockade; or
			
			\item adjacent $u,v\in V(G)$ and a $(k,\abs G/k^{99})$-anticomb $((a_i,B_i):i\in[k])$ in $G\setminus\{u,v\}$, where $k\in[y^{-1/24},x^{-1}]$, such that $a_i\in N_G(v)\setminus N_G(u)$ and $B_i\subset N_G(u)\setminus N_G(v)$ for all $i\in[k]$.
		\end{itemize}
	\end{lemma}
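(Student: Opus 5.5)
Assume the first two outcomes fail; we will produce the third. The plan follows the sketch above: build a half anticomb, prune it to get a dense teeth blockade, and then use \cref{lem:comb} between teeth.

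\emph{Step 1 (half anticomb).} Set $\ell:=\ceil{y^{-c}}$ with $c$ a small constant (say $c=1/6$ or a bit more). We greedily build vertices $v_1,\ldots,v_\ell$ and sets $U_1,\ldots,U_\ell$ together with nested ``remaining'' sets $W_0=V(G)\supset W_1\supset\cdots$, where $\abs{W_i}\ge(1-O(y^{1/2}))^i\abs G\ge\abs G/2$. At step $i$, the set $W_{i-1}$ has at least $\abs G/2$ vertices, so it is not $(1-y^2)$-dense. Hence some $v_i\in W_{i-1}$ has a non-neighbour set $U_i'$ in $W_{i-1}$ of size at least $y^2\abs{W_{i-1}}$. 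Since $G$ is $(1-y)$-dense, $\abs{U_i'}<y\abs G$. Let $A_i:=N(v_i)\cap W_{i-1}$. Each vertex of $U_i'$ has fewer than $y\abs G$ non-neighbours in $A_i$, so the vertices of $A_i$ have on average few non-neighbours in $U_i'$.

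Apply \cref{lem:comb} in $\overline G$ to the pair $(A_i,U_i')$ with $\Delta:=y^{2c'}\abs{U_i'}$ (after deleting the $O(y^{1/2})$-fraction of $A_i$ with more non-neighbours, found by averaging). One of two things happens.
\begin{itemize}
\item At most $20\sqrt{\abs{U_i'}\Delta}\le\abs{U_i'}/2$ vertices of $U_i'$ have a non-neighbour in the pruned $A_i$. Then we take $U_i$ to be the rest, so that $U_i$ is complete to the pruned $A_i$; set $W_i$ to be the pruned $A_i$.
\item We get an anticomb from $A_i$ to $U_i'$ of length $k\ge y^{-c'}$ and width $\abs{U_i'}/k^2$. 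If $k\le x^{-1}$, this is the third outcome with $u=v=v_i$ adjacency issue fixed below. If $k>x^{-1}$, truncate it to length about $x^{-1}$.
\end{itemize}
For the first bullet it is enough to require that $A_i$ is almost complete to $U_i'$, meaning $(1-y^{2c})$-dense, rather than complete. This is what makes ``later $v_j,U_j$ are $(1-y^{2c})$-dense to earlier $U_i$'' hold, as in the displayed half anticomb.

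The third outcome needs $u,v$ adjacent, with $a_i\in N(v)\setminus N(u)$ and $B_i\subset N(u)\setminus N(v)$. So the correct use is the following: the anticomb arises \emph{between teeth}, with $u:=v_j$ and $v:=v_i$ for $i<j$. These are adjacent since $v_j\in N(v_i)$. The teeth satisfy $V_i\subset N(v_j)\setminus N(v_i)$ and $V_j\subset N(v_i)\setminus N(v_j)$.

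\emph{Step 2 (pruning).} Each $v_j$ with $j>i$ has at most $y^{2c}\abs{U_i}$ non-neighbours in $U_i$. Removing these for all $j>i$ costs at most $\ell y^{2c}\abs{U_i}\le\abs{U_i}/2$. This yields $V_i$ with $\abs{V_i}\ge\abs{U_i}/2\ge\Omega(y^2)\abs G$. The resulting structure has $\{v_j\}\cup V_j\subset N(v_i)$ for all $i\ne j$, $V_i\cap N(v_i)=\emptyset$, and $(V_1,\ldots,V_\ell)$ is $(1-y^c)$-dense.

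\emph{Step 3 (comb between teeth).} For each pair $i<j$, apply \cref{lem:comb} in $\overline G$ with $A=V_j$, $B=V_i$ and $\Delta=y^c\abs{V_i}$. There are two possible results.
\begin{itemize}
\item Some pair yields an anticomb $((a_r,B_r):r\in[k])$ with $k\ge$ roughly $y^{-c/2}\ge y^{-1/24}$ (since $20\sqrt{\abs B\Delta}$ is small relative to $\abs B$ otherwise). Truncating to $k\le x^{-1}$ if necessary, and using $\abs{B_r}\ge\abs{V_i}/k^2\ge y^3\abs G/k^2\ge\abs G/k^{99}$ because $k\ge y^{-1/24}$ gives $y^3\ge k^{-72}$, we obtain the third outcome with $u=v_j$ and $v=v_i$. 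The vertices $u,v$ lie outside all teeth.
\item Otherwise, for every pair $i<j$ we can delete at most half of $V_i$ so that the rest is $(1-x)$-dense from $V_j$. These deletions must be compatible across all pairs. For that, apply the lemma with the stronger threshold $\Delta=x\abs{V_i}/\ell$ and thin each block by $\ell$ successive pairs, each time removing a small fraction. This yields a $(1-x)$-dense blockade of length $\ell\ge y^{-1/6}$ and width $\ge y^3\abs G$, which is the second outcome.
\end{itemize}

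\emph{Main obstacle.} The hardest part is this compatibility: sparsifying across all $\binom{\ell}{2}$ pairs simultaneously while keeping widths $\ge y^3\abs G$. The dichotomy of \cref{lem:comb} only controls the vertices of $B$ with any non-neighbour in $A$, so we must take $\Delta$ tiny, of order $x$. The number of bad vertices is then $20\sqrt{\abs B\Delta}$, which is small only if the comb fails. Balancing $k$ between $y^{-1/24}$ and $x^{-1}$, and the exponent $99$, against the constant $c$ is the delicate bookkeeping. Step 1 also needs the exponents chosen so that $\ell y^{2c}\le 1/2$ and $\ell\, O(y^{1/2})\le 1/2$.
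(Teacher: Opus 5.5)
Your architecture (greedy half anticomb, pruning to teeth, then an anticomb from a later tooth $V_j$ into an earlier tooth $V_i$ with $u=v_j$, $v=v_i$) is the same as the paper's, but Step~3, which carries the whole difficulty, does not work as written. There are four problems.

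\emph{The deletions are too large.} When you apply \cref{lem:comb} to all of $(V_j,V_i)$ with $\Delta=y^c\abs{V_i}$, the non-comb outcome only bounds by $20y^{c/2}\abs{V_i}$ the number of vertices of $V_i$ with a non-neighbour in $V_j$. Summing over the $\ell\approx y^{-c}$ later teeth gives about $20y^{-c/2}\abs{V_i}$, so nothing survives.

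\emph{Your remedy is not available.} Taking $\Delta=x\abs{V_i}/\ell$ fails because \cref{lem:comb} requires every vertex of $A=V_j$ to have at most $\Delta$ non-neighbours in $V_i$. You only know a bound of order $y^{c}\abs{V_i}$.

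\emph{Truncation fails.} Cutting an anticomb with $k>x^{-1}$ down to length $\floor{x^{-1}}$ does not give the third outcome. The teeth keep size $\abs{V_i}/k^2$ for the original $k$, which may be as large as $\abs{V_i}^{1/2}$, so they can be far smaller than $\abs G/\floor{x^{-1}}^{99}$.

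\emph{Compatibility is unresolved.} You flag compatibility of deletions across pairs as the main obstacle but do not resolve it. Step~1 has a similar loose end: the anticomb branch of \cref{lem:comb} yields only the single apex $v_i$, not an adjacent pair, and is never handled. There you should simply drop \cref{lem:comb} and prune by averaging, as the paper does.

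The missing ingredients are \cref{lem:locate}, a restriction to bad vertices, and a backwards construction. For an earlier tooth $Y$ and an already fixed later block $Q_j$, let $P_j$ be the set of vertices of $Y$ with at least $x\abs{Q_j}$ non-neighbours in $Q_j$. Applying \cref{lem:locate} in $\overline G$ gives $X\subset Q_j$ with $\abs X\le 1/x$ such that at least half of $P_j$ has a non-neighbour in $X$. Then apply \cref{lem:comb} in $\overline G$ to $(X,P_j)$, with $\Delta$ the cross-density bound.
\begin{itemize}
\item If $\abs{P_j}>\abs Y/(2\ell)$, the first outcome of \cref{lem:comb} contradicts this half-covering. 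So you get an anticomb with $k\le\abs X\le 1/x$ automatically, and $k=\Omega(y^{-c/2})$ follows from $\abs{P_j}/k^2\le\Delta$.
\item Otherwise $P_j$ is small and you simply delete it.
\end{itemize}
For the first case to work, the cross-density must be polynomially smaller than $1/\ell$; the paper uses $2y^{1/3}$ against $\ell=\ceil{y^{-1/6}}$. The $(1-y^c)$-density you record in Step~2 is therefore too weak. Your construction does in fact give $(1-O(y^{2c}))$-density, which suffices.

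Compatibility comes from choosing $r$ minimal such that there are $Q_r,\ldots,Q_\ell$, each of size at least half its tooth, with $Q_i$ being $(1-x)$-dense to $Q_j$ for all $r\le i<j$. The new block $Q_{r-1}:=Y\setminus\bigcup_{j\ge r}P_j$ is measured only against the already fixed $Q_j$, so nothing is ever revisited. When $r=1$, the reversed sequence $(Q_\ell,\ldots,Q_1)$ is the $(1-x)$-dense blockade of the second outcome.
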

	
	We remark that this standalone lemma (switched to the complement) can be incorporated into the first round of iterative sparsification in~\cite[Section 5]{density7}.
	
	Let us now return to the aforesaid `anticomb inside anticomb' idea that leads to the third outcome of \cref{lem:getcomb0}, and discuss how it may be extended to grow induced paths longer than $P_5$.
	Let $\mac V=(V_1,\ldots,V_\ell)$ be the above blockade of teeth with apexes $v_1,\ldots,v_\ell$, and assume that for some $i<j$, there is a smaller anticomb $((a_p,B_p):p\in[k])$ from $V_j$ to $V_i$, such that the blockade $\mac B=(B_1,\ldots,B_k)$ (a slight abuse of notation with respect to the above summary of~\cite[Section 5]{density7}) has width at least $\abs{V_i}/\poly(k)\ge \abs G/\poly(k)\ge \abs{\Gamma}/\poly(k)$ (and so it is a poly-blockade in $G$ and in $\Gamma$).
	This configuration yields plenty of induced $P_3$ patterns, for instance via $v_i\text-a_p\text-B_q$ for all distinct $p,q\in[k]$.
	We also observe that:
	\begin{itemize}
		\item induced $P_3$ patterns could also arise if $((a_p,B_p):p\in[k])$ was a {\em comb} instead of an anticomb, as one could go through $v_i\text-a_p\text-B_p$ for any $p\in[k]$; and
		
		\item for a similar reason, one could also obtain many $P_3$ patterns if the apexes $a_1,\ldots,a_k$ actually come from $\bigcup_{j\ne i}V_j$, by starting from $v_i$ in the case of anticomb and from $\{v_j:j\ne i\}$ in the case of comb. 
	\end{itemize} 
	
	The two ways of obtaining a $P_3$ from nested (anti)combs are shown in \cref{fig:nested-combs} below.
	\begin{figure}[htbp]
		\centering
		\begin{tikzpicture}[
			panel/.style={draw=black!30,rounded corners=3pt,fill=black!1},
			apex/.style={circle,draw=blue!65!black,fill=blue!12,minimum size=.69cm,font=\small},
			tooth/.style={draw=blue!65!black,rounded corners=2pt,fill=blue!6,
				minimum width=.96cm,minimum height=.61cm,font=\small},
			edge/.style={draw=black!80,line width=.85pt},
			nonedge/.style={draw=red!65!black,densely dashed,line width=.7pt}
			]
			\draw[panel] (0,0) rectangle (5.95,3.15);
			\node[font=\small\bfseries] at (2.98,2.78) {Inner anticomb};
			\node[apex] (vanti) at (.84,1.63) {$v_i$};
			\node[apex] (aanti) at (2.81,1.63) {$a_p$};
			\node[tooth] (bqanti) at (5.02,1.63) {$B_q$};
			\node[tooth] (bpanti) at (5.02,.63) {$B_p$};
			\draw[edge] (vanti) -- (aanti) -- (bqanti);
			\draw[nonedge] (vanti.north) .. controls (2.6,2.38) and (3.45,2.38) .. (bqanti.north);
			\draw[nonedge] (aanti.south east) -- (bpanti.north west);
			\node[font=\footnotesize] at (2.81,.87) {$a_p\in V_j$};
			\node[font=\footnotesize] at (2.98,.26) {$B_p,B_q\subset V_i$};
			\begin{scope}[xshift=6.35cm]
				\draw[panel] (0,0) rectangle (5.95,3.15);
				\node[font=\small\bfseries] at (2.98,2.78) {Inner comb};
				\node[apex] (vcomb) at (.84,1.63) {$v_i$};
				\node[apex] (acomb) at (2.81,1.63) {$a_p$};
				\node[tooth] (bpcomb) at (5.02,1.63) {$B_p$};
				\node[tooth] (bqcomb) at (5.02,.63) {$B_q$};
				\draw[edge] (vcomb) -- (acomb) -- (bpcomb);
				\draw[nonedge] (vcomb.north) .. controls (2.6,2.38) and (3.45,2.38) .. (bpcomb.north);
				\draw[nonedge] (acomb.south east) -- (bqcomb.north west);
				\node[font=\footnotesize] at (2.81,.87) {$a_p\in V_j$};
				\node[font=\footnotesize] at (2.98,.26) {$B_p,B_q\subset V_i$};
			\end{scope}
		\end{tikzpicture}
		\caption{How an inner (anti)comb extends the outer anticomb. For an inner anticomb choose $q\ne p$ and a vertex of $B_q$; for an inner comb choose a vertex of $B_p$. In both cases, $v_i$ is adjacent to $a_p\in V_j$ and anticomplete to the chosen tooth in $V_i$.}
		\label{fig:nested-combs}
	\end{figure}
	
	Continuing in this fashion, if we could build a smaller comb/anticomb from $\bigcup_{q\ne p}B_q$ to $B_p$ for some $p\in[k]$, then this would lead to many induced $P_4$ patterns.
	In general, for longer induced $P_t$ patterns, it would be highly desirable to keep building `(anti)combs inside (anti)combs' in this way, where each new inner (anti)comb has its teeth blocks lying in a large tooth block of its preceding outer (anti)comb, and its apexes are `scattered' around the other large tooth blocks.
	Generating such small (anti)combs, however, already appears challenging at $t=4$, because we have no information about the cross density within $\mac B$ (in fact, we cannot even force every two blocks of $\mac B$ to be sparse or dense to each other).
	
	To examine this issue in depth, let us recall that we could construct $\mac B$ because of the cross density $1-y^c$ between $V_1,\ldots,V_\ell$, which in turn came from the global density parameter $y$ of $G$.
	Following this global density idea, the argument in~\cite{density5} {\em shrinks} each $G[V_i]$ into a {\em sufficiently large} $y$-restricted induced subgraph $G[W_i]$, and then assumes that $G[W_i]$ is $(1-y)$-dense for all $i\in[\ell]$, because the $y$-sparse case yields anticomplete blockades of length $\poly(y^{-1})$ by \cref{lem:treesparse}.
	The proof method in~\cite{density5} then essentially builds the blockade $\mac B$ from $\bigcup_{j\ne i}W_j$ to $W_i$ for some $i$, such that the teeth of $\mac B$ are now guaranteed to form a $(1-y^c)$-dense blockade inside $G[W_i]$.
	Repeating this leads to the process of constructing smaller anticombs with prescribed cross density $1-y^c$; the whole process is formulated in~\cite{density5} as an analogue of the Gy\'arf\'as path argument.
	The difficulty now lies in the {\em width} of $(W_1,\ldots,W_\ell)$, that is, how much one can afford to shrink each $G[V_i]$ into a $y$-restricted induced subgraph.
	Asking for the shrinkage factor to be $\poly(y)$ is not feasible, because this would amount to assuming the polynomial R\"odl property that is essentially our desired conclusion.
	As an alternative, the argument in~\cite{density5} chooses the most natural shrinkage factor $\poly(x)$, which means each $W_i$ is only guaranteed to have size $\poly(x)\abs{V_i}\approx \poly(x)\abs G\approx \poly(x)\abs{\Gamma}$.
	This, in turn, breaks the favourable `poly' property of $(W_1,\ldots,W_\ell)$, since $\ell\approx y^{-c}$; and the same situation arises for $\mac B$ as well, because accommodating its $(1-y^c)$-denseness forces its length to be at most $y^{-c}$.
	
	Let us now explain how generating highly restricted induced subgraphs can be done in~\cite{density5}.
	This is because of the following `transference' theorem~\cite[Theorem 2.1]{density5} (slightly modified to be consistent with other results in this paper) that allows one to `transfer' blockades into density:
	\begin{theorem}
		\label{thm:trans}
		Let $f\colon(0,\frac12]\to\mab R_{\ge0}$ be a non-increasing function satisfying $1<f(x)\le 1/x$ for all $x\in(0,\frac12]$.
		Let $H$ be a graph, and suppose that there exists $b\ge2$ such that for every $x\in(0,2^{-b}]$ and every $H$-free graph $F$ with $\abs F>x^{-b}$,
		there is an $x$-sparse or $(1-x)$-dense $(f(x),{x^b\abs F})$-blockade in $F$.
		Then there exists $d\ge2$ such that for every $\eps\in(0,\frac12]$ and for $\delta>0$ satisfying
		\[\log\frac1\delta=\frac{d(\log\frac1\eps)^2}{\log(f(\eps))},\]
		every $H$-free graph $G$ contains an $\eps$-restricted induced subgraph with at least $\delta\abs G$ vertices.
	\end{theorem}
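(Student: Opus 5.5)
This is the transference theorem \cite[Theorem 2.1]{density5}; the plan is iterative restriction in the spirit of \cref{lem:sparsify}, driven by the blockade hypothesis. Fix $\eps$ and let $G$ be $H$-free. We track a current density threshold $x$, starting from $x_0=\eps$ and decreasing towards a terminal value $x_\ast$. The goal is to reach either an $\eps$-restricted induced subgraph directly, or to pass from scale $x$ to scale roughly $x^{\Theta(1)}$ while losing only a factor $x^{O(b)}$ in size. The count of $(\log\frac1\eps)^2/\log f(\eps)$ in the exponent should come from two sources. Each step costs about $b\log\frac1x$ in the logarithm of the size. The number of steps is governed by how long the blockades are, and a longer blockade allows a larger jump in density.

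The basic step is as follows. Given an $H$-free $F$ with $\abs F>x^{-b}$, apply the hypothesis at $x$ to get an $x$-sparse or $(1-x)$-dense blockade $(B_1,\ldots,B_k)$ with $k\ge f(x)$ and width $x^b\abs F$. By symmetry (pass to complements, which preserves the setup for $\overline H$ — or, more carefully, treat the two cases in parallel), assume it is $x$-sparse. The key observation is this: if we take $m$ blocks and discard, in each block, the few vertices with too many neighbours into the others, then the union $U$ of $m\le k$ blocks has maximum degree at most about $\abs{B_i}+mx\max\abs{B_j}$. If the blocks have roughly equal size (trim them to equal size first), this degree is about $(1/m+x)\abs U$. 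So $G[U]$ is roughly $\max(1/m,x)$-sparse, with $\abs U\ge m x^b\abs F$. Taking $m\approx f(x)$ converts the blockade into an induced subgraph of $G$ whose sparsity is about $1/f(x)$. This is not yet $\eps$-sparse unless $f(x)\ge 1/\eps$.

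To reach $\eps$ we iterate at a fixed scale. Set $x:=\eps^{C}$ for a suitable $C$, with $C\log\frac1\eps$ chosen so that the work balances. Applying the step inside $U$ (which is still $H$-free) produces a nested structure of depth $r$. At depth $r$ we get an induced subgraph whose density ratio has improved multiplicatively by about $f(x)^{-1}$ per level, because sparse-inside-sparse composes. So after $r\approx \log\frac1\eps/\log f(\eps)$ levels the density falls below $\eps$. The size loss per level is $x^{b}=\eps^{O(b)}$, giving a total loss of
\[\log\frac1\delta\approx r\cdot b\log\tfrac1\eps\cdot O(1)=\frac{O(b)(\log\frac1\eps)^2}{\log f(\eps)},\]
which is the claimed form. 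To use $f(\eps)$ rather than $f(x)$ in the count, we work at scale $x=\eps$ throughout, which is allowed because $f$ is non-increasing. At each level, an $\eps$-sparse blockade of length $f(\eps)$ and width $\eps^b$ times the current size lets us zoom into a single block or a union of blocks.

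The main obstacle is the mixed case. One level may produce a sparse blockade and the next a dense one, and unions of blocks with mixed densities are not restricted. The fix I would try first is to keep, at each level, only one block rather than a union. We record the type, sparse or dense, of its relation to its siblings, and then use the pigeonhole principle over the $r$ levels to find $r/2$ levels of the same type. The resulting structure is a tree-like family of sets in which all the relevant cross pairs are $\eps$-sparse, or all are $(1-\eps)$-dense. We then pick one vertex-set per branch and take a union, so that the degree within the union is dominated by the cross terms, yielding a $2\eps$-restricted subgraph. Since $r$ only halves, the exponent changes only by a constant factor absorbed into $d$. The routine remaining checks are the final rescaling $\eps\to\eps/2$ and the requirement $\abs F>\eps^{-b}$ at every level, which holds unless $\abs G$ is already tiny, in which case the conclusion is trivial with one vertex.
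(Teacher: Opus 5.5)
\paragraph{A gap at the composition step.} The paper does not prove this statement; it quotes it from \cite[Theorem 2.1]{density5}. Your argument therefore has to stand on its own, and it does not. The step you call ``sparse-inside-sparse composes'' is false as stated, and it is exactly where the difficulty of the theorem lies.

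In an $x$-sparse blockade $(B_1,\ldots,B_k)$, a vertex of $B_j$ is only guaranteed fewer than $x\abs{B_i}$ neighbours in $B_i$. When you apply the hypothesis inside $B_i$, the sub-blocks have size about $x^b\abs{B_i}$. This is much smaller than $x\abs{B_i}$ because $b\ge2$, so the inherited bound is vacuous at the new scale. Nothing prevents that vertex from being complete to the sub-blocks you use. Consequently, for leaves lying in different branches of your depth-$r$ tree, the cross pairs are uncontrolled, and no union of leaves need be restricted.

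The ``union of blocks'' iteration fails for the same reason. Suppose $U$ is about $1/m$-sparse and you rerun the step inside $U$. The new union has size about $mx^b\abs U$, far below $\abs U/m$, so the degree bound inherited from $U$ says nothing. In general the new union is again only about $1/m$-sparse, since a vertex may be adjacent to its whole own block. Sparsity of a graph never passes to blocks forming an $x^b$-fraction of it, so there is no multiplicative gain per level.

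The obvious repair is to take the threshold at each level smaller than the product of all the widths below it. Then $\log(1/x)$ at a level grows geometrically in the number of levels beneath it, which is far from the claimed exponent when $f$ grows slowly. A genuinely different mechanism for making higher-level relations hereditary under all the later shrinking is needed, and the proposal does not supply one.

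\paragraph{The mixed case.} Your treatment makes matters worse rather than better. Keeping a single block per level produces a chain. At the end you have only about $r$ sets (or about $r f(\eps)$ if you keep all siblings), not $f(\eps)^r$. Since a vertex may be adjacent to all of its own set, a union of $N$ equal-sized sets is in general only about $1/N$-sparse. Here $rf(\eps)\approx f(\eps)\log\frac1\eps/\log f(\eps)$, which is far below $\eps^{-1}$ unless $f(\eps)$ is nearly $1/\eps$, so the chain never reaches $\eps$.

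The count $f(\eps)^r$ that your exponent bookkeeping relies on requires branching into every block. That in turn needs a cograph-type Ramsey argument over the labelled tree instead of a per-level pigeonhole. It also puts you back in the two-sided shrinking situation described above, where the cross relations are lost.
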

	(A version for counting copies of $H$ was previously proved in~\cite{bnss2024}.)
	
	The argument in~\cite{density5} then carefully selects a sharply decreasing sequence $(y_s)_{s\ge0}$ of positive functions of the density threshold $x$ and a sharply decreasing sequence $(c_s)_{s\ge0}$ of positive numbers tending to $0$, starting with $y_0=1/2$ and $c_0=1$, such that \cref{thm:trans} allows for the following:
	\begin{quote}
		\it For every $s\ge0$, if every $P_t$-free graph $F$ with $\abs F>\poly(x^{-1})$ contains an $x$-sparse or $(1-x)$-dense $(y_s^{-c_s},\poly(x)\abs F)$-blockade, then every $P_t$-free graph $\Gamma$ contains a $y_{s+1}$-restricted induced subgraph with at least $\poly(x)\abs{\Gamma}$ vertices.
	\end{quote}
	
	This facilitates induction on $s$, because as we discussed above, the proof method in~\cite{density5} allows one to build an anticomplete or $(1-x)$-dense $(y_{s+1}^{-c_{s+1}},\poly(x)\abs G)$-blockade in every $P_t$-free $y_{s+1}$-restricted graph $G$ with $\abs{ G}>\poly(x^{-1})$. The base case $s=0$ follows from the fundamental theorem of Erd\H os and Hajnal~\cite{eh1989} that for every $H$, every sufficiently large $H$-free graph $F$ contains an $x$-sparse or $(1-x)$-dense $(2,\poly(x)\abs F)$-blockade (see also~\cite[Lemma 2.1]{fs2008}).
	The choice of $(y_s)_{s\ge0}$ then leads to the `near' \erh{} bound $2^{(\log \abs{\Gamma})^{1-o(1)}}$.
	As such, the argument in~\cite{density5} can be summarised as follows:
	\begin{quote}
		\it Start from long and wide blockades, then use \cref{thm:trans} to generate density from blockades, then use density to build anticombs, then use anticombs to construct longer blockades that are still sufficiently wide.
	\end{quote}
	
	There are two main shortcomings with this approach:
	\begin{itemize}
		\item The favourable `poly' property of the aforesaid $(W_1,\ldots,W_\ell)$ and $\mac B$ was already broken in the second step.
		
		\item Using high density could only produce anticombs and blockades that are {\em uniformly} anticomplete or dense; meanwhile, our goal is to produce $x$-pure poly-blockades, and we noted above that combs may also be useful in growing induced paths.
	\end{itemize}
	
	If our final goal is just to produce sufficiently long $x$-pure poly-blockades, then generating highly restricted induced subgraphs via \cref{thm:trans} may not really be needed.
	Thus, our first key idea in the proof of \cref{lem:pathpure} is to produce $x$-pure blockades within each aforesaid $G[V_i]$ {\em directly}, without going through density, so that the two shortcomings above can be avoided.
	In more detail, since we cannot build a long induced path, given the above discussion we can assume that for the aforesaid $(V_1,\ldots,V_\ell)$, for all $i\in[\ell]$ there is no appropriate comb or anticomb going from $\bigcup_{j\ne i}V_j$ to $V_i$.
	We now aim to design a suitable structural framework (`splitting paths', see \cref{subsec:split} below) which essentially implies that under this assumption, for each $i\in[\ell]$ and for every induced subgraph $F_i$ of $G[V_i]$ with $\abs {F_i}\ge \poly(y)\abs{V_i}$, either:
	\begin{itemize}
		\item $F_i$ contains an $x$-pure poly-blockade; or
		
		\item $\bigcup_{j\ne i}V_j$ is $x^d$-pure to a significant portion of $V(F_i)$ (for some large $d$, note that the cross density $1-y^c$ will no longer be needed).
	\end{itemize} 
	
	For every $i$, if the second outcome occurs for some $F_i$ with $D_i$ the significant portion of interest, then we can use \cref{lem:xpure} to prune $(D_1,\ldots,D_\ell)$ a little further to obtain an $x$-pure poly-sub-blockade.
	If the first outcome holds for some fixed $i$ and every admissible $F_i$, then we can `bookkeep' the poly-blockades within $G[V_i]$ to eventually produce a sufficiently long $x$-pure poly-blockade inside it.
	The two routes are summarised in \cref{fig:bookkeeping-routes}.
	\begin{figure}[htbp]
		\centering
		\begin{tikzpicture}[
			box/.style={draw=blue!60!black,rounded corners=3pt,fill=blue!5,
				align=center,font=\small,minimum height=.76cm},
			arrow/.style={->,draw=black!75,line width=.75pt}
			]
			\node[box,text width=9.6cm] (start) at (0,3.65)
			{Teeth $V_1,\ldots,V_\ell$ of the initial anticomb; no further useful cross-block (anti)comb};
			\node[box,text width=4.8cm,minimum height=1.13cm] (local) at (-2.85,2.05)
			{For some fixed $i$, every large $F_i\subseteq G[V_i]$ has a local $x$-pure poly-blockade};
			\node[box,text width=4.8cm,minimum height=1.13cm] (cross) at (2.85,2.05)
			{For every $i$, some large $F_i\subseteq G[V_i]$ gives $D_i$ with external $x^d$-purity};
			\node[box,text width=4.8cm] (keep) at (-2.85,.65)
			{Bookkeep inside $G[V_i]$};
			\node[box,text width=4.8cm] (prune) at (2.85,.65)
			{Prune $(D_1,\ldots,D_\ell)$ using \cref{lem:xpure}};
			\node[font=\small\bfseries] at (-2.85,-.3) {long $x$-pure poly-blockade};
			\node[font=\small\bfseries] at (2.85,-.3) {global $x$-pure poly-blockade};
			\draw[arrow] (start.south west) -- (local.north);
			\draw[arrow] (start.south east) -- (cross.north);
			\draw[arrow] (local) -- (keep);
			\draw[arrow] (cross) -- (prune);
			\draw[arrow] (keep.south) -- (-2.85,-.08);
			\draw[arrow] (prune.south) -- (2.85,-.08);
		\end{tikzpicture}
		\caption{The two ways to assemble local information from the teeth of the initial anticomb, once further useful (anti)combs have been excluded. The left-hand route replaces generating highly restricted induced subgraphs.}
		\label{fig:bookkeeping-routes}
	\end{figure}
	
	Compared to recent works on \erh{}~\cite{density3,density4,density5,density6,density7}, the chief difference of this `bookkeeping blockades' idea lies in the left-hand route in \cref{fig:bookkeeping-routes}, that is, sufficiently long (poly-)blockades no longer come mainly from the global density of any large induced subgraph, but instead are built in a much more direct way by `bookkeeping' blockades obtained throughout the process.
	As such, our proof of \cref{lem:pathpure} will not involve iterative densification.
	Our argument will employ the following simple variant of the conversion theorems~\cite[Theorem 6.1]{density7} and~\cite[Theorem 4.2]{density4}, which in turn can all be viewed as distant relatives of \cref{thm:trans}.
	
	\begin{theorem}
		\label{thm:convsemi}
		Let $b\ge2$, and let $x,y\in(0,\frac12]$ with $x\le y^{2b+1}$.
		Let $G$ be a graph, such that for every induced subgraph $F$ of $G$ with $\abs F\ge y^{b}\abs G$, there exist an integer $k\in[2,1/x]$ and an $x$-pure $(k,\abs F/k^b)$-blockade in $F$.
		Then $G$ contains an $xy^{-2b}$-pure $(\ell,\abs G/\ell^{2b})$-blockade for some $\ell\in[y^{-1},x^{-1}]$.
	\end{theorem}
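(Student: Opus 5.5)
The plan is to build the required blockade by a recursive refinement. Starting from the trivial blockade $(V(G))$, we apply the hypothesis repeatedly inside the current blocks. Every block produced carries a \emph{mass} $\mu$, and the bookkeeping keeps the masses summing to $1$ and keeps each block of size at least $\mu^b\abs G$. A pigeonhole argument over dyadic mass classes should then extract a single long, wide blockade. Purity between two final blocks will be inherited from the step at which their ancestors were separated, at the cost of a factor depending on how much the blocks shrank since that step. I expect controlling this loss to be the crux; it is what $x\le y^{2b+1}$ and the weakened purity $xy^{-2b}$ must pay for.

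\textbf{Refinement.} Assign $V(G)$ mass $1$. While some current block $C$ has $\mu(C)>y$, do the following. Since $\abs C\ge\mu(C)^b\abs G\ge y^b\abs G$, the hypothesis applies to $F=G[C]$ and gives $k_C\in[2,1/x]$ and an $x$-pure $(k_C,\abs C/k_C^b)$-blockade inside $C$. Replace $C$ by these $k_C$ blocks, each with mass $\mu(C)/k_C$; the invariants $\sum\mu=1$ and $\abs{C'}\ge\mu(C')^b\abs G$ are preserved. At the end every block has mass in $[xy,y]$, so there are at least $1/y$ blocks. If already the first application gives $k\ge 1/y$, we stop at once with $\ell=k$, since then $\abs F/k^b\ge\abs G/\ell^{2b}$.

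\textbf{Extraction.} Group the final blocks by $\mu\in(2^{-j-1},2^{-j}]$. Only $j$ with $2^{-j}\le y$ occur, and there are $O(\log(1/x))$ classes. A weighted pigeonhole argument (weights of order $1/j^2$) gives a class $j$ with $n_j\ge 2^{j}/\poly(j)$. Take $\ell:=\min(n_j,\floor{1/x})$ of its blocks. Each has size at least $2^{-(j+1)b}\abs G$, which is at least $\abs G/\ell^{2b}$ because $j\ge\log(1/y)$ is large relative to the $\poly(j)$ loss. Also $\ell\ge 1/y$, since $y$ is small enough to absorb $\poly(j)$.

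\textbf{Purity (main obstacle).} For two final blocks $C,D$, let $B\supset C$ and $B'\supset D$ be the children of their last common ancestor; the pair $(B,B')$ is $x$-pure. Passing to subsets: if every vertex of $D$ has fewer than $x\abs B$ neighbours in $B$, then it has fewer than $(x\abs B/\abs C)\abs C$ neighbours in $C$, and symmetrically for the dense case. So the pair $(C,D)$ is $x\max(\abs B/\abs C,\abs{B'}/\abs D)$-pure. We need $\abs C\ge y^{2b}\abs B$.
\begin{itemize}
\item Along the path from $B$ to $C$ the sizes shrink by at most $(\mu(C)/\mu(B))^b$, so it would suffice to have $\mu(C)\ge y^2\mu(B)$.
\item Raw masses do not give this: $B$ may have mass close to $1$ while $C$ has mass about $xy$.
\end{itemize}
My first attempt to fix this is to change the stopping rule to a relative one: refine $C$ only while the refinement keeps $\mu$ within a factor $y^2$ of the mass at the most recent separation. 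Fresh sub-blockades with new ratios are then started only inside blocks of size at least $y^b$ times their parent, and the hypothesis is applied only to sets of size at least $y^b\abs G$ in absolute terms. Failing that, I would use a cleaning step in the spirit of \cref{lem:xpure}. This discards a small fraction of each chosen block so that only the top-level splitting relation matters, at which point the ratio is at least $(y^2)^b=y^{2b}$. The slack $x\le y^{2b+1}$ also guarantees $xy^{-2b}\le y<\frac12$, so the output purity parameter is meaningful.
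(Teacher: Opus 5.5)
Your refinement and mass invariant are sound ($\sum\mu=1$ with $\abs C\ge\mu(C)^b\abs G$ is essentially the paper's potential $\sum_i\abs{A_i}^{1/b}\ge\abs G^{1/b}$), and your purity inequality is correct. But the proof does not go through: the crux is left open, and the fixes you suggest would not close it.

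You exit on $k\ge1/y$ only at the first application, so later splits may have $k_C$ as large as $1/x$. Final blocks can then have mass about $xy$ and size only about $(xy)^b\abs G$, while the separating ancestor $B$ can have size comparable to $\abs G$. The inherited parameter $x\abs B/\abs C$ is then of order $x^{1-b}y^{-b}\gg1$.

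A cleaning step in the spirit of \cref{lem:xpure} cannot repair this. Since each vertex of $C$ has fewer than $x\abs{B'}$ neighbours in $B'$, at most $x\abs{B'}/\theta$ vertices of $D$ have $\theta\abs C$ neighbours in $C$. That is a small part of $D$ only when $\theta\gg x\abs{B'}/\abs D$, which is the same ratio again. The relative stopping rule is too vague to check that it still yields $1/y$ blocks.

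The extraction step also has a gap. The pigeonhole guarantee $n_j\ge 2^j/\poly(j)$ falls short of $1/y$ when $2^{-j}$ is close to $y$, and $y$ is an arbitrary element of $(0,\frac12]$, not ``small enough''.

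The missing observation is that the bad splits are themselves winning outcomes. Every application of the hypothesis, at any stage, is made inside a block $C$ with $\abs C\ge y^b\abs G$. If it returns $k\ge1/y$, that $x$-pure blockade already has length $k\in[1/y,1/x]$ and width $\abs C/k^b\ge\abs G/k^{2b}$, so you stop.

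Hence you may assume every split has $k_C<1/y$. Since you only split blocks of mass greater than $y$, every block ever created then has mass greater than $y^2$ and size greater than $y^{2b}\abs G$.

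Each pair of final blocks $C,D$ inherits from its separating split one of two bounds, of the same type in both directions: fewer than $x\abs G<xy^{-2b}\abs C$ neighbours, or fewer than that many non-neighbours. So any $\ell=\ceil{1/y}\le1/x$ final blocks form the required $xy^{-2b}$-pure blockade of width at least $y^{2b}\abs G\ge\abs G/\ell^{2b}$. No dyadic extraction or cleaning is needed.

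This is the paper's argument. It takes a maximal blockade whose blocks have size at least $y^{2b}\abs G$, measures pairs against the absolute threshold $x\abs G$, splits a largest block while there are fewer than $1/y$ blocks, and exits immediately whenever a split has $k\ge1/y$.
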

	\begin{proof}
		Let $s\ge1$ be maximal such that $G$ contains a blockade $(A_1,\ldots,A_s)$ satisfying:
		\begin{itemize}
			\item $\abs{A_i}\ge y^{2b}\abs G$ for all $i\in[s]$;
			
			\item $\sum_{i\in[s]}\abs{A_i}^{1/b}\ge \abs G^{1/b}$; and
			
			\item for all distinct $i,j\in[s]$, either every vertex in $A_i$ has fewer than $x\abs G$ neighbours in $A_j$, or every vertex in $A_i$ has fewer than $x\abs G$ non-neighbours in $A_j$.
		\end{itemize}
		(These conditions are clearly satisfied for $s=1$, by taking $A_1=V(G)$.)
		
		If $s\ge y^{-1}$ then we are done since $x\abs G\le xy^{-2b}\abs{A_i}$ for all $i\in[s]$.
		So we may assume that $s<y^{-1}$. Since $b\ge2$, $y\le\frac12$, and $x\le y^{2b+1}$, we have $s<y^{-1}\le y^{-2b}\le x^{-1}$ and $xy^{-2b}\le y\le\frac12$.
		Let $i\in[s]$ satisfy $\abs{A_i}=\max_{j\in[s]}\abs{A_j}$; then $s\abs{A_i}^{1/b}\ge \abs G^{1/b}$ and so $\abs {A_i}\ge \abs G/s^b\ge \abs G/s^{2b}$ and $\abs{A_i}\ge \abs G/s^b\ge y^b\abs G$.
		Thus, the hypothesis gives an $x$-pure $(k,\abs{A_i}/k^b)$-blockade $(B_1,\ldots,B_k)$ in $G[A_i]$.
		If $k\ge y^{-1}$ then
		\[\abs{B_p}\ge\abs{A_i}/k^b\ge y^b\abs G/k^b\ge \abs G/k^{2b}\quad\text{ for all $p\in[k]$}\]
		and we are done.
		Thus, suppose for a contradiction that $k<y^{-1}$.
		Then $\abs{B_p}\ge\abs{A_i}/k^b\ge y^{2b}\abs G$ for all $p\in[k]$.
		Observe that $\sum_{p\in[k]}\abs{B_p}^{1/b}\ge\abs{A_i}^{1/b}$, which yields
		\[\sum_{j\in[s]\setminus\{i\}}\abs{A_j}^{1/b}+\sum_{p\in[k]}\abs{B_p}^{1/b}
		\ge \sum_{j\in[s]}\abs{A_j}^{1/b}\ge\abs G^{1/b}.\]
		Now, let $\mac B$ be the blockade obtained from $(A_1,\ldots,A_s)$ by replacing $A_i$ by the sequence $(B_1,\ldots,B_k)$ and maintaining the natural order. Then $\mac B$ has length $s+k-1>s$ since $k\ge2$. Moreover, $x\abs G\le xy^{-2b}\abs{A_j}\le\abs{A_j}/2$ and $x\abs G\le xy^{-2b}\abs{B_p}\le\abs{B_p}/2$ for all $j\ne i$ and $p\in[k]$, so edge counting rules out opposite density types in the two directions.
		\begin{itemize}
			
			\item for every $j\in[s]\setminus\{i\}$, either every vertex in $A_j$ has fewer than $x\abs G$ neighbours in $A_i\supset B_1\cup\cdots\cup B_k$, or every vertex in $A_j$ has fewer than $x\abs G$ non-neighbours in $A_i\supset B_1\cup\cdots\cup B_k$;
			
			\item for every $j\in[s]\setminus\{i\}$, either every vertex in $A_i\supset B_1\cup\cdots\cup B_k$ has fewer than $x\abs G$ neighbours in $A_j$, or every vertex in $A_i\supset B_1\cup\cdots\cup B_k$ has fewer than $x\abs G$ non-neighbours in $A_j$; and
			
			\item for all distinct $p,q\in[k]$, either every vertex in $B_p$ has fewer than $x\abs{B_q}\le x\abs G$ neighbours in $B_q$, or every vertex in $B_p$ has fewer than $x\abs{B_q}\le x\abs G$ non-neighbours in $B_q$.
		\end{itemize}
		
		Therefore, the blockade $\mac B$ violates the maximality of $s$. This proves \cref{thm:convsemi}.
	\end{proof}
	
	\subsection{Splitting paths}
	\label{subsec:split}
	We now explain the second key conceptual idea in the proof of \cref{lem:pathpure}.
	To begin with, we recall the following fundamental result, which was proved in the proof of~\cite[Lemma 4.1]{density7} (stated for the complement).
	
	\begin{lemma}
		\label{lem:conn}
		Let $k\ge2$ be an integer, and let $G$ be a graph with $\abs G\ge1$ whose components have size less than $\abs G/k$. Then $G$ contains an anticomplete $(k,\abs G/(2k))$-blockade.
	\end{lemma}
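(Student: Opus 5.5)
The plan is to group the components greedily into $k$ bins of roughly equal size. Let $C_1,\ldots,C_m$ be the vertex sets of the components of $G$, listed in any order. Each has size less than $\abs G/k$, and distinct components are anticomplete to each other. So any union of some components is anticomplete to any union of other components. It therefore suffices to partition a subfamily of the components into $k$ groups, each of total size at least $\abs G/(2k)$. The blocks $B_1,\ldots,B_k$ will be the unions of the groups.

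To build the groups, scan the components in order and close a block as soon as it is large enough. Start with $B_1=\emptyset$ and add $C_1,C_2,\ldots$ one by one until $\abs{B_1}\ge \abs G/(2k)$. Then start $B_2$ with the next component, and continue in this way. A block is only closed at the first moment it reaches size at least $\abs G/(2k)$. Just before the last component was added, the block had size less than $\abs G/(2k)$, and that component has size less than $\abs G/k$. Hence every closed block has size less than $\abs G/(2k)+\abs G/k=3\abs G/(2k)$.

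The main point is to check that at least $k$ blocks get closed. Suppose only $j<k$ blocks are closed. The leftover components form an unclosed partial block of size less than $\abs G/(2k)$. Counting all vertices gives
\[\abs G< j\cdot\tfrac{3\abs G}{2k}+\tfrac{\abs G}{2k}\le (k-1)\tfrac{3\abs G}{2k}+\tfrac{\abs G}{2k}=\tfrac{(3k-2)\abs G}{2k}.\]
This is not a contradiction when $k\ge2$, since $(3k-2)/(2k)\ge1$. So the naive bound $3\abs G/(2k)$ is too weak, and tightening it is the main obstacle.

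The fix is to charge each closed block only for what it needs. Order the components by non-increasing size and use first-fit, filling a bin until it reaches $\abs G/(2k)$. A closed bin either consists of a single component, which is fine, or its last component is no larger than the earlier ones in that bin. In the second case the last component has size at most half of the bin's total, so the bin has size less than $2\cdot\abs G/(2k)=\abs G/k$.

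A single-component bin also has size less than $\abs G/k$, by the hypothesis on component sizes. So every closed bin has size less than $\abs G/k$. If fewer than $k$ bins close, the total is less than
\[(k-1)\tfrac{\abs G}{k}+\tfrac{\abs G}{2k}<\abs G,\]
which is a contradiction. Hence $k$ bins close, and their unions form an anticomplete $(k,\abs G/(2k))$-blockade, with any leftover components discarded. This uses $\abs G\ge1$ only to make the bounds meaningful.
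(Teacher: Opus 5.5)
Your proof is correct. Sorting the components by non-increasing size means that a bin with at least two components at most doubles its size when it closes, so every closed bin has fewer than $|G|/k$ vertices; fewer than $k$ closed bins plus a leftover of fewer than $|G|/(2k)$ vertices then cannot account for all of $V(G)$.

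The paper gives no proof of its own here and refers instead to the proof of Lemma~4.1 in its earlier paper on the five-vertex path. Your greedy grouping of components into unions of size in $[|G|/(2k),|G|/k)$ is the standard argument for this fact.

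One small correction to your abandoned first attempt: the unsorted bound does give a contradiction when $k=2$, since then $(3k-2)/(2k)=1$ and the inequality is strict. It fails only for $k\ge3$.
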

	
	In~\cite[Section 5]{density7}, this was applied to the `teeth' $B_1,\ldots,B_k$ of the anticomb given in the third outcome of \cref{lem:getcomb0}, assuming that each $B_i$ is sufficiently large.
	Provided that a long anticomplete poly-blockade never occurs in any $G[B_i]$, \cref{lem:conn} would give a sub-blockade 
	$(D_1,\ldots,D_k)$ where $D_i\subset B_i$, $\abs{D_i}\ge\abs{B_i}/\poly(k)$, and $G[D_i]$ is connected for all $i\in[k]$.
	The blockade $(D_1,\ldots,D_k)$ is then pure, because if some $z\in D_j$ is mixed on $D_i$ for $i\ne j$ then one can build an induced $P_5$ by starting from the edge $va_i$.
	The mixed property works here because each $G[D_i]$ is {\em connected}, that is, for {\em any} partition $(X,Y)$ of $D_i$ into {\em nonempty} $X,Y$, there is an edge of $G[D_i]$ between $X$ and $Y$; and in particular this holds for $X=D_i\setminus N_G(z)$ and $Y=D_i\cap N_G(z)$.
	If one follows this approach to get $P_6$, then one would require that each $G[D_i]$ satisfies the following much stronger property:
	{\it for any partition $(X,Y)$ of $D_i$ into nonempty $X,Y$, there is an induced path $v_1\text-v_2\text-v_3$ in $G[D_i]$ with $v_2,v_3\in X$ and $v_1\in Y$.}
	A slight issue here is that $X$ could have just one vertex, so asking for this property to hold for {\em any} such partition $(X,Y)$ with $X,Y$ {\em merely} nonempty would not be feasible.
	To fix this, given the setting of \cref{lem:getcomb0} with density threshold $x$, and given our choice $X=D_i\setminus N_G(z)$ and $Y=D_i\cap N_G(z)$ as above, it would be natural to require $\abs X\ge x\abs{D_i}$ in our formulation, so that there is `room to spare' for $v_2$ and $v_3$.
	
	Thus, our `desired' extension of \cref{lem:conn} will take the following form.
	For $x>0$, we say that a graph $F$ is {\em $x$-splittable} if for every partition $(X,Y)$ of $V(F)$ with $\abs X\ge x\abs F$ and $Y\ne\emptyset$, there is an induced path $v_1\text-v_2\text-v_3$ in $F$ such that $v_2,v_3\in X$ and $v_1\in Y$; see \cref{fig:splittability} below.
	\begin{figure}[htbp]
		\centering
		\begin{tikzpicture}[
			panel/.style={draw=black,rounded corners=3pt,fill=black!1},
			inY/.style={circle,draw=orange!70!black,fill=orange!18,minimum size=.72cm,font=\small},
			inX/.style={circle,draw=blue!65!black,fill=blue!13,minimum size=.72cm,font=\small},
			edge/.style={draw=black!80,line width=.85pt},
			nonedge/.style={draw=red!65!black,densely dashed,line width=.7pt}
			]
			\draw[panel] (1,0) rectangle (4.6,2);
			\draw (2.8,2) -- (2.8,0);
			\node[font=\small\bfseries] at (2.8,2.4) {Connectedness};
			\node[inY] (yleft) at (1.9,1) {$v_1$};
			\node[inX] (xleft) at (3.7,1) {$v_2$};
			\draw[edge] (yleft) -- (xleft);
			\node[font=\footnotesize] at (1.9,.34) {$Y$};
			\node[font=\footnotesize] at (3.7,.34) {$X$};
			\begin{scope}[xshift=6.05cm]
				\draw[panel] (0,0) rectangle (6,2);
				\draw (2,2) -- (2,0);
				\node[font=\small\bfseries] at (3,2.4) {$x$-splittability};
				\node[inY] (yright) at (1,1) {$v_1$};
				\node[inX] (xoneright) at (3.1,1) {$v_2$};
				\node[inX] (xtworight) at (4.9,1) {$v_3$};
				\draw[edge] (yright) -- (xoneright) -- (xtworight);
				\draw[nonedge] (yright.north) .. controls (2.3,1.8) and (3.35,1.8) .. (xtworight.north);
				\node[font=\footnotesize] at (1,.34) {$Y$};
				\node[font=\footnotesize] at (4,.34) {$X$};
			\end{scope}
		\end{tikzpicture}
		\caption{For a partition $(X,Y)$ of $V(F)$ into nonempty parts, connectedness supplies an edge crossing from $Y$ to $X$. When $\abs X\ge x\abs F$, $x$-splittability supplies an induced $P_3$ with one endpoint in $Y$ and the other two vertices in $X$.}
		\label{fig:splittability}
	\end{figure}
	
	In our extension, we aim to find, in every large graph, either a large $x$-splittable induced subgraph, or a long `homogeneous' poly-blockade analogous to the anticomplete one in \cref{lem:conn}.
	Intuitively, it appears difficult to force such a `homogeneous' blockade to be fully complete or fully anticomplete, because in our favourite path $v_1\text-v_2\text-v_3$ as above, $v_1$ is mixed on $X$ since it is adjacent to $v_2$ and nonadjacent to $v_3$.
	Nonetheless, if we wish to extend the argument of~\cite[Section 5]{density7} to $P_6$ in this way, then a fully complete or fully anticomplete outcome is not really needed, because a pure outcome would already suffice.
	It turns out that we can make this work in our extension of \cref{lem:conn}, as follows.
	
	\begin{lemma}
		\label{lem:p3}
		Let $x,y\in(0,2^{-8}]$ with $x\le y$. Then every graph $G$ with $\abs G> x^{-4}$ contains either:
		\begin{itemize}
			
			\item a pure $(\ell,\abs G/\ell^{8})$-blockade, for some integer $\ell\in[y^{-1/2},x^{-1}]$; or
			
			\item an $x$-splittable induced subgraph with at least $y^2\abs G$ vertices.
		\end{itemize}
	\end{lemma}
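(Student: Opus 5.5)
The plan is to run a maximality/bookkeeping argument of the same shape as the proof of \cref{thm:convsemi}, with the $x$-pure blocks replaced by genuinely pure ones. We choose $s\ge1$ maximal such that $G$ contains a \emph{pure} blockade $(A_1,\ldots,A_s)$ satisfying three conditions. First, $\sum_{i\in[s]}\abs{A_i}^{1/8}\ge\abs G^{1/8}$. Second, every block satisfies $\abs{A_i}\ge \abs G/s^{8}$, or at least a lower bound polynomial in $y$. Third, $s\le x^{-1}$. The trivial blockade $(V(G))$ satisfies these. If $s\ge y^{-1/2}$, the width bound gives the first outcome with $\ell=s$, after trimming to the required length if necessary. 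So we assume $s<y^{-1/2}$ and let $A_i$ be the largest block. The power-sum condition gives $\abs{A_i}\ge\abs G/s^8\ge y^{4}\abs G$. To make this at least $y^2\abs G$, I would tune the thresholds, for instance by stopping at $s\ge y^{-1/4}$ instead and absorbing the difference into the exponents. Then $F:=G[A_i]$ is large enough that, if it is $x$-splittable, the second outcome holds. Hence we may assume $F$ is not $x$-splittable.

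The structural heart is to decode non-splittability. It gives a partition $(X,Y)$ of $A_i$ with $\abs X\ge x\abs F$ and $Y\ne\emptyset$, such that no induced path $v_1\text-v_2\text-v_3$ has $v_1\in Y$ and $v_2,v_3\in X$. We claim every $u\in Y$ is pure to every component $C$ of $G[X]$. Indeed, if $u$ were mixed on $C$, then connectivity of $G[C]$ gives an edge $v_2v_3$ of $C$ with $uv_2\in E(G)$ and $uv_3\notin E(G)$, which is a forbidden path. Moreover, distinct components of $G[X]$ are anticomplete to each other. The plan is now to use this to refine $A_i$ into several pure pieces that keep the potential $\sum\abs{\cdot}^{1/8}$ from dropping. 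Purity with the other blocks $A_j$ ($j\ne i$) is inherited automatically by subsets, so only the new internal pairs need checking.

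We split into two cases according to the component structure of $G[X]$, with a parameter $k$ around $x^{-1/7}$ or tuned appropriately.

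\emph{Case 1: every component of $G[X]$ has fewer than $\abs X/k$ vertices.} Then \cref{lem:conn} yields an anticomplete $(k,\abs X/(2k))$-blockade inside $X$, whose blocks have size at least $x\abs F/(2k)$. Replacing $A_i$ by these $k$ blocks increases the power sum provided $k\,(x\abs F/2k)^{1/8}\ge\abs F^{1/8}$. This holds once $k^{7/8}\ge(2/x)^{1/8}$, which is compatible with $k\le x^{-1}$. The new blockade therefore contradicts the maximality of $s$.

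\emph{Case 2: some component $C$ has $\abs C\ge\abs X/k$.} Split $Y$ into $Y_1$ (the vertices complete to $C$) and $Y_0$ (the vertices anticomplete to $C$), and let $Y'$ be the larger of the two. Then $(C,Y')$ is a pure pair, and so is $(C,X\setminus C)$, since it is anticomplete. The natural move is to replace $A_i$ by two or three pure pieces chosen from $C$, $X\setminus C$ and $Y'$. Two of these pieces, $C$ and $Y'$, need not be pure to each other in a way compatible with a third piece, so I would keep only a mutually pure sub-collection whose sizes make the $1/8$-power sum grow.

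The main obstacle is Case 2 when $Y$ is tiny, for instance a single vertex, and $X$ is essentially a single giant component. Then no refinement gains potential. My first attempt would be to make the argument recursive on $C$ rather than on $A_i$. We replace $A_i$ by $C$ (respectively by $C\cup Y'$) and continue, tracking a second potential: the number of vertices peeled off, with at most $x$-proportion loss per step. Since each step removes at least one vertex while retaining the purity information, after sufficiently many steps either the discarded vertices $Y$ accumulate into a large set that can be grouped, via majority over complete/anticomplete type, into long pure blockades, or the current graph becomes $x$-splittable while still having at least $y^2\abs G$ vertices. The hypothesis $\abs G>x^{-4}$ is what guarantees the pieces stay nonempty and integral throughout. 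Balancing the exponents in this peeling step against the requirement $\ell\in[y^{-1/2},x^{-1}]$ and width $\abs G/\ell^8$ is where I expect most of the work to lie.
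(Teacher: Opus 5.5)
There is a genuine gap. The situation you flag as the main obstacle is essentially the whole content of the lemma, and the peeling sketch does not resolve it.

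Your structural observation is correct, and it is exactly the one the paper uses. If $(X,Y)$ witnesses that $F$ is not $x$-splittable and $C$ is a component of $F[X]$, then every vertex of $V(F)\setminus C$ is pure to $C$, so $C$ is a proper module of $F$.

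Take $k=\floor{(4x)^{-1}}$ rather than $k\approx x^{-1/7}$. Then Case~1 gives the first outcome outright, because $\abs X\ge xy^2\abs G$ forces $\abs X/(2k)\ge\abs G/k^8$. No potential is needed there. With $k\approx x^{-1/7}$, the new blocks are far too narrow for a blockade whose length is only about $s$.

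What remains is precisely \cref{lem:module}: either a pure blockade, or an induced subgraph on at least $y^2\abs G$ vertices all of whose proper modules have fewer than $4x^4\abs G$ vertices. Your Case~2 does not prove this, for three reasons.
\begin{itemize}
\item The module $C$ is only guaranteed a small fraction of $F$ (about $x^2\abs F$), so $V(F)\setminus C$ may carry almost all of $F$. That set is not a module, so you cannot recurse into it. Recursing into $C$ instead may drop below $y^2\abs G$ immediately.
\item Splitting into the unbalanced pure pieces $C$ and $Y'$ need not raise $\sum\abs{\cdot}^{1/8}$, so maximality gives nothing.
\item Your peeled layers do carry usable chain structure. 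The part of $F_t\setminus F_{t+1}$ complete (resp.\ anticomplete) to $F_{t+1}$ is complete (resp.\ anticomplete) to every later layer, so consecutive groups form pure blockades. But a single large layer has no internal structure you control, and several large disjoint modules can coexist. So a linear peeling with a count of discarded vertices cannot account for where the mass goes.
\end{itemize}

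The missing idea is a global, branching analysis via Gallai's modular decomposition tree $\mac D(G)$.
\begin{itemize}
\item Call a node heavy if it has at least $y^2\abs G$ vertices. The light children of heavy nodes partition $V(G)$.
\item The leaves of the heavy subtree are pairwise incomparable, so they form a pure blockade of width at least $y^2\abs G$. Hence there are fewer than $y^{-1/2}$ of them, and fewer than $2y^{-1/2}$ heavy nodes without exactly one heavy child.
\item At each heavy node $M$, the light children have total size $O(y^{3/2}\abs G)$. If $G[M]$ or $\overline G[M]$ is disconnected, they are components and \cref{lem:conn} applies.
\item If $M$ is prime, its children with fewer than $4x^4\abs G$ vertices have union smaller than $y^2\abs G$. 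Otherwise, keep all of them and collapse every other child to a single representative vertex; this yields the required subgraph with only small proper modules. Note this is the opposite of diving into a large module. Children of intermediate size, grouped dyadically by size, are pairwise pure and hence few at each scale.
\item Along each maximal chain of heavy nodes with exactly one heavy child, the light children anticomplete (resp.\ complete) to the bottom node form an anticomplete (resp.\ complete) blockade. This is where your chain observation belongs, and it bounds their contribution by $O(y\abs G)$ per chain.
\end{itemize}
Summing over the partition gives $\abs G=O(y^{1/2}\abs G)$, a contradiction for $y\le 2^{-8}$.

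Separately, your initial bookkeeping has an exponent slip. With $\sum\abs{A_i}^{1/8}\ge\abs G^{1/8}$ and $s<y^{-1/2}$, the largest block is only guaranteed $y^4\abs G$ vertices. Stopping at $s\ge y^{-1/4}$ does not help, because the first outcome requires $\ell\ge y^{-1/2}$.
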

	
	This lemma plays a role in the proof of \cref{thm:extend} and may be of independent interest. 
	We prove this lemma in \cref{sec:p3}; it is not needed for the proof of \cref{lem:pathpure}.
	
	Going back to our discussion on $P_6$, by applying \cref{lem:p3} (with $y=1/k$) to the teeth $B_1,\ldots,B_k$ (as in the third outcome of \cref{lem:getcomb0}), we obtain either a long and wide pure blockade inside some $G[B_i]$, or a sub-blockade $(D_1,\ldots,D_k)$ where $D_i\subset B_i$ and $G[D_i]$ is $x$-splittable for all $i\in[k]$.
	In the second scenario, for all distinct $i,j\in[k]$, each vertex in $D_j$ is anticomplete or $(1-x)$-dense to $D_i$ (else we get $P_6$; see \cref{fig:splitting-p6}); and so we can trim $(D_1,\ldots,D_k)$ a little more (as in the proof of \cref{lem:xpure}) to a $(1-O(x))$-semidense sub-blockade. 
	Hence, both scenarios will eventually lead to the niceness of $P_6$ via the argument in~\cite[Sections 5 and 6]{density7}.
	
	\begin{figure}[htbp]
		\centering
		\begin{tikzpicture}[
			tooth/.style={draw=blue!45!black,rounded corners=3pt,fill=blue!4},
			vertex/.style={circle,draw=blue!65!black,fill=blue!13,minimum size=.66cm,font=\small},
			outside/.style={vertex,draw=orange!70!black,fill=orange!18},
			edge/.style={draw=black!80,line width=.85pt},
			nonedge/.style={draw=red!65!black,densely dashed,line width=.7pt}
			]
			\draw[tooth] (2.91,.24) rectangle (4.11,1.44);
			\draw[tooth] (5,-.1) rectangle (10,1.78);
			\node[outside] (v) at (.59,.83) {$v$};
			\node[outside] (ai) at (1.91,.83) {$a_i$};
			\node[vertex] (z) at (3.51,.83) {$z$};
			\node[vertex] (wone) at (6,.83) {$v_1$};
			\node[vertex] (wtwo) at (7.5,.83) {$v_2$};
			\node[vertex] (wthree) at (9,.83) {$v_3$};
			\draw[edge] (v) -- (ai) -- (z) -- (wone) -- (wtwo) -- (wthree);
			\draw[nonedge] (z.north) .. controls (4.84,1.7) and (6.17,1.7) .. (wtwo.north);
			\draw[nonedge] (z.north) .. controls (5.34,2.44) and (7.17,2.44) .. (wthree.north);
			\node[font=\small] at (3.52,-.5) {$D_j$};
			\node[font=\small] at (7.5,-.5) {$D_i$};
			\node[font=\footnotesize] at (6,.175) {$Y$};
			\node[font=\footnotesize] at (8.25,.175) {$X$};
		\end{tikzpicture}
		\caption{Why a vertex $z\in D_j$ cannot have both a neighbour and at least $x\abs{D_i}$ non-neighbours in $D_i$ when $i\ne j$. For $Y=D_i\cap N_G(z)$ and $X=D_i\setminus N_G(z)$, splittability gives the induced path $v_1\text-v_2\text-v_3$ with $v_1\in Y$ and $v_2,v_3\in X$. The anticomb gives $va_i$ and $a_iz$, while $v$ misses all teeth and $a_i$ misses $D_i$, so the displayed $P_6$ is induced.}
		\label{fig:splitting-p6}
	\end{figure}
	
	What we have discussed so far is a special instance of a more general setting on `splitting paths', as follows.
	For an integer $r\ge 2$ and a graph $G$ with disjoint $A,B\subset V(G)$, we say that the ordered pair $(A,B)$ {\em splits} an induced $P_r$ in $G$, and write $A\to_rB$, if $G$ contains an induced path $v_1\text-\cdots\text-v_r$ such that $v_1\in A$ and $v_2,\ldots,v_r\in B$; and in this case, we also say that $(A,B)$ is a {\em directed split} of $v_1\text-\cdots\text-v_r$.
	For $x>0$, we say that $G$ is {\em $(r,x)$-splittable} if $Y\to_rX$ for all partitions $(X,Y)$ of $V(G)$ such that $\abs X\ge x\abs G$ and $Y\ne\emptyset$; thus $(3,x)$-splittability is equivalent to the aforesaid $x$-splittability.
	We remark that the more general idea of `splitting' any given graph $H$ with a distinguished vertex $g$ (that is, there is a copy $\varphi$ of $H$ in $G$ with $\varphi(g)\in A$ and $\varphi(V(H)\setminus\{g\})\subset B$)
	was a major ingredient behind the improvement~\cite{bnss2024} on the general bound towards \erh{}.
	However, in the rest of this paper we will only focus on splitting paths. 
	
	It remains unclear to us whether \cref{lem:p3} can be extended to $(4,x)$-splittability; so pursuing such an extension to get $P_7$ may not be feasible.
	But all is not lost; a closer look at the above sketch for $P_6$ actually suggests that we do not really need each $G[B_i]$ to contain a large $x$-splittable induced subgraph; and instead, $B_i\to_4B_j$ for some $i\ne j$ would already suffice.
	In this way, a more flexible `splitting' statement for disjoint vertex subsets in the host graph $G$ would be sufficient.
	To this end, let us record such a statement, which turns out to be a corollary of \cref{lem:p3}.
	
	\begin{lemma}
		\label{lem:splitp4}
		Let $x,y\in(0,2^{-8}]$ with $x\le y$, and let $G$ be a graph with disjoint $A,B\subset V(G)$ satisfying $\abs B>x^{-4}$. Then either:
		\begin{itemize}
			\item $A\to_4B$ in $G$;
			
			\item there exists $E\subset B$ with $\abs E\ge y^2\abs B$ such that every vertex in $A$ is either anticomplete or $(1-x)$-dense to $E$; or
			
			\item $G[B]$ contains a pure $(\ell,\abs B/\ell^8)$-blockade, for some integer $\ell\in[y^{-1/2},x^{-1}]$.
		\end{itemize}
	\end{lemma}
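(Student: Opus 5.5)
Apply \cref{lem:p3} to the graph $G[B]$, with the same $x,y$; this is legitimate since $\abs B>x^{-4}$. If the first outcome of \cref{lem:p3} holds, then $G[B]$ contains a pure $(\ell,\abs B/\ell^8)$-blockade for some integer $\ell\in[y^{-1/2},x^{-1}]$, which is exactly the third bullet. So we may assume the second outcome: there is $E\subset B$ with $\abs E\ge y^2\abs B$ such that $F:=G[E]$ is $x$-splittable, that is, $(3,x)$-splittable.

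We then show that either $A\to_4B$ or every vertex of $A$ is anticomplete or $(1-x)$-dense to $E$, which gives the first or second bullet. Suppose some $a\in A$ is neither anticomplete nor $(1-x)$-dense to $E$. Set $Y:=E\cap N_G(a)$ and $X:=E\setminus N_G(a)$. Then $Y\ne\emptyset$, since $a$ is not anticomplete to $E$. Also $\abs X\ge x\abs E$: otherwise $a$ has more than $(1-x)\abs E$ neighbours in $E$, i.e.\ is $(1-x)$-dense to $E$. (The boundary case $\abs Y=(1-x)\abs E$ exactly is covered, since $(1-x)$-dense requires strictly more than $(1-x)\abs E$ neighbours, so then $\abs X\ge x\abs E$.) Since $(X,Y)$ is a partition of $V(F)$, $x$-splittability of $F$ gives an induced path $v_1\text-v_2\text-v_3$ in $F$ with $v_1\in Y$ and $v_2,v_3\in X$.

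Now $a\text-v_1\text-v_2\text-v_3$ is an induced path in $G$: $a$ is adjacent to $v_1\in Y$ and nonadjacent to $v_2,v_3\in X$, and $v_1v_3\notin E(G)$ because the $P_3$ is induced. Here $a\in A$ and $v_1,v_2,v_3\in E\subset B$, so $A\to_4B$.

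There is no real obstacle; the argument is a direct corollary of \cref{lem:p3}. The only points needing care are the strict versus non-strict inequalities in the definitions of $(1-x)$-dense and $x$-splittable, which match as checked above, and that $A$ and $B$ are disjoint, so $a\notin E$.
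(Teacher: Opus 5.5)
Your proposal is correct and follows essentially the same route as the paper: apply \cref{lem:p3} to $G[B]$. The pure-blockade outcome gives the third bullet directly. Otherwise, a vertex of $A$ with a neighbour and at least $x\abs E$ non-neighbours in the $x$-splittable set $E$ yields, via the split $v_1\text-v_2\text-v_3$, the induced $P_4$ witnessing $A\to_4B$.
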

	
	To see how this result would be useful for the aforesaid `teeth' blockade $(B_1,\ldots,B_k)$, apply it with $y=1/k$ and $(A,B)=\left(\bigcup_{j\ne i}B_j,B_i\right)$ for each $i\in[k]$. We omit the details.
	
	Before discussing \cref{lem:splitp4} in more detail, let us deduce it from \cref{lem:p3}, as follows.
	
	\begin{proof}
		[Proof of \cref{lem:splitp4}, assuming \cref{lem:p3}]
		Assume that the second outcome fails.
		Since $\abs B>x^{-4}$, \cref{lem:p3} implies that $G[B]$ contains either:
		\begin{itemize}
			\item a pure $(k,\abs B/k^8)$-blockade, for some integer $k\in[y^{-1/2},x^{-1}]$; or
			
			\item an $x$-splittable induced subgraph $F$ with $\abs F\ge y^2\abs B$.
		\end{itemize}
		
		If the first bullet holds then the third outcome of the lemma holds; so assume that the second bullet holds.
		Let $E:=V(F)$. Since the second outcome of the lemma fails, there exists $v\in A$ with at least one neighbour and at least $x\abs E$ non-neighbours in $E$.
		Since $F$ is $(3,x)$-splittable, it contains an induced path $v_1\text-v_2\text-v_3$ such that $v_1\in E\cap N_G(v)$ and $v_2,v_3\in E\setminus N_G(v)$; and so $v\text-v_1\text-v_2\text-v_3$ verifies the first outcome of the lemma.
		This proves \cref{lem:splitp4}.
	\end{proof}
	
	We now give a few remarks about \cref{lem:splitp4}.
	Firstly, this lemma can be used to give a clean replacement of the comb argument in the first round of iterative `densification' for $P_5$ in~\cite[Section 5]{density7} (we omit the details).
	Secondly, and perhaps more importantly, a reader familiar with the literature on forbidden induced paths may recognise the similarity of \cref{lem:splitp4} with the following `splitting paths' formulation of the Gy\'arf\'as path argument~\cite{gyarfas1987}.
	
	\begin{lemma}
		[Gy\'arf\'as]
		\label{lem:gyarfas}
		For every integer $r\ge3$, and every graph $G$ with nonempty disjoint $A,B\subset V(G)$ such that $\chi(B)\ge r-2$ and $G[B]$ is connected, either:
		\begin{itemize}
			\item $A\to_rB$ in $G$ ({\em a directed split});
			
			\item for every $v\in A$, either $v$ is anticomplete to $B$ or $\chi(B\cap N_G(v))\ge\frac1{r-2}\chi(B)$ ({\em anticomplete or `dense'}); or
			
			\item there exists $u\in B$ with $\chi( N_{G[B]}(u))\ge\frac1{r-2}\chi(B)$ ({\em a large coherent substructure within $G[B]$}).
		\end{itemize}
	\end{lemma}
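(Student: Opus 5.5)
Set $c:=\chi(B)/(r-2)$, and assume that the second and third outcomes both fail; we will build an induced path $v_1\text-\cdots\text-v_r$ with $v_1\in A$ and $v_2,\ldots,v_r\in B$. The plan is the classical Gy\'arf\'as argument: grow the path one vertex at a time, always keeping a connected piece of $B$ of large chromatic number ahead of the current endpoint. Since the second outcome fails, there is $v\in A$ that has a neighbour in $B$ and satisfies $\chi(B\cap N_G(v))<c$. Put $v_1:=v$ and $C_1:=B$.

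We maintain the following invariant for $s\ge1$.
\begin{itemize}
\item $v_1\text-\cdots\text-v_s$ is an induced path with $v_2,\ldots,v_s\in B$.
\item $C_s\subset B$ is such that $G[C_s]$ is connected, $C_s$ is anticomplete to $\{v_1,\ldots,v_{s-1}\}$, and $v_s\notin C_s$ has a neighbour in $C_s$.
\item $\chi(C_s)>\chi(B)-(s-1)c$. For $s=1$ we read this as $\chi(C_1)=\chi(B)$.
\end{itemize}
The initial choice satisfies the invariant because $G[B]$ is connected and $v$ has a neighbour in $B$.

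For the step $s\to s+1$ with $s\le r-2$, first bound $\chi(C_s\cap N_G(v_s))<c$. For $s=1$ this is the choice of $v$. For $s\ge2$ it holds because $v_s\in B$ and the third outcome fails, so $\chi(N_{G[B]}(v_s))<c$. By subadditivity of $\chi$,
\[\chi(C_s\setminus N_G(v_s))>\chi(B)-sc\ge\chi(B)-(r-2)c=0.\]
Hence $C_s\setminus N_G(v_s)$ is nonempty. Let $C_{s+1}$ be a component of $G[C_s\setminus N_G(v_s)]$ of maximum chromatic number; since $\chi$ of a graph is the maximum over its components, $\chi(C_{s+1})>\chi(B)-sc$.

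Next we find $v_{s+1}$. The set $C_{s+1}$ is a proper subset of $C_s$, because it misses the neighbour of $v_s$ in $C_s$. Since $G[C_s]$ is connected, some vertex of $C_s\setminus C_{s+1}$ is adjacent to $C_{s+1}$. By maximality of the component $C_{s+1}$, any such vertex lies in $N_G(v_s)$; take it as $v_{s+1}$. We check the invariant.
\begin{itemize}
\item $v_{s+1}\in C_s$, so it is nonadjacent to $v_1,\ldots,v_{s-1}$ and adjacent to $v_s$. Thus $v_1\text-\cdots\text-v_{s+1}$ is induced, and $v_{s+1}\in B$.
\item $C_{s+1}\subset C_s$ avoids $N_G(v_s)$, so it is anticomplete to $\{v_1,\ldots,v_s\}$.
\item $v_{s+1}\notin C_{s+1}$ has a neighbour in $C_{s+1}$.
\end{itemize}

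After the step with $s=r-2$ we reach $s=r-1$. The invariant then gives an induced path $v_1\text-\cdots\text-v_{r-1}$ and a nonempty set $C_{r-1}$ that is anticomplete to $v_1,\ldots,v_{r-2}$ and contains a neighbour $v_r$ of $v_{r-1}$. Then $v_1\text-\cdots\text-v_r$ is an induced path with $v_1\in A$ and $v_2,\ldots,v_r\in B$, so $A\to_rB$. If $r=3$, the only step used is $s=1$, which relies only on the choice of $v$, so this case is covered as well.

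The main point needing care is the choice of $v_{s+1}$. It must lie in $N_G(v_s)\cap C_s$ so that the path stays induced, and also be adjacent to the new component. Both follow from the connectivity of $G[C_s]$ together with the fact that $C_{s+1}$ is a whole component of $G[C_s\setminus N_G(v_s)]$. The hypothesis $\chi(B)\ge r-2$ only ensures $c\ge1$, and plays no further role in the argument.
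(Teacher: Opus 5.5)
Your proof is correct. It is the classical Gy\'arf\'as path argument: you grow an induced path from $v\in A$ while keeping a connected, high-chromatic piece of $B$ anticomplete to the earlier path vertices. The paper does not reprove the lemma; it simply invokes this argument by citing Gy\'arf\'as. Your remark that the hypothesis $\chi(B)\ge r-2$ is never actually used is also accurate.
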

	
	(Here, $\chi$ denotes chromatic number; and for a graph $G$ with $S\subset V(G)$, we write $\chi(S)$ for $\chi(G[S])$ when there is no danger of ambiguity.)
	
	Given this observation, it would be natural to predict the following extension of \cref{lem:splitp4}.
	
	\begin{lemma}
		\label{lem:split0}
		For every integer $r\ge2$, there exists $b\ge2$ such that for every $x,y\in(0,\frac12]$ with $x\le y^b$, and every graph $G$ with disjoint $A,B\subset V(G)$ satisfying $\abs B>x^{-b}$, either:
		\begin{itemize}
			\item $A\to_rB$ in $G$;
			
			\item there exists $E\subset B$ with $\abs E\ge y^b\abs B$ such that $A$ is $x$-pure to $E$ in $G$; or
			
			\item $G[B]$ contains an $x$-pure $(k,\abs B/k^b)$-blockade, for some integer $k\in[y^{-1},x^{-1}]$.
		\end{itemize} 
	\end{lemma}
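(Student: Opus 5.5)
I plan to prove \cref{lem:split0} by induction on $r$, using the third outcome as a "bookkeeping" outcome that is assembled via \cref{thm:convsemi}. The base case $r=2$ is easy. If $A\not\to_2B$, then $A$ is anticomplete to $B$, so the second outcome holds with $E=B$.

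For the inductive step, let $b_{r-1}$ be given for $r-1$ and choose $b\gg b_{r-1}$. Assume the first and second outcomes fail. The aim is to show that every induced subgraph $F$ of $G[B]$ with $\abs F\ge y^{b'}\abs B$ (for a suitable $b'$) contains an $x'$-pure $(k,\abs F/k^{b'})$-blockade for some $k\in[2,1/x']$. Here $x'$ is a slightly larger power-type threshold, with $x\le x'y^{O(b')}$. Once this holds, \cref{thm:convsemi} applied to $G[B]$ produces an $x$-pure $(\ell,\abs B/\ell^{2b'})$-blockade with $\ell\in[y^{-1},x^{-1}]$, which is the third outcome. So everything reduces to a local statement: a large set $F$ either contains a short pure poly-blockade, or it helps produce $A\to_rB$ or an $x$-pure portion $E$.

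For the local statement, fix such an $F$ and apply the hypothesis for $r-1$ to the pair $(F_1,F_2)$. Here $F_1,F_2$ are obtained by splitting $V(F)$ according to a vertex $v\in A$ that is mixed on a large part of $F$; such a $v$ must exist, since otherwise the second outcome holds on that part. Take $F_2:=F\setminus N(v)$, which has size at least $x\abs F$ after \cref{lem:xpure}-style trimming, and $F_1:=F\cap N(v)$. Apply induction with parameters $(x,y_0)$, where $y_0$ is a constant-power choice. There are three cases.

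First, if $F_1\to_{r-1}F_2$, then prepending $v$ gives $A\to_rB$. This is valid because $v$ is adjacent to the first vertex and anticomplete to $F_2$, so the resulting path is induced.

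Second, if $F_1$ is $x$-pure to a large $E'\subset F_2$, there are two sub-cases. If some vertex of $F_1$ has a neighbour in $E'$, then an $x$-dense pair supplies the needed blockade: edge counting gives a pure pair between the dense part of $F_1$ and $E'$, which is a $(2,\cdot)$-blockade. If instead $F_1$ is anticomplete to $E'$, iterate with a new mixed vertex inside $F_2$, in the spirit of \cref{lem:iteration}.

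Third, if $G[F_2]$ has an $x$-pure poly-blockade, that is already the local blockade required.

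The key issue is ensuring the mixed vertex $v$ sees a large non-neighbourhood. I would handle this by first reducing to a vertex with at least $x\abs F$ non-neighbours and at least one neighbour, exactly as in the proof of \cref{lem:splitp4}, but localised to each $F$.

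The main obstacle will be the bookkeeping of parameters. Each level of induction costs a polynomial factor in $y$ and degrades $x$ by a $y^{-O(b)}$ factor, because $x$-pureness with respect to subsets must be re-normalised through \cref{lem:xpure}. The length guarantee of the blockade from induction, $[y_0^{-1},x^{-1}]$, must also be compatible with the $k\in[2,1/x]$ range in \cref{thm:convsemi}. A genuine gap is the sub-case where $F_1$ is anticomplete to $E'$ but the neighbourhood $F_1$ is tiny, which would give a useless blockade. To handle it, I would first try choosing $v$ to maximise $\abs{F\cap N(v)}$ among mixed vertices. I would then run a Gyárfás-style descent, replacing $F$ by $E'$ and repeating at most $O(\log(1/x)/\log(1/y))$ times. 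Each round costs a $y^{O(1)}$ factor, and the process terminates either in a directed split or in a pure portion, which yields the second outcome.
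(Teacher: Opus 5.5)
There is a genuine gap. Your outer reduction is fine and is essentially the paper's. You show that every large $F\subset B$ either gives one of the first two outcomes or contains a short pure poly-blockade, and then invoke \cref{thm:convsemi}; this is how the paper deduces \cref{lem:split0} from \cref{lem:split}, and your ``local statement'' \emph{is} \cref{lem:split}. One correction: \cref{thm:convsemi} weakens purity by a factor $y^{-2b'}$, so the local threshold must be \emph{smaller} than $x$, not larger. The paper uses $x^{3/2}$, which works because $x\le y^{4d}$.

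The gap is in your proof of the local statement. A vertex $v\in A$ witnessing that $A$ is not $x$-pure to $F$ is only guaranteed $x\abs{F}$ neighbours and $x\abs{F}$ non-neighbours in $F$. If both sides are a constant fraction of $F$, your three cases work, and this is precisely \cref{claim:rodl}. But suppose $\abs{F\cap N_G(v)}=\Theta(x\abs{F})$. Then your second case only yields a pure pair with one side of size about $x\abs{F}$, and it makes no difference whether $F_1$ is anticomplete to $E'$ or merely $x$-pure to it. Such a pair is not a $(k,\abs{F}/k^{b'})$-blockade for $k=2$. A single vertex cannot supply the $x^{-\Omega(1)}$ blocks that would make that width acceptable.

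The proposed descent does not repair this, for three reasons:
\begin{itemize}
\item Nothing forces termination after $O(\log(1/x)/\log(1/y))$ rounds.
\item The lopsided pairs collected along the way start at width about $x\abs{F}$ and only shrink. Assembling them into a blockade of width $\abs{F}/\poly(k)$ would take $x^{-\Omega(1)}$ rounds, each costing a constant factor.
\item Even granting termination, the accumulated loss $y^{O(\log(1/x)/\log(1/y))}=x^{O(1)}$ leaves a set $E$ of size $x^{O(1)}\abs{B}$. The second outcome needs $\abs{E}\ge y^b\abs{B}$, which is a constant fraction when $y=\frac12$.
\end{itemize}
Choosing $v$ to maximise $\abs{F\cap N_G(v)}$ does not help when every mixed vertex has degree $\Theta(x\abs{F})$.

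The missing idea is to treat the medium-degree vertices \emph{collectively}, so that the number of pieces and their sizes are polynomially linked. The paper proceeds as follows.
\begin{itemize}
\item It first makes $A$ $\delta$-pure to $B$ for the constant $\delta=2^{-20}$. This is your argument, restricted to $\delta$-mixed vertices.
\item For the $\delta$-sparse part $P$, it greedily picks at most $x^{-2}$ vertices whose fresh neighbourhoods in $B$ have size at least $x^2\abs{B}$. Let $X$ be the set they cover.
\item If $\abs{X}>\abs{B}/4$, then \cref{lem:comb} with $\Delta=\delta\abs{B}$ gives a $(k,\abs{X}/k^2)$-comb with $2^9<k\le x^{-2}$. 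That is, $k$ teeth each of size at least $\abs{B}/k^3$.
\item The bookkeeping \cref{claim:bookkeep} is then applied to the teeth. It uses the conversion theorem inside each tooth, the induction hypothesis from $\bigcup_{j\ne i}C_j$ to $C_i$, and pairwise pruning via \cref{lem:xpure}.
\item This yields either $B_i\to_r B_j$ or an $x$-pure poly-blockade. In the first case the apex $a_i$, which is complete to $B_i$ and anticomplete to $B_j$, extends the split to an induced $P_{r+1}$.
\item Otherwise $P$ is $2x^2$-sparse to $B\setminus X$. The $(1-\delta)$-dense part is handled symmetrically with anticombs.
\end{itemize}
Without the comb lemma, or some substitute that turns many small neighbourhoods into $k$ teeth of size $\abs{B}/\poly(k)$, your argument cannot produce the required poly-blockades.
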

	
	This is true, and in fact follows from the case $y=\frac12$, whose slight strengthening below is the actual main result of this section.
	
	\begin{lemma}
		\label{lem:split}
		For every integer $r\ge2$, there exists $d\ge2$ such that for every $x\in(0,2^{-d}]$ and every graph $G$ with disjoint $A,B\subset V(G)$ satisfying $\abs B>x^{-d}$, either:
		\begin{itemize}
			\item $A\to_rB$ in $G$;
			
			\item there exists $E\subset B$ with $\abs E\ge \abs B/2$ such that $A$ is $x$-pure to $E$ in $G$; or
			
			\item $G[B]$ contains an $x$-pure $(k,\abs B/k^d)$-blockade, for some integer $k\in[2,x^{-1}]$.
		\end{itemize} 
	\end{lemma}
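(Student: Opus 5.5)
We argue by induction on $r$, mimicking the Gy\'arf\'as argument of \cref{lem:gyarfas}, with ``large chromatic number'' replaced by ``large size'' and ``connected'' replaced by ``admitting no $x$-pure poly-blockade''. The base case $r=2$ is essentially immediate. If $A\not\to_2B$, then $A$ is anticomplete to $B$, so the second outcome holds with $E=B$.

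For the inductive step, suppose the lemma holds for $r-1$ with constant $d'$, and take $d\gg d'$. Assume that the first and third outcomes fail. The first task is to make $B$ ``coherent''. By \cref{thm:convsemi}, applied with a suitable $y$ (a small power of $2^{-1}$ depending only on $r$), the failure of the third outcome gives an induced subgraph $F$ of $G[B]$ with $\abs F\ge y^{b}\abs B$ that has no $x$-pure $(k,\abs F/k^b)$-blockade. In particular, $F$ has no anticomplete $(2,\abs F/2^{b})$-blockade. By \cref{lem:conn}, it follows that $F$ has a component covering almost all of it. More generally, every vertex-subset $F'$ of $B$ of linear size inherits the same property.

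Fix $v\in A$ that is not $x$-pure to a large part of $B$. Set $Y=B\cap N(v)$ and $X=B\setminus N(v)$, so both are of size at least $x\abs B$ after discarding a few vertices. We then apply the induction hypothesis for $r-1$ to the pair $(Y,X)$, that is, with $A:=Y$ and $B:=X$. There are three possibilities.
\begin{enumerate}
\item If $Y\to_{r-1}X$, we obtain an induced path $u_1\text-\cdots\text-u_{r-1}$ with $u_1\in N(v)$ and $u_2,\ldots,u_{r-1}\notin N(v)$. Then $v\text-u_1\text-\cdots\text-u_{r-1}$ is an induced $P_r$ split by $(A,B)$, contradicting the failure of the first outcome.
\item A pure poly-blockade inside $G[X]$ is impossible, since it would transfer to $G[B]$ with a polynomial loss, absorbed into $d$.
\item Otherwise, there is $E_v\subset X$ with $\abs{E_v}\ge\abs X/2$ such that $Y$ is $x$-pure to $E_v$.
\end{enumerate}

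Hence the genuine content lies in the third possibility. Here every vertex of $N(v)\cap B$ is $x$-sparse or $(1-x)$-dense to $E_v$. Partition $Y$ into $Y_0$, the sparse vertices, and $Y_1$, the dense ones. If both $Y_0$ and $E_v$ are sizeable, then by \cref{lem:xpure}-type trimming we obtain an $x$-pure pair between linear-size subsets of $B$. Iterating this pair-extraction with \cref{lem:iteration} (in the $x$-pure version, i.e.\ tracking sparse or dense pairs through \cref{thm:convsemi}) produces an $x$-pure poly-blockade in $G[B]$, contradicting our assumption. The remaining case is that $Y$ is essentially $(1-x)$-dense to $E_v$ and, symmetrically, $E_v$ is essentially pure to $Y$. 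In that case we pass from $B$ to $B'=B\cap N(v)$ or $E_v$ and recurse.

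The recursion must terminate, and this is the step I expect to be the main obstacle. It is the analogue of Gy\'arf\'as choosing a vertex $u$ in the third outcome of \cref{lem:gyarfas} and descending into the neighbourhood of that vertex, keeping track of the path already built. To make it work I would maintain a partial induced path $v=w_0\text-w_1\text-\cdots\text-w_j$ with $w_1,\ldots,w_j\in B$, together with a set $B_j\subset B$ such that:
\begin{itemize}
\item $\abs{B_j}\ge\abs B/\mathrm{poly}(x^{-1})^{j}$, or rather $\abs{B_j}\ge 2^{-O(j)}\abs B$ to keep width polynomial in $k$;
\item $B_j$ is anticomplete to $w_0,\ldots,w_{j-1}$;
\item $B_j$ is complete or $(1-x)$-dense from $w_j$.
\end{itemize}
Connectivity of the coherent part of $B_j$, obtained from \cref{lem:conn}, lets us extend the path by one vertex each time a mixed vertex exists. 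Once $j=r-1$, the path gives $A\to_rB$. The delicate point is the bookkeeping: at each of the at most $r$ steps, the discarded pieces must either form $x$-pure pairs, which we feed into \cref{thm:convsemi} to produce the third outcome, or cost only a constant factor. Only then does the surviving set stay at least $\abs B/2$ for the second outcome when every vertex of $A$ turns out pure.

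Finally, to obtain the constant $\tfrac12$ in the second outcome rather than $y^b$, I would run the argument for all $v\in A$ simultaneously. Each ``bad'' $v$ either yields a path or certifies an $x$-pure split inside $B$. So if neither the first nor the third outcome holds, the set of vertices of $B$ on which some $v\in A$ is mixed must be small; this is the same counting used in \cref{lem:xpure}, with the hierarchy $x\le 2^{-d}$ and $\abs B>x^{-d}$ absorbing the losses. Removing that small set leaves $E$ with $\abs E\ge\abs B/2$ to which $A$ is $x$-pure.
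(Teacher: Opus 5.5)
You have a genuine gap, and it lies exactly where the sizes must be controlled.

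Applying the induction hypothesis to $(Y,X)=(B\cap N_G(v),B\setminus N_G(v))$ and appending $v$ is the right move; it is \cref{claim:rodl} in the paper. The trouble is that you run it for every $v$ that is merely not $x$-pure to $B$. Then $\abs{Y}$ or $\abs{X}$ may be as small as $x\abs{B}$. Anything you find inside it has width about $x\abs{B}/k^{O(1)}$: this covers both an $x$-pure blockade in $G[X]$ and a pair built from $Y_0$ and $E_v$. That width is below $\abs{B}/k^{d}$ for small $k$. For $k=2$, the third outcome needs width $2^{-d}\abs{B}\ge x\abs{B}$. So the loss is a factor $x$, not $\mathrm{poly}(k)$, and it is not ``absorbed into $d$''.

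The paper runs this step only when $v$ has at least $\delta\abs{B}$ neighbours and at least $\delta\abs{B}$ non-neighbours in $B$, for the constant $\delta=2^{-20}$. The loss is then a constant. In that regime the second outcome of the induction finishes at once, because a $(1-x)$-dense pair is also $x$-pure. Whichever of $Y_0,Y_1$ contains half of $Y$ forms, after trimming as in \cref{lem:xpure}, a linear-size $x$-pure pair with a subset of $E_v$. That is the third outcome with $k=2$. So there is no ``remaining dense case'', and no need for \cref{lem:iteration} or for a recursion into $B\cap N_G(v)$. As you suspected, that recursion has no mechanism keeping its losses constant. Also, shrinking $B$ at the start to a ``coherent'' $F$ with $\abs{F}\ge y^b\abs{B}$ is incompatible with producing $E$ with $\abs{E}\ge\abs{B}/2$.

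Your plan does not address the actual difficulty. This is the set of vertices of $A$ with between roughly $x^2\abs{B}$ and $\delta\abs{B}$ neighbours (or non-neighbours) in $B$. Their neighbourhoods can together cover all of $B$, so no single $E$ of size $\abs{B}/2$ avoids them. The counting of \cref{lem:xpure} does not help here, since it presupposes purity in both directions. The paper's missing ingredients are as follows.

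\emph{(i) Greedy selection plus the comb lemma.} Greedily choose at most $x^{-2}$ vertices of the $\delta$-sparse part $P$ of $A$, each contributing at least $x^2\abs{B}$ new neighbours. Their neighbourhoods cover a set $X_P\subset B$. Either $\abs{X_P}\le\abs{B}/4$, in which case all of $P$ is $2x^2$-sparse to $B\setminus X_P$. Or, by \cref{lem:comb}, $X_P$ contains a comb with $k\in(2^9,x^{-2}]$ teeth, each of size more than $\abs{B}/(4k^2)$. This is \cref{claim:sparsify}; anticombs handle the $(1-\delta)$-dense part, giving $X_Q$, in \cref{claim:densify}.

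\emph{(ii) Bookkeeping across teeth.} \cref{claim:bookkeep} first applies \cref{thm:convsemi} inside each tooth to get $C_i$. It then applies the induction hypothesis to $(\bigcup_{j\ne i}C_j,C_i)$, and finally prunes pairwise by \cref{lem:xpure}. The result is either an $x$-pure poly-blockade, which is the third outcome, or a split $B_i\to_r B_j$ between two teeth. The apex $a_i$ is adjacent to $B_i$ and anticomplete to $B_j$, so it extends that split to an induced $P_{r+1}$ from $A$ into $B$.

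So the induction hypothesis is used across different teeth of a comb, not along a Gy\'arf\'as-style descent into one neighbourhood. This keeps every loss polynomial in $k$ and leaves $E=B\setminus(X_P\cup X_Q)$ with $\abs{E}\ge\abs{B}/2$.
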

	
	Before presenting its proof, let us use this result to deduce \cref{lem:split0,lem:pathpure}.
	
	\begin{proof}
		[Proof of \cref{lem:split0}, assuming \cref{lem:split}]
		Let $d\ge2$ be given by \cref{lem:split}; we claim that $b:=4d$ suffices.
		To this end, assume that the first and third outcomes fail. In particular, there does not exist any integer $\ell\in[y^{-1},x^{-3/2}]$ such that $G[B]$ contains an $x^{3/2}y^{-2d}$-pure $(\ell,\abs B/\ell^{2d})$-blockade.
		(Note that $x^{3/2}y^{-2d}\le x$; and if such an $\ell$ exists with $\ell\ge x^{-1}$, then let $k:=\ceil{\ell^{1/2}}\le 2\ell^{1/2}\le 2x^{-3/4}\le x^{-1}$ and take only $k\ge\ell^{1/2}\ge x^{-1/2}\ge y^{-1}$ blocks from the blockade.)
		Thus, by \cref{thm:convsemi} with $(x,y,b,G)=(x^{3/2},y,d,G[B])$, there exists $D\subset B$ with $\abs D\ge y^d\abs B$ such that there does not exist any integer $k\in[2,x^{-3/2}]$ for which $G[D]$ contains an $x^{3/2}$-pure $(k,\abs D/k^d)$-blockade.
		Since $\abs D\ge y^d\abs B>x^{d-b}\ge x^{-2d}$, the choice of $d$ and \cref{lem:split} (with $x^{3/2}$ replacing $x$) implies that either:
		\begin{itemize}
			\item $A\to_rD$ in $G$; or
			
			\item there exists $E\subset D$ with $\abs E\ge\abs D/2$ such that $A$ is $x^{3/2}$-pure to $E$ in $G$.
		\end{itemize}
		
		If the first bullet holds then the first outcome of the lemma holds, contrary to our assumption.
		If the second bullet holds then the second outcome of the lemma holds since $\abs E\ge\abs D/2\ge y^d\abs B/2\ge y^b\abs B$ by the choice of $b$. This proves \cref{lem:split0}.
	\end{proof}
	\begin{proof}
		[Proof of \cref{lem:pathpure}, assuming \cref{lem:split}]
		Let $d\ge2$ be given by \cref{lem:split} with $r=t+1$, and let $b:=8d\ge3$.
		Let $K:=2b(2b+1)+b$.
		It suffices to show that for every $\eps\in(0,\frac12]$ and every $P_{t+1}$-free graph $G$ with $\abs G>\eps^{-K}$, there is an $\eps$-pure $(\eps^{-1},\eps^K\abs G)$-blockade in $G$.
		To this end, let $x:=\eps^{2b+1}$. We claim that:
		
		\begin{clm}
			{\ref*{lem:pathpure}.1}
			\label{claim:nice}
			For every induced subgraph $F$ of $G$ with $\abs F\ge \eps^b\abs G$, there exist an integer $k\in[2,1/x]$ and an $x$-pure $(k,\abs F/k^b)$-blockade in $F$.
		\end{clm}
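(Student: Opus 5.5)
Every outcome we need comes from \cref{lem:split} applied twice, with the two directions swapped, followed by \cref{lem:xpure}. The key observation is that in a $P_{t+1}$-free graph the first outcome of \cref{lem:split} with $r=t+1$ can never occur. Indeed, a directed split $v_1\text-\cdots\text-v_{t+1}$ is an induced copy of $P_{t+1}$ in $G$. So every application of \cref{lem:split} gives either a large set to which the other side is $x'$-pure, or an $x'$-pure blockade inside $B$.

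Fix $F$ with $\abs F\ge\eps^b\abs G$, and work with the threshold $x':=x^4$. The hypotheses of \cref{lem:split} should hold for the following reasons.
\begin{itemize}
\item $x'=\eps^{4(2b+1)}\le 2^{-d}$, since $b=8d$.
\item Every set $B$ we use will have $\abs B\ge\abs F/4\ge\eps^b\abs G/4$. Since $\abs G>\eps^{-K}$ with $K=2b(2b+1)+b$, this exceeds $x'^{-d}=\eps^{-4d(2b+1)}=\eps^{-b(2b+1)/2}$.
\end{itemize}

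Split $V(F)$ into two halves $A_0,B_0$ with $\abs{A_0},\abs{B_0}\ge\lfloor\abs F/2\rfloor$, and apply \cref{lem:split} to $(A_0,B_0)$ with $x'$.

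If the third outcome holds, we have an $x^4$-pure $(k,\abs{B_0}/k^d)$-blockade in $F[B_0]$ with $k\in[2,x^{-4}]$. If $k\le 1/x$, it is already the required blockade: $x^4$-pure implies $x$-pure, and $\abs{B_0}/k^d\ge\abs F/(4k^d)\ge\abs F/k^b$ because $k^{b-d}\ge 2^{7d}\ge4$. If $k>1/x$, keep only $k':=\lfloor 1/x\rfloor$ of its blocks. Their width is at least $\abs F x^{4d}/4$, which is at least $\abs F x^{b}\ge\abs F/k'^b$ since $b=8d>4d+1$.

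Otherwise the second outcome holds: there is $E\subset B_0$ with $\abs E\ge\abs{B_0}/2$ such that $A_0$ is $x^4$-pure to $E$. Apply \cref{lem:split} again, now to $(E,A_0)$. The third outcome is handled exactly as before, so we may assume there is $E'\subset A_0$ with $\abs{E'}\ge\abs{A_0}/2$ such that $E$ is $x^4$-pure to $E'$. Both $E$ and $E'$ have size at least roughly $\abs F/4$.

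Next we pass from purity towards the big sets to purity towards the subsets. $E'\subset A_0$ is $x^4$-pure to $E$ directly. The set $E$ is $x^4$-pure to $A_0$, and $\abs{E'}\ge\abs{A_0}/2$. Hence a vertex of $E$ with fewer than $x^4\abs{A_0}$ neighbours in $A_0$ has fewer than $2x^4\abs{E'}\le x^3\abs{E'}$ neighbours in $E'$, and symmetrically for non-neighbours. So $E$ and $E'$ are $x^3$-pure to each other.

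\cref{lem:xpure} (with $x\le\frac12$) then gives $X\subset E'$ and $Y\subset E$ forming an $x^2$-pure, hence $x$-pure, pair. Both have size at least $(1-x^2)\abs F/4-1\ge\abs F/2^b$. Thus $(X,Y)$ is an $x$-pure $(2,\abs F/2^b)$-blockade, proving the claim with $k=2$.

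The only step needing care is the one just described: the density threshold degrades by a constant factor when moving from purity to $A_0$ to purity to $E'$. That is why the threshold is taken as $x^4$ rather than $x^3$. The remaining inequalities are just the bookkeeping of the constants $b=8d$ and $K$, and they appear to have ample slack.
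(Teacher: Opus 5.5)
Your proof is correct, but it takes a longer route than the paper.

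\textbf{The paper's route.} The paper applies \cref{lem:split} only once, with $x^2$ in place of $x$, to $(A,B)$. It then gets the reverse direction for free:
\begin{enumerate}
\item It splits $A$ into its $x^2$-sparse and $(1-x^2)$-dense parts towards $E$, and keeps the larger part $D$. So $\abs D\ge\abs A/2$, and every vertex of $D$ has the same type.
\item It double counts the edges between $D$ and $E$ to find $E'\subset E$ with $\abs{E'}\ge(1-x)\abs E$ that is $x$-sparse (resp.\ $(1-x)$-dense) to $D$.
\end{enumerate}
This makes $(D,E')$ $x$-pure directly, with no second application of \cref{lem:split} and no appeal to \cref{lem:xpure}. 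In the blockade outcome, the paper simply keeps $\ceil{k^{1/3}}$ blocks.

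\textbf{Your route.} You apply \cref{lem:split} a second time with the roles swapped and then invoke \cref{lem:xpure}. This also works:
\begin{itemize}
\item $E$ and $A_0$ are disjoint.
\item $\abs{A_0}>x'^{-d}$.
\item The new blockade outcome in $F[A_0]$ is handled as before.
\end{itemize}
However, it buys nothing extra. One-sided purity from a large set already forces the converse once you restrict to the majority type. Your route costs an additional application of the lemma and an extra case.

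\textbf{Two small slips.}
\begin{itemize}
\item After the second application you say $E$ is $x^4$-pure to $A_0$. What \cref{lem:split} actually gives, as you yourself stated a sentence earlier, is that $E$ is $x^4$-pure to $E'$ itself. So there is no constant-factor loss, and your stated reason for taking $x^4$ rather than $x^3$ is unfounded. This is harmless, since $x^3$ would also have sufficed.
\item In the case $k>1/x$, the inequality $\abs F x^b\ge\abs F/k'^b$ goes the wrong way, because $k'=\floor{1/x}\le 1/x$. Instead use $k'\ge 1/(2x)$ and $\abs F x^{4d}/4\ge\abs F(2x)^b\ge\abs F/k'^b$, which holds with ample room since $x\le 2^{-(2b+1)}$.
\end{itemize}
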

		\begin{subproof}
			Note that $\abs F\ge\eps^b\abs G\ge \eps^{b-K}=\eps^{-2b(2b+1)}=x^{-2b}$ by the choice of $K$ and $x$. Let $(A,B)$ be a partition of $V(F)$ with $\abs A=\floor{\abs F/2}$; then $\abs A\ge \abs F/4$ and $\abs B\ge \abs F/2$.
			By the choice of $d$ and \cref{lem:split}, either:
			\begin{itemize}
				\item $A\to_{t+1}B$ in $F$;
				
				\item there exists $E\subset B$ with $\abs E\ge\abs B/2$ such that $A$ is $x^2$-pure to $E$ in $F$; or
				
				\item $F[B]$ contains an $x$-pure $(k,\abs B/k^d)$-blockade, for some integer $k\in[2,x^{-2}]$.
			\end{itemize}
			
			The first bullet cannot hold since $F$ is $P_{t+1}$-free. 
			If the third bullet holds then the claim also holds since $\abs B/k^d\ge \frac12\abs F/k^d\ge \abs F/\ceil{k^{1/3}}^b$ by the choice of $b$ (note that $\ceil{k^{1/3}}\le 2k^{1/3}\le  2x^{-2/3}\le x^{-1}$ and $\ceil{k^{1/3}}^{8}\ge k$).
			Thus, we may assume that the second bullet holds. Then there exists $D\subset A$ with $\abs D\ge\abs A/2\ge\abs F/8$ such that $D$ is $x^2$-sparse or $(1-x^2)$-dense to $E$. Thus, there exists $E'\subset E$ with $\abs{E'}\ge(1-x)\abs E\ge \frac12\abs E$ such that $E'$ is $x$-sparse or $(1-x)$-dense to $D$; and so $(D,E')$ is $x$-pure.
			Hence, the claim holds with $k=2$, since $\abs D,\abs{E'}\ge\abs F/8\ge\abs F/2^b$.
			This proves \cref{claim:nice}.
		\end{subproof}
		
		Now, \cref{claim:nice,thm:convsemi} (with $y=\eps$) together give an $x\eps^{-2b}$-pure $(\eps^{-1},x^{2b}\abs G)$-blockade in $G$. Since $x\eps^{-2b}=\eps$ and $x^{2b}=\eps^{2b(2b+1)}\ge\eps^K$, this proves \cref{lem:pathpure}.
	\end{proof}
	
	\subsection{The main proof}
	\label{subsec:mainsplit}
	We now present the proof of \cref{lem:split}, with a strong emphasis on bookkeeping blockades and splitting paths, as follows.
	
	\begin{proof}
		[Proof of \cref{lem:split}]
		We proceed by induction on $r\ge2$.
		For $r=2$, if the first outcome fails then $A$ is anticomplete to $B$; and so the second outcome holds with $E=B$.
		
		Now, for $r\ge2$, we prove the lemma for $r+1$, assuming that it holds for $r$.
		By induction, there exists $b\ge2$ such that for every $x\in(0,2^{-b}]$, and every graph $G$ with disjoint $A,B\subset V(G)$ such that $\abs B>x^{-b}$, either:
		\begin{itemize}
			\item $A\to_rB$ in $G$;
			
			\item there exists $E\subset B$ with $\abs E\ge \abs B/2$ such that $A$ is $x$-pure to $E$ in $G$; or
			
			\item $G[B]$ contains an $x$-pure $(k,\abs B/k^b)$-blockade, for some integer $k\in[2,1/x]$.
		\end{itemize}
		
		We claim that $d:=50b(b+1)$ suffices.
		To this end, let $x\in(0,2^{-d}]$, and let $G$ be a graph with disjoint $A,B\subset V(G)$ such that $\abs B>x^{-d}$.
		Assume that the first and third outcomes of the lemma fail for $r+1$ and $d$; that is, $A\not\to_{r+1}B$ in $G$, and $G[B]$ contains no $x$-pure $(k,\abs B/k^d)$-blockade, for any integer $k\in[2,1/x]$.
		
		The following claim formalises the process of bookkeeping blockades; see  \cref{fig:bookkeep} for a proof illustration.
		
		\begin{clm}
			{\ref*{lem:split}.1}
			\label{claim:bookkeep}
			Let $(B_1,\ldots,B_k)$ be a blockade in $G[B]$ of length $k\in[2,x^{-2}]$ and width larger than $x^{-4b(b+1)}$. Then either:
			\begin{itemize}
				\item there exist distinct $i,j\in[k]$ such that $B_i\to_r B_j$;
				
				\item for some $i\in[k]$ and some integer $\ell\in[k,x^{-4b-2}]$, there is an $x$-pure $(\ell,\abs{B_i}/\ell^{2b})$-blockade in $G[B_i]$; or
				
				\item for each $i\in[k]$, there exists $D_i\subset B_i$ with $\abs{D_i}\ge \abs{B_i}/(4k^b)$, such that $(D_1,\ldots,D_k)$ is $x$-pure.
			\end{itemize}
		\end{clm}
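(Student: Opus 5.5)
We apply the induction hypothesis (the $r$-case with exponent $b$) to each ordered pair $(B_j,B_i)$ with $i\ne j$, using a smaller threshold $x'$. Assume the first outcome of the claim fails, so $B_j\not\to_r B_i$ for all distinct $i,j$. For the threshold, take $x':=x^{4b+2}$ (so $x'\le 2^{-b}$). Then $\abs{B_i}>x^{-4b(b+1)}\ge x'^{-b}$, which makes the induction hypothesis applicable with $(A,B)=(B_j,B_i)$.

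For a fixed pair, the hypothesis then gives one of two things. Either there is $E\subset B_i$ with $\abs E\ge\abs{B_i}/2$ such that $B_j$ is $x'$-pure to $E$, or $G[B_i]$ contains an $x'$-pure $(m,\abs{B_i}/m^b)$-blockade for some $m\in[2,1/x']$. In the second case we want to upgrade to the second outcome of the claim, which needs length $\ell\ge k$. To get this, rather than applying the hypothesis to $B_i$ once, we bookkeep blockades inside $G[B_i]$ via \cref{thm:convsemi}, with $(x,y,b)$ replaced by $(x',1/k,b)$. Note $x'\le k^{-(2b+1)}$ because $k\le x^{-2}$.

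Concretely, fix $i$ and suppose that for every $F\subset B_i$ with $\abs F\ge k^{-b}\abs{B_i}$ there is some $j\neq i$ for which the hypothesis applied to $(B_j,F)$ returns a blockade. Then every such $F$ has an $x'$-pure $(m,\abs F/m^b)$-blockade. Now \cref{thm:convsemi} yields an $x'k^{2b}$-pure $(\ell,\abs{B_i}/\ell^{2b})$-blockade with $\ell\in[k,1/x']$. Since $x'k^{2b}\le x^{4b+2}x^{-4b}=x^2\le x$, this is $x$-pure. If $\ell>x^{-4b-2}$ we truncate to $x^{-4b-2}$ blocks. This gives the second outcome.

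Otherwise, for every $i$ there is $F_i\subset B_i$ with $\abs{F_i}\ge k^{-b}\abs{B_i}$ such that, for every $j\ne i$, the hypothesis applied to $(B_j,F_i)$ returns the pure outcome. The main obstacle is that this outcome gives a possibly different set $E_{ij}\subset F_i$ for each $j$, each of size only $\abs{F_i}/2$, and intersecting $k-1$ halves could leave nothing. I would instead apply the hypothesis once with $A=\bigcup_{j\ne i}B_j$. This is legitimate: $A\to_r F_i$ would give an induced path starting at some vertex of a single $B_j$, so $B_j\to_r B_i$, which we excluded. So define $F_i$ using this union $A$ in the previous paragraph, and we get a single $E_i\subset F_i$ with $\abs{E_i}\ge\abs{B_i}/(2k^b)$ such that every $B_j$ ($j\ne i$) is $x'$-pure to $E_i$.

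Finally we prune as in \cref{lem:xpure}. Each $B_j$ splits into a part sparse to $E_i$ and a part dense to $E_i$. Since $E_j\subset B_j$ is also $x'$-pure to $E_i$, counting edges between $E_i$ and $E_j$ shows that one of these parts within $E_j$ has relative size at most $O(x'k^{b})$. Deleting, for each $i$, the vertices of $E_i$ lying in the minority type with respect to each $E_j$ loses at most $k\cdot O(x'k^b)\abs{E_i}\le\abs{E_i}/2$ vertices. This leaves $D_i$ with $\abs{D_i}\ge\abs{B_i}/(4k^b)$. Every pair $(D_i,D_j)$ is now of the same type in both directions, with error $x'\cdot 4k^b\le x$ relative to $\abs{D_i}$, so it is $x$-pure. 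This is the third outcome.
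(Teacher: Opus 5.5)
Your proposal is correct and is essentially the paper's proof. The ingredients match: \cref{thm:convsemi} with threshold $x'=x^{4b+2}$ and $y=1/k$ either gives the second outcome or a large $F_i\subset B_i$ (the paper's $C_i$) with no $x'$-pure blockade; the induction hypothesis is applied once with $A$ equal to the union of the other blocks, giving $E_i$; and a pairwise \cref{lem:xpure}-style pruning yields the $D_i$. Using $\bigcup_{j\ne i}B_j$ in place of the paper's $\bigcup_{j\ne i}C_j$ is harmless, and your $O(x'k^b)$ loss estimates are weaker than necessary but still sufficient. Your truncation step is vacuous, since \cref{thm:convsemi} already gives $\ell\le 1/x'=x^{-4b-2}$; in any case, truncating would not preserve the width bound $\abs{B_i}/\ell^{2b}$ for the shorter length.
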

		\begin{subproof}
			
			\begin{figure}[ht]
				\centering
				\begin{tikzpicture}[
					every node/.style={font=\small},
					stage/.style={draw=black!60,rounded corners=2pt,fill=blue!5,
						minimum width=1.4cm,minimum height=.55cm},
					kept/.style={stage,draw=blue!65!black,fill=blue!15},
					contain/.style={->,draw=black!65,line width=.6pt}
					]
					\node[stage] (bi) at (-2.7,2.55) {$B_i$};
					\node[stage] (bj) at (2.7,2.55) {$B_j$};
					\node[stage] (ci) at (-2.7,1.7) {$C_i$};
					\node[stage] (cj) at (2.7,1.7) {$C_j$};
					\node[stage] (ei) at (-2.7,.85) {$E_i$};
					\node[stage] (ej) at (2.7,.85) {$E_j$};
					\node[kept] (di) at (-2.7,0) {$D_i$};
					\node[kept] (dj) at (2.7,0) {$D_j$};
					\draw[contain] (bi) -- (ci);
					\draw[contain] (bj) -- (cj);
					\draw[contain] (ci) -- (ei);
					\draw[contain] (cj) -- (ej);
					\draw[contain] (ei) -- (di);
					\draw[contain] (ej) -- (dj);
					\node[font=\footnotesize,text=black!70] at (0,2.12) {conversion theorem};
					\node[font=\footnotesize,text=black!70] at (0,1.27) {induction};
					\node[font=\footnotesize,text=black!70] at (0,.42) {pairwise pruning};
					\draw[<->,draw=blue!65!black,line width=.7pt] (di) --
					node[fill=white,inner sep=2pt,font=\footnotesize,yshift=-0.4mm] {$x$-pure} (dj);
				\end{tikzpicture}
				\caption{The bookkeeping step in \cref{claim:bookkeep} when its first two outcomes fail. The inclusions hold for every index, and simultaneous pruning yields $|D_i|\ge |B_i|/(4k^b)$ with every pair $(D_i,D_j)$ $x$-pure.}
				\label{fig:bookkeep}
			\end{figure}
			
			Assume that the first two outcomes fail.
			(Note that the condition `no further useful cross-block (anti)comb' as shown in \cref{fig:bookkeeping-routes} is essentially contained in the failure of the first outcome.)
			Let $\xi:=x^{4b+2}\in(0,k^{-2b-1}]$.
			Since the second outcome fails, for each $i\in[k]$, \cref{thm:convsemi} (with $G=G[B_i]$, $x=\xi$, and $y=1/k$) gives $C_i\subset B_i$ with $\abs{C_i}\ge \abs{B_i}/k^b$ such that there does not exist an integer $q\in[2,1/\xi]$ for which there is a $\xi$-pure $(q,\abs{C_i}/q^b)$-blockade in $G[C_i]$.
			Let $A_i:=\bigcup_{j\ne i}C_j$.
			Since $\abs{C_i}\ge \abs{B_i}/k^b>x^{-b(4b+2)}=\xi^{-b}$, by the choice of $b$ and the induction hypothesis (with  $(A,B,x)=(A_i,C_i,\xi)$), either:
			\begin{itemize}
				\item $A_i\to_rC_i$ in $G$; or
				
				\item there exists $E_i\subset C_i$ with $\abs{E_i}\ge\abs{C_i}/2$ such that $A_i$ is $\xi$-pure to $E_i$ in $G$.
			\end{itemize}
			
			If the first bullet holds then there exists $j\ne i$ for which $C_j\to_rC_i$; and so the first outcome of the claim holds, violating our assumption.
			Hence, the second bullet holds.
			Now, for distinct $i,j\in[k]$, since $E_i$ is $\xi$-pure to $E_j$ and $E_j$ is $\xi$-pure to $E_i$, \cref{lem:xpure} (with $x^2$ replacing $x$, note that $\xi\le x^6$) gives $E_i^j\subset E_i$ and $E_j^i\subset E_j$ such that $\abs{E_i^j}\le x^4\abs{E_i}$, $\abs{E_j^i}\le x^4\abs{E_j}$, and $(E_i\setminus E_i^j,E_j\setminus E_j^i)$ is $x^4$-pure in $G$.
			Let $D_i:=E_i\setminus\left(\bigcup_{j\ne i}E_i^j\right)$ for all $i\in[k]$; then
			$\abs{D_i}\ge (1-kx^4)\abs{E_i}\ge \frac12\abs{E_i}$ since $k\le x^{-2}$.
			Therefore $(D_i,D_j)$ is $2x^4$-pure for all distinct $i,j\in[k]$; and so $(D_1,\ldots,D_k)$ is $x$-pure in $G$ because $2x^4\le x$.
			This proves \cref{claim:bookkeep}.
		\end{subproof}

		Now, we will handle the vertices in $A$ according to their degrees towards $B$ in $G$.
		Let $\delta:=2^{-20}$;
		the following claim is a replacement of R\"odl's theorem \ref{thm:rodl} with constant density parameter.
		
		\begin{clm}
			{\ref*{lem:split}.2}
			\label{claim:rodl}
			$A$ is $\delta$-pure to $B$.
		\end{clm}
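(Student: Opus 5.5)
Suppose not: some $v\in A$ is mixed in the $\delta$ sense on $B$, i.e.\ $v$ has at least $\delta\abs B$ neighbours and at least $\delta\abs B$ non-neighbours in $B$. Set $Y:=B\cap N_G(v)$ and $X:=B\setminus N_G(v)$, so $\abs X,\abs Y\ge\delta\abs B$. We will contradict $A\not\to_{r+1}B$ by finding $X'\subset X$ and $Y'\subset Y$ with $Y'\to_r X'$. Indeed, an induced path $v_1\text-\cdots\text-v_r$ with $v_1\in Y'$ and $v_2,\ldots,v_r\in X'$ extends to $v\text-v_1\text-\cdots\text-v_r$, which is induced because $v$ is adjacent to $v_1$ and to no vertex of $X$. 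Hence $A\to_{r+1}B$, a contradiction. So for every such $v$ we may assume $Y\not\to_r X$, and more generally that no subset of $Y$ splits an induced $P_r$ into a subset of $X$.

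\textbf{Step 1: apply the induction hypothesis to $(Y,X)$.} The $r$-case of \cref{lem:split} holds with constant $b$. We apply it to $(Y,X)$ with a parameter $\xi$ (say $\xi:=x^{2}$ or $x^{4b+2}$); this is allowed since $\abs X\ge\delta\abs B>\delta x^{-d}\ge\xi^{-b}$ by the choice $d=50b(b+1)$. The outcome $Y\to_rX$ is excluded. If the third outcome holds, $G[X]$ contains a $\xi$-pure $(k,\abs X/k^b)$-blockade. Since $\abs X\ge\delta\abs B$ with $\delta$ a constant, and $k\le 1/\xi$, this is a $x$-pure $(k,\abs B/k^d)$-blockade in $G[B]$ after absorbing $\delta^{-1}\le k^{20}$ (using $k\ge2$, or coarsening to two blocks when $k$ is small) into $k^{d}$. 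That contradicts the failure of the third outcome of \cref{lem:split}. So we obtain $E\subset X$ with $\abs E\ge\abs X/2$ such that $Y$ is $\xi$-pure to $E$.

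\textbf{Step 2: produce a pure pair and conclude.} Every vertex of $Y$ is $\xi$-sparse or $(1-\xi)$-dense to $E$. One of these two classes, call it $Y_0$, has size at least $\abs Y/2\ge\delta\abs B/2$. Averaging then gives $E'\subset E$ of size at least $(1-2\xi)\abs E$ (or at least half) with $(Y_0,E')$ being $O(\xi)$-pure, as in the proof of \cref{lem:xpure}. These two sets have sizes at least $\delta\abs B/4\ge\abs B/2^{d}$, so $(Y_0,E')$ is an $x$-pure $(2,\abs B/2^d)$-blockade in $G[B]$. This again contradicts the failure of the third outcome, so every $v\in A$ is $\delta$-pure to $B$.

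The step I expect to need care is the bookkeeping of constants. We need $\delta=2^{-20}$ losses absorbed into $k^d$ with $k\ge2$, the induction applied with a parameter fine enough that $\xi$-purity implies $x$-purity, and the width bound $\abs X>\xi^{-b}$ to hold. All of these appear to follow from $d=50b(b+1)$ and $x\le 2^{-d}$. If the blockade case has $k$ very small relative to $\delta^{-1}$, I would instead keep two of its blocks and use $2^{d}\ge 2^{20}\cdot 2^{b}$.
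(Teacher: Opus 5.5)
Your proposal is correct and follows the paper's proof step by step, with the names $X,Y$ swapped. You apply the $r$-case to the pair (neighbours of $v$ in $B$, non-neighbours of $v$ in $B$) with $\xi=x^2$ and rule out the split by prepending $v$. You then turn each of the other two outcomes into an $x$-pure blockade of $G[B]$, contradicting the failed third outcome.

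Two small bookkeeping corrections:
\begin{itemize}
\item In Step~1, small $k$ needs no special treatment, since $\delta^{-1}=2^{20}\le k^{20}$ for every $k\ge2$. The real issue is large $k$: the induction only gives $k\le\xi^{-1}=x^{-2}$, which may exceed $x^{-1}$, so as written the blockade does not contradict the third outcome for an integer in $[2,x^{-1}]$. As in the paper, keep only $\ceil{k^{1/3}}\in[2,x^{-1}]$ of its blocks and use $\abs B/k^{b+20}\ge\abs B/\ceil{k^{1/3}}^{d}$.
\item In Step~2, averaging does not give $\abs{E'}\ge(1-2\xi)\abs E$. With $\xi=x^2$ it gives $\abs{E'}\ge(1-x)\abs E$ with $E'$ being $x$-sparse (or $(1-x)$-dense) to $Y_0$. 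Alternatively, your parenthetical fallback (half of $E$ being $2\xi$-sparse) also works.
\end{itemize}
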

		\begin{subproof}
			Suppose not; then there exists $v\in A$ with at least $\delta\abs B$ neighbours and at least $\delta\abs B$ non-neighbours in $B$.
			Let $X:=B\cap N_G(v)$ and $Y:=B\setminus N_G(v)$;
			then $\abs Y\ge \delta\abs B>\delta x^{-d}=2^{-20}x^{-d}\ge x^{-2b}$ by the choice of $d$.
			Thus, by the choice of $b$ (with $x^2$ replacing $x$), either:
			\begin{itemize}
				\item $X\to_rY$ in $G[B]$;
				
				\item there exists $Z\subset Y$ with $\abs Z\ge \abs Y/2$ such that $X$ is $x^2$-pure to $Z$ in $G$; or
				
				\item $G[Y]$ contains an $x^2$-pure $(k,\abs Y/k^b)$-blockade, for some integer $k\in[2,x^{-2}]$.
			\end{itemize}
			
			If the first bullet holds, then appending $v$ gives an induced $P_{r+1}$ with one endpoint $v$ and other vertices in $B$, contrary to our assumption.
			
			If the third bullet holds, then $\abs Y/k^b\ge \delta\abs B/k^b\ge \abs B/k^{b+20}\ge \abs B/\ceil{k^{1/3}}^d$; and so the third outcome of the lemma holds for $\ceil{k^{1/3}}\in[2,x^{-1}]$, a contradiction.
			
			Hence, the second bullet above holds.
			Then there exists $W\subset X$ such that $\abs W\ge\abs X/2\ge \delta\abs B/2\ge 2^{-d}\abs B$ and $W$ is $x^2$-sparse or $(1-x^2)$-dense to $Z$.
			Note that $\abs Z\ge \abs Y/2\ge \delta\abs B/2\ge 2^{1-d}\abs B$ by the choice of $d$.
			Assume without loss of generality that $W$ is $x^2$-sparse to $Z$.
			Then there exists $U\subset Z$ with $\abs U\ge (1-x)\abs Z\ge\frac12\abs Z\ge2^{-d}\abs B$ for which $U$ is $x$-sparse to $W$. Since $2x^2\le x$, we deduce that $W$ is also $x$-sparse to $U$; and so the third outcome of the lemma holds, a contradiction.
			This proves \cref{claim:rodl}.
		\end{subproof}

		By \cref{claim:rodl}, $A$ admits a partition $(P,Q)$ such that $P$ is $\delta$-sparse to $B$ and $Q$ is $(1-\delta)$-dense to $B$.
		The following claim removes a small set of vertices from $B$ such that $P$ is very sparse to the rest.
		
		\begin{clm}
			{\ref*{lem:split}.3}
			\label{claim:sparsify}
			There exists $X\subset B$ such that $\abs X\le \abs B/4$ and $P$ is $2x^2$-sparse to $B\setminus X$.
		\end{clm}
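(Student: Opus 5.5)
The plan is to apply \cref{lem:comb} to the pair $(P,B)$ and show that its comb outcome contradicts the standing assumptions that $A\not\to_{r+1}B$ and that $G[B]$ has no $x$-pure $(k,\abs B/k^d)$-blockade with $k\in[2,1/x]$.

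First I would restrict attention to the vertices of $P$ with non-negligible degree. Let $P_1\subset P$ be the vertices with at least $x^2\abs B$ neighbours in $B$. Every vertex of $P\setminus P_1$ has fewer than $x^2\abs B$ neighbours in $B$. Whatever $X$ we remove, as long as $\abs X\le\abs B/4$ (in fact $\le\abs B/2$ suffices), such a vertex has fewer than $2x^2\abs{B\setminus X}$ neighbours in $B\setminus X$. So it is enough to find $X$ with $\abs X\le\abs B/4$ such that $P_1$ is $2x^2$-sparse to $B\setminus X$.

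Next, apply \cref{lem:comb} to $(P_1,B)$ with $\Delta:=\delta\abs B$, using \cref{claim:rodl}: each vertex of $P$ is $\delta$-sparse to $B$. In the first outcome, the set $X$ of vertices of $B$ with a neighbour in $P_1$ has size at most $20\sqrt{\delta}\abs B=20\cdot2^{-10}\abs B<\abs B/4$. Then $P_1$ is anticomplete to $B\setminus X$, and we are done. In the second outcome we get a $(k,\abs B/k^2)$-comb $((a_i,B_i):i\in[k])$ with $a_i\in P_1$ and $B_i\subset B$. The teeth are automatically pairwise disjoint, since $B_i\subset N_G(a_i)$ while $B_j\cap N_G(a_i)=\emptyset$. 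So $(B_1,\ldots,B_k)$ is a blockade in $G[B]$. Also $k\ge2$, because $\abs B/k^2\le\abs{B_i}<\delta\abs B$.

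When $k\le x^{-2}$, the width is at least $x^4\abs B>x^{4-d}\ge x^{-4b(b+1)}$ by the choice of $d$. So \cref{claim:bookkeep} applies, and I would refute each of its outcomes.
\begin{itemize}
\item If $B_i\to_rB_j$ for some $i\ne j$, via an induced path $v_1\text-\cdots\text-v_r$, then $a_i$ is adjacent to $v_1\in B_i$ and anticomplete to $B_j\ni v_2,\ldots,v_r$. Hence $a_i\text-v_1\text-\cdots\text-v_r$ is an induced $P_{r+1}$ witnessing $A\to_{r+1}B$, a contradiction.
\item If $G[B_i]$ contains an $x$-pure $(\ell,\abs{B_i}/\ell^{2b})$-blockade with $\ell\in[k,x^{-4b-2}]$, its width is at least $\abs B/\ell^{2b+2}$. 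Passing to $\lceil\ell^{c}\rceil$ of its blocks for a suitable small $c>0$, this gives a blockade forbidden by the failure of the third outcome of the lemma, because $d=50b(b+1)$ is large.
\item If $(D_1,\ldots,D_k)$ is $x$-pure of width at least $\abs B/(4k^{b+2})$, then after taking $\lceil k^{1/3}\rceil$ blocks (and using $k\le x^{-2}$) we again contradict the failure of the third outcome.
\end{itemize}

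The main obstacle is the comb with $k>x^{-2}$. Truncating it to its first $\floor{x^{-2}}$ pairs keeps the comb structure, but the width bound $\abs B/k^2$ can then be far too small for \cref{claim:bookkeep}. I would first try to exploit the degree lower bound on $P_1$: each $a\in P_1$ has at least $x^2\abs B$ neighbours. I would apply \cref{lem:comb} not to $G$ directly but to $(P_1,B)$ with $B$ cut into a bounded number of parts, or iterate the argument. The aim is to rechoose the teeth of a truncated comb as $N_G(a_i)\cap B$ minus the small set $\bigcup_{j\ne i}N_G(a_j)\cap N_G(a_i)$, so that the teeth have size about $x^2\abs B$ and the length is at most about $x^{-2}$. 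If that fails, the fallback is a greedy procedure. Repeatedly pick $v\in P$ with at least $2x^2\abs{B\setminus X}$ neighbours in $B\setminus X$ and, using the comb/bookkeeping dichotomy above on the current $B\setminus X$, either add a set of size $O(\sqrt{\delta})\abs B$ to $X$ or reach a contradiction. The ``at most one quarter'' bound would come from the $20\sqrt{\delta}$ factor.
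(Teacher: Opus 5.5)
Your reduction to $P_1$, your handling of the first outcome of \cref{lem:comb}, and your refutation of the three outcomes of \cref{claim:bookkeep} when the comb has length $k\le x^{-2}$ are all fine, and the last part coincides with the end of the paper's argument. The gap is the case $k>x^{-2}$, which you leave open, and it is where the real content of the claim lies. Applied to $(P_1,B)$, \cref{lem:comb} bounds $k$ only by $\abs{P_1}$. So the guaranteed tooth size $\abs B/k^2$ can be far below $x^{-4b(b+1)}$, and then neither \cref{claim:bookkeep} (which needs length at most $x^{-2}$ and large width) nor the failure of the third outcome of the lemma can be used. Truncating the comb does not enlarge its teeth.

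Your suggested repairs do not close this. The degree lower bound $x^2\abs B$ says nothing about private neighbourhoods. The apexes may have almost identical neighbourhoods in $B$, and then the set you call small can be almost all of $N_G(a_i)\cap B$, so the rechosen tooth need not exceed the original one. Your fallback greedy procedure adds $O(\sqrt\delta)\abs B$ vertices to $X$ per step with no bound on the number of steps, so it cannot keep $\abs X\le\abs B/4$.

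The missing idea is to shrink the apex set to at most $x^{-2}$ vertices \emph{before} invoking \cref{lem:comb}, by greedy covering. Order $P$ as $v_1,v_2,\ldots$, where $X_{i-1}$ is the set of vertices of $B$ with a neighbour in $\{v_1,\ldots,v_{i-1}\}$, and $v_i$ has the most neighbours in $B\setminus X_{i-1}$ among the remaining vertices. The increments $X_i\setminus X_{i-1}$ are pairwise disjoint with non-increasing sizes. So if $p$ is the last index with $\abs{X_p\setminus X_{p-1}}\ge x^2\abs B$ (or $p=0$ if there is none), then $p\le x^{-2}$ and every vertex of $P$ has fewer than $x^2\abs B$ neighbours in $B\setminus X_p$. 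Thus $X:=X_p$ works whenever $\abs{X_p}\le\abs B/4$. Otherwise, apply \cref{lem:comb} to $(\{v_1,\ldots,v_p\},X_p)$ with $\Delta=\delta\abs B$. The first outcome gives $\abs{X_p}\le 400\delta\abs B<\abs B/4$, a contradiction. The second gives a comb of length $k\le p\le x^{-2}$ whose teeth have size at least $\abs{X_p}/k^2>\abs B/(4k^2)$ (so also $k>2^9$). Your $k\le x^{-2}$ analysis then rules this out, up to a harmless constant factor in the widths.
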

		\begin{subproof}
			We may assume $P\ne\emptyset$.
			The proof incorporates \cref{lem:comb} and also the greedy covering argument in its proof given in~\cite{csss2023a}.
			Let $s:=\abs P$.
			We choose an ordering $(v_1,v_2,\ldots,v_s)$ of $P$ as follows. Set $X_0:=\emptyset$. For $i=1,2,\ldots,s$ in turn, assuming that $v_1,\ldots,v_{i-1}$ have been chosen, let $X_{i-1}$ be the set of vertices in $B$ with a neighbour in $\{v_1,\ldots,v_{i-1}\}$, and choose $v_i$ with the largest number of neighbours in $B\setminus X_{i-1}$ among those in $P\setminus\{v_1,\ldots,v_{i-1}\}$. Then set $X_i$ to be the set of vertices in $B$ with a neighbour in $\{v_1,\ldots,v_i\}$.
			If $\abs{X_1}< x^2\abs B$ then we are done by taking $X$ empty; and so we may assume that $\abs{X_1}\ge x^2\abs B$.
			Thus, there exists $p\in [s]$ maximal such that $\abs{X_p\setminus X_{p-1}}\ge x^2\abs B$, where $X_0=\emptyset$.
			Hence $p\le \min(s,x^{-2})$.
			Let $A':=\{v_1,\ldots,v_p\}$ and $X:=X_p$; then every vertex in $X$ has a neighbour in $A'$.
			If $\abs X\le\abs B/4$ then we are done; so suppose for a contradiction that $\abs X>\abs B/4$.
			Now, by \cref{lem:comb} with $(A,B,\Delta)=(A',X,\delta\abs B)$, either:
			\begin{itemize}
				\item $\abs{X}\le 20\sqrt{\delta\abs B\cdot \abs{X}}$; or
				
				\item for some integer $k\ge1$, there is a $(k,\abs{X}/k^2)$-comb $((a_i,B_i):i\in[k])$ in $G$, where $a_i\in A'$ and $B_i\subset X$ for all $i\in[k]$.
			\end{itemize}
			
			The first bullet is equivalent to $\abs X\le 400\delta\abs B$, which cannot happen due to our supposition and $400\delta<1/4$ by the choice of $\delta$.
			Hence, the second bullet holds.
			Then $k\le \abs{A'}=p\le x^{-2}$, and 
			$\delta\abs B>\abs X/k^2>\abs B/(4k^2)$
			which yields $k>(4\delta)^{-1/2}=2^9$ by the choice of $\delta$.
			Then for all $i\in[k]$, the choice of $d$ gives
			$$\abs{B_i}\ge\abs X/k^2>\abs B/(4k^2)\ge  \abs B/k^3>x^{6-d}\ge x^{-4b(b+1)}.$$
			Thus, \cref{claim:bookkeep} implies that either:
			\begin{itemize}
				\item there exist distinct $i,j\in[k]$ such that $B_i\to_r B_j$;
				
				\item for some $i\in[k]$ and some integer $\ell\in[k,x^{-4b-2}]$, there is an $x$-pure $(\ell,\abs{B_i}/\ell^{2b})$-blockade in $G[B_i]$; or
				
				\item for each $i\in[k]$, there exists $D_i\subset B_i$ with $\abs{D_i}\ge \abs{B_i}/(4k^b)$, such that $(D_1,\ldots,D_k)$ is $x$-pure.
			\end{itemize}
			
			If the first bullet holds then appending $a_i$ gives an induced $P_{r+1}$ in $G$ with one endpoint $a_i\in A$ and the other vertices in $B$ (see the left-hand side of \cref{fig:apex-extension}), a contradiction.
			
			If the second bullet holds then the third outcome of the lemma holds because
			\[\abs{B_i}/\ell^{2b}\ge \abs B/(4k^2\ell^{2b})\ge \abs B/\ell^{2b+3}\ge\abs B/\ceil{\ell^{1/(8b)}}^d\]
			by the choice of $d$, a contradiction.
			
			If the third bullet holds then the third outcome of the lemma also holds because
			\[\abs{B_i}/(4k^b)\ge 2^{-4}\abs B/k^{b+2}\ge \abs B/k^{b+3}\ge \abs B/\ceil{k^{1/3}}^d\]
			which also gives a contradiction. This proves \cref{claim:sparsify}.
		\end{subproof}
		
		\begin{figure}[htbp]
			\centering
			\begin{tikzpicture}[
				panel/.style={draw=black!28,rounded corners=3pt,fill=black!1},
				pathbox/.style={draw=blue!60!black,rounded corners=2pt,fill=blue!6,
					minimum height=.7cm,font=\small},
				apex/.style={circle,draw=blue!65!black,fill=blue!17,
					minimum size=.75cm,font=\small},
				path/.style={draw=black!75,line width=.85pt}
				]
				\draw[panel] (0,0) rectangle (6.9,3);
				\node[font=\small\bfseries] at (3.45,2.57) {Comb: tooth $B_i$};
				\node[apex] (ai) at (.85,1.5) {$a_i$};
				\node[pathbox,minimum width=1.65cm] (ui) at (2.85,1.5) {$u_1\in B_i$};
				\node[pathbox,minimum width=2.35cm] (taili) at (5.25,1.5) {$u_2,\ldots,u_r\in B_j$};
				\draw[path] (ai) -- (ui) -- (taili);
				\node[font=\footnotesize] at (3.45,.45) {$a_i\in P$ sees $B_i$ and misses $B_j$};
				\begin{scope}[xshift=7.3cm]
					\draw[panel] (0,0) rectangle (6.9,3);
					\node[font=\small\bfseries] at (3.45,2.57) {Anticomb: tooth $B_j$};
					\node[apex] (aj) at (.85,1.5) {$a_j$};
					\node[pathbox,minimum width=1.65cm] (uj) at (2.85,1.5) {$u_1\in B_i$};
					\node[pathbox,minimum width=2.35cm] (tailj) at (5.25,1.5) {$u_2,\ldots,u_r\in B_j$};
					\draw[path] (aj) -- (uj) -- (tailj);
					\node[font=\footnotesize] at (3.45,.45) {$a_j\in Q$ sees $B_i$ and misses $B_j$};
				\end{scope}
			\end{tikzpicture}
			\caption{Extending a directed split $B_i\to_r B_j$. In the comb case (\cref{claim:sparsify}) use $a_i$, whose tooth is $B_i$; in the anticomb case (\cref{claim:densify}) use $a_j$, whose tooth is $B_j$. The last box represents the rest of the path in $B_j$. In each case, the apex is adjacent to $u_1$ and anticomplete to $B_j$, giving an induced $P_{r+1}$.}
			\label{fig:apex-extension}
		\end{figure}
		
		Similarly, we will remove a small portion from $B$ so that $Q$ is very dense to the rest, as follows.
		
		\begin{clm}
			{\ref*{lem:split}.4}
			\label{claim:densify}
			There exists $Y\subset B$ such that $\abs Y\le \abs B/4$ and $Q$ is $(1-2x^2)$-dense to $B\setminus Y$.
		\end{clm}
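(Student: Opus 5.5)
The plan is to mirror the proof of \cref{claim:sparsify}, working in the complement for the adjacency between $Q$ and $B$ while keeping everything else in $G$. We may assume $Q\ne\emptyset$. Let $s:=\abs Q$ and order $Q$ greedily as $(v_1,\ldots,v_s)$. At step $i$, let $Y_{i-1}$ be the set of vertices in $B$ with a \emph{non}-neighbour in $\{v_1,\ldots,v_{i-1}\}$. Choose $v_i$ to maximise the number of non-neighbours in $B\setminus Y_{i-1}$.

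If $\abs{Y_1}<x^2\abs B$, then every vertex of $Q$ has fewer than $x^2\abs B$ non-neighbours in $B$, so we may take $Y=\emptyset$. Otherwise, let $p\le\min(s,x^{-2})$ be maximal with $\abs{Y_p\setminus Y_{p-1}}\ge x^2\abs B$. Set $A':=\{v_1,\ldots,v_p\}$ and $Y:=Y_p$. By maximality and the greedy choice, each vertex of $Q$ has fewer than $x^2\abs B$ non-neighbours in $B\setminus Y$, which is at most $2x^2\abs{B\setminus Y}$ once $\abs Y\le\abs B/4$. So suppose for a contradiction that $\abs Y>\abs B/4$.

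Apply \cref{lem:comb} in $\overline G$ with $(A,B,\Delta)=(A',Y,\delta\abs B)$; this is valid since $Q$ is $(1-\delta)$-dense to $B$. The first outcome would give $\abs Y\le 400\delta\abs B<\abs B/4$, so instead we obtain a $(k,\abs Y/k^2)$-anticomb $((a_i,B_i):i\in[k])$ in $G$ with $a_i\in A'$ and $B_i\subset Y$. Thus $a_i$ is anticomplete to $B_i$ and complete to $B_j$ for all $j\ne i$. As before, $2^9<k\le x^{-2}$ and $\abs{B_i}\ge\abs B/k^3>x^{-4b(b+1)}$, so \cref{claim:bookkeep} applies to $(B_1,\ldots,B_k)$.

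The second and third outcomes of \cref{claim:bookkeep} give the third outcome of the lemma by exactly the same arithmetic as in \cref{claim:sparsify}, which is a contradiction. The remaining case, and the only point needing care, is the first outcome: some distinct $i,j$ with $B_i\to_rB_j$, witnessed by an induced path $u_1\text-u_2\text-\cdots\text-u_r$ with $u_1\in B_i$ and $u_2,\ldots,u_r\in B_j$. Here we must choose the right apex. We take $a_j$, which is adjacent to $u_1\in B_i$ since $i\ne j$, and anticomplete to $B_j\ni u_2,\ldots,u_r$. Then $a_j\text-u_1\text-\cdots\text-u_r$ is an induced $P_{r+1}$ with $a_j\in A$ and the rest in $B$, so $A\to_{r+1}B$, a contradiction (as on the right of \cref{fig:apex-extension}). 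This shows $\abs Y\le\abs B/4$ and proves the claim.
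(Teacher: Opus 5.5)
Your proposal is correct and follows the paper's own proof. You complement the greedy covering argument of \cref{claim:sparsify}, apply \cref{lem:comb} in $\overline G$ to obtain an anticomb with $2^9<k\le x^{-2}$, feed its teeth into \cref{claim:bookkeep}, and rule out the directed-split outcome by appending the apex $a_j$ of the target tooth, exactly as the paper does.
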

		\begin{subproof}
			The proof is similar to that of \cref{claim:sparsify}.
			Here, by assuming $Q\ne\emptyset$ and supposing for a contradiction that $\abs Y>\abs B/4$, we obtain $A''\subset Q$ with $\abs{A''}\le x^{-2}$, such that for some integer $k\ge1$, there is a $(k,\abs{Y}/k^2)$-anticomb $((a_i,B_i):i\in[k])$ in $G$, where $a_i\in A''$ and $B_i\subset Y$ for all $i\in[k]$.
			Then $k\le \abs{A''}\le x^{-2}$, and 
			$\delta\abs B>\abs Y/k^2>\abs B/(4k^2)$
			which yields $k>(4\delta)^{-1/2}=2^9$ by the choice of $\delta$.
			Then for all $i\in[k]$, the choice of $d$ gives
			$$\abs{B_i}\ge\abs Y/k^2>\abs B/(4k^2)\ge  \abs B/k^3>x^{6-d}\ge x^{-4b(b+1)}.$$
			Thus, \cref{claim:bookkeep} implies that either:
			\begin{itemize}
				\item there exist distinct $i,j\in[k]$ such that $B_i\to_r B_j$;
				
				\item for some $i\in[k]$ and some integer $\ell\in[k,x^{-4b-2}]$, there is an $x$-pure $(\ell,\abs{B_i}/\ell^{2b})$-blockade in $G[B_i]$; or
				
				\item for each $i\in[k]$, there exists $D_i\subset B_i$ with $\abs{D_i}\ge \abs{B_i}/(4k^b)$, such that $(D_1,\ldots,D_k)$ is $x$-pure.
			\end{itemize}
			
			If the first bullet holds then appending $a_j$ gives an induced $P_{r+1}$ in $G$ with one endpoint $a_j\in A$ and the other vertices in $B$ (see the right-hand side of \cref{fig:apex-extension}), a contradiction. For the last two bullets, we repeat the argument at the end of the proof of \cref{claim:sparsify} to reach a contradiction. This proves \cref{claim:densify}.
		\end{subproof}
		
		Now, by \cref{claim:sparsify,claim:densify}, we have that $\abs{B\setminus (X\cup Y)}\ge\abs B/2$; and so $A=P\cup Q$ is $4x^2$-pure to $B\setminus(X\cup Y)$. Since $4x^2\le x$, this verifies the second outcome of the lemma for $r+1$.
		This completes the induction step and proves \cref{lem:split}.
	\end{proof}
	
	\subsection{Extensions to \texorpdfstring{$S_{1,2,t}$}{S1,2,t} and \texorpdfstring{$S_{1,2,t}'$}{S1,2,t-prime}}
	In this subsection we briefly explain necessary changes to the proof of \cref{lem:split} so that \cref{lem:pathpure} holds for $S_{1,2,t}$ and $S_{1,2,t}'$, for all $t\ge1$.
	To do so, we employ the following two analogues of \cref{lem:splitp4}.
	
	\begin{lemma}
		\label{lem:splite}
		Let $x,y\in(0,2^{-8}]$ with $x\le y$, and let $G$ be a graph with disjoint $A,B\subset V(G)$ satisfying $\abs B>x^{-4}$. Then either:
		\begin{itemize}
			\item $G$ contains an induced path $v_1\text-v\text-v_2\text-v_3$ such that $v\in A$ and $v_1,v_2,v_3\in B$;
			
			\item there exists $E\subset B$ with $\abs E\ge y^2\abs B$ such that every vertex in $A$ is either complete or $x$-sparse to $E$; or
			
			\item $G[B]$ contains a pure $(k,\abs B/k^8)$-blockade, for some integer $k\in[y^{-1/2},x^{-1}]$.
		\end{itemize}
	\end{lemma}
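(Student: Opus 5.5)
We will mimic the deduction of \cref{lem:splitp4} from \cref{lem:p3}, using the $x$-splittable induced subgraph in a complemented role. Assume the second and third outcomes fail. Since $\abs B>x^{-4}$, \cref{lem:p3} applied to $G[B]$ gives either a pure $(k,\abs B/k^8)$-blockade with $k\in[y^{-1/2},x^{-1}]$, which is the third outcome and so excluded, or an $x$-splittable induced subgraph $F$ of $G[B]$ with $\abs F\ge y^2\abs B$. Put $E:=V(F)$.

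Since the second outcome fails for this $E$, some $v\in A$ is neither complete nor $x$-sparse to $E$. So $v$ has a non-neighbour in $E$ and at least $x\abs E$ neighbours in $E$. We take the partition $X:=E\cap N_G(v)$ and $Y:=E\setminus N_G(v)$. Then $\abs X\ge x\abs F$ and $Y\ne\emptyset$, so $x$-splittability of $F$ gives an induced path $v_1\text-v_2\text-v_3$ in $F$ with $v_1\in Y$ and $v_2,v_3\in X$.

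This is the configuration we need, with a relabelling. The vertex $v$ is adjacent to $v_2$ and $v_3$ and nonadjacent to $v_1$, so $v$ together with the path would form a paw rather than the path $v_1\text-v\text-v_2\text-v_3$. We therefore read the roles differently. Write the required path as $w_1\text-v\text-w_2\text-w_3$; it needs $v$ adjacent to $w_1$ and $w_2$, nonadjacent to $w_3$, and $w_1$ nonadjacent to both $w_2$ and $w_3$. So we want neighbours $w_1,w_2$ of $v$ and a non-neighbour $w_3$ of $v$ with $w_2w_3\in E(G)$ and $w_1$ anticomplete to $\{w_2,w_3\}$.

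The main obstacle is producing this configuration from splittability. Splittability of $F$ alone gives $w_3\in Y$ adjacent to $w_2\in X$, with another vertex of $X$ nonadjacent to $w_3$ and adjacent to $w_2$. That vertex is adjacent to $w_2$, so it cannot serve as $w_1$. We first try to remedy this by instead applying \cref{lem:p3} to $\overline{G}[B]$, since purity of blockades is invariant under complementation.

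Splittability in $\overline G$ gives an induced path $u_1\text-u_2\text-u_3$ in $\overline{F}$ with $u_1\in Y'$ and $u_2,u_3\in X'$. In $G$ this means $u_1u_2\notin E(G)$, $u_2u_3\notin E(G)$, and $u_1u_3\in E(G)$. We choose $X':=E\cap N_G(v)$ and $Y':=E\setminus N_G(v)$, which requires $\abs{X'}\ge x\abs E$; this holds because $v$ is not $x$-sparse to $E$. Then $u_2,u_3$ are neighbours of $v$ and $u_1$ is a non-neighbour of $v$ adjacent to $u_3$, while $u_2$ is nonadjacent to both $u_1$ and $u_3$. Setting $w_1:=u_2$, $w_2:=u_3$ and $w_3:=u_1$ gives the induced path $u_2\text-v\text-u_3\text-u_1$. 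This is exactly the first outcome, with $v\in A$ and the other three vertices in $B$.

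So the final plan is to apply \cref{lem:p3} to $\overline{G}[B]$ and not to $G[B]$. The only point to check is that the pure blockade from \cref{lem:p3} in $\overline G[B]$ is also pure in $G[B]$, which holds since complementation preserves purity.
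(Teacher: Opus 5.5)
Your proof is correct and is exactly the paper's argument. You apply \cref{lem:p3} to $\overline G[B]$, which is legitimate because purity of blockades is invariant under complementation, take $E$ to be the vertex set of the resulting $x$-splittable subgraph of $\overline G[B]$, and split $E$ according to a vertex $v\in A$ with at least $x\abs E$ neighbours and a non-neighbour in $E$; this produces the induced path $u_2\text-v\text-u_3\text-u_1$. In a final write-up, drop the abandoned first attempt with $G[B]$ and define $E$ only once.
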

	\begin{proof}
		[Proof sketch]
		Repeat the proof of \cref{lem:splitp4} by applying \cref{lem:p3} to the {\em complement} $\overline G[B]$ of $G[B]$, then considering a vertex in $A$ with at least $x\abs E$ neighbours and at least one non-neighbour in $E$.
	\end{proof}
	\begin{lemma}
		\label{lem:splitbird}
		Let $x,y\in(0,2^{-8}]$ with $x\le y$, and let $G$ be a graph with disjoint $A,B\subset V(G)$ satisfying $\abs B>x^{-4}$. Then either:
		\begin{itemize}
			\item there are $v\in A$ and $v_1,v_2,v_3\in B$ such that $\{v,v_1,v_2\}$ is a triangle in $G$, and $v_3$ is adjacent to $v_2$ and nonadjacent to $v,v_1$;
			
			\item there exists $E\subset B$ with $\abs E\ge y^2\abs B$ such that every vertex in $A$ is either complete or $x$-sparse to $E$; or
			
			\item $G[B]$ contains a pure $(k,\abs B/k^8)$-blockade, for some integer $k\in[y^{-1/2},x^{-1}]$.
		\end{itemize}
	\end{lemma}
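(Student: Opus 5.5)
Like \cref{lem:splite}, the plan is to rerun the proof of \cref{lem:splitp4}, this time applying \cref{lem:p3} to the complement $\overline G[B]$. Assume the third outcome fails. Pure blockades in $\overline G[B]$ are exactly pure blockades in $G[B]$. So, since $\abs B>x^{-4}$, \cref{lem:p3} applied to $\overline G[B]$ gives $E\subset B$ with $\abs E\ge y^2\abs B$ such that $\overline G[E]$ is $x$-splittable. Now assume the second outcome also fails for this $E$. Then some $v\in A$ is neither complete to $E$ nor $x$-sparse to $E$. In other words, $v$ has at least one non-neighbour in $E$ and at least $x\abs E$ neighbours in $E$.

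Apply $x$-splittability of $\overline G[E]$ with $X:=E\cap N_G(v)$ and $Y:=E\setminus N_G(v)$. This is a valid partition, since $\abs X\ge x\abs E$ and $Y\ne\emptyset$. It yields an induced path $w_1\text-w_2\text-w_3$ in $\overline G$ with $w_1\in Y$ and $w_2,w_3\in X$. Back in $G$, this says $w_1w_2\notin E(G)$, $w_2w_3\notin E(G)$ and $w_1w_3\in E(G)$. Also $v$ is adjacent to $w_2,w_3$ and nonadjacent to $w_1$.

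The remaining step is to relabel so as to get the bird-type configuration. Set $v_1:=w_3$, $v_2:=w_2$ and $v_3:=w_1$. Then $vv_1,vv_2\in E(G)$, $v_3$ is nonadjacent to $v$, and $v_3$ is adjacent to $v_1$. However, $v_1v_2\notin E(G)$, so $\{v,v_1,v_2\}$ is not a triangle. This mismatch is the main obstacle: the complement of an induced $P_3$ is an edge plus an isolated vertex, and it does not directly produce a triangle with a pendant edge. My first fix is to choose $E$ more cleverly. I would apply \cref{lem:p3} to $G[B]$ itself, which gives $x$-splittable $G[E]$, and use the failure of the second outcome in a different form, taking $X=E\cap N_G(v)$ of size at least $x\abs E$ and $Y=E\setminus N_G(v)$ nonempty.

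With that choice, splittability gives an induced path $v_3\text-u\text-u'$ in $G[E]$ with $v_3\in Y$ and $u,u'\in X$. So $v$ is adjacent to $u,u'$ and nonadjacent to $v_3$, while $uu'\in E(G)$, $uv_3\in E(G)$ and $u'v_3\notin E(G)$. Now set $v_1:=u'$ and $v_2:=u$. Then $\{v,v_1,v_2\}$ is a triangle, $v_3$ is adjacent to $v_2$, and $v_3$ is nonadjacent to both $v$ and $v_1$. This is exactly the first outcome, so the direct version of \cref{lem:splitp4} suffices, and no complement is needed.

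It remains to check that the complementary purity condition matches the second outcome ``complete or $x$-sparse''. For this I would first trim $E$ as in \cref{lem:xpure}, or simply note which failure case we need: a vertex with at least $x\abs E$ neighbours and at least one non-neighbour in $E$ is precisely one that is neither complete nor $x$-sparse to $E$. That is the negation of the second outcome, so the argument closes.
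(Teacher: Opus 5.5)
Your final argument is correct and matches the paper's proof sketch: apply \cref{lem:p3} to $G[B]$ itself, take $v\in A$ with at least $x\abs E$ neighbours and a non-neighbour in $E$, then split $E$ via $X=E\cap N_G(v)$ and $Y=E\setminus N_G(v)$. The opening attempt via the complement $\overline G[B]$, which is the route for \cref{lem:splite}, and the suggested trimming via \cref{lem:xpure} are unnecessary and should be dropped.
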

	\begin{proof}
		[Proof sketch]
		Repeat the proof of \cref{lem:splitp4} by applying \cref{lem:p3} to $G[B]$, then considering a vertex in $A$ with at least $x\abs E$ neighbours and at least one non-neighbour in $E$.
	\end{proof}
	Recall the definition in \cref{sec:intro} that, for an integer $r\ge1$, the graph $S_{1,2,r}$ has vertex set $\{r_0,q_1\}\cup\{p_1,p_2\}\cup\{z_1,\ldots,z_r\}$,
	where $r_0\text-q_1$, $r_0\text-p_1\text-p_2$, and $r_0\text-z_1\text-\cdots\text-z_r$ are the three branches of $S_{1,2,r}$; and $S_{1,2,r}'$ is obtained from $S_{1,2,r}$ by adding the edge $p_1q_1$.
	Let the {\em principal leaf} of both $S_{1,2,r}$ and $S_{1,2,r}'$ be $z_r$ in this case.
	Given what we have discussed so far, the following is a suitable analogue of \cref{lem:split} for $S_{1,2,r}$ and $S_{1,2,r}'$.
	
	\begin{lemma}
		\label{lem:splitspider}
		Let $r\ge1$ be an integer, and let $H$ be either $S_{1,2,r}$ or $S_{1,2,r}'$, with principal leaf $z_r$.
		Then there exists $d\ge2$ such that for every $x\in(0,2^{-d}]$ and every graph $G$ with disjoint $A,B\subset V(G)$ satisfying $\abs B>x^{-d}$, either:
		\begin{itemize}
			\item there is a copy $\varphi$ of $H$ in $G$ where $\varphi(z_r)\in A$ and $\varphi(V(H)\setminus\{z_r\})\subset B$;
			
			\item there exists $E\subset B$ with $\abs E\ge \abs B/2^d$ such that $A$ is $x$-pure to $E$ in $G$; or
			
			\item $G[B]$ contains an $x$-pure $(k,\abs B/k^d)$-blockade, for some integer $k\in[2,x^{-1}]$.
		\end{itemize} 
	\end{lemma}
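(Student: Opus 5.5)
I will prove \cref{lem:splitspider} by induction on $r\ge1$, copying the structure of the proof of \cref{lem:split} almost verbatim. The only change is that the role of ``extending a directed split $B_i\to_rB_j$ by an apex'' is now played by extending a copy of $H_r$ (meaning $S_{1,2,r}$ or $S_{1,2,r}'$) with principal leaf $z_r$ to a copy of $H_{r+1}$. The key observation is that $H_{r+1}$ is obtained from $H_r$ by attaching a new leaf $z_{r+1}$ to $z_r$. So if $\varphi$ is a copy of $H_r$ with $\varphi(z_r)\in B_i$ and $\varphi(V(H_r)\setminus\{z_r\})\subset B_j$, then an apex $a$ adjacent to $\varphi(z_r)$ and anticomplete to $B_j$ extends $\varphi$ to a copy of $H_{r+1}$ with principal leaf in $A$. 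This is exactly the configuration supplied by a comb from $P$ (using $a_i$) or an anticomb from $Q$ (using $a_j$), as in \cref{fig:apex-extension}.

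The base case $r=1$ is where \cref{lem:splite,lem:splitbird} enter. For $H=S_{1,2,1}$, the graph $H\setminus z_1$ is the path $q_1\text-r_0\text-p_1\text-p_2$ with $r_0$ adjacent to $z_1$. So we need $v\in A$ with $v\text-r_0$ and $r_0$ the second vertex of an induced $P_4$ in $B$ all of whose other vertices miss $v$. I would treat this via \cref{lem:splite} applied in the complement, or more directly by adapting it: the first outcome gives the pattern $v_1\text-v\text-v_2\text-v_3$. Rather than fuss, I would set up the base case as ``$r=0$'', with $H_0=S_{1,2,0}$ (namely $P_4$ resp. the paw-like graph $\{q_1,r_0,p_1,p_2\}$ with $r_0$ as principal leaf). \cref{lem:splite} resp. \cref{lem:splitbird} with $y=2^{-1}$-type constants, suitably reinterpreted, then give exactly: a copy with principal leaf in $A$, or $E$ of size $\Omega(\abs B)$ with $A$ pure up to $x$ to $E$, or a pure (hence $x$-pure) blockade of length $k\in[y^{-1/2},x^{-1}]$ and width $\abs B/k^8$. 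I may need to reverse roles of $A,B$ or take complements to match the precise patterns; the constant $2^d$ in the second outcome (weaker than $1/2$) is exactly what absorbs the loss $y^2$. The mixed-vertex case ($x\abs E$ neighbours and a non-neighbour) is handled inside those lemmas. A vertex with few but nonzero neighbours is $x$-sparse, so it is fine.

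For the induction step I would rerun the proof of \cref{lem:split}. First comes the analogue of \cref{claim:bookkeep}, with ``$B_i\to_rB_j$'' replaced by ``a copy of $H_r$ with principal leaf in $B_i$ and rest in $B_j$''. Its proof is unchanged, using \cref{thm:convsemi}, the induction hypothesis with $(A_i,C_i)$, and \cref{lem:xpure}; now $\abs{E_i}\ge\abs{C_i}/2^{b}$, so widths become $\abs{B_i}/(2^{b+1}k^b)$. Then I prove $A$ is $\delta$-pure to $B$ (\cref{claim:rodl}). For a mixed $v$, set $X=B\cap N(v)$ and $Y=B\setminus N(v)$, and apply induction to $(X,Y)$. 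A copy of $H_r$ with leaf in $X$ and rest in $Y$ extends via $v$. The second and third outcomes give sparse/dense pairs or blockades as before. Finally come the comb/anticomb claims \cref{claim:sparsify,claim:densify}, using \cref{lem:comb}, closing with the apex extension above. Constants get worse by factors of $2^{b}$, so I take $d$ like $100b(b+1)$.

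The main obstacle is the base case. The second outcomes of \cref{lem:splite,lem:splitbird} are ``complete or $x$-sparse'' rather than general $x$-purity, and their first outcomes must be matched carefully to a copy of $S_{1,2,1}$ or $S'_{1,2,1}$ (the bull) with $z_1$ in $A$. Note $v$ there plays the role of $r_0$, not of a leaf. So I expect to need one extra round, applying those lemmas to an auxiliary pair (e.g.\ neighbourhoods of a fixed vertex), or to start the induction at $H_1$ using the $r\to r+1$ machinery on top of \cref{lem:splite}. This bookkeeping is what I would work out first.
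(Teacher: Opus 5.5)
Your proposal is correct and follows essentially the paper's own route. The paper also inducts on $r$ by rerunning the proof of \cref{lem:split}, attaching an apex (from a comb, an anticomb, or a mixed vertex) to the image of the principal leaf, and it obtains the base case by feeding \cref{lem:splite} or \cref{lem:splitbird} into one round of that machinery to grow $z_1$ out of the degree-two vertex $r_0$; this is exactly your ``$r=0$'' formulation followed by the step $0\to 1$. The worries in your last paragraph are already settled by that formulation, so no auxiliary pair is needed: ``complete or $x$-sparse'' is a special case of $x$-purity, and taking $y=2^{-8}$ in those lemmas gives the $r=0$ statement directly, with $|E|\ge 2^{-16}|B|$ and blockade length $k\ge 16$.
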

	\begin{proof}
		[Proof sketch]
		Proceed by induction on $r\ge 1$.
		To prove the base case $r=1$, repeat the argument in the proof of \cref{lem:split} and incorporate \cref{lem:splite} (if $H=S_{1,2,1}$) or \cref{lem:splitbird} (if $H=S_{1,2,1}'$), to grow a pendant edge (and the principal leaf) out of a vertex of degree two.
		To finish the induction step, repeat the proof of \cref{lem:split} again to grow the principal leaf by one edge.
	\end{proof}
	
	It is now not hard to use \cref{lem:splitspider} to prove that $S_{1,2,t}$ and $S_{1,2,t}'$ are pure for all $t\ge1$, in a manner similar to the deduction of \cref{lem:pathpure} from \cref{lem:split}. We omit the detailed proof.
	
	\begin{lemma}
		\label{lem:spiderpure}
		For every integer $t\ge1$, $S_{1,2,t}$ and $S_{1,2,t}'$ are pure.
	\end{lemma}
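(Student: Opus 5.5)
We follow the deduction of \cref{lem:pathpure} from \cref{lem:split} almost verbatim, with \cref{lem:splitspider} in place of \cref{lem:split}. Fix $t\ge1$ and $H\in\{S_{1,2,t},S'_{1,2,t}\}$ with principal leaf $z_t$. Let $d\ge2$ be given by \cref{lem:splitspider} for $H$ with $r=t$. Then set $b:=8d$ (or a somewhat larger multiple of $d$ to absorb the new factor $2^{-d}$ in the second outcome), and $K:=2b(2b+1)+b$. It suffices to show that for every $\eps\in(0,\frac12]$ and every $H$-free $G$ with $\abs G>\eps^{-K}$ there is an $\eps$-pure $(\eps^{-1},\eps^K\abs G)$-blockade. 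Put $x:=\eps^{2b+1}$. The plan is to verify the hypothesis of \cref{thm:convsemi} with $y=\eps$; that theorem then outputs an $x\eps^{-2b}=\eps$-pure $(\ell,\abs G/\ell^{2b})$-blockade with $\ell\in[\eps^{-1},x^{-1}]$. Keeping only $\ceil{\eps^{-1}}$ blocks gives width at least $x^{2b}\abs G\ge\eps^K\abs G$.

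For the hypothesis, take an induced subgraph $F$ with $\abs F\ge\eps^b\abs G$, so $\abs F\ge x^{-2b}$. Split $V(F)$ into $A,B$ with $\abs A=\floor{\abs F/2}$, and apply \cref{lem:splitspider} to $(A,B)$ in $F$ with $x^2$ in place of $x$. This needs $\abs B>x^{-2d}$, which holds since $\abs B\ge\abs F/2$ and $b\ge 8d$. There are three outcomes.
\begin{itemize}
\item The first outcome yields a copy of $H$ in $F$, which is impossible since $G$ is $H$-free.
\item The third outcome gives an $x^2$-pure (hence $x$-pure) $(k,\abs B/k^d)$-blockade with $k\in[2,x^{-2}]$. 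Pass to $\ceil{k^{1/3}}\le x^{-1}$ of its blocks; this gives width $\abs F/(2k^d)\ge\abs F/\ceil{k^{1/3}}^b$, exactly as in \cref{claim:nice}.
\item The second outcome gives $E\subset B$ with $\abs E\ge\abs B/2^d$ such that $A$ is $x^2$-pure to $E$. Some $D\subset A$ with $\abs D\ge\abs A/2$ is uniformly $x^2$-sparse or $(1-x^2)$-dense to $E$. Counting edges then gives $E'\subset E$ with $\abs{E'}\ge(1-x)\abs E$ that is $x$-sparse or $(1-x)$-dense to $D$, so $(D,E')$ is an $x$-pure pair. Both sides have size at least $\abs F/2^{d+3}\ge\abs F/2^b$, so this is the required blockade with $k=2$.
\end{itemize}

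The only change from the path case is that the second outcome of \cref{lem:splitspider} loses a factor $2^d$ rather than $2$. This is harmless because $b$ exceeds $d+3$, so everything is pure bookkeeping of exponents.

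The genuine content, and the main obstacle, lies in \cref{lem:splitspider} itself, whose proof is only sketched. I would take it as given, as the statement permits. If I had to fill it in, the key step would be the base case $r=1$: rerunning Claims \ref{claim:rodl}--\ref{claim:densify} with \cref{lem:splite} or \cref{lem:splitbird} in place of the induction hypothesis. Combs are used when the apex has neighbours in a tooth, and anticombs when it has non-neighbours. The apex attaches as the pendant $z_1$ to the $r_0$ of the $v_1\text-v\text-v_2\text-v_3$ or triangle configuration found across teeth. For the induction step $r\to r+1$, the argument is identical to that of \cref{lem:split}, extending the principal branch by one vertex.
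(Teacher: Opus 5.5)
Your proposal is correct and follows the paper's intended route: the paper omits this proof, saying only that it follows from \cref{lem:splitspider} in the same way that \cref{lem:pathpure} follows from \cref{lem:split}, and your bookkeeping fills this in correctly. In particular, $r=t$, $b=8d$ and $K=2b(2b+1)+b$ work, and $b=8d$ already absorbs the extra $2^{-d}$ loss in the second outcome because $8d\ge d+3$. The paper also takes \cref{lem:splitspider} as given, so your closing sketch of its base case is not needed for this statement.
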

	
	\section{From niceness to full \erh{}}
	\label{sec:nicetoeh}
	This section presents a proof of \cref{lem:niceeh}, by building on the argument in~\cite[Section 7]{density7}.
	Thus, our setting will be a $(1-y)$-dense $P_{t+1}$-free graph $G$ where $y$ is some sufficiently small density parameter.
	We aim to use the niceness of $P_{t+1}$ to generate a $(1-y^b)$-semidense blockade $\mac B$ of width at least $\poly(y)\abs G$, for some $b\ge2$.
	We can arrange each block of $\mac B$ to induce a connected subgraph of $G$, via \cref{lem:conn}.
	Also, we may assume that $G$ has a vertex $v$ outside of $\mac B$ that is mixed on many blocks of $\mac B$; let us say these blocks form a sub-blockade $\mac B'$ of $\mac B$, and $L$ is the pattern graph of $\mac B'$ in $G$.
	At this stage, the structural argument extracting $P_5$ in~\cite[Section 7]{density7} no longer appears to work, so we will be using the \erh{} property of $P_t$ instead.
	Our goal is to examine the pattern graph $L$ in depth. 
	Note that:
	\begin{itemize}
		\item the semi-density parameter $y^b$ is not small enough (in other words, still a nontrivial amount of `noise'); and
		
		\item because $G$ is $(1-y)$-dense, we cannot hope to make the proportion of non-neighbours of $v$ in each block of $\mac B'$ exceed any fixed small constant threshold.
	\end{itemize}
	
	Because of these reasons, it might not be feasible to ask the pattern graph $L$ to be $P_t$-free so that we could apply the \erh{} property of $P_t$ immediately.
	To get around this issue, we do the following:
	\begin{itemize}
		\item firstly, apply the well-known {\em tensor power/amplification} trick to form a large blow-up $J$ of $P_t$, so that $J$ still has the \erh{} property; and
		
		\item secondly, use the structure of $J$ to show that, under the $P_{t+1}$-free condition of $G$, if $L$ contains an induced copy of $J$, then we can make the dense pairs in $\mac B'$ corresponding to the edges of $J$ complete, and then grow a copy of $P_{t+1}$ going from $v$ to $\mac B'$ via these complete pairs. 
	\end{itemize}
	
	These are the two main ideas behind the proof of \cref{lem:niceeh}; and the actual proof uses a slightly modified version of the second idea.
	To formalise the first idea, for two graphs $G$ and $H$, we let $G[H]$ be the graph obtained by substituting a copy of $H$ for every vertex of $G$.
	\subsection{Extending a path by switching endpoints}
	\label{subsec:switch}
	This subsection describes how to combine the two aforesaid ideas.
	To clarify our rooted $P_{t+1}$ construction, it may be helpful to consider the following relaxation of the semidense property.
	In what follows, for a graph $G$ with disjoint $A,B\subset V(G)$, we say that $A$ {\em covers} $B$ in $G$ if every vertex in $A$ has a neighbour in $B$. 
	A blockade $\mac B$ in $G$ is {\em organised} if for every two distinct blocks $B$ and $B'$ of $\mac B$, either:
	\begin{itemize}
		\item $B$ and $B'$ are anticomplete to each other in $G$; or
		
		\item $B$ and $B'$ cover each other in $G$.
	\end{itemize}
	
	Note that the organised property is substantially weaker than the semidense property.
	If $\mac B$ is organised, let the {\em pattern graph} of $\mac B$ in $G$ be the graph with vertices the blocks of $\mac B$, where two blocks $B,B'$ are adjacent if and only if they cover each other in $G$.
	For each adjacent pair of blocks with suitable properties in the pattern graph of $\mac B$, we aim to turn their covering property into completeness.
	The following simple lemma is one step in this direction.
	
	\begin{lemma}
		\label{lem:complend}
		Let $t\ge2$ be an integer, and let $G$ be a $P_{t+1}$-free graph with an organised blockade $(B_1,\ldots,B_t)$ with pattern $P_t$ in $G$, such that $G[B_1]$ is connected.
		Then $B_2$ is complete to $B_1$ in $G$.
	\end{lemma}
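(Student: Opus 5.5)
We argue by contradiction: suppose some $u\in B_2$ has a non-neighbour in $B_1$, and build an induced $P_{t+1}$ in $G$. The plan is to grow a path along the pattern $P_t$ in the direction $B_t,B_{t-1},\ldots,B_2$, enter $B_1$ at a neighbour of the last vertex, and then use the connectivity of $G[B_1]$ to add one further vertex in $B_1$ that is not adjacent to that last vertex of $B_2$.

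First choose the vertex in $B_2$. Since $B_2$ covers $B_1$, every vertex of $B_2$ has a neighbour in $B_1$. So $u$ has both a neighbour and a non-neighbour in $B_1$. Because $G[B_1]$ is connected, there is an edge $w_1w_0$ of $G[B_1]$ with $w_1\in N_G(u)$ and $w_0\notin N_G(u)$. Then $u\text-w_1\text-w_0$ is an induced $P_3$.

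Next build the rest of the path backwards. Set $u_2:=u$. For $i=3,\ldots,t$ in turn, pick $u_i\in B_i$ adjacent to $u_{i-1}$; this is possible because $B_{i-1}$ and $B_i$ are adjacent in the pattern, so $B_{i-1}$ covers $B_i$. This gives $u_t\text-u_{t-1}\text-\cdots\text-u_2\text-w_1\text-w_0$, which has $(t-1)+2=t+1$ vertices. It is induced:
\begin{itemize}
\item consecutive vertices are adjacent by construction;
\item $u_i$ and $u_j$ with $|i-j|\ge2$ lie in blocks that are nonadjacent in the pattern $P_t$, and nonadjacent blocks of an organised blockade are anticomplete;
\item $w_0,w_1\in B_1$ are anticomplete to $B_i$ for $i\ge3$ for the same reason;
\item $u_2w_0$ is a non-edge by the choice of $w_0$.
\end{itemize}
Thus $G$ contains an induced $P_{t+1}$, contradicting $P_{t+1}$-freeness. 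Hence every vertex of $B_2$ is complete to $B_1$.

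For $t=2$ the chain consists of $u_2$ alone, and the path $u_2\text-w_1\text-w_0$ is a $P_3=P_{t+1}$, so the argument still applies. The only real step is using connectivity to find the boundary edge $w_1w_0$ of $N_G(u)\cap B_1$ inside $B_1$. This needs $N_G(u)\cap B_1\neq\emptyset$, which the covering property supplies. The remaining checks are immediate from the pattern being a path.
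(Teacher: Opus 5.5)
Your proof is correct and is essentially the paper's own argument. In both, a vertex of $B_2$ that has a neighbour (by covering) and a non-neighbour in the connected set $B_1$ yields an edge $w_1w_0$ of $G[B_1]$ with $w_1$ adjacent and $w_0$ nonadjacent to it. The covering property along the pattern $P_t$ then extends this to an induced path through $B_3,\ldots,B_t$, giving an induced $P_{t+1}$.
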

	\begin{proof}
		Suppose not; then there exists $v_2\in B_2$ mixed on $B_1$. Since $v_2$ has a neighbour in $B_1$ and $G[B_1]$ is connected, there is an edge $v_0v_1$ in $G[B_1]$ such that $v_2$ is adjacent to $v_1$ and nonadjacent to $v_0$.
		Because $(B_1,\ldots,B_t)$ is organised with pattern $P_t$ in $G$, there is an induced path $v_2\text-v_3\text-\cdots\text-v_t$ in $G$ starting at this chosen vertex $v_2$, such that $v_r\in B_r$ for all $r\in\{2,3,\ldots,t\}$ (see \cref{fig:complete-end}).
		Then $v_0\text-v_1\text-v_2\text-\cdots\text-v_t$ is an induced $P_{t+1}$ in $G$, a contradiction. This proves \cref{lem:complend}.
	\end{proof}
	
	\begin{figure}[htbp]
		\centering
		\begin{tikzpicture}[
			vertex/.style={circle,draw=blue!65!black,fill=blue!14,minimum size=.65cm,font=\small},
			block/.style={draw=blue!45!black,rounded corners=3pt,fill=blue!4},
			edge/.style={draw=black!80,line width=.85pt},
			nonedge/.style={draw=red!65!black,densely dashed,line width=.75pt}
			]
			\draw[block] (0,.12) rectangle (2.32,1.31);
			\draw[block] (2.64,.12) rectangle (3.85,1.31);
			\draw[block] (5.49,.12) rectangle (6.68,1.31);
			\node[vertex] (vzero) at (.59,.72) {$v_0$};
			\node[vertex] (vone) at (1.72,.72) {$v_1$};
			\node[vertex] (vtwo) at (3.245,.72) {$v_2$};
			\node[font=\large] (omitted) at (4.68,.72) {$\ldots$};
			\node[vertex] (vlast) at (6.085,.72) {$v_t$};
			\draw[edge] (vzero) -- (vone) -- (vtwo) -- (omitted) -- (vlast);
			\draw[nonedge] (vzero.north) .. controls (1.475,1.91) and (2.36,1.91) .. (vtwo.north);
			\node[font=\small] at (1.16,-.18) {$B_1$};
			\node[font=\small] at (3.245,-.18) {$B_2$};
			\node[font=\small] at (6.085,-.18) {$B_t$};
		\end{tikzpicture}
		\caption{Why the first covering pair must be complete. If $v_2\in B_2$ is mixed on connected $G[B_1]$, an edge $v_0v_1$ of $G[B_1]$ crosses its non-neighbours and neighbours. Mutual covering along the remaining blocks extends the induced path to $v_t$, contradicting $P_{t+1}$-freeness.}
		\label{fig:complete-end}
	\end{figure}
	
	Now, let $H_1,H_2$ be the one-vertex graph; and define the sequence $(H_r)_{r\ge1}$ of repeated blow-ups 
	\begin{equation}
		\label{eq:sub1}
		H_{r+1}:=P_t[H_r]\quad\text{for every integer $r\ge2$.}
	\end{equation}
	
	The substitution in \eqref{eq:sub1} is shown in \cref{fig:iterated-blowup} below.
	\begin{figure}[htbp]
		\centering
		\begin{tikzpicture}[
			quotient/.style={circle,draw=black!60,fill=black!5,minimum size=.57cm,font=\small},
			cluster/.style={draw=blue!60!black,rounded corners=3pt,fill=blue!8,
				minimum width=1.65cm,minimum height=.83cm,align=center,font=\small},
			clusterlink/.style={draw=blue!65!black,line width=1.8pt},
			projection/.style={->,draw=black!45,line width=.6pt}
			]
			\node[font=\small,anchor=east] at (-.25,2.11) {$P_t$:};
			\node[quotient] (qone) at (1,2.11) {$1$};
			\node[quotient] (qtwo) at (3.35,2.11) {$2$};
			\node[quotient] (qthree) at (5.7,2.11) {$3$};
			\node[font=\large] (qdots) at (7.68,2.11) {$\ldots$};
			\node[quotient] (qlast) at (9.62,2.11) {$t$};
			\draw (qone) -- (qtwo) -- (qthree) -- (qdots) -- (qlast);
			\node[font=\small,anchor=east] at (-.25,.39) {$H_{r+1}$:};
			\node[cluster] (lone) at (1,.39) {$L_1\cong H_r$};
			\node[cluster] (ltwo) at (3.35,.39) {$L_2\cong H_r$};
			\node[cluster] (lthree) at (5.7,.39) {$L_3\cong H_r$};
			\node[font=\large] (ldots) at (7.68,.39) {$\ldots$};
			\node[cluster] (llast) at (9.62,.39) {$L_t\cong H_r$};
			\draw[clusterlink] (lone) -- (ltwo);
			\draw[clusterlink] (ltwo) -- (lthree);
			\draw[clusterlink] (lthree) -- (ldots) -- (llast);
			\foreach \top/\bottom in {qone/lone,qtwo/ltwo,qthree/lthree,qlast/llast} {
				\draw[projection] (\top.south) -- (\bottom.north);
			}
		\end{tikzpicture}
		\caption{The repeated blow-up $H_{r+1}=P_t[H_r]$. Each vertex $j$ of $P_t$ is substituted by a copy $L_j$ of $H_r$. Consecutive copies are joined in the pattern graph, while non-consecutive copies are anticomplete. For an organised blockade with this pattern, its blocks initially only cover one another across a join.}
		\label{fig:iterated-blowup}
	\end{figure}
	
	Now, we are given a vertex $v$ outside of $\mac B$ that is mixed on all blocks of $\mac B$. If $G$ is $P_{t+1}$-free and $\mac B$ exhibits an $H_t$ pattern, then we can use the structure of $H_t$ to construct an induced $P_{t+1}$ with endpoint $v$, a contradiction.
	To do so, we extend the path one vertex at a time, using the following simple observation.
	Assume that $\mac B=(B_1,\ldots,B_t)$ is the blockade given in \cref{lem:complend}, and that $G$ contains an induced path $v_2\text-\cdots\text-v_{t-1}\text-v_t$, such that $v_i\in B_i$ for all $i\in\{2,\ldots,t\}$, and $v$ is anticomplete to $\{v_2,\ldots,v_{t-1}\}$.
	If $v$ is adjacent to $v_t$, choose a non-neighbour $v_1$ of $v$ in $B_1$; and if $v$ is nonadjacent to $v_t$, then choose a neighbour $v_1$ of $v$ in $B_1$.
	Since $B_2$ is complete to $B_1$ by \cref{lem:complend}, we have that $v_1,v_2$ are always adjacent; and so one of
	\[v\text-v_1\text-v_2\text-\cdots\text-v_{t-1}\text-v_t\quad\text{if $v$ and $v_t$ are nonadjacent}\]
	or
	\[v\text-v_t\text-v_{t-1}\text-\cdots\text-v_{2}\text-v_1\quad\text{if $v$ and $v_t$ are adjacent}\]
	is a copy of $P_{t+1}$ rooted at $v$, whose other vertices are in $\mac B$.
	The two choices of endpoint are compared in \cref{fig:switch-ends}.
	\begin{figure}[htbp]
		\centering
		\begin{tikzpicture}[
			panel/.style={draw=black!30,rounded corners=3pt,fill=black!1},
			vertex/.style={circle,draw=blue!65!black,fill=blue!12,minimum size=.58cm,font=\small},
			outside/.style={vertex,draw=orange!70!black,fill=orange!19},
			edge/.style={draw=black!80,line width=.83pt},
			nonedge/.style={draw=red!65!black,densely dashed,line width=.7pt}
			]
			\draw[panel] (0,0) rectangle (5.95,2.75);
			\node[font=\small\bfseries] at (2.98,2.43) {$v$ misses $v_t$};
			\node[outside] (vleft) at (.55,1.13) {$v$};
			\node[vertex] (voneleft) at (1.68,1.13) {$v_1$};
			\node[vertex] (vtwoleft) at (2.8,1.13) {$v_2$};
			\node[font=\large] (dotsleft) at (3.91,1.13) {$\ldots$};
			\node[vertex] (vtleft) at (5.08,1.13) {$v_t$};
			\draw[edge] (vleft) -- (voneleft) -- (vtwoleft) -- (dotsleft) -- (vtleft);
			\draw[nonedge] (vleft.north) .. controls (2.06,2.08) and (3.59,2.08) .. (vtleft.north);
			\node[font=\footnotesize] at (1.68,.35) {$v_1\in B_1$};
			\node[font=\footnotesize] at (5.08,.35) {$v_t\in B_t$};
			\begin{scope}[xshift=6.33cm]
				\draw[panel] (0,0) rectangle (5.95,2.75);
				\node[font=\small\bfseries] at (2.98,2.43) {$v$ sees $v_t$};
				\node[outside] (vright) at (.55,1.13) {$v$};
				\node[vertex] (vtright) at (1.68,1.13) {$v_t$};
				\node[font=\large] (dotsright) at (2.8,1.13) {$\ldots$};
				\node[vertex] (vtworight) at (3.91,1.13) {$v_2$};
				\node[vertex] (voneright) at (5.08,1.13) {$v_1$};
				\draw[edge] (vright) -- (vtright) -- (dotsright) -- (vtworight) -- (voneright);
				\draw[nonedge] (vright.north) .. controls (2.06,2.08) and (3.59,2.08) .. (voneright.north);
				\node[font=\footnotesize] at (1.68,.35) {$v_t\in B_t$};
				\node[font=\footnotesize] at (5.08,.35) {$v_1\in B_1$};
			\end{scope}
		\end{tikzpicture}
		\caption{The endpoint switch. If $v$ misses $v_t$, choose $v_1\in B_1$ adjacent to $v$ and use the path from the left. If $v$ sees $v_t$, choose $v_1\in B_1$ nonadjacent to $v$ and reverse the path. The dashed arc is the relevant nonedge; $v$ also misses all interior vertices $v_2,\ldots,v_{t-1}$, and $B_1$ is complete to $B_2$.}
		\label{fig:switch-ends}
	\end{figure}
	
	This is the key idea behind the following lemma.
	\begin{lemma}
		\label{lem:switching}
		Let $r,t$ be integers with $t\ge r\ge 2$.
		Let $G$ be a $P_{t+1}$-free graph, with $v\in V(G)$ and an organised blockade $\mac B$ with pattern $H_r$ in $G\setminus v$, such that each block of $\mac B$ induces a connected subgraph in $G$ and $v$ is mixed on every block of $\mac B$.
		Then $G$ contains an induced $P_{r+1}$ where $v$ is an endpoint and the other vertices are in $\bigcup_{B\in\mac B}B$.
	\end{lemma}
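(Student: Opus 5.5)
Induction on $r$ for fixed $t$, with $H_r$ viewed recursively as $P_t[H_{r-1}]$ for $r\ge3$; we will prove the slightly stronger statement that the path can be taken with all vertices other than $v$ in distinct blocks. For the base case $r=2$, $H_2$ is a single vertex, so $\mac B$ has one block $B$ with $G[B]$ connected and $v$ mixed on it. Connectivity gives an edge $v_1v_2$ of $G[B]$ with $v$ adjacent to $v_1$ and not to $v_2$, and $v\text-v_1\text-v_2$ is the required induced $P_3$.

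For $r\ge3$, write the pattern $H_r=P_t[H_{r-1}]$, so the blocks of $\mac B$ split into $t$ groups $\mac L_1,\ldots,\mac L_t$. Each $\mac L_j$ is an organised sub-blockade with pattern $H_{r-1}$. Blocks in $\mac L_j$ and $\mac L_{j+1}$ cover each other, and blocks in non-consecutive groups are anticomplete. Applying induction to $\mac L_{t-1}$ (say) gives an induced path $v\text-u_1\text-\cdots\text-u_{r-1}$ with the $u_i$ in distinct blocks of $\mac L_{t-1}$. This alone is too short. Instead, we will run the switching argument before \cref{lem:switching} at the level of groups. Choose a ``spine'' of blocks $C_1\in\mac L_1,\ldots,C_t\in\mac L_t$. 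Any such sequence is an organised blockade with pattern $P_t$, and $G[C_1]$ is connected, so by \cref{lem:complend} $C_2$ is complete to $C_1$ (and symmetrically $C_{t-1}$ to $C_t$).

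The plan is then to use induction to obtain the long part of the path, and the covering/completeness at the ends to attach $v$ via the endpoint switch. The steps are as follows.
\begin{enumerate}
\item Use the inductive path of length $r$ from $v$ inside a single group, ending at a vertex $w$ in some block $C$ of that group.
\item Extend along the spine: covering in the pattern gives a neighbour of $w$ in an adjacent group, then a further neighbour, and so on.
\item Use anticompleteness between non-consecutive groups, and between non-adjacent blocks inside a group, to keep the path induced.
\item Control adjacency of $v$ to the newly added vertices by the endpoint-switch trick of \cref{fig:switch-ends}: since $v$ is mixed on every block, whenever the new vertex must lie in a block $B'$ complete to the previous block, we may choose it to be a neighbour or a non-neighbour of $v$ as needed.
\end{enumerate}
Since we need only one extra vertex per induction step (from $P_r$ to $P_{r+1}$), a single extension suffices. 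Take the inductive path $v\text-u_1\text-\cdots\text-u_{r-1}$ inside $\mac L_1$, with $u_{r-1}\in C\in\mac L_1$. Pick a block $C'\in\mac L_2$; by \cref{lem:complend} applied to a spine starting with $C$, $C'$ is complete to $C$. Note that $C'$ is complete to every block of $\mac L_1$ on such spines, hence complete to all the $u_i$. This is a problem, so the extension must instead come from the far side: the path occupies a group at one end, and we extend into the next group while the path's interior vertices lie in blocks that are themselves non-adjacent.

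The main obstacle is exactly this point: a new block complete to one $\mac L_j$-block tends to be complete to all blocks of $\mac L_j$, creating chords with the interior of the path. To resolve it, I would first try to strengthen the induction so that the path from $v$ uses vertices in blocks $C_1,\ldots,C_r$ that lie in different groups along a spine, alternating which end is attached to $v$ as in \cref{fig:switch-ends}. I would then verify that non-consecutive spine blocks are anticomplete, so that only consecutive adjacencies survive, and handle the one remaining adjacency with $v$ by the neighbour/non-neighbour choice in the freshly added end block. Checking that the recursive structure $H_r$ supplies enough depth, namely $r\le t$ levels, for this alternation is the step I expect to require the most care.
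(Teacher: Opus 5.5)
Your proposal does not yet contain a proof. You correctly reject putting the inductive path from $v$ inside a single group, since a block of $\mac L_2$ is indeed complete to all of $\bigcup\mac L_1$. The shape you end up aiming for is the right one: one vertex $v_i\in\bigcup\mac L_i$ for each $i\in[r]$, with the endpoint switch performed in $\mac L_1$, which is complete to $\mac L_2$ by \cref{lem:complend}.

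But nothing in your plan makes $v$ nonadjacent to the interior vertices $v_2,\ldots,v_{r-1}$, and that is the whole difficulty. Anticompleteness of non-consecutive groups only removes chords among the $v_i$. Covering only says that $v_{i+1}$ has \emph{some} neighbour in each block of $\mac L_i$, and all of these may be neighbours of $v$. Your step (4) presupposes that the next vertex can be taken in a block complete to the current vertex. However, the only completeness you have established is between $\mac L_1$ and $\mac L_2$. Applying the induction hypothesis to $v$ itself is what leads you into the chord problem.

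The missing idea is \cref{claim:complete} in the paper: apply the induction hypothesis to \emph{other} vertices in order to manufacture completeness. The claim is that for every $j\le r-2$, every $u\in\bigcup\mac L_{j+1}$ is complete to some block of $\mac L_j$.
\begin{enumerate}
\item Suppose not. By covering, $u$ has a neighbour in every block of $\mac L_j$, so $u$ is mixed on all of them.
\item Induction (pattern $H_{r-1}$) then gives an induced $P_r$ with endpoint $u$ and its other vertices in $\bigcup\mac L_j$.
\item Extend this path from $u$ by one vertex in each of $\mac L_{j+2},\ldots,\mac L_t$. This gives an induced path on $r+(t-j-1)\ge t+1$ vertices, contradicting $P_{t+1}$-freeness.
\end{enumerate}

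With the claim, the path is built as follows.
\begin{enumerate}
\item Pick $v_{r-1}\in\bigcup\mac L_{r-1}$ nonadjacent to $v$.
\item Walk backwards. By the claim, $v_{i+1}$ is complete to some block of $\mac L_i$, on which $v$ is mixed. So that block contains a $v_i$ adjacent to $v_{i+1}$ and nonadjacent to $v$. Continue down to $v_2$.
\item Take $v_r\in\bigcup\mac L_r$ adjacent to $v_{r-1}$; this is where $r\le t$ enters.
\item Take $v_1\in\bigcup\mac L_1$ adjacent to $v$ if and only if $v_r$ is not. Then $v_1$ is automatically adjacent to $v_2$.
\end{enumerate}
One of $v\text-v_1\text-\cdots\text-v_r$ or $v\text-v_r\text-\cdots\text-v_1$ is then the required induced $P_{r+1}$. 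So the depth of $H_r$ is consumed by this claim, not by any alternation.
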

	\begin{proof}
		We proceed by induction on $r$. If $r=2$ then $\mac B=\{B\}$ for some $B\subset V(G)$; and the lemma holds since $v$ is mixed on $B$ and $G[B]$ is connected.
		
		Now, for $r<t$, we prove the lemma for $r+1$ assuming that it is true for $r$.
		To see this, let $\mac B=(B_z:z\in V(H_{r+1}))$. For every $j\in[t]$, let $L_j$ be the vertex set of the copy of $H_r$ in $H_{r+1}$ that substitutes the $j$-th vertex of $P_{t}$, and let $C_j:=\bigcup_{z\in L_j}B_z$.
		Then $G[C_j]$ is connected (since $H_r$ and each $G[B_z]$ are connected), and $(C_1,\ldots,C_t)$ is an organised blockade with pattern $P_t$ in $G$.
		In particular, \cref{lem:complend} implies that $C_1$ is complete to $C_2$ in $G$.
		We claim that:
		
		\begin{claim}
			\label{claim:complete}
			For every $j\in[r-1]$, every $u\in C_{j+1}$ is complete to some block in $(B_z:z\in L_j)$.
		\end{claim}
		\begin{subproof}
			Suppose not. Since $\mac B$ is organised, $u$ has a neighbour in $B_z$ for each $z\in L_j$; and so $u$ is mixed on $B_z$ for all such $z$.
			By induction applied to $u$ versus $(B_z:z\in L_j)$, $G$ contains an induced path $P$ of length $r$ where $u$ is an endpoint and the other vertices are in $C_j$.
			Since $\mac B$ is organised, there is an induced path $Q$ in $G$ such that $u\in Q$ and $\abs{Q\cap C_i}=1$ for all $i\in\{j+1,\ldots,t\}$.
			Then $P\text-u\text-Q$ is an induced path in $G$ of length $r+(t-j-1)\ge t$ (since $j\le r-1$), contrary to the $P_{t+1}$-freeness of $G$.
			This proves \cref{claim:complete}.
		\end{subproof}
		
		Now, let $v_r\in C_r\setminus N_G(v)$. Then there exists $j\in\{2,\ldots,r\}$ minimal such that $G$ contains an induced path $v_r\text-v_{r-1}\text-\cdots\text-v_j$ where $v_i\in C_i\setminus N_G(v)$ for all $i\in\{j,j+1,\ldots,r\}$.
		If $j>2$, then by \cref{claim:complete}, $v_j\in C_j$ is complete in $G$ to $B_z$ for some $z\in L_{j-1}$; and so there exists $v_{j-1}\in B_z\setminus N_G(v)$, violating the minimality of $j$. Therefore $j=2$.
		
		Next, since $\mac B$ is organised, there exists $v_{r+1}\in C_{r+1}\cap N_G(v_r)$.
		Let $v_1$ be a neighbour of $v$ in $C_1$ if $v_{r+1}\in C_{r+1}\setminus N_G(v)$, and let $v_1$ be a non-neighbour of $v$ in $C_1$ if $v_{r+1}\in C_{r+1}\cap N_G(v)$.
		Then $v_1$ is always adjacent to $v_2$, since $C_2$ is complete to $C_1$.
		Thus, \[v\text-v_1\text-v_2\text-\cdots\text-v_{r}\text-v_{r+1}\quad\text{(if $v_1\in C_1\cap N_G(v)$)}
		\qquad\text{or}\qquad v\text-v_{r+1}\text-v_{r}\text-\cdots\text-v_2\text-v_1\quad \text{(if $v_1\in C_1\setminus N_G(v)$)}\]
		is an induced $P_{r+2}$ in $G$ where $v$ is an endpoint and the other vertices are in $\bigcup_{B\in\mac B}B$, completing the induction step.
		This proves \cref{lem:switching}.
	\end{proof}
	
	Now, \cref{lem:switching} with $r=t$ implies that:
	
	\begin{lemma}
		\label{lem:switch}
		Let $t$ be an integer with $t\ge2$. Let $G$ be a $P_{t+1}$-free graph with $v\in V(G)$.
		Then $G\setminus v$ does not contain any organised blockade $\mac B$ with pattern $H_t$, such that $v$ is mixed on every block of $\mac B$ and $G[B]$ is connected for all $B\in \mac B$.
	\end{lemma}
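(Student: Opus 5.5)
The plan is to derive \cref{lem:switch} directly from \cref{lem:switching}, arguing by contradiction. Suppose $G$ is $P_{t+1}$-free, $v\in V(G)$, and $G\setminus v$ contains an organised blockade $\mac B$ with pattern $H_t$ such that $v$ is mixed on every block of $\mac B$ and $G[B]$ is connected for all $B\in\mac B$.

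The first step is to check that this configuration satisfies every hypothesis of \cref{lem:switching} with $r=t$. The constraint $t\ge r\ge2$ holds because $t\ge2$. The graph $G$ is $P_{t+1}$-free, and $\mac B$ is an organised blockade with pattern $H_t$ in $G\setminus v$. Each block induces a connected subgraph of $G$, and $v$ is mixed on every block.

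Applying \cref{lem:switching}, $G$ contains an induced $P_{r+1}=P_{t+1}$ that has $v$ as an endpoint and all its other vertices in $\bigcup_{B\in\mac B}B$. This contradicts the assumption that $G$ is $P_{t+1}$-free, which proves the statement.

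The only point that needs care is the indexing of the blow-up sequence. $H_1$ and $H_2$ are both the one-vertex graph, and $H_{r+1}=P_t[H_r]$ for $r\ge2$, so the blockade "with pattern $H_t$" is exactly the object \cref{lem:switching} handles at level $r=t$. Once that is confirmed, I do not expect any real obstacle: the substantive work has already been done in \cref{lem:switching}, and this statement is its endpoint case.
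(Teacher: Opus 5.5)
Your proposal is correct and is exactly the paper's argument. The paper states \cref{lem:switch} as an immediate consequence of \cref{lem:switching} with $r=t$: the induced $P_{t+1}$ rooted at $v$ that this produces contradicts the $P_{t+1}$-freeness of $G$.
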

	
	\subsection{Densification}
	\label{subsec:densify}
	Given \cref{lem:switch}, we are now ready to prove \cref{lem:niceeh} by means of iterative densification.
	The proof is more or less routine given~\cite{density6} and~\cite[Section 7]{density7};
	and to make our argument less heavy, we also adapt the dyadic partitioning step shown in~\cite[Section 9]{2025pc5}.
	
	\begin{lemma}
		\label{lem:pathdense1}
		Let $t\ge2$ be an integer. Assume that $H_t$ has the \erh{} property witnessed by $b\ge2$, and that $P_{t+1}$ is nice as witnessed by $a\ge2$. Then for $d=11ab$, the following holds.
		Let $y\in(0,2^{-d}]$, and let $G$ be a $(1-y)$-dense $P_{t+1}$-free graph with $\abs G>y^{-d}$. Then $G$ contains either:
		\begin{itemize}
			\item a $y^3$-restricted induced subgraph with at least $y^{d}\abs G$ vertices;
			
			\item an anticomplete $(y^{-1},y^{d}\abs G)$-blockade; or
			
			\item a complete pair $(X,Y)$ with $\abs X\ge y^{d}\abs G$ and $\abs Y\ge (1-4y)\abs G$.
		\end{itemize}
	\end{lemma}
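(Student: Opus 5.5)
We first dispose of the case where $G$ has many vertices with many non-neighbours, and otherwise find a vertex $v$ whose non-neighbourhood gives the complete pair. Apply the niceness of $P_{t+1}$ (witnessed by $a$) with $\eps:=y^{c}$, for a suitable constant $c$ depending on $b$, to $G$. Since $\abs G>y^{-d}\ge\eps^{-a}$, this gives a $(1-\eps)$-semidense $(\eps^{-1},\eps^a\abs G)$-blockade $\mac B=(B_1,\ldots,B_m)$. Apply \cref{lem:conn} inside each $G[B_i]$ with $k=y^{-1}$. If some $G[B_i]$ has all components of size less than $y\abs{B_i}$, we obtain an anticomplete $(y^{-1},y\abs{B_i}/2)$-blockade, which is the second outcome. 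So we may replace each $B_i$ by the vertex set of a component of size at least $y\abs{B_i}$. The semidense property degrades to $(1-\eps/y)$-semidense, which is still strong, and after this step each block induces a connected subgraph.

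Next, let $L$ be the pattern graph on the blocks, where $ij$ is an edge when the blocks are dense to each other. Then $\mac B$ is organised, since dense pairs cover each other. The key case split uses the \erh{} property of $H_t$ (witnessed by $b$) on $L$, which has $m\ge\eps^{-1}$ vertices:
\begin{itemize}
\item If $L$ has a stable set of size $m^{1/b}\ge y^{-1}$, those blocks are pairwise anticomplete, giving the second outcome (this fixes $c\ge b$).
\item If $L$ has a clique $S$ of size $m^{1/b}$, the blocks in $S$ are pairwise $(1-\eps)$-dense. Their union $W$ (or a large portion of it) then has density close to $1$, and by iterating we aim for the first outcome, a $y^3$-restricted (indeed $(1-y^3)$-dense) induced subgraph on $\poly(y)\abs G$ vertices. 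Taking one block from each of $y^{-3}$ indices roughly forms a $(1-y^3)$-dense subgraph once $\eps\le y^{6}$, because each vertex misses at most a $1/\abs S$ fraction from its own block plus $\eps$ elsewhere. Intra-block sparsity is the obstruction, but averaging after restricting each block to a small random-free subset handles it. Alternatively, we could appeal to the standard argument in~\cite[Section 7]{density7}.
\item Otherwise $L$ contains an induced $H_t$, since $L$ has no clique or stable set of size $\abs L^{1/b}$.
\end{itemize}

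In the last case we aim for the third outcome. Let $\mac B'$ be the organised sub-blockade with pattern $H_t$; it has $\abs{H_t}$ blocks, a constant number. By \cref{lem:switch}, no vertex $v\notin\bigcup\mac B'$ is mixed on every block of $\mac B'$. Since $G$ is $(1-y)$-dense, each vertex has fewer than $y\abs G$ non-neighbours. Counting then shows that all but at most $4y\abs G$ vertices outside $\mac B'$ are complete to some block: a vertex anticomplete to a block of size at least $\eps^a y\abs G$ is rare in aggregate by the density bound, so we can double count non-edges into blocks. Pigeonholing over the constantly many blocks gives a block $X=B_z$ and a set $Y$ of size at least $(1-4y)\abs G$ complete to $X$, provided $\abs X\ge y^d\abs G$, which the choice $d=11ab$ is meant to ensure. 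The delicate point is the bookkeeping that only a $4y$ fraction is lost, so $\abs{\bigcup\mac B'}$ must be at most $y\abs G$; we achieve this by trimming blocks first, while preserving connectivity and the organised property.

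The main obstacle I expect is the clique case. Making an honestly $(1-y^3)$-dense subgraph out of a clique of dense blocks requires controlling the density inside each block. Our plan is to apply \cref{thm:rodl}/viral-type reasoning inductively, or to choose $\eps$ small enough that one vertex per block suffices and then shrink.
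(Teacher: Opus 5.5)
There is a genuine gap in your third case. Knowing that no vertex outside $\mac B'$ is mixed on every block of a single copy $\mac B'$ of $H_t$ only tells you that each such vertex is pure to \emph{some} block of $\mac B'$. Different vertices may be complete to different blocks. Pigeonholing over the $\abs{H_t}$ blocks therefore gives a block complete to only about a $1/\abs{H_t}$ fraction of the vertices, not to $(1-4y)\abs G$ of them. Even your count of vertices anticomplete to some block of $\mac B'$ only gives $\abs{H_t}\,y\abs G$, not $4y\abs G$. Passing to many disjoint copies of $H_t$ in $L$ does not help, since each vertex is still only guaranteed to be pure to a $1/\abs{H_t}$ fraction of the blocks. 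The third outcome needs a single block to which almost every vertex is complete. So you need every vertex to be pure to almost all blocks, and a global application of the \erh{} property to $L$ cannot give this.

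The missing idea is to reverse the order of quantifiers. Fix a vertex $v$ outside the blockade, and apply the \erh{} property of $H_t$ only to the pattern graph $L_v$ of the blocks on which $v$ is mixed. By \cref{lem:switch}, $L_v$ is $H_t$-free. So if $v$ were mixed on at least $y^{-4b}$ blocks, $L_v$ would contain a clique or stable set of size at least $y^{-4}$, giving the first or second outcome. Hence every vertex is mixed on fewer than $y^{-4b}$ blocks. With $\eps=y^{5b}$ there are at least $y^{-5b}$ blocks, so averaging gives one block $B_i$ on which fewer than $y^b\abs G$ vertices are mixed. Since $G$ is $(1-y)$-dense, fewer than $y\abs G$ vertices are anticomplete to $B_i$, so all but $2y\abs G$ of the outside vertices are complete to $B_i$. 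The paper builds the blockade inside a set $S$ of size $\ceil{y\abs G}$ and only counts vertices outside $S$, which also settles your concern about the total size of the blocks.

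Your clique case, which you leave open, is handled by the observation you half-make already. A clique in $L_v$ has at least $y^{-4}$ blocks. After a dyadic partition (losing a logarithmic factor) you keep $\abs I\ge 16y^{-3}$ blocks whose sizes are within a factor $2$, and you trim them to a common size $m$. The non-edges inside blocks then number at most $\abs I m^2$, which is at most a $1/\abs I\le y^3/16$ fraction of $(\abs I m)^2$, whatever the density inside the blocks. Deleting the vertices of large non-degree then gives a $(1-y^3)$-dense induced subgraph.
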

	\begin{proof}
		Assume that the first two outcomes fail. 
		Let $\eps:=y^{5b}$ and $\ell:=\ceil{\eps^{-1}}$; then $y^3\eps^{2a}/4\ge y^{4+10ab}\ge y^d$.
		Let $S\subset V(G)$ with $\abs S=\ceil{y\abs G}\le 2y\abs G$.
		
		\begin{claim}
			\label{claim:getsemiconn}
			$G[S]$ contains a $(1-\eps)$-semidense $(\ell,y^3\eps^{2a}\abs G)$-blockade $(B_1,\ldots,B_\ell)$, such that $G[B_i]$ is connected for all $i\in[\ell]$.
		\end{claim}
		\begin{subproof}
			Since $\abs S\ge y\abs G> y^{1-d}\ge \eps^{-2a}$, the niceness of $P_{t+1}$ (with $\eps^2$ replacing $\eps$) gives a $(1-\eps^2)$-semidense $(\eps^{-2},\eps^{2a}\abs S)$-blockade in $G[S]$. We retain $\ell=\ceil{\eps^{-1}}$ blocks $A_1,\ldots,A_\ell$ from this blockade.
			Note that $\eps^{2a}\abs S\ge y\eps^{2a}\abs G$.
			For each $j\in[\ell]$, by \cref{lem:conn} with $k=\ceil{y^{-1}}\le 2y^{-1}$, $G[A_j]$ contains either:
			\begin{itemize}
				\item an anticomplete $(y^{-1},y^2\abs{A_j}/4)$-blockade; or
				
				\item a connected induced subgraph with at least $y\abs {A_j}/2$ vertices.
			\end{itemize}
			
			If the first bullet holds then the second outcome of the lemma holds since 
			$$y^2\abs{A_j}/4\ge y^3\eps^{2a}\abs G/4=y^{10ab+3}\abs G/4\ge y^d\abs G.$$
			Thus, the second bullet holds; and so there exists $B_j\subset A_j$ such that $\abs{B_j}\ge y\abs{A_j}/2\ge y^2\eps^{2a}\abs G/2\ge y^3\eps^{2a}\abs G$ and $G[B_j]$ is connected.
			In particular, since $(A_1,\ldots,A_\ell)$ is $(1-\eps^2)$-semidense and $\eps^2\le y\eps/2$, the blockade $(B_1,\ldots,B_\ell)$ is $(1-\eps)$-semidense.
			This proves \cref{claim:getsemiconn}.
		\end{subproof}
		
		Now, let $B:=V(G)\setminus S$; then $\abs B=\abs G-\abs S\ge (1-2y)\abs G$.
		
		\begin{claim}
			\label{claim:pathmixed}
			Every vertex in $B$ is mixed on fewer than $y^{-4b}$ blocks among $B_1,\ldots,B_{\ell}$.
		\end{claim}
		
		\begin{subproof}
			Suppose that there exists $v\in B$ mixed on at least $y^{-4b}$ blocks among $B_1,\ldots,B_\ell$;
			and assume that it is mixed on $B_1,\ldots,B_h$, where $h\ge y^{-4b}$.
			Since $(B_1,\ldots,B_h)$ is $(1-\eps)$-semidense (and so organised) and $G[B_i]$ is connected for all $i\in[h]$,
			\cref{lem:switch} implies that the pattern graph $L$ of this blockade is $H_t$-free; and so the choice of $b$ gives a clique or stable set $J$ in $L$ with $\abs J\ge \abs L^{1/b}\ge h^{1/b}\ge y^{-4}$.
			We may assume that $V(L)=[h]$.
			If $J$ is stable in $L$ then $(B_j:j\in J)$ satisfies the second outcome of the lemma since $y^3\eps^{2a}\abs G\ge y^d\abs G$, violating our supposition.
			Thus, $J$ is a clique in $L$.
			Since $y^3\eps^{2a}\abs G\le\abs{B_i}\le \abs S\le 2y\abs G$ for all $i\in J$, dyadic partitioning gives some  $I\subset J$ with
			$$\abs I\ge \frac {\abs J}{2\log(y^{-3}\eps^{-2a})}\ge\frac{y^{-4}}{4d\log (y^{-1})}\ge 16y^{-3} $$
			(note that $d=11ab\ge40$ and $y\in(0,2^{-d}]$) 
			and $\abs{B_i}\le 2\abs{B_j}$ for all $i,j\in I$.
			Let $m:=\min_{i\in I}\abs{B_i}\ge\frac12\max_{i\in I}\abs{B_i}$.
			For every $i\in I$, let $Q_i\subset B_i$ satisfy $\abs{Q_i}=m$; then $(Q_i:i\in I)$ is $(1-2\eps)$-dense.
			Let $Q:=\bigcup_{i\in I}Q_i$; then $\abs Q=\abs I m$ and the number of non-edges of $G[Q]$ is at most
			\[\abs I\cdot m^2+\abs I^2\cdot 2\eps m^2
			\le y^3(\abs Im)^2/16+2\eps(\abs Im)^2< y^3\abs Q^2/8\]
			where the last inequality holds since $y\le 2^{-d}$ and $2\eps=2y^{5b}< y^{3}/16$.
			Now, let $R$ be the set of vertices in $G[Q]$ with at least $y^{3}\abs Q/2$ non-neighbours in $Q$.
			Then $\abs R<\abs Q/2$; and so $\abs{Q\setminus R}>\abs Q/2=\abs Im/2\ge m\ge y^3\eps^{2a}\abs G\ge y^{d}\abs G$.
			Also, every vertex in $Q\setminus R$ has fewer than $y^{3}\abs Q/2< y^{3}\abs{Q\setminus R}$ non-neighbours in $G[Q\setminus R]$.
			Therefore, $G[Q\setminus R]$ is an induced subgraph of $G$ satisfying the first outcome of the lemma, contrary to our assumption.
			This proves \cref{claim:pathmixed}.
		\end{subproof}
		
		\cref{claim:pathmixed} gives some $i\in[\ell]$ such that $B$ has fewer than $y^b\abs B\le y\abs G$ vertices mixed on $B_i$.
		Since $G$ is $(1-y)$-dense, there are fewer than $y\abs G$ vertices in $B$ with no neighbours in $B_i$; and so $B$ has fewer than $2y\abs G$ vertices with at least one non-neighbour in $B_i$.
		Let $Y$ be the set of vertices in $B$ complete to $B_i$. Then $\abs Y\ge \abs B-2y\abs G$; and because $\abs B\ge (1-2y)\abs G$, we have
		$\abs Y\ge \abs B-2y\abs G\ge  (1-4y)\abs G$.
		Therefore, the third outcome of the lemma holds since $\abs{B_i}\ge y^3\eps^{2a}\abs G\ge y^{d}\abs G$.
		This proves \cref{lem:pathdense1}.
	\end{proof}
	Iterating \cref{lem:pathdense1} gives:
	
	\begin{lemma}
		\label{lem:pathdense}
		Let $t\ge2$ be an integer. Assume that $H_t$ has the \erh{} property witnessed by $b\ge2$, and that $P_{t+1}$ is nice as witnessed by $a\ge2$. Then for $d=12ab$, the following holds.
		Let $x,y\in(0,2^{-d}]$ with $x\le y$, and let $G$ be a $(1-y)$-dense $P_{t+1}$-free graph with $\abs G>x^{-2d}$. Then $G$ contains either:
		\begin{itemize}
			\item a $y^2$-restricted induced subgraph with at least $y^{d}\abs G$ vertices; or
			
			\item a complete or anticomplete $(y^{-1/2},y^{d}\abs G)$-blockade.
		\end{itemize}
	\end{lemma}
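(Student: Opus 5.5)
The plan is to iterate \cref{lem:pathdense1}, peeling off complete pairs, in the spirit of \cref{lem:iteration} applied in the complement. Assume both outcomes of \cref{lem:pathdense} fail. Let $k\ge0$ be maximal such that $G$ contains disjoint sets $X_1,\ldots,X_k,Y_k$ with the following properties:
\begin{itemize}
\item $\abs{X_i}\ge y^{d_1}\abs G$ for all $i\in[k]$, where $d_1=11ab$;
\item $\abs{Y_k}\ge(1-4y)^k\abs G$; and
\item each $X_i$ is complete to $X_{i+1}\cup\cdots\cup X_k\cup Y_k$.
\end{itemize}
Such sets are built by nesting. Given $Y_{i-1}$, apply \cref{lem:pathdense1} inside $G[Y_{i-1}]$ to obtain a complete pair $(X_i,Y_i)$ with $Y_i\subset Y_{i-1}$.

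The first step is to show that if $k\ge y^{-1/2}$ we are done. In that case $(X_1,\ldots,X_{\ceil{y^{-1/2}}})$ is a complete blockade. Its width is at least $y^{d_1}\abs{Y_{i-1}}\ge y^{d_1}(1-4y)^{y^{-1/2}}\abs G\ge y^{d_1}\abs G/2\ge y^d\abs G$. Here we use $(1-4y)^{y^{-1/2}}\ge 1/2$, which holds since $y$ is tiny.

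The second step treats the case $k<y^{-1/2}$. Then $F:=G[Y_k]$ satisfies $\abs F\ge\abs G/2$. The key point is that $F$ is not quite $(1-y)$-dense, but it is nearly so. Every vertex of $F$ has fewer than $y\abs G\le 2y\abs F$ non-neighbours in $F$, so $F$ is $(1-2y)$-dense. We therefore apply \cref{lem:pathdense1} to $F$ with parameter $2y$ instead of $y$. This needs $2y\le 2^{-d_1}$, which follows from $d=12ab>d_1$. It also needs $\abs F>(2y)^{-d_1}$, which follows from $\abs G>x^{-2d}$.

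This is where the main bookkeeping difficulty sits: the density drifts as we recurse. The cleanest fix is to state the maximal-$k$ conditions directly in terms of $G$. Every $Y_k$ with $\abs{Y_k}\ge\abs G/2$ automatically inherits $(1-2y)$-density from $G$. So each invocation uses the same parameter $y'=2y$, and the third outcome of \cref{lem:pathdense1} yields $\abs{X}\ge (2y)^{d_1}\abs F$ and $\abs{Y}\ge(1-8y)\abs F$. With this change the maximality condition uses the factor $(1-8y)^k$, and $(1-8y)^{y^{-1/2}}\ge 1/2$ still holds.

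The third step checks that the other two outcomes of \cref{lem:pathdense1} transfer to $G$. A $(2y)^3$-restricted induced subgraph of $F$ on at least $(2y)^{d_1}\abs F\ge y^d\abs G$ vertices is $y^2$-restricted, because $8y^3\le y^2$. An anticomplete $((2y)^{-1},(2y)^{d_1}\abs F)$-blockade has length at least $y^{-1/2}$, since $(2y)^{-1}\ge y^{-1/2}$. Either outcome contradicts our assumption. Hence the complete-pair outcome must occur, and it extends the chain, contradicting the maximality of $k$.

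The only delicate points are the constants:
\begin{itemize}
\item $(2y)^{d_1}/2\ge y^{12ab}$, which needs $y\le 2^{-d}$ to absorb the factors $2^{d_1+1}$;
\item the size hypothesis $\abs F\ge\abs G/2>(2y)^{-d_1}$, which holds because $x\le y$ and $\abs G>x^{-2d}$.
\end{itemize}
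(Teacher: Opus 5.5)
Your proof is correct and follows the paper's argument. The paper also assumes both outcomes fail and notes that every induced subgraph on at least $\frac14\abs G$ vertices is $(1-4y)$-dense, so \cref{lem:pathdense1} (with $4y$) must return a complete pair; it then invokes \cref{lem:iteration} in the complement, whereas you run the same peeling iteration by hand with threshold $\frac12\abs G$ and parameter $2y$. (Incidentally, $(2y)^{d_1}/2\ge y^{d_1}$ holds trivially, so no absorption of powers of $2$ is needed there.)
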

	\begin{proof}
		Suppose that all outcomes fail. Let $p:=y^{d}\abs G$. We claim that:
		
		\begin{claim}
			\label{claim:pathdense}
			For every induced subgraph $F$ of $G$ with $\abs F\ge\frac14\abs G$, there is a complete pair $(A,B)$ with $\abs A\ge p$ and $\abs B\ge (1-16y)\abs F$.
		\end{claim}
		\begin{subproof}
			Since $\abs F\ge\frac14\abs G>\frac14 x^{-2d}>x^{-d}$, we have that $F$ is $(1-4y)$-dense. Thus, \cref{lem:pathdense1} (with $4y\le 4\cdot 2^{-12ab}\le 2^{-11ab}$) implies that $F$ contains either:
			\begin{itemize}
				\item a $(4y)^3$-restricted induced subgraph with at least $(4y)^{d}\abs F$ vertices;
				
				\item an anticomplete $((4y)^{-1},(4y)^{d}\abs F)$-blockade; or
				
				\item a complete pair $(X,Y)$ with $\abs X\ge (4y)^{d}\abs F$ and $\abs Y\ge (1-16y)\abs F$.
			\end{itemize}
			
			Since both outcomes of the lemma fail, $(4y)^3\le y^2$, $(4y)^{-1}\ge y^{-1/2}$, and $(4y)^{d}\abs F\ge y^{d}\abs G=p$, we see that the first two bullets cannot hold.
			Hence the third bullet holds; and so the claim holds because $(4y)^{d}\abs F\ge p$.
			This proves \cref{claim:pathdense}.
		\end{subproof}
		
		Now, by \cref{claim:pathdense,lem:iteration} (applied to $\overline G$, with $16y$ replacing $y$ and $x=\abs G^{-1}$ to ensure completeness), $G$ contains a complete $((16y)^{-1},p)$-blockade.
		Since $(16y)^{-1}\ge y^{-1/2}$, the second outcome holds, a contradiction. This proves \cref{lem:pathdense}.
	\end{proof}
	We are now ready to prove \cref{lem:niceeh} via the following lemma.
	
	\begin{lemma}
		\label{lem:patheh}
		Let $t\ge2$ be an integer. Assume that $P_t$ has the \erh{} property, and that $P_{t+1}$ is nice as witnessed by $a\ge2$. 
		Then there exists $q\ge2$ such that for every $x\in(0,2^{-q}]$ and every $P_{t+1}$-free graph $G$, either:
		\begin{itemize}
			\item $G$ has a $(1-x)$-dense induced subgraph with at least $x^q\abs G$ vertices; or
			
			\item there is a complete or anticomplete $(k,\abs G/k^q)$-blockade in $G$, for some integer $k\ge2$.
		\end{itemize}
	\end{lemma}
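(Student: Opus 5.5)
We first convert the hypothesis on $P_t$ into the input needed by \cref{lem:pathdense}. Since $P_t$ has the \erh{} property, repeated application of \cref{thm:aps} shows that every blow-up $H_r$ has it; in particular $H_t$ has the \erh{} property, witnessed by some $b\ge2$. Indeed, $H_{r+1}=P_t[H_r]$ is obtained by substituting $H_r$ for each of the $t$ vertices of $P_t$ in turn. By \cref{thm:bfp}, $P_{t+1}$ is also viral (or we may use R\"odl's theorem \ref{thm:rodl} directly) for the start of the densification. Let $d:=12ab$ be given by \cref{lem:pathdense}. We will take $q$ to be a large multiple of $d$, say $q:=100d$.

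The main step is iterative densification via \cref{lem:sparsify} applied to $\overline G$. Here ``$y$-sparse in $\overline G$'' means ``$(1-y)$-dense in $G$''. Fix $x\in(0,2^{-q}]$ and a $P_{t+1}$-free graph $G$. Suppose that the second outcome fails, i.e.\ $G$ has no complete or anticomplete $(k,\abs G/k^q)$-blockade for any $k\ge2$. We want to verify the two hypotheses of \cref{lem:sparsify} with $c:=2^{-d}$, exponent $a:=2$ and a suitable $t$-parameter $\tau$ (a multiple of $d$).

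For the first hypothesis, \cref{thm:rodl} gives a $c$-restricted induced subgraph on at least $\delta_0\abs G$ vertices, where $\delta_0$ is a constant depending only on $t$. If it is $c$-sparse, \cref{lem:treesparse} (since $P_{t+1}$ is a forest) yields an anticomplete blockade of constant length and linear width. That is a $(k,\abs G/k^q)$-blockade with $k\ge2$ once $q$ is large. So we may assume it is $(1-c)$-dense, and by enlarging $\tau$ its size is at least $c^{\tau}\abs G$.

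For the second hypothesis, take $y\in[x,c]$ and a $(1-y)$-dense induced subgraph $F$ with $\abs F\ge y^{2\tau}\abs G$. If $\abs F\le x^{-2d}$ we need care; I would handle this by noting that $\abs G\le x^{-O(\tau d)}$ in that case. Then the first outcome holds trivially with a single vertex, since $x^q\abs G<1$ once $q$ is large relative to $\tau d$. Otherwise \cref{lem:pathdense} gives either a $y^2$-restricted induced subgraph of $F$ with at least $y^d\abs F$ vertices, or a complete or anticomplete $(y^{-1/2},y^d\abs F)$-blockade. In the latter case, with $k=\ceil{y^{-1/2}}$ we have $y^d\abs F\ge y^{d+2\tau}\abs G\ge \abs G/k^{q}$, because $q\ge 2(d+2\tau)+2$, which contradicts our assumption. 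In the restricted case, if the subgraph is $y^2$-sparse then \cref{lem:treesparse} (with $\eps=y^{2/s}$) produces an anticomplete $(y^{-2/s},y^{2+d}\abs F)$-blockade. This again gives a forbidden $(k,\abs G/k^q)$-blockade for $q$ large in terms of $s,d,\tau$. Hence the subgraph is $(1-y^2)$-dense, i.e.\ $y^2$-sparse in $\overline G$, of size at least $y^{d}\abs F\ge y^{2\tau}\abs F$ provided $\tau\ge d$.

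\cref{lem:sparsify} then yields a $(1-x)$-dense induced subgraph with at least $x^{4\tau}\abs G\ge x^q\abs G$ vertices, which is the first outcome. The main obstacle is purely bookkeeping: choosing $\tau$ and $q$ compatibly, so that every blockade produced along the way is genuinely a $(k,\abs G/k^q)$-blockade with $k\ge2$. This uses the lower bound $\abs F\ge y^{2\tau}\abs G$ together with length at least $y^{-\Omega(1)}$. The tiny-graph regime ($\abs F\le x^{-2d}$) must also be handled via the trivial single-vertex case. All structural content is already packed into \cref{lem:pathdense}.
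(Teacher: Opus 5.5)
Your proposal is correct and is essentially the paper's proof: R\"odl's theorem together with \cref{lem:treesparse} gives a $(1-c)$-dense starting subgraph, then \cref{lem:pathdense} is applied inside each $(1-y)$-dense piece with \cref{lem:treesparse} ruling out the sparse outcome, and finally \cref{lem:sparsify} on $\overline G$ pushes the density to $1-x$ (your $\tau$ is the paper's $h$, and your tiny-graph case is the paper's initial assumption $\abs G>x^{-q}$). Only bookkeeping needs fixing: the aside that $P_{t+1}$ is viral by \cref{thm:bfp} is circular, though unused; $c$ should be $2^{-\max(d,s)}$ so that \cref{lem:treesparse} applies with $\eps\le\frac12$; and $q$ must depend on $\tau$ and $s$ rather than being $100d$ (the paper takes $q=s(2h+d)$ with $h\ge d/2$).
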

	\begin{proof}
		Since $P_t$ has the \erh{} property, \cref{thm:aps} implies that $H_t$ also has the \erh{} property; let $b\ge2$ witness the \erh{} property of $H_t$.
		Let $d:=12ab$.
		Let $s\ge2$ be given by \cref{lem:treesparse} with $H=P_{t+1}$, and let $c:=2^{-\max(d,s)}$. By \cref{thm:rodl}, there exists $h\ge2$ such that every $P_{t+1}$-free graph $G$ contains a $c$-restricted induced subgraph with at least $c^h\abs G$ vertices; by increasing $h$ if necessary we may assume $h\ge d/2$.
		We claim that $q:=s(2h+d)$ suffices.
		To this end, let $x\in(0,2^{-q}]$, and let $G$ be a $P_{t+1}$-free graph. Assume that $\abs G>x^{-q}$ (else the first outcome of the lemma holds by taking any induced subgraph of $G$ with $\ceil{x^q\abs G}\in\{0,1\}$ vertices), and that $G$ contains no complete or anticomplete $(k,\abs G/k^q)$-blockade, for any integer $k\ge 2$.
		We claim that:
		
		\begin{claim}
			\label{claim:nosparse}
			For every $y\in[x^2,c]$, $G$ contains no $y$-sparse induced subgraph $F$ with $\abs F\ge y^{q/s-1}\abs G$.
		\end{claim}
		\begin{subproof}
			Suppose that such an induced subgraph $F$ exists. Since $\abs F\ge y^{q/s-1}\abs G> y^{q/s-1-q/2}\ge y^{-1}$ (since $s\ge2$) and $y\le c\le 2^{-s}$, the choice of $s$ and \cref{lem:treesparse} (with $\eps=y^{1/s}$) together yield an anticomplete $(y^{-1/s},y\abs F)$-blockade in $F$.
			For $k=\ceil{y^{-1/s}}\ge2$, this blockade is an anticomplete $(k,\abs G/k^q)$-blockade in $G$, since $y\abs F\ge y^{q/s}\abs G\ge \abs G/k^q$.
			This violates our supposition, and so proves \cref{claim:nosparse}.
		\end{subproof}
		
		Now, the choice of $h$ gives a $c$-restricted induced subgraph $G'$ of $G$ with $\abs{G'}\ge c^h\abs G$.
		Since $c^h\ge c^{q/s-1}$ by the choice of $q$,
		\cref{claim:nosparse} with $y=c$ implies that $G'$ is $(1-c)$-dense.
		We next claim that:
		
		\begin{claim}
			\label{claim:path}
			For every $y\in[x,c]$ and every $(1-y)$-dense induced subgraph $F$ of $G$ with $\abs F\ge y^{2h}\abs G$, there is a $(1-y^2)$-dense induced subgraph with at least $y^{2h}\abs F$ vertices.
		\end{claim}
		\begin{subproof}
			Observe that $\abs F\ge y^{2h}\abs G>y^{2h}x^{-q}\ge x^{2h-q}\ge x^{-2d}$ by the choice of $q$ and $h$.
			Thus, by \cref{lem:pathdense}, $F$ contains either:
			\begin{itemize}
				\item a $y^2$-restricted induced subgraph $J$ with at least $y^{d}\abs F$ vertices; or
				
				\item a complete or anticomplete $(y^{-1/2},y^{d}\abs F)$-blockade.
			\end{itemize}
			
			If the second bullet holds, then since $y^{d}\abs F\ge y^{d+2h}\abs G\ge y^{q/2}\abs G$ by the choice of $q$,
			there would be a complete or anticomplete $(k,\abs G/k^q)$-blockade in $G$ where $k=\ceil{y^{-1/2}}$, contrary to our supposition.
			Thus, the first bullet holds.
			Since $\abs J\ge y^{d}\abs F\ge y^{d+2h}\abs G\ge y^{2q/s-2}\abs G$ by the choice of $q$, \cref{claim:nosparse} implies that $J$ is $(1-y^2)$-dense.
			This proves \cref{claim:path}.
		\end{subproof}
		
		Now, \cref{claim:path,lem:sparsify} (applied to $\overline G$, with $h$ replacing $t$) together give a $(1-x)$-dense induced subgraph of $G$ with at least $x^{4h}\abs G\ge x^q\abs G$ vertices.
		This verifies the first outcome of the lemma, and so proves \cref{lem:patheh}.
	\end{proof}
	
	\begin{proof}
		[Proof of \cref{lem:niceeh}]
		This follows by combining \cref{lem:patheh,lem:eh} (with $q$ replacing $b$).
	\end{proof}
	
	\subsection{Extensions}
	We now explain several extensions of our argument in this section thus far.
	Such extensions will, in particular, enable us to prove \cref{lem:niceeh} for $S_{1,2,t}$ under the assumption that it is nice and that the graph obtained by removing its principal leaf has the \erh{} property.
	Combined with \cref{lem:spiderpure}, this would lead to the full \erh{} property of $S_{1,2,t}$.
	A similar procedure can be done for $S_{1,2,t}'$: indeed, since this graph contains a triangle and so cannot be nice, we will use \cref{lem:nearnice} instead, where the sparse outcome is still good because we can use \cref{lem:antiviral} and assume that $S_{1,2,t-1}'$ is viral.
	
	Thus, in what follows we will only focus on the setting of \cref{subsec:switch}: semidense blockades $\mac B$ in a dense graph $G$, with a vertex $v$ outside that is mixed on the blocks of $\mac B$.
	In our setup, for an integer $s\ge2$, an {\em $s$-lollipop} is a graph $H$ with an induced path
	\[R=g_0\text-g_1\text-\cdots\text-g_s\]
	such that $H\setminus (R\setminus\{g_s\})$ is connected with at least two vertices, and $V(H)\setminus R$ is anticomplete to $R\setminus\{g_s\}$.
	(This is the principal structure behind the Gy\'arf\'as path argument.)
	The following is an extension of \cref{lem:complend} for $H$ and $F:=H\setminus g_0$.
	
	\begin{lemma}
		\label{lem:completend}
		Let $s\ge2$ be an integer, and let $H$ be an $s$-lollipop. Let $G$ be an $H$-free graph with a $(1-\abs F^{-1})$-semidense blockade $\mac B=(B_z:z\in V(F))$ with pattern $F$ in $G$, where $G[B_{g_1}]$ is connected.
		If $g_2$ is the unique neighbour of $g_1$ in $F$, then $B_{g_2}$ is complete to $B_{g_1}$ in $G$.
	\end{lemma}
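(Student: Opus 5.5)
The plan is to follow the proof of \cref{lem:complend}: assume $B_{g_2}$ is not complete to $B_{g_1}$, and build a copy of $H$ in which $g_0,g_1$ are mapped into $B_{g_1}$ and $g_2$ is mapped to a vertex $v_2\in B_{g_2}$ that is mixed on $B_{g_1}$. The only new work is embedding the rest of $F$ around $v_2$. For \cref{lem:complend} this came for free from covering along a path; here I will get it from the semidense hypothesis by a greedy argument.

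\emph{Step 1: the local structure.} Suppose some $v_2\in B_{g_2}$ has a non-neighbour in $B_{g_1}$. Since $\mac B$ has pattern $F$ and $g_1g_2\in E(F)$, the sets $B_{g_1},B_{g_2}$ are $(1-\abs F^{-1})$-dense to each other, so $v_2$ also has a neighbour in $B_{g_1}$. Thus $v_2$ is mixed on $B_{g_1}$. Because $G[B_{g_1}]$ is connected, there is an edge $v_0v_1$ of $G[B_{g_1}]$ with $v_1\in N_G(v_2)$ and $v_0\notin N_G(v_2)$.

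\emph{Step 2: greedy embedding of $F\setminus g_1$ rooted at $v_2$.} Order $V(F)\setminus\{g_1\}$ as $z_1=g_2,z_2,\ldots,z_m$, and set $u_{z_1}:=v_2$. Then choose $u_{z_i}\in B_{z_i}$ for $i=2,\ldots,m$ in turn, requiring $u_{z_i}$ to be adjacent to every earlier $u_{z_j}$ with $z_jz_i\in E(F)$. Each such earlier vertex lies in a block $(1-\abs F^{-1})$-dense to $B_{z_i}$, so it has fewer than $\abs{B_{z_i}}/\abs F$ non-neighbours in $B_{z_i}$. There are at most $\abs F-2$ such earlier vertices, since $g_1$ and $z_i$ itself are excluded. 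Hence the forbidden set has size less than $(\abs F-2)\abs{B_{z_i}}/\abs F<\abs{B_{z_i}}$, and a valid choice exists. Non-edges of $F$ are automatically respected, because blocks corresponding to non-adjacent vertices of $F$ are anticomplete (this is the semidense condition together with the pattern being $F$). The chosen vertices are distinct since the blocks are disjoint. This gives a copy of $F\setminus g_1=H\setminus\{g_0,g_1\}$ with $g_2\mapsto v_2$.

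\emph{Step 3: attach $v_1,v_0$.} Map $g_1\mapsto v_1$ and $g_0\mapsto v_0$. In $H$, the only neighbour of $g_0$ is $g_1$, and the neighbours of $g_1$ are exactly $g_0$ and $g_2$. The latter holds because $R$ is induced, $V(H)\setminus R$ is anticomplete to $R\setminus\{g_s\}$, and $s\ge2$.

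In $G$, the edges $v_0v_1$ and $v_1v_2$ are present and $v_0v_2$ is absent. For $z\notin\{g_1,g_2\}$, $z$ is non-adjacent to $g_1$ in $F$, so $B_z$ is anticomplete to $B_{g_1}\ni v_0,v_1$. So this map is a copy of $H$ in $G$, contradicting $H$-freeness.

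I expect the main point needing care to be Step 2. The union bound must be organised greedily, not by a random choice, since $F$ may have many edges. The density threshold $1-\abs F^{-1}$ is exactly what makes the greedy count go through.
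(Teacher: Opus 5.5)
Your proof is correct and follows the same route as the paper's: a vertex $v_2\in B_{g_2}$ mixed on the connected block $B_{g_1}$ gives an edge $v_0v_1$, you map $g_0,g_1,g_2$ to $v_0,v_1,v_2$, and you greedily embed the rest of $F\setminus g_1$ using the $(1-\abs F^{-1})$-semidense property. Your Step 2 and Step 3 just spell out the greedy count and the adjacency checks that the paper leaves implicit.
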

	\begin{proof}
		Suppose not; then there exists $u\in B_{g_2}$ mixed on $B_{g_1}$.
		Since $G[B_{g_1}]$ is connected, it contains an edge $u_1u_2$ such that $u$ is adjacent to $u_1$ and nonadjacent to $u_2$.
		Then embed $g_0,g_1,g_2$ into $u_2,u_1,u$, respectively.
		Now, since $\mac B$ is $(1-\abs F^{-1})$-semidense, we can greedily embed the rest of $H$ into $(B_z:z\in V(F)\setminus\{g_1\})$.
		This proves \cref{lem:completend}.
	\end{proof}
	
	Now, we formulate a quick extension of \cref{lem:switching} in the semidense setting.
	For an $s$-lollipop $H$, we define a sequence of repeated blow-ups $(J_r)_{r\ge1}$ in a manner similar to the definition \eqref{eq:sub1}, as follows: $J_1,J_2$ are the one-vertex graph; and (recall that $F=H\setminus g_0$)
	\begin{equation}
		\label{eq:sub2}
		J_{r+1}:=F[J_r]\quad\text{for every integer $r\ge2$.}
	\end{equation}
	
	The aforesaid extension of \cref{lem:switching} to this setting is as follows, with a similar proof.
	
	\begin{lemma}
		\label{lem:switchin}
		Let $r,s$ be integers with $2\le r\le s+1$ and $s\ge2$, and let $H$ be an $s$-lollipop.
		Let $G$ be an $H$-free graph, with $v\in V(G)$ and a $(1-\abs{J_{r+1}}^{-1})$-semidense blockade $\mac B$ with pattern $J_r$ in $G\setminus v$, such that each block of $\mac B$ induces a connected subgraph in $G$ and $v$ is mixed on every block of $\mac B$.
		Then $G$ contains an induced $P_{r+1}$ where $v$ is an endpoint and the other vertices are in $\bigcup_{B\in\mac B}B$.
	\end{lemma}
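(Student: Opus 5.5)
I will prove \cref{lem:switchin} by induction on $r$, following the proof of \cref{lem:switching} with $P_t$ replaced by $F=H\setminus g_0$ and using \cref{lem:completend} in place of \cref{lem:complend}.

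The base case $r=2$ is immediate. Here $\mac B$ has a single block $B$. Since $v$ is mixed on $B$ and $G[B]$ is connected, there is an edge $u_1u_2$ of $G[B]$ with $v$ adjacent to $u_1$ and nonadjacent to $u_2$. Then $v\text-u_1\text-u_2$ is the required $P_3$.

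For the step from $r$ to $r+1$ (so $r+1\le s+1$, i.e.\ $r\le s$), write $\mac B=(B_z:z\in V(J_{r+1}))$ with $J_{r+1}=F[J_r]$. For each $w\in V(F)$, let $L_w$ be the copy of $J_r$ substituted for $w$, and set $C_w:=\bigcup_{z\in L_w}B_z$.

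The coarse blockade $\mac C=(C_w:w\in V(F))$ has pattern $F$, and each $G[C_w]$ is connected. It is also $(1-\abs F^{-1})$-semidense: between consecutive substituted copies every pair of blocks is $(1-\abs{J_{r+2}}^{-1})$-dense, and averaging over the at most $\abs{J_r}$ blocks of $L_w$ keeps each vertex of $C_{w'}$ far above $(1-\abs F^{-1})\abs{C_w}$ neighbours. Hence \cref{lem:completend} gives that $C_{g_2}$ is complete to $C_{g_1}$.

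The analogue of \cref{claim:complete} is the following: for $1\le j\le r-1$, every $u\in C_{g_{j+1}}$ is complete to some block $B_z$ with $z\in L_{g_j}$. I argue by contradiction. If not, then $u$ is mixed on every such block, because semidensity (with all pairs in the pattern dense) gives $u$ a neighbour in each. Induction applied to $u$ and the fine blockade $(B_z:z\in L_{g_j})$, which has pattern $J_r$ and is still sufficiently semidense, yields an induced path on $r$ vertices from $u$ into $C_{g_j}$; this path lies in the $g_j$ part.

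I then extend this path. Starting from $u$, I embed $g_{j+2},\dots,g_s$ and the whole connected part $V(H)\setminus R$ greedily into the other $C_w$, using semidensity: the relevant pairs are dense, and anticomplete pairs are automatically non-edges. The path gives $g_{j+1}=u$ together with $g_j,\dots,g_0$ on $r\ge j+1$ vertices. Truncated suitably, this produces an induced copy of $H$ (the excess length is absorbed, since $H$ is an $s$-lollipop and $j\le r-1$ keeps us inside $R$), which contradicts $H$-freeness.

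With the claim in hand, I repeat the path-building in the proof of \cref{lem:switching}:
\begin{enumerate}
\item Choose $v_r\in C_{g_r}\setminus N_G(v)$.
\item Walk down to $v_2\in C_{g_2}$ through non-neighbours of $v$, each time using the claim to land in a block complete to the current vertex, inside which $v$ has a non-neighbour.
\item Take $v_{r+1}\in C_{g_{r+1}}\cap N(v_r)$. When $r=s$ there is no $g_{s+1}$, so I instead take a neighbour of $g_s$ in the lollipop head.
\item Choose $v_1\in C_{g_1}$ as a neighbour or non-neighbour of $v$ according to whether $v_{r+1}\notin N(v)$ or $v_{r+1}\in N(v)$. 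Completeness of $C_{g_2}$ to $C_{g_1}$ makes $v_1v_2$ an edge.
\item Switch endpoints as in the proof of \cref{lem:switching} to obtain the induced $P_{r+2}$ rooted at $v$.
\end{enumerate}
The non-edges between non-consecutive vertices hold because the corresponding $C_w$ are anticomplete, as $R$ is induced in $F$.

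The main obstacle is the claim's greedy embedding. There I must check that the density threshold $1-\abs{J_{r+1}}^{-1}$ survives passing to unions of blocks, and that the lengths match so that an induced copy of $H$, rather than merely a long path, is produced. If this does not go through directly, I would first try strengthening the semidensity exponent in the inductive statement.
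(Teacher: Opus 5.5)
Your proposal is correct and follows the paper's proof essentially step for step. It uses the coarse blockade $(C_w)$ together with \cref{lem:completend} to get $C_{g_2}$ complete to $C_{g_1}$, and proves the claim by applying induction inside $L_{g_j}$ and then greedily completing a copy of $H$ through $C_{g_{j+2}},\dots,C_{g_s}$ and the head; it then walks down through non-neighbours of $v$ and switches endpoints, using a neighbour $g_{s+1}$ of $g_s$ in the head when $r=s$. The density issue you flag as the main obstacle is not one. A union of blocks from two adjacent substituted copies inherits exactly the same semidensity threshold, and this comfortably suffices for the greedy embeddings, so no strengthening of the inductive statement is needed.
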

	
	\begin{proof}
		We proceed by induction on $r$. If $r=2$ then $\mac B=\{B\}$ for some $B\subset V(G)$; and the lemma holds since $v$ is mixed on $B$ and $G[B]$ is connected.
		
		Now, for $r\le s$, we prove the lemma for $r+1$ assuming that it is true for $r$.
		To see this, let $\mac B=(B_z:z\in V(J_{r+1}))$. For every $j\in[s]$, let $L_j$ be the vertex set of the copy of $J_r$ in $J_{r+1}$ that substitutes $g_j$, and let $C_j:=\bigcup_{z\in L_j}B_z$.
		Then $G[C_j]$ is connected (since $J_r$ and each $G[B_z]$ are connected), and $(C_1,\ldots,C_{s})$ is a $(1-\abs{J_{r+1}}^{-1})$-semidense blockade with pattern $P_{s}$ in $G$.
		Also, for each $g\in V(H)\setminus V(R)$, let $L_g$ be the vertex set of the copy of $J_r$ in $J_{r+1}$ that substitutes $g$, and let $C_g:=\bigcup_{z\in L_g}B_z$.
		Again, $G[C_g]$ is connected for each such $g$.
		The blockade $(C_1,\ldots,C_s)\cup(C_g:g\in V(H)\setminus V(R))$ is a $(1-\abs{J_{r+1}}^{-1})$-semidense blockade with pattern $H\setminus g_0$ in $G$; and so, since $\abs{J_{r+1}}=\abs F^{r-1}\ge\abs F$, \cref{lem:completend} implies that $C_2$ is complete to $C_1$ in $G$.
		We claim that:
		
		\begin{claim}
			\label{claim:compl}
			For every $j\in[r-1]$, every $u\in C_{j+1}$ is complete to some block in $(B_z:z\in L_j)$.
		\end{claim}
		\begin{subproof}
			Suppose not. Since $\mac B$ is $(1-\abs{J_{r+1}}^{-1})$-semidense, $u$ has a neighbour in $B_z$ for each $z\in L_j$; and so $u$ is mixed on $B_z$ for all such $z$.
			By induction applied to $u$ versus $(B_z:z\in L_j)$, $G$ contains an induced path $P$ of length $r$ where $u$ is an endpoint and the other vertices are in $C_j$.
			Since $\mac B$ is $(1-\abs{J_{r+1}}^{-1})$-semidense, there is an induced path $Q$ in $G$ such that $u\in Q$ and $\abs{Q\cap C_i}=1$ for all $i\in\{j+1,\ldots,s\}$.
			Let $\mac C:=(C_s)\cup(C_g:g\in V(H)\setminus V(R))$.
			Let $u_s\in V(Q)\cap C_s$, embed $g_s$ into $u_s$, then greedily embed a copy $\varphi$ of $H\setminus(V(R)\setminus\{g_s\})$ along $\mac C$ such that $\varphi(g_s)=u_s$, which is possible since $\mac B$ is  $(1-\abs{J_{r+1}}^{-1})$-semidense.
			Since $P\text-u\text-Q$ is an induced path of length at least $s$, assembling $P\text-u\text-Q$ and $\varphi$ then gives a copy of $H$ in $G$, a contradiction.
			This proves \cref{claim:compl}.
		\end{subproof}
		
		Now, since $H\setminus(V(R)\setminus\{g_s\})$ is connected with at least two vertices, $g_s$ is adjacent to some $g_{s+1}\in V(H)\setminus(V(R)\setminus\{g_s\})$.
		Let $C_{s+1}:=C_{g_{s+1}}$; this allows for the existence of $C_{r+1}$.
		
		Next, let $v_{r}\in C_{r}\setminus N_G(v)$. Then there exists $j\in\{2,\ldots,r\}$ minimal such that $G$ contains an induced path $v_{r}\text-v_{r-1}\text-\cdots\text-v_j$ where $v_i\in C_i\setminus N_G(v)$ for all $i\in\{j,j+1,\ldots,r\}$.
		If $j>2$, then by \cref{claim:compl}, $v_j\in C_j$ is complete in $G$ to $B_z$ for some $z\in L_{j-1}$; and so there exists $v_{j-1}\in B_z\setminus N_G(v)$, violating the minimality of $j$. Therefore $j=2$.
		
		Now, since $\mac B$ is $(1-\abs{J_{r+1}}^{-1})$-semidense, there exists $v_{r+1}\in C_{r+1}\cap N_G(v_{r})$.
		Let $v_1$ be a neighbour of $v$ in $C_1$ if $v_{r+1}\in C_{r+1}\setminus N_G(v)$, and let $v_1$ be a non-neighbour of $v$ in $C_1$ if $v_{r+1}\in C_{r+1}\cap N_G(v)$.
		Then $v_1$ is always adjacent to $v_2$, since $C_2$ is complete to $C_1$.
		Thus, \[v\text-v_1\text-v_2\text-\cdots\text-v_{r}\text-v_{r+1}\quad\text{(if $v_1\in C_1\cap N_G(v)$)}
		\qquad\text{or}\qquad v\text-v_{r+1}\text-v_{r}\text-\cdots\text-v_1\quad \text{(if $v_1\in C_1\setminus N_G(v)$)}\]
		is an induced $P_{r+2}$ in $G$ where $v$ is an endpoint and the other vertices are in $\bigcup_{B\in\mac B}B$, completing the induction step.
		This proves \cref{lem:switchin}.
	\end{proof}
	
	\cref{lem:switchin} suggests the following definition. 
	For a graph $H$ and a tree $T$ with $r_0\in V(T)$, we say that a graph $J$ {\em forces $(T,r_0)$ via $H$} if for every $H$-free graph $G$, with $v\in V(G)$ and a $(1-(\abs H\abs J)^{-1})$-semidense blockade $\mac B$ with pattern $J$ in $G\setminus v$,
	such that each block of $\mac B$ induces a connected subgraph in $G$ and $v$ is mixed on every block of $\mac B$, there is a copy $\varphi$ of $T$ in $G$ with $\varphi(r_0)=v$ and $\varphi(V(T)\setminus\{r_0\})\subset \bigcup_{B\in\mac B}B$.
	Thus, \cref{lem:switchin} with $r=s$ implies that if $H$ is an $s$-lollipop, then $J_s$ forces $(P_{s+1},r_0)$ via $H$, where $r_0$ is an endpoint of $P_{s+1}$ (note that $\abs{J_{s+1}}=\abs F\abs{J_s}$).
	The following lemma follows straightforwardly from the definition.
	
	\begin{lemma}
		\label{lem:force}
		Let $H$ be a graph, and let $h\ge2$ be an integer.
		For each $i\in[h]$, let $F_i$ force $(T_i,r_i)$ via $H$. 
		Let $T$ be the tree obtained from the disjoint union of $T_1,\ldots,T_h$ by identifying $r_1,\ldots,r_h$ into one vertex $r_0$.
		Then the disjoint union $F_0$ of $F_1,\ldots,F_h$ forces $(T,r_0)$ via $H$.
	\end{lemma}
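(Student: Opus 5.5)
The plan is to unwind the definition of forcing and then embed the trees $T_1,\ldots,T_h$ one at a time, each into its own part of the blockade. Let $G$ be $H$-free, let $v\in V(G)$, and let $\mac B$ be a $(1-(\abs H\abs{F_0})^{-1})$-semidense blockade with pattern $F_0$ in $G\setminus v$. Assume every block of $\mac B$ induces a connected subgraph and $v$ is mixed on every block. Since $F_0$ is the disjoint union of $F_1,\ldots,F_h$, the blockade splits as $\mac B=\mac B_1\cup\cdots\cup\mac B_h$, where $\mac B_i$ consists of the blocks indexed by $V(F_i)$. Each $\mac B_i$ has pattern $F_i$, inherits connectivity of its blocks, and $v$ is still mixed on each of its blocks.

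The density parameter passes down to each part. Because $\abs{F_i}\le\abs{F_0}$, we have $(\abs H\abs{F_0})^{-1}\le(\abs H\abs{F_i})^{-1}$, so $\mac B_i$ is $(1-(\abs H\abs{F_i})^{-1})$-semidense. In more detail, any two blocks of $\mac B_i$ are either anticomplete, or dense to each other with the stronger parameter, which implies the weaker one. We can therefore apply the hypothesis that $F_i$ forces $(T_i,r_i)$ via $H$. This gives a copy $\varphi_i$ of $T_i$ with $\varphi_i(r_i)=v$ and $\varphi_i(V(T_i)\setminus\{r_i\})\subset\bigcup_{B\in\mac B_i}B$.

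Next we glue the copies together, defining $\varphi$ on $V(T)$ by $\varphi(r_0)=v$ and $\varphi=\varphi_i$ on $V(T_i)\setminus\{r_i\}$. The map is injective because the sets $\bigcup_{B\in\mac B_i}B$ are pairwise disjoint and none contains $v$. Edges and non-edges between $v$ and $\varphi_i(T_i)$, and inside each $\varphi_i(T_i)$, are correct because each $\varphi_i$ is a copy. What remains is to show that $\varphi_i(V(T_i)\setminus\{r_i\})$ is anticomplete to $\varphi_j(V(T_j)\setminus\{r_j\})$ for $i\ne j$.

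This anticompleteness is the only real point to check, and it comes from the pattern. Since $F_0$ is a disjoint union, any block of $\mac B_i$ and any block of $\mac B_j$ are non-adjacent in the pattern graph. In a semidense blockade, pattern-nonadjacent blocks are exactly the anticomplete pairs, so there are no edges between $\bigcup\mac B_i$ and $\bigcup\mac B_j$. Hence $\varphi$ is a copy of $T$ with $\varphi(r_0)=v$ and the remaining vertices in $\bigcup_{B\in\mac B}B$, as required. I expect no real obstacle here; the only care needed is the monotonicity of the semidensity parameter in $\abs J$.
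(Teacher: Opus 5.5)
Your proposal is correct and follows the paper's own proof. The paper also takes the union $D_i$ of the blocks with pattern $F_i$, notes that $D_1,\ldots,D_h$ are pairwise disjoint and anticomplete, applies the forcing hypothesis to each part, and glues the resulting copies at $v$. Your explicit check that $(1-(\abs H\abs{F_0})^{-1})$-semidensity implies $(1-(\abs H\abs{F_i})^{-1})$-semidensity, because $\abs{F_i}\le\abs{F_0}$, is a step the paper leaves implicit, and you handle it correctly.
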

	\begin{proof}
		Let $G$ be an $H$-free graph with $v\in V(G)$, and a $(1-(\abs H\abs {F_0})^{-1})$-semidense blockade $\mac B$ with pattern $F_0$ in $G\setminus v$,
		such that each block of $\mac B$ induces a connected subgraph in $G$ and $v$ is mixed on every block of $\mac B$.
		For each $i\in[h]$, let $D_i$ be the union of the blocks with pattern $F_i$; then $D_1,\ldots,D_h$ are pairwise disjoint and anticomplete.
		For each $i\in[h]$, there is a copy $\varphi_i$ of $(T_i,r_i)$ in $G$ with $\varphi_i(r_i)=v$ and $\varphi_i(V(T_i)\setminus\{r_i\})\subset D_i$.
		Then $(\varphi_i(V(T_i)\setminus\{r_i\}):i\in[h])$ are pairwise disjoint and anticomplete.
		Assembling $\varphi_1,\ldots,\varphi_h$ then gives a copy $\varphi$ of $T$ in $G$ with $\varphi(r_0)=v$ and $\varphi(V(T)\setminus\{r_0\})\subset \bigcup_{B\in\mac B}B$.
		This proves \cref{lem:force}.
	\end{proof}
	
	We are now ready for our applications to $S_{1,2,r}$, in the more general setting of {\em subdivided stars}.
	Here, for integers $h\ge1$ and $n_1,\ldots,n_h\ge1$, let $S_{n_1,\ldots,n_h}$ be the graph obtained from the star $K_{1,h}$ by subdividing its $i$-th edge exactly $n_i-1$ times, for all $i\in[h]$; and the {\em centre} of this graph is the centre of the original $K_{1,h}$.
	Then for every $s_1,\ldots,s_h\ge2$, the subdivided star $S_{1,s_1,\ldots,s_h}$ is clearly an $s_i$-lollipop for all $i\in[h]$.
	Combining \cref{lem:switchin,lem:force} then yields the following.
	
	\begin{lemma}
		\label{lem:forcesubstar}
		Let $h\ge1$ and $s_1,\ldots,s_h\ge2$ be integers, let $s_0=1$, and let $H:=S_{s_0,s_1,\ldots,s_h}$  with centre $r_0$ and $s_h=\max_{i\in[h]}s_i$.
		Let $F$ be obtained from $H$ by removing the leaf in the $h$-th branch.
		Then, given the definition \eqref{eq:sub2}, the disjoint union of $J_{s_0},J_{s_1},\ldots,J_{s_h}$ forces $(H,r_0)$ via $H$.
	\end{lemma}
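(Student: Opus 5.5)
The plan is to decompose $(H,r_0)$ as $h+1$ rooted trees glued at $r_0$ and apply \cref{lem:force}. Write $H=S_{s_0,s_1,\ldots,s_h}$ as the union of its branches. For each $i\in\{0,1,\ldots,h\}$, let $T_i$ be the $i$-th branch together with $r_0$. Then $T_i$ is a path on $s_i+1$ vertices rooted at its endpoint $r_0$, and $H$ is obtained from $T_0,\ldots,T_h$ by identifying their roots. So by \cref{lem:force} (with $h+1\ge2$ pieces) it suffices to check that each $J_{s_i}$ forces $(P_{s_i+1},\text{endpoint})$ via $H$. The semidensity parameter in the definition of forcing is $(\abs H\abs{J_{s_i}})^{-1}$ for the individual pieces. \Cref{lem:force} supplies the smaller parameter $(\abs H\abs{F_0})^{-1}$ with $F_0$ the disjoint union, and since semidensity is monotone, the stronger hypothesis passes to each sub-blockade $D_i$. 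This is exactly what is used inside the proof of \cref{lem:force}.

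For $i=0$ we have $s_0=1$, and $J_1$ is a single vertex. A single connected block $B$ on which $v$ is mixed yields a neighbour of $v$ in $B$, hence an induced $P_2$ rooted at $v$, which is $T_0$. For $i\ge1$, I would use that $H$ is an $s_i$-lollipop. Take $R$ to be the $i$-th branch read from its leaf $g_0$ towards $g_{s_i}=r_0$. Removing $R\setminus\{r_0\}$ leaves the other branches with the centre, which is connected with at least two vertices since $h\ge1$ and the $0$-th branch is nonempty. The anticompleteness condition also holds. I would then invoke \cref{lem:switchin} with $r=s=s_i$, as in the remark after that lemma, to get that $J_{s_i}$ forces $(P_{s_i+1},r_0)$ via $H$.

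The main obstacle is that the sequence $(J_r)$ in \eqref{eq:sub2} depends on the chosen lollipop, through $F=H\setminus g_0$. The statement fixes $F$ as $H$ minus the leaf of the $h$-th (longest) branch, whereas for branch $i$ the natural lollipop deletes the leaf of branch $i$. I would handle this in one of two ways. The first is to note that $H\setminus(\text{leaf of branch }i)$ is an induced subgraph of $H\setminus(\text{leaf of branch }h)$, because $s_h\ge s_i$ lets one shorten branch $h$ and relabel. An organised pattern containing the larger blow-up then contains the smaller, so forcing transfers by restricting to sub-blockades. The second is to observe that the proof of \cref{lem:switchin} only uses the path $C_1,\ldots,C_s$ plus enough of the rest of $F$ to complete a copy of $H$ via greedy embedding. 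Using $F$ with the $h$-branch shortened still supplies a path of length $s_h\ge s_i$ through the centre into branch $h$, while the remaining branches, including branch $i$ and the $0$-th, give the rest of $H$. I would try the second route first, rerunning that proof with $R$ taken along branch $h$, and check that the semidensity parameter $(\abs H\abs{J})^{-1}$ suffices for the greedy embeddings in \cref{lem:completend} and in its claim.
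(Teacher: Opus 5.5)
Your decomposition of $(H,r_0)$ into the $h+1$ rooted paths is the paper's proof, and so is the treatment of branch $0$ via $J_1=K_1$ and the final appeal to \cref{lem:force}. Your second route for branches $i\ge1$ is also exactly what the paper does. One simplification: nothing needs to be rerun, because \cref{lem:switchin} is already stated for all $2\le r\le s+1$.

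So view $H$ as an $s_h$-lollipop with $R$ running along branch $h$. Then $H\setminus g_0$ is precisely the $F$ of the statement. Apply \cref{lem:switchin} directly with $s=s_h$ and $r=s_i\le s_h$; this is exactly where $s_h=\max_i s_i$ enters. The parameter check is $\abs{J_{s_i+1}}=\abs F\abs{J_{s_i}}<\abs H\abs{J_{s_i}}$, so the forcing hypothesis implies the lemma's semidensity hypothesis.

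Drop your first route, because it is false whenever $s_i<s_h$: $H\setminus(\text{leaf of branch } i)$ is in general not an induced subgraph of $H\setminus(\text{leaf of branch } h)$. For $H=S_{1,2,3}$ these are $S_{1,1,3}$ and $S_{1,2,2}$, two non-isomorphic six-vertex trees.
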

	\begin{proof}
		By \cref{lem:switchin} and since $s_h=\max_{i\in[h]}s_i$, each copy of $J_{s_i}$ forces $(P_{s_i+1},r_0)$ via $H$, for all $i\in[h]$; and clearly $J_{s_0}=J_1=K_1$ forces $(P_2,r_0)$ via $H$. The conclusion follows from \cref{lem:force}.
	\end{proof}
	
	Now we give a proof sketch of \cref{lem:niceeh} for $S_{1,s_1,\ldots,s_h}$, as follows.
	
	\begin{lemma}
		\label{lem:niceehsubstar}
		Let $h\ge1$ and $s_1,\ldots,s_h\ge2$ be integers, let $s_0=1$, and let $H:=S_{s_0,s_1,\ldots,s_h}$  with centre $r_0$ and $s_h=\max_{i\in[h]}s_i$.
		Let $F$ be obtained from $H$ by removing the leaf in the $h$-th branch.
		If $F$ has the \erh{} property and $H$ is nice, then $H$ has the \erh{} property.
	\end{lemma}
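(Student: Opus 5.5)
We follow the proof of \cref{lem:niceeh} through \cref{lem:pathdense1,lem:pathdense,lem:patheh}, changing only the structural input. The role of $H_t$ is taken by the disjoint union $J^*:=J_{s_0}\sqcup J_{s_1}\sqcup\cdots\sqcup J_{s_h}$ from \cref{lem:forcesubstar}. Here each $J_r$ is built from $F$ as in \eqref{eq:sub2}.

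\textbf{Step 1: $J^*$ has the \erh{} property.} Each $J_{r+1}=F[J_r]$ is obtained by substituting copies of $J_r$ into the vertices of $F$. Since $F$ has the \erh{} property, \cref{thm:aps} and induction on $r$ show that every $J_r$ has it; the base graphs $J_1,J_2$ are single vertices. A disjoint union of graphs with the \erh{} property also has it. One way to see this is that the disjoint union of $G_1$ and $G_2$ is obtained by substituting $G_1$ and $G_2$ for the two vertices of the two-vertex stable set and applying \cref{thm:aps}. Let $b\ge2$ witness the \erh{} property of $J^*$.

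\textbf{Step 2: the analogue of \cref{lem:pathdense1}.} Take $G$ to be a $(1-y)$-dense $H$-free graph. Niceness of $H$ gives a $(1-\eps^2)$-semidense blockade inside a set $S$ of size about $y\abs G$. \cref{lem:conn} then makes each block connected, exactly as in \cref{claim:getsemiconn}. In the analogue of \cref{claim:pathmixed}, suppose some $v\notin S$ is mixed on $h'\ge y^{-4b}$ blocks, and let $L$ be the pattern graph on those blocks. \cref{lem:forcesubstar} then shows that $L$ is $J^*$-free. Otherwise $L$ contains an induced copy of $J^*$, and the corresponding sub-blockade is $(1-\eps)$-semidense with pattern $J^*$. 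Since $\eps=y^{5b}\le(\abs H\abs{J^*})^{-1}$ once $y$ is small, this gives a copy of $H$ rooted at $v$, contradicting $H$-freeness.

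The \erh{} property of $J^*$ then yields a clique or stable set in $L$ of size at least $y^{-4}$. A stable set gives a long anticomplete blockade. A clique gives, after dyadic partitioning, a $y^3$-restricted induced subgraph, exactly as before. Hence every vertex outside $S$ is mixed on few blocks. Some block $B_i$ is then complete to all but $O(y)\abs G$ vertices, which is the complete-pair outcome. The proofs of \cref{lem:pathdense,lem:patheh} are then unchanged, with $P_{t+1}$ replaced by $H$. They use only \cref{thm:rodl}, \cref{lem:treesparse} (valid since $H$ is a forest), \cref{lem:iteration} and \cref{lem:sparsify}. \cref{lem:eh} finishes the proof.

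\textbf{Main obstacle.} The delicate point is checking that the hypotheses of \cref{lem:forcesubstar}, through \cref{lem:switchin} and \cref{lem:completend}, are really met here. Those lemmas require the semidensity parameter $(\abs H\abs{J_{r+1}})^{-1}$ and need $G$ to be only $H$-free, not $P$-free. Because $\abs{J^*}$ is a constant depending only on $H$, choosing $\eps=y^{5b}$ with $y\le2^{-d}$ and $d$ large relative to $\abs H\abs{J^*}$ suffices. I would also check that the organised and semidense hypotheses survive passing to the sub-blockade indexed by the induced copy of $J^*$ in $L$. Finally, I would check that \cref{lem:switchin} applies to every branch: the lemma requires $r\le s_h+1$ for the lollipop structure along the longest branch, which is why $s_h=\max_i s_i$ is assumed.
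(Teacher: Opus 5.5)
Your proposal is correct and follows the paper's route: take $J$ to be the disjoint union of $J_{s_0},\ldots,J_{s_h}$, obtain its \erh{} property from \cref{thm:aps}, use \cref{lem:forcesubstar} in place of \cref{lem:switch} in the analogue of \cref{claim:pathmixed}, and rerun \cref{subsec:densify} with $P_{t+1}$ replaced by $H$. Your requirement $\eps\le(\abs H\abs{J})^{-1}$ is exactly the ``numerical modification'' the paper leaves implicit; it is needed because forcing requires a $(1-(\abs H\abs J)^{-1})$-semidense blockade rather than merely an organised one.
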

	\begin{proof}
		[Proof sketch]
		For each $i\in[h]$, let $J_{s_i}$ be as in \eqref{eq:sub2} with $r=s_i$.
		Let $J$ be the disjoint union of $J_{s_0},J_{s_1},\ldots,J_{s_h}$.
		By \cref{lem:forcesubstar}, $J$ forces $(H,r_0)$ via $H$.
		By \cref{thm:aps}, each $J_{s_i}$ has the \erh{} property since $F$ does; and so $J$ has the \erh{} property as well.
		The rest of the proof then just numerically modifies the argument in~\cref{subsec:densify}; we omit the details.
	\end{proof}
	
	\cref{lem:niceehsubstar} with $h=2$, $s_1=2$ and $s_2=t$ implies the following, which is our objective at the beginning of this subsection for $S_{1,2,t}$.
	
	\begin{lemma}
		\label{lem:niceehspider}
		For every integer $t\ge2$, if $S_{1,2,t-1}$ has the \erh{} property and $S_{1,2,t}$ is nice, then $S_{1,2,t}$ has the \erh{} property.
	\end{lemma}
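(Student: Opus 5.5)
The statement is the instance $h=2$, $s_1=2$, $s_2=t$ of \cref{lem:niceehsubstar}, so the plan is to verify that this instance is admissible and that it produces exactly the graphs in \cref{lem:niceehspider}. With $s_0=1$, the graph $H=S_{s_0,s_1,s_2}=S_{1,2,t}$ is obtained from $K_{1,3}$ by subdividing its edges $0$, $1$ and $t-1$ times; this is exactly $S_{1,2,t}$ with centre $r_0$. The hypotheses of \cref{lem:niceehsubstar} are $h\ge1$, $s_1,s_2\ge2$, and $s_h=\max_i s_i$. Here $h=2\ge1$, $s_1=2\ge2$, and $s_2=t\ge2$ since $t\ge2$. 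Moreover $s_h=s_2=t=\max(2,t)$, again because $t\ge2$.

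Next, I identify the graph $F$ of \cref{lem:niceehsubstar}: it is obtained from $H$ by deleting the leaf of the $h$-th branch, that is, the longest branch $r_0\text-z_1\text-\cdots\text-z_t$. Removing $z_t$ leaves the spider with branches of lengths $1$, $2$ and $t-1$, so $F=S_{1,2,t-1}$. When $t=2$, this is $S_{1,2,1}$, which is the tree $E$ minus a leaf (the chair/fork); the identification remains literal.

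Under these identifications, the hypotheses of \cref{lem:niceehspider} (namely that $S_{1,2,t-1}$ has the \erh{} property and that $S_{1,2,t}$ is nice) are exactly the hypotheses of \cref{lem:niceehsubstar}, namely that $F$ has the \erh{} property and $H$ is nice. Its conclusion, that $H=S_{1,2,t}$ has the \erh{} property, is the desired statement.

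The only point needing care is the ordering convention $s_h=\max_i s_i$. This is what makes the removed leaf $z_t$ the principal leaf rather than $p_2$, and it holds precisely because $t\ge2$. No step here is a genuine obstacle; all the real work is contained in \cref{lem:niceehsubstar}, which is taken as given.
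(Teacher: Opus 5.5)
Your proposal is correct and is exactly the paper's argument: the paper obtains \cref{lem:niceehspider} as the specialisation $h=2$, $s_1=2$, $s_2=t$ of \cref{lem:niceehsubstar}. Your identifications match it, namely $H=S_{1,2,t}$ and $F=S_{1,2,t-1}$, with the ordering condition $s_h=\max_i s_i$ guaranteed by $t\ge2$.
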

	
	It remains to handle $S_{1,2,t}'$; let us now briefly sketch what needs to change.
	To begin with, $S_{1,2,t}'$ is a $t$-lollipop for every $t\ge2$, where $g_t$ is a vertex of degree two of the remaining paw (recall that the {\em paw} is obtained by adding a pendant edge to a triangle).
	In particular, we can use $S_{1,2,t-1}'$ to force a copy of $P_{t+1}$ rooted at $v$.
	Our change here is to strengthen the mixed property to `dense-mixed', meaning that for each $B\in \mac B$, we require $v$ to have at least $\frac12\abs B$ neighbours and at least one non-neighbour in $B$; we can arrange such a `dense-mixed' property in the proof of \cref{lem:pathdense1}.
	Now, to force a copy of $S_{1,2,t}'$ provided that it satisfies \cref{lem:nearnice} and $S_{1,2,t-1}'$ has the \erh{} property,
	let $J$ be a sufficiently large repeated blow-up of $S_{1,2,t-1}'$ so that $J$ can force a copy of $P_{t+1}$ rooted at $v$.
	To embed the remaining paw so that $g_t$ is embedded into $v$, we employ a $P_3$ pattern in $\mac B$.
	The `dense-mixed' property allows us to first embed the leaf of the paw into a non-neighbour of $v$ in a leaf block,
	then use the semidense and dense-mixed properties to embed the neighbours of $g_t$ into the other two blocks appropriately.
	Consequently, the disjoint union of $J$ and $P_3$ would force a copy of $S_{1,2,t}'$, as desired.
	We omit the detailed proof here.
	
\section{Additional remarks}
	We conclude the paper with the following common generalisation of \cref{lem:splitp4,lem:split0}.
\begin{lemma}
	\label{lem:split2}
	For every integer $r\ge2$, there exists $b\ge2$ such that for every $x,y\in(0,\frac12]$ with $x\le y^b$, and every graph $G$ with disjoint $A,B\subset V(G)$ satisfying $\abs B>x^{-b}$, either:
	\begin{itemize}
		\item $A\to_rB$ in $G$;
		
		\item there exists $E\subset B$ with $\abs E\ge y^b\abs B$ such that every vertex in $A$ is either anticomplete or $(1-x)$-dense to $E$ in $G$; or
		
		\item $G[B]$ contains a $(1-x)$-semidense $(k,\abs B/k^b)$-blockade, for some integer $k\in[y^{-1},x^{-1}]$.
	\end{itemize} 
\end{lemma}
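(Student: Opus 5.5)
Our plan is to mirror the proof of \cref{lem:split} (together with the reduction of \cref{lem:split0} to it), by induction on $r$, but carrying the stronger one-sided `anticomplete or $(1-x)$-dense' conclusions throughout. First we prove a $y=\frac12$ version. Fix $r$; there is $d\ge2$ such that for every $x\in(0,2^{-d}]$ and disjoint $A,B$ with $\abs B>x^{-d}$, either $A\to_rB$, or some $E\subset B$ with $\abs E\ge\abs B/2^d$ has every vertex of $A$ anticomplete or $(1-x)$-dense to it, or $G[B]$ contains a $(1-x)$-semidense $(k,\abs B/k^d)$-blockade with $k\in[2,x^{-1}]$. We then deduce the stated version from this by the argument used for \cref{lem:split0}. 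That argument needs an analogue of \cref{thm:convsemi} with `$x$-pure' replaced by `$(1-x)$-semidense' (with loss $xy^{-2b}$). The same maximality proof goes through: in each pair, one keeps either `no edges at all' or `fewer than $x\abs G$ non-neighbours', and anticompleteness is inherited by sub-blocks and unions.

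For the base case $r=2$, failure of $A\to_2B$ means $A$ is anticomplete to $B$, so we take $E=B$. For the induction step we follow \cref{lem:split}. A semidense analogue of \cref{claim:bookkeep} comes from applying the inductive hypothesis to $(\bigcup_{j\ne i}C_j,C_i)$. This makes each $E_i$ such that every outside vertex is anticomplete or $(1-\xi)$-dense to it. A dense pair in both directions is then pruned as in \cref{lem:xpure}, while anticomplete-in-one-direction forces anticomplete-in-both. The mixed case, where a vertex of $E_j$ is anticomplete to $E_i$ but some vertex of $E_i$ is dense to $E_j$, is impossible: density means having neighbours. So the pruned blockade is $(1-x)$-semidense.

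The analogue of \cref{claim:rodl} splits $A$ into $P$ (few neighbours) and $Q$ (dense). The $Q$ part is handled as in \cref{claim:densify}, via the anticomb and bookkeeping, to get $Q$ $(1-2x^2)$-dense to most of $B$. The $P$ part is where the new difficulty lies. Sparse is no longer acceptable; we need true anticompleteness. A vertex $v\in P$ with a neighbour in $E$ is mixed, so we must either split a path from $v$ or remove its neighbourhood. Our approach is to take the set $X$ of vertices in $B$ having a neighbour in $P$. If $\abs X$ is small we discard it, and $P$ becomes anticomplete to $B\setminus X$. Otherwise, \cref{lem:comb}, applied exactly as in \cref{claim:sparsify}, gives a comb $((a_i,B_i))$ with many teeth. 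The semidense bookkeeping claim then yields a directed split $B_i\to_rB_j$, which extends by $a_i$ to give $A\to_{r+1}B$, or a semidense blockade in $G[B]$. The step $r\ge3$ also has to rule out the situation where all of $X$ is reached but with tiny degrees, and greedy covering handles this. Here, however, the threshold defining $p$ must be a constant fraction of $\abs B$ rather than $x^2\abs B$. This is because vertices with very few neighbours still spoil anticompleteness.

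\textbf{Main obstacle.} We expect the hardest point to be exactly this: a vertex of $P$ with $o(x)\abs B$ but nonzero neighbours in every large subset. Here one must use the inductive splitting inside $B$, from $N(v)\cap B$ to $B\setminus N(v)$ as in \cref{claim:rodl}, but applied with $Y=B\setminus N(v)$ large and $N(v)\cap B$ tiny. We would try to recurse on the pair $(N(v)\cap B,\,B\setminus N(v))$ with the semidense induction hypothesis, aiming to force a $P_r$ split, and otherwise shrink $B$ by $\abs{N(v)}$ while paying only polynomially. Alternatively, this case could be absorbed by choosing $y$-dependent thresholds so that the union of all removed neighbourhoods costs at most half of $B$.
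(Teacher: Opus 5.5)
Your overall skeleton matches the paper's. You prove a $y=\frac12$ version by induction on $r$ along the lines of \cref{lem:split} (this is \cref{lem:split1}), then pass to general $y$ with a semidense analogue of \cref{thm:convsemi}. The base case, the analogue of \cref{claim:rodl}, the $Q$-part and the semidense bookkeeping are fine. One small correction to the bookkeeping: a vertex of $E_j$ anticomplete to $E_i$ \emph{can} coexist with vertices of $E_i$ dense to $E_j$. What is true is that unless the pair is fully anticomplete, the anticomplete parts have relative size at most $\xi$ on each side, and you prune them.

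The genuine gap is the $P$-part, which you flag as the main obstacle but do not resolve, and the step you write down fails. If $X$ is the set of \emph{all} vertices of $B$ with a neighbour in $P$, you cannot apply \cref{lem:comb} ``exactly as in \cref{claim:sparsify}''. There, the bound $k\le\abs{A'}=p\le x^{-2}$ came from a greedy cover that stops once the gain drops below $x^2\abs B$, and that cover only makes $P$ $2x^2$-sparse, not anticomplete, to the rest. Covering all of $X$ may need $\abs P$ apexes, so the comb length is unbounded and its teeth, of size about $\abs X/k^2$, may be single vertices. A constant threshold for $p$ does not help, nor does deleting neighbourhoods vertex by vertex, since every leftover vertex of $P$ with nonzero degree still spoils anticompleteness.

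This is not a technicality. Let the edges between $P=A$ and $B$ form a perfect matching, and let $G[B]$ be any $P_r$-free graph. Then $A\not\to_{r+1}B$, and no $E$ with $\abs E\ge2$ satisfies the second outcome. So the lemma for $r+1$ must produce a long $(1-x)$-semidense poly-blockade in an arbitrary $P_r$-free graph, which is essentially the niceness of $P_r$. No removal or comb argument yields that.

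The paper gets past this with an Erd\H{o}s--Hajnal-strength input. After \cref{claim:sparsify} it applies \cref{lem:dualcount} to the pair $(B\setminus X,P)$. Either most of $B\setminus X$ is anticomplete to $P$ (discard the rest), or the set $R$ of vertices of $B\setminus X$ with a neighbour in $P$ is large and contains few copies of the path minus an endpoint. In the latter case, the viral property of paths (from \cref{thm:path} and \cref{thm:bfp}) gives an $x^2$-restricted induced subgraph $F$ of $G[R]$ of polynomial size. The dense case is immediate. In the sparse case, every vertex of $F$ has a neighbour in $P$ while $P$ is very sparse to $V(F)$, so a Gy\'arf\'as path argument from $P$ into $F$ produces a long anticomplete poly-blockade inside $F$. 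Your proposal has no counterpart of this counting-plus-viral step, so the induction step is not established.
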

	
	Similarly, we also have the following extension of \cref{lem:split}.
	
\begin{lemma}
	\label{lem:split1}
	For every integer $r\ge2$, there exists $d\ge2$ such that for every $x\in(0,2^{-d}]$ and every graph $G$ with disjoint $A,B\subset V(G)$ satisfying $\abs B>x^{-d}$, either:
	\begin{itemize}
		\item $A\to_rB$ in $G$;
		
		\item there exists $E\subset B$ with $\abs E\ge \abs B/4$ such that every vertex in $A$ is either anticomplete or $(1-x)$-dense to $E$ in $G$; or
		
		\item $G[B]$ contains a $(1-x)$-semidense $(k,\abs B/k^d)$-blockade, for some integer $k\in[2,x^{-1}]$.
	\end{itemize} 
\end{lemma}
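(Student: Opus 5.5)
We would prove \cref{lem:split1} by induction on $r$, following the proof of \cref{lem:split} step by step. Throughout, ``$x$-pure'' is replaced by the one-sided notion ``anticomplete or $(1-x)$-dense'', and ``$x$-pure blockade'' by ``$(1-x)$-semidense blockade''. The base case $r=2$ is as before: if $A\not\to_2B$ then $A$ is anticomplete to $B$, and we take $E=B$.

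For the induction step, let $b$ be the constant for $r$ and take $d$ a large multiple of $b^2$. We assume that $A\not\to_{r+1}B$ and that $G[B]$ has no $(1-x)$-semidense $(k,\abs B/k^d)$-blockade.

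\textbf{Bookkeeping claim.} This is the analogue of \cref{claim:bookkeep}: a blockade $(B_1,\ldots,B_k)$ in $G[B]$ either has $B_i\to_rB_j$ for some $i\ne j$, or a long semidense poly-blockade inside some $G[B_i]$, or a semidense sub-blockade $(D_1,\ldots,D_k)$. The proof runs through \cref{thm:convsemi} and the induction hypothesis exactly as before, with two adjustments.
\begin{itemize}
\item We need a semidense version of \cref{thm:convsemi}. Its proof goes through verbatim with the third maximality condition replaced by ``either $A_i$ is anticomplete to $A_j$, or every vertex of $A_i$ has fewer than $x\abs G$ non-neighbours in $A_j$''. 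Refining a block preserves this, since anticompleteness is inherited by subsets. The output is then a $(1-xy^{-2b})$-semidense blockade.
\item The pruning at the end replaces \cref{lem:xpure}. Here $E_i$ and $E_j$ are mutually ``anticomplete or $(1-\xi)$-dense''. A vertex of $E_i$ that has a neighbour in $E_j$ is not anticomplete to $E_j$, so it is $(1-\xi)$-dense to $E_j$. If both directions have some vertex of the dense type, then after removing the few vertices anticomplete to the other side the pair becomes mutually dense. The remaining issue is a minority of vertices of $E_i$ anticomplete to $E_j$ while the majority is dense. By edge counting this minority has size at most about $\xi\abs{E_i}$, and we delete it. The result is that each pair is either anticomplete or mutually $(1-O(\xi))$-dense, and the losses are affordable since $k\le x^{-2}$.
\end{itemize}

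\textbf{Replacing \cref{claim:rodl}.} A vertex $v\in A$ with at least one neighbour and at least $\delta\abs B$ non-neighbours in $B$ should be impossible. If $v$ has at least $\delta\abs B$ neighbours, we apply the induction hypothesis to $(X,Y)=(N(v)\cap B,\,B\setminus N(v))$ as before. The second outcome there gives $Z\subset Y$ to which $X$ is anticomplete-or-dense. If some large $W\subset X$ is dense to $Z$, pruning gives a $(1-x)$-dense pair in $G[B]$. Otherwise most of $X$ is anticomplete to $Z$, which gives an anticomplete pair. Both are semidense blockades of length 2, a contradiction.

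The genuinely new case is a vertex with few (but some) neighbours. We handle it together with the sparse part $P$: in the analogue of \cref{claim:sparsify} we want every vertex of $P$ to be \emph{anticomplete} to $B\setminus X$, not merely sparse. So we run the greedy covering with threshold ``at least one new neighbour'' instead of ``at least $x^2\abs B$'', and stop when the covered set $X$ exceeds $\abs B/4$. If the number of apexes used exceeds $x^{-2}$, the comb lemma still applies with $\Delta=\delta\abs B$. This is the step we expect to be the main obstacle, since the resulting $k$ may be unbounded. The fix we would try first: if $k>x^{-1}$, take only $\lceil x^{-1}\rceil$ teeth. These still have width $\abs B/k^2$, which may be too small. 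In that case we instead accept losing only $\abs B/4$, which is why the statement weakens $\abs B/2$ to $\abs B/4$. The covering then stops as soon as the covered mass reaches $\abs B/4$, and the comb lemma bounds the number of useful teeth by $O(\delta^{-1/2})$-type quantities together with the bookkeeping claim.

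\textbf{Dense part and conclusion.} The dense part $Q$ is handled as in \cref{claim:densify}, giving $(1-2x^2)$-density after removing at most $\abs B/4$ vertices. The set $E=B\setminus(X\cup Y)$ then has $\abs E\ge\abs B/2\ge\abs B/4$, and every vertex of $A$ is anticomplete or $(1-x)$-dense to $E$.
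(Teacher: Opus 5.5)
Your semidense versions of \cref{thm:convsemi} and of the pruning step, your treatment of vertices with at least $\delta\abs B$ neighbours and non-neighbours, and your handling of the dense part $Q$ all go through. The gap is exactly the step you flag, and your proposed fix does not close it.

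\textbf{Why the covering step fails.} With threshold ``at least one new neighbour'' the number of apexes is unbounded. \cref{lem:comb} gives only the \emph{lower} bound $k>(4\delta)^{-1/2}$ on the comb length, never an upper bound, and the teeth have size $\abs X/k^2$, which can be $1$. Here is a concrete instance (in your indexing, proving the case $r+1$).
\begin{itemize}
\item Let every vertex of $A$ have exactly one neighbour in $B$, let these neighbours cover $B$, and let $G[B]$ be $P_r$-free. Then $A\not\to_{r+1}B$.
\item There are no vertices of large degree and $Q=\emptyset$, so everything rests on the covering step.
\item The covered set is all of $B$. Every comb produced by \cref{lem:comb} has singleton teeth and $k\ge\abs X^{1/2}$. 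So \cref{claim:bookkeep}, which needs $k\le x^{-2}$ and width larger than $x^{-4b(b+1)}$, cannot be applied.
\item Weakening $\abs B/2$ to $\abs B/4$ changes nothing here.
\end{itemize}
Moreover, in this instance the second outcome of the lemma also fails. A vertex with exactly one neighbour in $E$ is neither anticomplete nor $(1-x)$-dense to $E$ once $\abs E\ge2$. So here the lemma asserts precisely that every $P_r$-free graph on more than $x^{-d}$ vertices contains a $(1-x)$-semidense $(k,\abs B/k^d)$-blockade for some $k\in[2,x^{-1}]$. That is a polynomial, niceness-type statement about $P_r$-free graphs. It cannot be extracted from combs and bookkeeping alone.

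\textbf{The missing ingredient.} What is missing is a global input: the viral property of paths, which follows from \cref{thm:path,thm:bfp}. The paper's argument runs as follows.
\begin{itemize}
\item Keep \cref{claim:sparsify} essentially as it is, with $x$ replaced by $x^a$, so that $P$ is very sparse to $B\setminus X$.
\item Apply \cref{lem:dualcount} to $(B\setminus X,P)$. Either $P\to_{r+1}(B\setminus X)$, which is a contradiction. Or all but an $x$-fraction of $B\setminus X$ is anticomplete to $P$, and you delete the rest. Or the set $R$ of vertices of $B\setminus X$ with a neighbour in $P$ satisfies $\abs R\ge x\abs{B\setminus X}$ and contains very few induced copies of $P_r$.
\item In the last case, virality gives an $x^2$-restricted $F\subseteq G[R]$ with $\abs F\ge\poly(x)\abs R$.
\item If $F$ is dense, you read off a $(1-x)$-dense blockade directly.
\item If $F$ is sparse, every vertex of $F$ has a neighbour in $P$ while $P$ is very sparse to $V(F)$. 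A Gy\'arf\'as-path argument from $P$ into $F$ then yields an anticomplete $(x^{-1},\poly(x)\abs F)$-blockade in $F$, contradicting the failure of the third outcome.
\end{itemize}
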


	We observe that \cref{lem:split1} implies \cref{lem:split2}, in a manner similar to how \cref{lem:split} implies \cref{lem:split0} as shown in \cref{subsec:split}.
	
	Let us give a proof sketch of \cref{lem:split1}. The main inputs are the viral property of all paths, which follows from \cref{thm:path,thm:bfp}, and the following variant of \cref{lem:blockcount} with $k=2$.
\begin{lemma}
	\label{lem:dualcount}
	Let $x\in(0,\frac12]$, and let $H$ be a graph with a vertex $v$ of degree one.
	Let $G$ be a graph with an $x$-sparse pair $(X,Y)$.
	Then for every $y\in(0,1)$, either:
	\begin{itemize}
		\item there is a copy $\varphi$ of $H$ in $G$ such that $\varphi(v)\in Y$ and $\varphi(V(H)\setminus\{v\})\subset X$;
		
		\item there exists $Q\subset X$ with $\abs Q\ge(1-y)\abs X$ such that $Q$ is anticomplete to $Y$ in $G$; or
		
		\item there exists $R\subset X$ with $\abs R\ge y\abs X$ such that $\ind_{H\setminus v}(G[R])<\abs H\cdot xy^{-1}\abs R^{\abs H-1}$.
	\end{itemize}
\end{lemma}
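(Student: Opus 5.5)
Throughout, let $u$ be the unique neighbour of $v$ in $H$, and recall that $(X,Y)$ being $x$-sparse means every vertex of $Y$ has fewer than $x\abs X$ neighbours in $X$. The plan is to assume that the first two outcomes fail and to verify the third with $R$ equal to the set of vertices of $X$ that see $Y$. So let $Z$ be the set of vertices in $X$ with at least one neighbour in $Y$.

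\textbf{Step 1: $Z$ is large.} The set $Q:=X\setminus Z$ is anticomplete to $Y$. Since the second outcome fails, $\abs Q<(1-y)\abs X$, and hence $\abs Z>y\abs X$. In particular $Z\ne\emptyset$.

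\textbf{Step 2: fix witnesses and pass to $R$.} For each $z\in Z$, fix one neighbour $w(z)\in Y$ of $z$, and set $N_z:=N_G(w(z))\cap X$. By sparsity, $\abs{N_z}<x\abs X$. Put $R:=Z$, so that $\abs R\ge y\abs X$. If $\abs H=2$, then $H\setminus v=\{u\}$, and any $z\in Z$ together with $w(z)$ gives the first outcome; so we may assume $\abs H\ge3$.

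\textbf{Step 3: every copy in $R$ meets a sparse neighbourhood.} Let $\psi$ be a copy of $H\setminus v$ in $G[R]$, and put $z:=\psi(u)$. Since $v$ is a leaf attached to $u$, extending $\psi$ by $v\mapsto w(z)$ is a copy of $H$ unless $w(z)$ has a neighbour in $\psi(V(H)\setminus\{u,v\})$. That copy would have $\varphi(v)\in Y$ and all other vertices in $X$, which is the first outcome. Hence, for every such $\psi$, some $z'\in V(H)\setminus\{u,v\}$ has $\psi(z')\in N_z$.

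\textbf{Step 4: count.} For a fixed value $z=\psi(u)\in R$, the number of copies $\psi$ is at most the number of choices of the index $z'$, times the choices of $\psi(z')\in N_z$, times at most $\abs R$ choices for each of the remaining $\abs H-3$ vertices. This gives at most $(\abs H-2)\,x\abs X\,\abs R^{\abs H-3}$ copies. Summing over the $\abs R$ choices of $z$,
\[\ind_{H\setminus v}(G[R])\le(\abs H-2)\,x\abs X\,\abs R^{\abs H-2}<\abs H\cdot xy^{-1}\abs R^{\abs H-1},\]
where the last step uses $\abs X\le \abs R/y$. This is the third outcome.

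The only genuinely delicate point is Step 3. One must choose a single witness $w(z)$ per vertex of $Z$ in advance; otherwise the sparse set $N_z$ that each copy is forced to meet is not fixed, and the count in Step 4 is not controlled. The rest is routine.
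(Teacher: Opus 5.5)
Your proposal is correct and is essentially the paper's proof: you take $R$ to be the set of vertices of $X$ with a neighbour in $Y$, fix one neighbour $w(z)\in Y$ for each $z\in R$ (the paper's $z_Y$), note that every copy of $H\setminus v$ in $G[R]$ sending $u$ to $z$ must meet $N_G(w(z))$, and sum the resulting bound $(\abs H-2)\,x\abs X\,\abs R^{\abs H-3}$ over $z\in R$. Your separate treatment of $\abs H=2$ just spells out the paper's remark that $\abs H\ge3$ once the first two outcomes fail.
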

\begin{proof}
	Assume that the first two outcomes fail; then $\abs H\ge3$.
	Let $u$ be the unique neighbour of $v$ in $H$.
	Let $R$ be the set of vertices in $X$ with a neighbour in $Y$; then $\abs R\ge y\abs X$ since the second outcome fails.
	For each $z\in R$, let $S_z$ be the set of copies $\phi$ of $H\setminus v$ in $G[R]$ with $\phi(u)=z$.
	For each such $z$, let $z_Y\in Y$ be a neighbour of $z$; then for every $\phi\in S_z$, since the first outcome of the lemma fails, we see that $\phi(V(H)\setminus\{u,v\})\cap N_G(z_Y)$ is nonempty.
	Hence, since $Y$ is $x$-sparse to $X$, we obtain $\abs{S_z}< (\abs H-2)\cdot x\abs X\cdot \abs R^{\abs H-3}$.
	Because this holds for all $z\in R$, we deduce that
	\[\ind_{H\setminus v}(G[R])=\sum_{z\in R}\abs{S_z}< \abs H\cdot x\abs X\cdot \abs R^{\abs H-2}\le \abs H\cdot xy^{-1}\abs R^{\abs H-1}.\]
	This verifies the third outcome of the lemma, and in turn proves \cref{lem:dualcount}.
\end{proof}
	We now sketch the proof of \cref{lem:split1}, as follows.
\begin{proof}
	[Proof sketch of \cref{lem:split1}]
	We repeat the proof of \cref{lem:split} by induction on $r$ as in \cref{subsec:mainsplit}, where $x$ is replaced by $x^a$ for some large $a\ge2$ depending on $r$, and `$x$-pure' is replaced by `anticomplete or $(1-x^a)$-dense', with appropriate analogues of \cref{lem:xpure,thm:convsemi}.
	Now, after the proof of \cref{claim:sparsify}, apply \cref{lem:dualcount} with $(x,y,X,Y)=(x^a,x,B\setminus X,P)$. 
	If the second outcome holds then we can remove a small portion of $B\setminus X$ such that the rest is anticomplete to $P$; so we may assume that third outcome holds.
	We then apply the viral property of $P_{r-1}$ via the choice of $a$ to get an $x^2$-restricted induced subgraph $F$ of $G[R]$, with $\abs F\ge x^b\abs R$ for some $b\ge2$ much smaller than $a$.
	The $(1-x^2)$-dense case is clear; so we assume that $F$ is $x^2$-sparse.
	Note that every vertex in $V(F)\subset R$ has a neighbour in $P$, and $P$ is still very sparse to $V(F)$.
	Then it suffices to run an $r$-vertex Gy\'arf\'as path from $P$ into $V(F)$, in a manner similar to~\cite[Lemma 3.1]{density5}, to get an anticomplete $(x^{-1},\poly(x)\abs F)$-blockade within $F$.	
	We omit the detailed proof.
\end{proof}
	
	\appendix
	\section{Building combs}
	\label{sec:buildcombs}
	This section provides a proof of \cref{lem:getcomb0}, which follows the proof of~\cite[Lemma 6.8]{2025thes} and modifies the proof of~\cite[Lemma 5.2]{density7}.
	We need the following lemma, which is~\cite[Lemma 4.2]{density7}.
	\begin{lemma}
		\label{lem:locate}
		Let $x>0$, and let $G$ be a bipartite graph with bipartition $(A,B)$ where $A\ne\emptyset$ and every vertex in $B$ has at least $x\abs A$ neighbours in $A$.
		Then there exists $A'\subset A$ such that $\abs {A'}\le1/x$ and there are at least $\frac12\abs B$ vertices in $B$ with a neighbour in $A'$.
	\end{lemma}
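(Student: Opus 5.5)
Since each vertex of $B$ has at least $x\abs A$ neighbours in $A$, a random choice works. Take $A'$ to be a set of $m:=\floor{1/x}$ vertices of $A$ chosen independently and uniformly at random, with repetition. If $1/x\ge\abs A$, take $A'=A$ instead. In that case $\abs{A'}\le 1/x$, and every vertex of $B$ has a neighbour in $A'$, because $x\abs A>0$ forces at least one neighbour. So we may assume $\abs A>1/x$. Then $x<1$ and $m\ge1$.

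For a fixed $b\in B$, a single uniform sample from $A$ misses $N(b)$ with probability at most $1-x$. The probability that $b$ has no neighbour in $A'$ is therefore at most $(1-x)^m$. Summing over $B$, the expected number of vertices of $B$ with no neighbour in $A'$ is at most $(1-x)^m\abs B$. It suffices to show $(1-x)^m\le\frac12$. Then some outcome has at least $\frac12\abs B$ vertices of $B$ with a neighbour in $A'$, and $\abs{A'}\le m\le 1/x$.

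The only delicate point is the estimate $(1-x)^{\floor{1/x}}\le\frac12$, which fails for some $x$: at $x=0.6$ we get $m=1$ and $(1-x)^m=0.4$, which is fine, but for $x$ slightly above $1/2$ we also get $m=1$, and $1-x<\frac12$ still holds. Write $m\ge 1/x-1$. Then $(1-x)^m\le e^{-xm}\le e^{-(1-x)}$, which is at most $\frac12$ only when $x\le 1-\ln 2$. So this crude bound is not sufficient, and I would instead use a deterministic greedy argument, which avoids the issue.

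Repeatedly pick $a\in A$ that maximises the number of still-uncovered vertices of $B$ in its neighbourhood. Stop as soon as at least $\frac12\abs B$ vertices are covered. Suppose after $i$ steps the set $U$ of uncovered vertices has $\abs U>\frac12\abs B$. Counting edges between $U$ and $A$ gives at least $x\abs A\abs U$ of them, so some $a$ covers at least $x\abs U$ new vertices. Thus $\abs U$ shrinks by a factor of at most $1-x$ per step. That alone again only yields about $\ln 2/x$ steps, which is under $1/x$ — fine when $\ln2/x\le\floor{1/x}$.

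A cleaner count avoids the rounding entirely. Each step while $\abs U>\frac12\abs B$ covers at least $x\abs U> x\abs B/2$ new vertices. The process must therefore stop after at most $\ceil{1/x}$ steps, and more carefully after at most $1/x$ steps. This is because $k$ steps cover more than $kx\abs B/2$ vertices, while at most $\frac12\abs B$ vertices can be covered before stopping (the last step may overshoot). So the number of steps before the final one is less than $1/x$.

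This off-by-one is the step I expect to need care. The fix is to combine the multiplicative bound $(1-x)^k$ for small $x$ with the observation that for $x>1/2$ a single vertex suffices: take the $a$ maximising coverage, which covers at least $x\abs B>\frac12\abs B$ by averaging. In all cases $\abs{A'}\le 1/x$ and at least $\frac12\abs B$ vertices of $B$ have a neighbour in $A'$.
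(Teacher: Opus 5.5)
\textbf{There is a genuine gap.} Your strategy is the right one: pick vertices of $A$ greedily or at random, each catching at least an $x$-fraction of the still-uncovered part of $B$. But the proposal never proves the only nontrivial point of the lemma, namely that $\floor{1/x}$ vertices suffice rather than $O(1/x)$.

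\begin{itemize}
\item You assert that $(1-x)^{\floor{1/x}}\le\frac12$ `fails for some $x$', and abandon it. It never fails, and your own test values confirm it.
\item Your additive count only shows that fewer than $1/x$ steps precede the final one, i.e.\ at most $\ceil{1/x}$ steps. This exceeds $1/x$ whenever $1/x$ is not an integer; for $x=0.4$ it permits three vertices where only two are allowed. Nothing you wrote justifies the sentence `more carefully after at most $1/x$ steps'.
\item Your final fix handles only $x>\frac12$ (one vertex, by averaging) and $x\le 1-\ln 2$ (where your bound $e^{-(1-x)}\le\frac12$ applies). The range $1-\ln 2<x\le\frac12$ is left untreated.
\end{itemize}
The last omission is not cosmetic: in that range the exponential bound really fails. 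For $x$ just above $\frac13$ one has $\floor{1/x}=2$ and $e^{-2x}\approx e^{-2/3}>\frac12$, so a sharper input than $1-x\le e^{-x}$ is needed.

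The missing step is short. If $B=\emptyset$, take $A'=\emptyset$. Otherwise $x\le 1$, because a vertex of $B$ has at least $x\abs A$ neighbours among the $\abs A$ vertices of $A$. So $m:=\floor{1/x}\ge1$ and $x>1/(m+1)$, whence
\[(1-x)^m<\Bigl(\frac{m}{m+1}\Bigr)^m=\Bigl(1+\frac1m\Bigr)^{-m}\le\frac12\]
by Bernoulli's inequality $(1+\frac1m)^m\ge 1+m\cdot\frac1m=2$. With this, either $m$ greedy steps (each shrinking the uncovered set by a factor of at most $1-x$) or your random choice of $m$ samples proves the lemma for all $x$ at once, with no case distinction.

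Alternatively, your additive count can be repaired. The \emph{first} greedy step covers at least $x\abs B$ vertices, since then $U=B$, not merely $x\abs B/2$. Suppose the process stops after $k\ge2$ steps. After $k-1$ steps, fewer than $\frac12\abs B$ vertices are covered, but at least $x\abs B+(k-2)x\abs B/2=kx\abs B/2$ are. Hence $k<1/x$. If $k=1$, then $k\le 1/x$ because $x\le1$.
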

	
	We are now ready to prove \cref{lem:getcomb0}, which we restate here in the sparse setting for notational convenience.
	
	\begin{lemma}
		\label{lem:getcomb}
		Let $x,y\in(0,2^{-99}]$ with $x\le y$. Then every $y$-sparse graph $G$ with $\abs G> y^{-1}$ contains either:
		\begin{itemize}
			
			\item a $y^2$-sparse induced subgraph with at least $\abs G/2$ vertices;
			
			\item an $x$-sparse $(y^{-1/6},y^3\abs G)$-blockade; or
			
			\item non-adjacent $u,v\in V(G)$ and a $(k,\abs G/k^{99})$-comb $((a_i,B_i):i\in[k])$ in $G\setminus\{u,v\}$, where $k\in[y^{-1/24},x^{-1}]$, such that $a_i\in N_G(u)\setminus N_G(v)$ and $B_i\subset N_G(v)\setminus N_G(u)$ for all $i\in[k]$.
		\end{itemize}
	\end{lemma}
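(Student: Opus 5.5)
The plan is to build a long \emph{half comb} by repeatedly pulling out vertices of large degree. Then I either find a small comb inside a pair of its teeth, or prune the teeth to an $x$-sparse blockade. This is the sparse version of the sketch in \cref{subsec:bookkeep}.

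Assume the first outcome fails. Then every induced subgraph $F$ with $\abs F\ge\abs G/2$ has a vertex with at least $y^2\abs F$ neighbours in $F$. Set $c:=1/6$ and $\ell:=\ceil{y^{-c}}$, and put $G_0:=G$. For $i=1,2,\ldots,\ell$ in turn, while $\abs{G_{i-1}}\ge\abs G/2$, do the following.
\begin{itemize}
\item Choose $v_i\in V(G_{i-1})$ with at least $y^2\abs{G_{i-1}}$ neighbours in $G_{i-1}$, and let $U_i:=N_G(v_i)\cap V(G_{i-1})$. Then $y^2\abs G/2\le\abs{U_i}<y\abs G$, the upper bound because $G$ is $y$-sparse.
\item Let $V(G_i)$ consist of the vertices of $G_{i-1}\setminus(U_i\cup\{v_i\})$ that have fewer than $y^{2c}\abs{U_i}$ neighbours in $U_i$.
\end{itemize}
Each vertex of $U_i$ has degree less than $y\abs G$. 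Counting edges leaving $U_i$ therefore shows that fewer than $y^{1-2c}\abs G$ vertices are discarded for being heavy, in addition to the fewer than $y\abs G$ vertices of $U_i$ itself. Over $\ell\approx y^{-1/6}$ steps the total loss is at most about $y^{1/2}\abs G\le\abs G/2$, so the process runs for all $\ell$ steps.

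By construction, for $i<j$ the vertex $v_j$ and the set $U_j$ are anticomplete to $v_i$, and $\{v_j\}\cup U_j$ is $y^{2c}$-sparse to $U_i$. Now prune. From $U_i$ delete every vertex adjacent to some $v_j$ with $j>i$. Each such $v_j$ has fewer than $y^{2c}\abs{U_i}$ neighbours in $U_i$, so at most $\ell y^{2c}\abs{U_i}\approx y^{1/6}\abs{U_i}$ vertices are deleted. Also delete the vertices of $U_i$ having at least $y^{c}\abs{U_j}$ neighbours in some $U_j$ with $j>i$. Edge counting against the $y^{2c}$-sparsity of $U_j$ to $U_i$ shows this removes at most $\ell y^{c}\abs{U_i}$ vertices, which is small because the sizes $\abs{U_i}$ are all within a factor $y^{-1}$ of each other; this needs a small check.

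The result is a blockade $(V_1,\ldots,V_\ell)$ with the following properties.
\begin{itemize}
\item $\abs{V_i}\ge y^2\abs G/4\ge y^3\abs G$.
\item The blockade is $y^{c}$-sparse in both directions.
\item $v_i$ is complete to $V_i$ and anticomplete to $V_j$ for $j\ne i$.
\item The apexes $v_1,\ldots,v_\ell$ are pairwise non-adjacent.
\end{itemize}

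Next, fix a pair $i\ne j$ and look for a comb from $V_j$ to $V_i$. Let $V_j^*$ be the set of vertices of $V_j$ with at least $x\abs{V_i}$ neighbours in $V_i$. Two cases arise, according to how many vertices of $V_i$ have a neighbour in $V_j^*$.
\begin{itemize}
\item If few vertices of $V_i$ have a neighbour in $V_j^*$, there is nothing to find. After pruning (next paragraph), $V_j\setminus V_j^*$ is $x$-sparse to $V_i$.
\item Otherwise, \cref{lem:locate} (applied to the heavy vertices in the bipartite graph between $V_j^*$ and $V_i$) gives $A'\subset V_j^*$ with $\abs{A'}\le 1/x$ whose neighbourhood covers a constant fraction of $V_i$. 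Then \cref{lem:comb} applied to $(A',V_i)$ with $\Delta=y^{c}\abs{V_i}$ either puts that covered set inside $20\sqrt{y^c}\,\abs{V_i}$ vertices, which is a contradiction, or yields a $(k,\Omega(\abs{V_i})/k^2)$-comb with apexes in $A'\subset V_j$ and teeth in $V_i$.
\end{itemize}
In the comb case, $k\le\abs{A'}\le x^{-1}$. Each tooth lies in the neighbourhood of an apex, which has fewer than $y^c\abs{V_i}$ neighbours in $V_i$, so $k^{-2}\lesssim y^{c}$ and hence $k\ge y^{-1/12}/2\ge y^{-1/24}$. Taking $v:=v_i$ and $u:=v_j$ gives the third outcome: the apexes lie in $N(u)\setminus N(v)$, the teeth lie in $N(v)\setminus N(u)$, $u$ and $v$ are non-adjacent, and the width satisfies
\[\abs{V_i}/(4k^2)\ge y^3\abs G/(4k^2)\ge \abs G/k^{99},\]
since $k^{-72}\le y^3$.

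If no pair yields a comb, then for every ordered pair $(i,j)$ only a small fraction of $V_i$ sees the heavy part of $V_j$. Remove from each $V_i$ the vertices that are bad for some $j$ (total at most $\ell$ times a small fraction), and also remove the heavy part $V_j^*$ of each $V_j$. The heavy part is small by the same edge count against the $y^c$-sparsity, provided $x\ge$ the relevant threshold; otherwise I would use the ``few covered'' bound directly. The result is an $x$-sparse $(y^{-1/6},y^3\abs G)$-blockade, which is the second outcome.

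I expect the main obstacle to be the bookkeeping of the degree thresholds in this final step. The removals must stay negligible for all roughly $y^{-1/3}$ ordered pairs simultaneously, even though $x$ may be far smaller than $y$. If needed, I would first apply \cref{lem:iteration}-style repeated halving, or choose the threshold $y^{2c}$ and the exponent $c$ more carefully, so that a single removal per pair (the vertices covered by the heavy apexes) suffices.
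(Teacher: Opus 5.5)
Your half-comb construction and the first pruning step agree with the paper. The paper's $u_i$, $V_i$, $D_i$ are your $v_i$, $U_i$, and $U_i$ minus the vertices adjacent to later apexes. The gap is in the step producing the second outcome, which is the one you flag.

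\textbf{The heavy part is never shown to be small.} Edge counting gives only $|V_j^*|\le (y^{c}/x)|V_j|$. This is useless, since $x$ may be far below $y^{c}$. Knowing that few vertices of $V_i$ have a neighbour in $V_j^*$ does not bound $|V_j^*|$ either. For example, split $V_j$ into about $2/\sigma$ groups of size $\tfrac12\sigma|V_j|$, each complete to its own set of $x|V_i|$ vertices of $V_i$. This is compatible with $\sigma$-sparsity in both directions. Yet $V_j^*=V_j$, while only a $2x/\sigma$ fraction of $V_i$ is covered. Deleting $V_j^*$ destroys $V_j$. The right move is to delete the covered vertices of $V_i$, which are exactly the vertices of $V_i$ that are heavy towards $V_j$.

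\textbf{The case split and the use of \cref{lem:locate} are on the wrong side.} The hypothesis of \cref{lem:locate} (every vertex of $B$ has at least $x|A|$ neighbours in $A$) holds with $B$ equal to the heavy vertices. So it returns at most $1/x$ vertices of the \emph{other} block covering half of the heavy set. It does not give a set $A'\subset V_j^*$ covering much of $V_i$.

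\textbf{The exponents do not close.} For the deletions over $\ell\approx y^{-1/6}$ partners to be negligible, `few' must mean a fraction of at most about $y^{1/6}$. Then \cref{lem:comb} with $\Delta=y^{c}|V_i|=y^{1/6}|V_i|$ only gives $k$ of order $1$.

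\textbf{The reverse pruning is miscounted.} For each $j$ it may delete up to $y^{c}|U_i|$ vertices, whatever the sizes, so up to $(\ell-1)y^{1/6}|U_i|\approx|U_i|$ in total. Any threshold that makes it negligible leaves only about $y^{1/6}$-sparsity, which is too weak for the comb step.

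\textbf{The missing idea.} Let the heavy vertices be the teeth, and use only the direction of sparsity your construction provides. Now let $V_i$ denote $U_i$ after the first pruning only. For $i<j$, let $H$ be the set of vertices of $V_i$ with at least $x|V_j|$ neighbours in $V_j$.

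Suppose $|H|>\tfrac12y^{1/6}|V_i|$. Apply \cref{lem:locate} to $(V_j,H)$ to get $X\subset V_j$ with $|X|\le 1/x$ covering half of $H$. Then apply \cref{lem:comb} to $(X,H)$ with $\Delta=2y^{1/3}|V_i|$. This is valid because every vertex of $V_j\subset U_j$ has fewer than $y^{1/3}|U_i|$ neighbours in $U_i$, and $|V_i|\ge|U_i|/2$. The first alternative of \cref{lem:comb} would force $|H|\le 3200y^{1/3}|V_i|$, so you get a comb with $k\le|X|\le 1/x$ and $k^2\ge|H|/\Delta\ge y^{-1/6}/4$. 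Its apexes lie in $V_j$ and its teeth in $V_i$, and its width is at least $y^{13/6}|G|/(8k^2)\ge|G|/k^{99}$. This is the third outcome with $(u,v)=(v_j,v_i)$.

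Otherwise, delete $H$ from $V_i$; over all $j>i$ this costs at most half of $V_i$. Afterwards $V_i$ is $x$-sparse to $V_j$ for all $i<j$, so $(V_\ell,\ldots,V_1)$ is an $x$-sparse blockade. That one-directional condition is all the definition requires, so no reverse sparsity is needed.

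To avoid losing a constant factor in $x$ when all blocks shrink at once, build the blocks backwards as the paper does. Fix $Q_\ell,Q_{\ell-1},\ldots$ in turn, deleting from each $V_i$ only its heavy vertices towards the already fixed $Q_j$ with $j>i$.
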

	\begin{proof}
		Suppose that none of the outcomes holds.
		Let $s$ be the maximum integer with $0\le s\le \ceil{y^{-1/6}}$, such that there are $u_1,\ldots,u_s\in V(G)$ and disjoint $V_1,\ldots,V_s\subset V(G)\setminus\{u_1,\ldots,u_s\}$ satisfying:
		\begin{itemize}
			\item for all $i\in[s]$, $V_i\subset N_G(u_i)$ and $\abs{V_i}\ge y^2\abs G/2$;
			
			\item for all $i,j\in[s]$ with $i<j$, $u_i$ has no neighbour in $\{u_j\}\cup V_j$; and
			
			\item for all $i,j\in[s]$ with $i<j$, $\{u_j\}\cup V_j$ is $y^{1/3}$-sparse to $V_i$.
		\end{itemize}
		(These conditions are clearly satisfied for $s=0$.)
		
		\begin{claim}
			\label{claim:build1}
			$s= \ceil{y^{-1/6}}$.
		\end{claim}
		\begin{subproof}
			Suppose not; then $s<y^{-1/3}/2$. Let $E:=\bigcup_{i\in[s]}N_G[u_i]$; then
			\[\abs E\le s\cdot (1+y\abs G)\le 2sy\abs G<y^{2/3}\abs G.\]
			In particular, every vertex in $E$ has fewer than $y\abs G<2y\abs{G\setminus E}$ neighbours in $V(G)\setminus E$.
			For every $i\in[s]$, let $E_i$ be the set of vertices in $V(G)\setminus E$ with at least $y^{1/3}\abs{V_i}$ neighbours in $V_i$;
			then $\abs{E_i}\le 2y^{2/3}\abs{G\setminus E}$ by averaging.
			Let $F:=(G\setminus E)\setminus(E_1\cup\cdots\cup E_s)$; then
			\[\abs F\ge (1-2sy^{2/3})\abs{G\setminus E}
			\ge (1-y^{1/3})(1-y^{2/3})\abs G\ge \abs G/2.\]
			Hence, since the first outcome of the lemma fails, there exists $u_{s+1}\in V(F)$ with degree in $F$ at least $y^2\abs F\ge y^2\abs G/2$.
			Let $V_{s+1}:=N_F(u_{s+1})$;
			then $((u_i,V_i):i\in[s+1])$ violates the maximality of $s$, a contradiction.
			This proves \cref{claim:build1}.
		\end{subproof}
		
		Now, for every $i\in[s]$, let $D_i$ be the set of vertices in $V_i$ with no neighbour in $\{u_j:j>i\}$; then
		\[\abs{D_i}\ge (1-(s-i)y^{1/3})\abs{V_i}\ge  (1-(s-1)y^{1/3})\abs{V_i}\ge \abs{V_i}/2\ge y^2\abs G/4\]
		and in particular $D_j\subset V_j$ is $2y^{1/3}$-sparse to $D_i$ for all $j>i$.
		Let $r\in[s]$ be minimal such that there exists $Q_j\subset D_j$ for each $j\in\{r,r+1,\ldots,s\}$ satisfying:
		\begin{itemize}
			\item $\abs{Q_j}\ge\abs{D_j}/2$ for all $j\in\{r,r+1,\ldots,s\}$; and
			
			\item $Q_i$ is $x$-sparse to $Q_j$ for all $i,j\in\{r,r+1,\ldots,s\}$ with $i<j$.
		\end{itemize}
		(These conditions are clearly satisfied for $r=s$, by taking $Q_s:=D_s$.)
		Observe that $(Q_s,\ldots,Q_{r+1},Q_r)$ is $x$-sparse; and for all $j\in\{r,r+1,\ldots,s\}$,
		\[\abs{Q_j}\ge \abs{D_j}/2\ge\abs{V_j}/4\ge y^2\abs G/8\ge y^3\abs G.\]
		Thus, since the second outcome of the lemma fails, we deduce that $r>1$.
		Let $Y:=D_{r-1}$. For each $j\in\{r,r+1,\ldots,s\}$, let $P_j$ be the set of vertices in $Y$ with at least $x\abs{Q_j}$ neighbours in $Q_j$.
		
		\begin{claim}
			\label{claim:build2}
			$\abs{P_j}\le y^{1/6}\abs Y/2$ for all $j\in\{r,r+1,\ldots,s\}$.
		\end{claim}
		\begin{subproof}
			Suppose not.
			By \cref{lem:locate} with $(A,B)=(Q_j,P_j)$, there exists $X\subset Q_j$ such that $\abs X\le 1/x$ and at least $\frac12\abs{P_j}$ vertices in $P_j$ have a neighbour in $X$.
			By \cref{lem:comb} with $(A,B,\Delta)=(X,P_j,2y^{1/3}\abs Y)$, either:
			\begin{itemize}
				\item at most $20\sqrt{\abs{P_j}\cdot 2y^{1/3}\abs Y}$ vertices in $P_j$ have a neighbour in $X$; or
				
				\item for some integer $k\ge1$, $G$ contains a $(k,\abs{P_j}/k^2)$-comb $((a_i,B_i):i\in[k])$ where $a_i\in X$ and $B_i\subset P_j$ for all $i\in[k]$.
			\end{itemize}
			
			Our supposition and $y\le 2^{-99}$ together imply 
			\[20\sqrt{\abs{P_j}\cdot 2y^{1/3}\abs Y}<40y^{1/12}\abs{P_j}<\abs{P_j}/2\]
			and so the first bullet cannot hold.
			Hence, the second bullet holds.
			It follows that $k\le \abs X\le 1/x$ and $\abs{P_j}/k^2\le 2y^{1/3}\abs Y\le 4y^{1/6}\abs{P_j}$, which yields $k\ge \frac12y^{-1/12}\ge y^{-1/24}\ge 8$.
			Therefore
			\[\abs{P_j}/k^2\ge y^{1/6}\abs Y/(2k^2)\ge y^{13/6}\abs G/(8k^2)
			\ge \abs G/k^{99}\]
			and so the last outcome of the lemma holds with $(u,v)=(u_j,u_{r-1})$, contrary to our supposition.
			This proves \cref{claim:build2}.
		\end{subproof}
		
		Now, let $Q_{r-1}:=Y\setminus\left(\bigcup_{j\ge r}P_j\right)$. Then $Q_{r-1}$ is $x$-sparse to $Q_j$ for all $j\in\{r,r+1,\ldots,s\}$; and \cref{claim:build2} implies that
		\[\abs{Q_{r-1}}\ge (1-(s-r+1)y^{1/6}/2)\abs Y
		\ge (1-(s-1)y^{1/6}/2)\abs Y\ge \abs Y/2=\abs{D_{r-1}}/2\]
		which violates the minimality of $r$.
		This proves \cref{lem:getcomb}.
	\end{proof}
	
	\section{Splitting three-vertex paths by avoiding large modules}
	\label{sec:p3}
	This section proves \cref{lem:p3}, which we restate here for the reader's convenience.
	\begin{lemma}
		\label{lem:p30}
		Let $x,y\in(0,2^{-8}]$ with $x\le y$. Then every graph $G$ with $\abs G> x^{-4}$ contains either:
		\begin{itemize}
			
			\item a pure $(k,\abs G/k^{8})$-blockade, for some integer $k\in[y^{-1/2},x^{-1}]$; or
			
			\item an $x$-splittable induced subgraph with at least $y^2\abs G$ vertices.
		\end{itemize}
	\end{lemma}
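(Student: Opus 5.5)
We argue by contradiction: suppose $G$ contains neither outcome. The section title points to the mechanism. A partition $(X,Y)$ of an induced subgraph $F$ that witnesses non-splittability, meaning $\abs X\ge x\abs F$, $Y\ne\emptyset$ and no induced $v_1\text-v_2\text-v_3$ with $v_1\in Y$ and $v_2,v_3\in X$, says that every vertex of $Y$ is pure to every component of $F[X]$. Indeed, if $y\in Y$ had a neighbour and a non-neighbour in one component $K$ of $F[X]$, connectivity of $F[K]$ would give an edge $v_2v_3$ with $yv_2\in E$ and $yv_3\notin E$. Hence every component of $F[X]$ is a \emph{module} of $F$ relative to $Y$. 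It is also a module relative to the rest of $X$, since components are anticomplete to each other. So each component of $F[X]$ is a module of $F$.

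The first step is therefore a dichotomy. If $F[X]$ has a component $K$ with $\abs K\ge x^2\abs F$ (say), then $K$ is a large proper module of $F$, and it is proper because $Y\ne\emptyset$. If instead all components of $F[X]$ are small, then \cref{lem:conn} applied to $F[X]$ with $k\approx x^{-1}$ yields an anticomplete, hence pure, $(k,\abs X/(2k))$-blockade. Its width is about $x^2\abs F\ge \abs F/k^{2}$, and $k$ is in the allowed range $[y^{-1/2},x^{-1}]$. So, as long as $\abs F$ is at least $y^2\abs G$ times something polynomial, the first outcome holds. For this I will arrange that $\abs F\ge \abs G/k^{O(1)}$ by tracking sizes.

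In the module case, I then pass to a smaller structure. If the module $M=K$ is large but not too large, the pair $(M,V(F)\setminus M)$ is pure in a strong sense: each outside vertex is complete or anticomplete to $M$. I plan a bookkeeping iteration in the style of \cref{thm:convsemi}. I maintain a blockade $(A_1,\ldots,A_s)$ of pairwise pure blocks, where purity is exact because the blocks are modules, with $\sum\abs{A_i}^{1/8}\ge\abs G^{1/8}$ and each $\abs{A_i}\ge y^2\abs G$ roughly. At each step I take the largest block $A_i$ and test whether $G[A_i]$ is $x$-splittable. If it is, the second outcome holds once $\abs{A_i}\ge y^2\abs G$, which follows from $s<y^{-1/2}$ and the sum condition. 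If it is not, I split $A_i$ either into many anticomplete pieces, which gives the first outcome directly when there are at least $y^{-1/2}$ of them or otherwise refines the blockade, or into the module $M$ and its complement $A_i\setminus M$. Both of these are pure to every other block, because they lie inside $A_i$. They are also pure to each other only partially: $A_i\setminus M$ is pure to $M$ vertexwise, but not necessarily as a pair. To restore pair purity I would split $A_i\setminus M$ further into $N(M)\cap A_i$ and $A_i\setminus N[M]$; each of these is complete or anticomplete to $M$. This gives three pieces. The two outer pieces are not pure to each other, so the simplest fix is to keep only $M$ together with the larger of the other two. That keeps the two kept pieces pure, and the exponent $8$ absorbs the loss.

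The main obstacle is the accounting. The module branch can shrink blocks by a factor depending on $x$ rather than on the number of blocks: when $\abs M$ is close to $x^2\abs{A_i}$, or when $M$ is almost all of $A_i$, the sum $\sum\abs{A_i}^{1/8}$ may fail to increase. I would handle this by choosing $M$ as a \emph{maximal} proper module. I would also use the fact that when $\abs X\ge x\abs F$ there are two cases. If $F[X]$ has a component of size at least $\frac12\abs X$, then $M$ has size between $x\abs F/2$ and $\abs F-1$, and I need its complement also to be large, say $\ge \abs F/k^{8}$. If the complement is tiny, I instead discard it, which costs little, and recurse inside $M$ with the same bookkeeping. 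The delicate point is ensuring progress and that pieces of size $\ge\abs G/k^{8}$ suffice; I expect this calibration with exponents $4$ and $8$ to be where the real work lies.
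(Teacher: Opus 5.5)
Your opening reduction is correct, and it is exactly how the paper derives this statement from \cref{lem:module}. If $(X,Y)$ witnesses non-splittability of $F$, every component of $F[X]$ is a module of $F$. So either \cref{lem:conn} yields an anticomplete blockade of length about $x^{-1}$, or $F$ has a proper module of size about $x^2\abs F$. Everything therefore hinges on showing that $G$ has either a suitable pure blockade or an induced subgraph on at least $y^2\abs G$ vertices all of whose proper modules are small. That is the part your proposal leaves unproved, and the bookkeeping iteration you sketch does not deliver it.

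A small slip first: with the invariant $\sum\abs{A_i}^{1/8}\ge\abs G^{1/8}$, the bound $s<y^{-1/2}$ only gives $\abs{A_i}>y^4\abs G$, not $y^2\abs G$.

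The real problem is that the module step does not preserve the invariant. Suppose $\abs M$ is of order $x^2\abs{A_i}$, and $N(M)\cap A_i$ and $A_i\setminus N[M]$ have equal size and are not pure to each other. Then the two pieces you keep contribute at most about $\bigl(O(x^{1/4})+2^{-1/8}\bigr)\abs{A_i}^{1/8}$, which is less than $\abs{A_i}^{1/8}$ once $x$ is small. No fixed exponent repairs this, since the gain is a power of $x$ while the loss is a constant factor.

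Your fallback (take a maximal proper module, discard a tiny complement, recurse) fails outright. Let $G_1=K_1$, and let $G_{j+1}$ be the path $a\text-b_{j+1}\text-c_{j+1}\text-d_{j+1}$ with $G_j$ substituted for $a$. For $j\ge3$ the only maximal proper module of $G_j$ with more than one vertex is $V(G_{j-1})$, whose complement has three vertices. So your procedure peels off three vertices per round until almost nothing is left. Yet the discarded $b_i$ form a clique and the discarded $c_i,d_i$ induce a matching. The pure blockade lives precisely in the part your scheme never looks at again.

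What is missing is a global way to account for all nested modules at once. The paper gets this from Gallai's modular decomposition tree $\mac D(G)$ (\cref{lem:decomp}). Its nodes are modules and the children of each node are pairwise pure, so refining a node never forces you to throw away half of it. Call a node heavy if it has at least $y^2\abs G$ vertices; the light children of heavy nodes partition $V(G)$. The paper shows that if neither outcome holds, their total size is less than $\abs G$, via four observations:
\begin{itemize}
\item At a disconnected or co-disconnected heavy node, the light children are components or co-components, so \cref{lem:conn} applies.
\item At a prime heavy node, the children of size below $4x^4\abs G$ have union less than $y^2\abs G$ (otherwise the desired conclusion already holds). The remaining light children fall into dyadic size classes of pairwise pure sets, each of which must be short.
\item The heavy nodes form a subtree with fewer than $y^{-1/2}$ leaves, since its leaves are pairwise pure. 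Hence there are few branching nodes and fewer than $2y^{-1/2}$ maximal one-child chains.
\item Along each chain, the light children split into those anticomplete and those complete to the bottom node of the chain. These form an anticomplete and a complete blockade respectively, which is what captures the discarded vertices in the example above.
\end{itemize}
Consequently some prime node has children of size less than $4x^4\abs G$ with union at least $y^2\abs G$. These children, together with one vertex from each other child, induce an $F$ whose proper modules are all small, because the quotient is prime. Your first observation then shows that $F$ is $x$-splittable.
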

	
	Here, recall that a graph $F$ is {\em $x$-splittable} if for every partition $(X,Y)$ of $V(F)$ with $\abs X\ge x\abs F$ and $Y\ne\emptyset$, there is an induced path $v_1\text-v_2\text-v_3$ in $F$ with $v_1\in Y$ and $v_2,v_3\in X$, or equivalently $Y\to_3X$.
	
	Let us first give an idea of how the proof of \cref{lem:p30} would proceed; to avoid technicalities, we relax the first outcome to pure $(k,\abs G/\poly(k))$-blockades.
	Now, assume that the second outcome fails.
	Then each induced subgraph $F$ of $G$ with $\abs F\ge y^2\abs G$ is not $x$-splittable; and so $V(F)$ admits a partition $(X,Y)$ such that $Y\not\to_3X$, $\abs X\ge x\abs F$, and $Y\ne\emptyset$.
	If $F[X]$ has no component of size larger than $x\abs X$ say, then \cref{lem:conn} gives an anticomplete $(x^{-1},\poly(x)\abs X)$-blockade in $F[X]$ (and so in $G$); and so the first outcome holds since $\poly(x)\abs X\ge \poly(x)\abs F\ge \poly(x)\abs G$.
	Thus, $F[X]$ has a large component $C$ say; and since $Y\not\to_3V(C)$, every vertex in $V(F)\setminus V(C)$ is pure to $V(C)$.
	
	Consequently, \cref{lem:p30} amounts to showing that for every large graph $G$, either there is a suitable pure blockade in $G$, or $G$ has a sufficiently large induced subgraph with no large module.
	Here, as is well-known, a \emph{module} (or {\em homogeneous set}) in a graph $F$ is a nonempty subset $M\subset V(F)$ such that every vertex in $V(F)\setminus M$ is pure to $M$; and we say that $M$ is {\em proper} if $\abs M<\abs F$.
	Then $M$ is a module of $F$ if and only if it is a module of $\overline F$; and a subset $N$ of $M$ is a module of $F[M]$ if and only if it is a module of $F$.
	Given these definitions, the proof of \cref{lem:p30} uses the following lemma.
	
	\begin{lemma}
		\label{lem:module}
		Let $x,y\in(0,2^{-8}]$ with $x\le y$. Then every graph $G$ with $\abs G>x^{-4}$ contains either:
		\begin{itemize}
			\item a pure $(k,\abs G/k^{8})$-blockade, for some integer $k\in[y^{-1/2},x^{-1}]$; or
			
			\item an induced subgraph $F$ such that $\abs F\ge y^2\abs G$ and all proper modules of $F$ have size less than $4x^4\abs G$.
		\end{itemize}
	\end{lemma}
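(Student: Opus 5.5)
We argue by contradiction: assume that every induced subgraph $F$ of $G$ with $\abs F\ge y^2\abs G$ has a proper module of size at least $4x^4\abs G$, and that $G$ contains no pure $(k,\abs G/k^8)$-blockade with $k\in[y^{-1/2},x^{-1}]$. From these assumptions we will build a long pure blockade. The basic observation is this. If $M_1,\ldots,M_k$ are pairwise disjoint subsets of $V(F)$ such that each $M_i$ is a module of $F$, then every vertex outside $M_i$ is pure to $M_i$. In particular $M_j$ is pure to $M_i$ for all $j\ne i$, so $(M_1,\ldots,M_k)$ is a pure blockade.

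\textbf{Peeling modules.} We start from $F_0:=G$. At stage $s$ we hold pairwise disjoint sets $M_1,\ldots,M_s$ and a current graph $F_s=G\setminus(M_1\cup\cdots\cup M_s)$. We maintain the invariant that every vertex of $G$ not in $M_i$ is pure to $M_i$ for all $i\le s$. If $\abs{F_s}\ge y^2\abs G$, then $F_s$ has a proper module of size at least $4x^4\abs G$. Two cases arise.

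\emph{Small module.} If some such module $M$ has $\abs M\le\frac12\abs{F_s}$, we set $M_{s+1}:=M$ and $F_{s+1}:=F_s\setminus M$. The invariant is preserved. Every vertex of $F_s$ outside $M$ is pure to $M$, and every vertex in some earlier $M_i$ is pure to $M$ as well, since $M\subset F_s$ and $M_i$ is a module of the graph induced on $M_i\cup V(F_s)$.

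\emph{Huge module.} If every such module is larger than half of $F_s$, we instead pass to a maximal proper module $M$ and recurse inside $G[M]$. Modules of $G[M]$ are modules of the larger graph. So the sets already split off, together with $V(F_s)\setminus M$ (which is pure to $M$), remain pure to everything later extracted inside $M$.

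Formally, we maintain a nested structure: a graph $F$ together with a family of already extracted pairwise mutually pure sets, each pure to $V(F)$. Each step either adds a new set of size at least $4x^4\abs G$, or shrinks $F$ by a factor in $[1/2,1)$ while the removed part becomes one more extracted set. Since the extracted sets are pairwise pure, they form a pure blockade.

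\textbf{Counting.} The process continues while $\abs F\ge y^2\abs G$. We stop once $F$ falls below $y^2\abs G$; then at least $(1-y^2)\abs G$ vertices lie in extracted sets. All these sets are pure to each other.

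If at least $y^{-1/2}$ of them have size at least $\abs G/k^8$ for a suitable $k$, we are done. To arrange this, we apply a dyadic bucketing by size. Sizes lie between $4x^4\abs G$ and $\abs G$, so there are $O(\log x^{-1})$ dyadic buckets. Choose the bucket $[\sigma,2\sigma)\abs G$ carrying the most mass, at least $\Omega(\abs G/\log x^{-1})$. It contains $k\ge 1/(4\sigma\log x^{-1})$ sets, each of size at least $\sigma\abs G$. We truncate to $k\le x^{-1}$ sets if needed, which is fine since $\sigma\ge 4x^4$ gives $\sigma\ge k^{-8}$ whenever $k\le x^{-1/2}$ or so. We also need $k\ge y^{-1/2}$. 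This fails only if the mass is dominated by sets of size at least about $y^{1/2}\abs G/\log$; that is, few large pieces.

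\textbf{Main obstacle.} The hard case is exactly this one: the peeled sets are few but large, say two sets each of linear size. This happens, for instance, when $G$ itself is a join or disjoint union of two halves. Two halves give only a pure $(2,\cdot)$-blockade, which is too short. To handle it, I would iterate: apply the whole argument inside each large extracted set $M_i$ with $\abs{M_i}\ge y\abs G$. The hypothesis still applies there, since induced subgraphs of $M_i$ of size at least $y^2\abs G$ are induced subgraphs of $G$. Each inner refinement again yields sets pure to one another and, by the module property, pure to all other $M_j$. This is a recursion on a tree of modules, in the spirit of the proof of \cref{thm:convsemi}.

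We track a potential $\sum_i\abs{M_i}^{1/8}\ge\abs G^{1/8}$ as in that proof. Splitting a set of size $m$ into pieces each of size at most $m/2$, covering most of it, does not decrease the potential. We maximise the number of parts subject to every part having size at least $4x^4\abs G$, and argue that the maximum has at least $y^{-1/2}$ parts. Any part larger than $y^2\abs G$ can be split further, since the hypothesis gives a proper module of size at least $4x^4\abs G$ in it, or in a maximal chain descending into it. The potential inequality then yields the width bound $\abs G/k^8$ via the largest part, exactly as in \cref{thm:convsemi}.

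The delicate point is the loss from leftover vertices: splitting only by a module $M$ and its complement $F\setminus M$, where the complement need not be a module, breaks purity. I would fix this by splitting each part into its maximal proper modules. For a graph $F$ with $F$ or $\overline F$ disconnected, these are the components. Otherwise they are Gallai's modular partition, whose maximal strong modules partition $V(F)$ and are pairwise pure. That partition provides the clean split needed at each step.
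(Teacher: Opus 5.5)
There is a genuine gap. To begin with, the peeling invariant is false. A module $M$ of $F_s=G\setminus(M_1\cup\cdots\cup M_s)$ need not be a module of $G$: knowing that $M_i$ is a module only makes vertices \emph{outside} $M_i$ pure to $M_i$, not vertices \emph{inside} $M_i$ pure to $M$. For instance, take $M_1=\{u,u'\}$ and $M=\{v_1,v_2\}$ with $u,u'$ both adjacent to $v_1$ and both nonadjacent to $v_2$. Similarly, $V(F_s)\setminus M$ is in general not pure, as a block, to subsets of $M$.

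You notice this and retreat to Gallai partitions, i.e.\ to nodes of $\mac D(G)$, which is the right setting. But then your key step, ``any part larger than $y^2\abs G$ can be split further, since the hypothesis gives a proper module of size at least $4x^4\abs G$ in it, or in a maximal chain descending into it'', is false. Here is a counterexample.
\begin{itemize}
\item Let $V(G)=\{a_i,b_i,c_i:i\in[n]\}\cup\{z\}$. Let $M_i$ consist of $z$ and all levels $j\ge i$, with $M_{n+1}=\{z\}$. Let the quotient of $G[M_i]$ by $\{a_i\},\{b_i\},\{c_i\},M_{i+1}$ be the path $a_i\text-b_i\text-c_i\text-M_{i+1}$.
\item Every induced $F$ with $\abs F\ge y^2\abs G$ has the proper module $V(F)\cap M_{i+1}$, where $i$ is the first level met by $F$. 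This module has size at least $\abs F-3$, so your module hypothesis holds.
\item Yet the only nodes of $\mac D(G)$ of size at least $4x^4\abs G$ are the pairwise comparable $M_i$. Each of them splits into three singletons and $M_{i+1}$, so your recursion discards nearly everything and ends with a single part.
\item The lemma holds here only because the pairs $\{a_i,b_i\}$ are mutually anticomplete. Grouping consecutive levels then gives a long anticomplete blockade whose blocks are unions of many light children at different depths, which your scheme never forms.
\end{itemize}
Your justification for getting at least $y^{-1/2}$ parts uses only the module hypothesis, so it cannot work without a new idea.

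What is missing is the paper's accounting of light children in $\mac D(G)$.
\begin{itemize}
\item[(i)] At a prime node, the paper applies the hypothesis not to $G[M]$ but to the subgraph induced on the children of size below $4x^4\abs G$ plus one vertex from each other child. Since the quotient is prime, its proper modules lie inside single children, so these tiny children total less than $y^2\abs G$.
\item[] Children of intermediate size are bucketed dyadically, with fewer than $2^{j/2}y^{-1/2}$ of them in bucket $j$, so the sum converges. Taking only the heaviest bucket, as you propose, loses a $\log(1/x)$ factor that no power of $y^{-1}$ controls.
\item[] At disconnected or co-disconnected nodes, light children are grouped via \cref{lem:conn} rather than kept individually. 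Together these bound the light mass below each heavy node by $\frac52y^{3/2}\abs G$.
\item[(ii)] The heavy nodes form a subtree whose leaves are pairwise pure, so it has fewer than $y^{-1/2}$ leaves. Hence fewer than $2y^{-1/2}$ heavy nodes have zero or at least two heavy children, and there are fewer than $2y^{-1/2}$ maximal chains of heavy nodes with exactly one heavy child.
\item[(iii)] Along such a chain with bottom node $C(P)$, each light child of a chain node is complete or anticomplete to $C(P)$. Children of the same type at different levels are then mutually anticomplete (respectively complete). So \cref{lem:conn}, applied to their union (or its complement), caps the mass shed along the chain at $7y\abs G$.
\end{itemize}
Summing shows that the light children, which partition $V(G)$, total less than $\abs G$, a contradiction. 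Step (iii) is precisely what handles the example above, and nothing in your proposal plays its role.
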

	\begin{proof}
		[Proof of \cref{lem:p30}, assuming \cref{lem:module}]
		Assume that the first outcome does not hold.
		Then \cref{lem:module} gives an induced subgraph $F$ of $G$ such that $\abs F\ge y^2\abs G$ and all proper modules of $F$ have size less than $4x^4\abs G$.
		We claim that $F$ is $x$-splittable.
		Let $(X,Y)$ be a partition of $V(F)$ with $\abs X\ge x\abs F$ and $Y\ne\emptyset$, and put $k:=\floor{(4x)^{-1}}$.
		Then $y^{-1/2}\le x^{-1/2}\le(8x)^{-1}\le k\le x^{-1}$.
		Moreover,
		\[k^7\abs X
		\ge (8x)^{-7}xy^2\abs G
		=8^{-7}x^{-6}y^2\abs G
		\ge 8^{-7}x^{-4}\abs G
		>2\abs G.\]
		Thus $\abs X/(2k)\ge\abs G/k^8$.
		Since the first outcome fails, \cref{lem:conn} implies that $F[X]$ has a component $C$ with
		\[\abs C\ge\abs X/k\ge4x\abs X\ge4x^2\abs F\ge4x^2y^2\abs G\ge4x^4\abs G.\]
		As $V(C)\subseteq X\subsetneq V(F)$, it cannot be a module of $F$.
		Hence some vertex $v_1\in V(F)\setminus V(C)$ is mixed on $V(C)$; because $C$ is a component of $F[X]$, this vertex lies in $Y$.
		By the connectedness of $C$, there is an edge $v_2v_3$ of $C$ such that $v_1$ is adjacent to $v_2$ and nonadjacent to $v_3$.
		Thus $v_1\text-v_2\text-v_3$ is an induced path with $v_1\in Y$ and $v_2,v_3\in X$, proving that $F$ is $x$-splittable.
	\end{proof}
	
	The proof of \cref{lem:module} is via the machinery of modular decomposition developed by Gallai~\cite{gallai1967}, which we discuss in detail below. On a side note, to the best of our knowledge of the literature on the \erh{} conjecture \ref{conj:eh}, a (very) distant form of modular decomposition was previously employed by Choromanski, Falik, Liebenau, Patel, and Pilipczuk~\cite{cdlpp2018}.
	
	\subsection*{Modular decomposition}
	Observe that for every two disjoint modules $M,N$ of $G$, $(M,N)$ is a pure pair in $G$.
	For $\abs G\ge2$ and a collection $\mac M=\{M_1,\ldots,M_k\}$ of pairwise disjoint proper modules in $G$ that together partition $V(G)$, the {\em quotient graph} $G/\mac M$ is defined as the graph on vertex set $\mac M$ where $M_i,M_j$ are adjacent if and only if they are complete to each other in $G$.
	
	We now define $\mac P(G)$ to be the collection of:
	\begin{itemize}
		\item the vertex sets of all components of $G$ if $G$ is disconnected;
		
		\item the vertex sets of all components of $\overline G$ if $\overline G$ is disconnected; and
		
		\item the maximal proper modules of $G$ if both $G$ and $\overline G$ are connected.
	\end{itemize}
	
	We say that $G$ is {\em prime} if it has at least four vertices and all of its proper modules have size $1$.
	We collect the following simple fact due to Gallai~\cite{gallai1967}, for which we give a proof for completeness.
	
	\begin{lemma}
		[Gallai]
		\label{lem:gal}
		For $\abs G\ge2$, the members of $\mac P(G)$ always partition $V(G)$; in particular the graph $G/\mac P(G)$ is well-defined.
		Moreover, this graph  is edgeless if $G$ is disconnected, is complete if $\overline G$ is disconnected, and is prime if both $G$ and $\overline G$ are connected.
	\end{lemma}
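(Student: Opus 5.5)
We need to prove Gallai's lemma: for $\abs G\ge2$, the members of $\mac P(G)$ partition $V(G)$, and the quotient $G/\mac P(G)$ is edgeless, complete, or prime in the three respective cases.

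The disconnected case is immediate. Component vertex sets partition $V(G)$, and each is a proper module, since every outside vertex is anticomplete to it. There are no edges between distinct components, so the quotient is edgeless. The case where $\overline G$ is disconnected follows by passing to the complement. Modules of $G$ and $\overline G$ coincide, and components of $\overline G$ are pairwise complete in $G$. Hence the quotient is complete. Note that $G$ and $\overline G$ cannot both be disconnected, so the definition of $\mac P(G)$ is unambiguous.

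The main work is the case where both $G$ and $\overline G$ are connected. Here $\abs G\ge2$ forces $\abs G\ge4$, since the only graphs on two or three vertices have a disconnected complement or are themselves disconnected. Every singleton is a proper module, so every vertex lies in some maximal proper module. It remains to show that two distinct maximal proper modules $M,N$ are disjoint. I will use the standard fact that if $M,N$ are modules with $M\cap N\ne\emptyset$, then $M\cup N$ is a module: a vertex $z\notin M\cup N$ is pure to $M$ and pure to $N$, and these have the same type because $M\cap N\neq\emptyset$. So if $M$ and $N$ overlap, maximality forces $M\cup N=V(G)$.

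I expect this to be the main obstacle, and I would rule it out as follows. Suppose $M\cup N=V(G)$ with $M\not\subset N$ and $N\not\subset M$. Then $M\setminus N$ and $N\setminus M$ are nonempty. Take $a\in N\setminus M$. Since $a\notin M$, it is pure to $M$, so it is pure to $M\setminus N$. Similarly each $b\in M\setminus N$ is pure to $N\setminus M$. Now pick $c\in M\cap N$. Each $a\in N\setminus M$ is pure to $M\ni c$, and each $b\in M\setminus N$ is pure to $N\ni c$. From this I want to show that $(M\setminus N,N)$ is a pure pair. Concretely, each $b\in M\setminus N$ has the same adjacency to all of $N$ as to $c$, and each $a\in N\setminus M$ has the same adjacency to all of $M$ as to $c$. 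If the relevant pairs are all complete, then $\overline G$ is disconnected; if all anticomplete, then $G$ is disconnected.

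The delicate point is that the types might differ across vertices, so I would argue with the pair $(M\setminus N, N\setminus M)$ directly. It is a pure pair, say complete without loss of generality, by checking a single $a$ and then using $b$'s purity to $N$. Then $c$ is adjacent to all of $N\setminus M$, via $b$'s purity to $N$. Each $a$ is adjacent to $c$, hence complete to $M$. Then each $b$ is complete to $N$. So $M\setminus N$ is complete to $N$, which makes $\overline G$ disconnected, a contradiction. Therefore the maximal proper modules partition $V(G)$.

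Primality follows. There are at least two parts, since each part is proper. If there were exactly two parts $M,N$, they would form a pure pair, so $G$ or $\overline G$ would be disconnected. The same applies to three parts: any 3-vertex quotient yields a disconnected $G$ or $\overline G$. So the quotient has at least four vertices.

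Finally, suppose $\mac Q$ is a module of $G/\mac P(G)$ with $1<\abs{\mac Q}<\abs{\mac P(G)}$. Then $\bigcup\mac Q$ is a module of $G$, because modules are pure to each other. It is proper and strictly contains a maximal proper module, a contradiction. Hence all proper modules of the quotient are singletons, and the quotient is prime.
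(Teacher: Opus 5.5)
Your proof is correct and is essentially the paper's argument. Singletons give the covering. Overlapping maximal proper modules are excluded because their union is a module, while $M\cup N=V(G)$ would produce a pure pair contradicting the connectivity of $G$ and $\overline G$; you run these two steps in the opposite order from the paper. Primality comes from lifting a nontrivial quotient module to a proper module of $G$ that strictly contains a maximal one, and the quotient has at least four vertices because it and its complement are connected.

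One cosmetic fix: in the complete case, the detour through $c$ is unnecessary and its justification is mislabelled (that step actually uses each $a$'s purity to $M$). Directly, each $b\in M\setminus N$ is pure to $N$ and adjacent to the nonempty set $N\setminus M$, hence complete to $N$.
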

	\begin{proof}
		Let $\mac P:=\mac P(G)$. It suffices to show that if both $G$ and $\overline G$ are connected, then the members of $\mac P(G)$ partition $V(G)$ and $G/\mac P$ is prime.
		
		To show the partition property, observe that for each $v\in V(G)$, the set $\{v\}$ is a module in $G$, and so is contained in some maximal proper module of $G$.
		Thus $V(G)=\bigcup_{M\in \mac P}M$. Now, for distinct $M,N\in \mac P$, suppose that there exists $v\in M\cap N$. The maximality of $M,N$ implies that $N\setminus M$ is nonempty.
		Observe that $M$ is pure to $N\setminus M$; and every vertex in $N\setminus M$ has the same type of adjacency to each vertex in $M$ as to $v$.
		Thus $(M,N\setminus M)$ is a pure pair in $G$, which implies that $M\cup N\subsetneq V(G)$ since both $G$ and $\overline G$ are connected.
		Then every vertex in $V(G)\setminus(M\cup N)$ has the same type of adjacency to each vertex in $M\cup N$ as to $v$; and so $M\cup N$ is a proper module of $G$, violating the maximality of $M$ and $N$.
		Therefore, the members of $\mac P$ are pairwise disjoint and so partition $V(G)$.
		
		Thus, $F:=G/\mac P$ is well-defined and $\abs F\ge2$. Now, let $A$ be a proper module of $F$; then $\bigcup_{M\in A}M$ is a proper module of $G$, which implies that $\abs A=1$ since each $M\in A$ is maximal.
		Since $F$ and $\overline F$ are connected, $\abs F\ge4$; and so $F$ is prime.
		This proves \cref{lem:gal}.
	\end{proof}
	
	The graph $G/\mac P(G)$ is called the {\em canonical quotient} of $G$.
	Note that if $G$ has at least two vertices then so does its canonical quotient.
	
	In what follows, a {\em rooted tree} is a pair $(T,\circ)$ where $T$ is a tree and $\circ\in V(T)$, and we call $\circ$ the {\em root} of $(T,\circ)$.
	For distinct nodes $u,v$ in $(T,\circ)$, we say that $u$ is a {\em parent} of $v$ in $(T,\circ)$, or equivalently $v$ is a {\em child} of $u$ in $(T,\circ)$, if $u$ is adjacent to $v$ in $T$ and $u$ lies on the unique path of $T$ joining $\circ$ and $v$.
	We say that $u,v$ are {\em comparable} if one lies in the unique path between the other and the root $\circ$, and {\em incomparable} if they are not comparable.
	A {\em chain} in $(T,\circ)$ is a sequence of pairwise comparable nodes.
	A {\em leaf} of $(T,\circ)$ is a node with no children.
	For a graph $G$, the {\em modular decomposition tree} of $G$, denoted by $\mac D(G)$, is the rooted tree with the following properties:
	\begin{itemize}
		\item the root of $\mac D(G)$ is $V(G)$, and the leaves of $\mac D(G)$ are the singletons of $V(G)$; and
		
		\item each node $M$ of $\mac D(G)$ is a module of $G$, and if $M$ is an internal node then its children in $\mac D(G)$ are the members of $\mac P(G[M])$.
	\end{itemize}
	
	(Note that this is well-defined because each singleton is a module; and for all modules $M$ of $G$ with at least two vertices, every member of $\mac P(G[M])$ has fewer than $\abs M$ vertices.)
	Observe that every two incomparable nodes in $\mac D(G)$ are disjoint and pure to each other in $G$, because they are subsets of two distinct children of some node in $\mac D(G)$.
	
	We say that an internal node $M$ in $\mac D(G)$ is {\em prime} if the canonical quotient of $G[M]$ is prime.
	The proof of \cref{lem:module} is via the following fact about $\mac D(G)$.
	
	\begin{lemma}
		\label{lem:decomp}
		Let $x,y\in(0,2^{-8}]$ with $x\le y$, and let $G$ be a graph with $\abs G>x^{-4}$. Then either:
		\begin{itemize}
			\item\label{out:pure}
			there exist an integer $k\in[y^{-1/2},x^{-1}]$ and a pure $(k,\abs G/k^{8})$-blockade in $G$; or
			
			\item\label{out:prime}
			some prime node $M$ of $\mac D(G)$ has a collection
			$\mac C$ of children such that
			\[
			\abs C<4x^{4}\abs G\quad\text{for all $C\in\mac C$},
			\qquad\text{and}\qquad
			\abs*{\,\bigcup_{C\in\mac C}C\,}\ge y^2\abs G.
			\]
		\end{itemize}
	\end{lemma}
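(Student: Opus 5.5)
The plan is to extract either outcome from the shape of $\mac D(G)$. The structural fact used throughout is that incomparable nodes of $\mac D(G)$ are disjoint and pairwise pure, so any antichain of sufficiently large nodes is already a pure blockade. Two facts give the flexibility to merge small pieces:
\begin{itemize}
\item If $M$ is a series or parallel node, any union of some of its children is still pure to any union of other children (complete, resp.\ anticomplete), and is pure to everything outside $M$ because $M$ is a module of $G$.
\item If $N\subsetneq M$ are nested nodes, every vertex of $M\setminus N$ is pure to $N$, since $N$ is a module of $G$. Splitting $M\setminus N$ into the part complete to $N$ and the part anticomplete to $N$ and keeping the larger half gives a one-sided ``chunk'' that is pure to everything inside $N$.
\end{itemize}

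Set $\tau:=4x^4\abs G$ and call a node big if it has at least $\tau$ vertices. The big nodes form a subtree $T'$ containing the root, since $\tau<\abs G$ because $\abs G>x^{-4}$. Every vertex lies in exactly one small node whose parent is big; call these the \emph{bottom} nodes, so they partition $V(G)$. If some prime big node has bottom children of total size at least $y^2\abs G$, the second outcome holds with $\mac C$ those children. So assume every prime node carries fewer than $y^2\abs G$ bottom mass. The goal is then to build a pure $(k,\abs G/k^8)$-blockade with $k$ about $y^{-1/2}$ (or, when the pieces are finer, some $k\le x^{-1}$).

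First, follow the heavy path $V(G)=M_0\supsetneq M_1\supsetneq\cdots$ in $T'$, always moving to the largest child, until the current node is small. Let $S_i:=M_i\setminus M_{i+1}$. The $S_i$ partition all but fewer than $\tau\le\abs G/2$ vertices. By the second bullet above, consecutive runs $M_i\setminus M_{j+1}$ can be grouped, halved, and used as blocks that are pure to all later runs. Fix $k:=\lceil y^{-1/2}\rceil\le x^{-1}$ and chunk greedily along the path into pieces of mass about $\abs G/k^{2}$.
\begin{itemize}
\item If no single $S_i$ has mass at least $\abs G/k^2$, we get at least $k$ chunks. After halving, each has width at least $\abs G/(4k^2)\ge\abs G/k^8$, and the first outcome holds.
\item Otherwise some level $M_i$ has side mass $s_i\ge\abs G/k^{2}\ge y\abs G/4$.
\end{itemize}

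Now look at the children of this $M_i$ other than $M_{i+1}$.
\begin{itemize}
\item If $M_i$ is series or parallel, greedily merge its side children into groups of mass about $\abs G/k^{4}$; a child of at least that size forms a group by itself. This gives at least $k$ pairwise pure blocks of width at least $\abs G/k^8$, which is the first outcome.
\item If $M_i$ is prime, its bottom side children have mass less than $y^2\abs G\ll s_i$. So most of $s_i$ lies in big side children $C_1,C_2,\ldots$, which are pairwise pure but cannot be merged. Sort them by size; if the $K$-th largest has size at least $\abs G/K^8$ for some integer $K\in[y^{-1/2},x^{-1}]$, we are done. Since each $C_j$ has size at least $\tau\ge x^8\abs G$, the only failure is when a bounded number of big side children (fewer than $y^{-1/2}$) carry most of the mass.
\item In that case, recurse into such a child $C$, which has size at least about $\abs G/(2k^3)$. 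The subtree below $C$ is pure to everything produced along the rest of the path, so blocks found there can be accumulated together with the chunks already built.
\end{itemize}

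I expect the main obstacle to be this last recursion, that is, making the accounting into a clean tree lemma. The difficulty is that mass can be repeatedly concentrated in a few big children of prime nodes, on side branches off the heavy path, so no single level gives $k$ blocks. I would handle this with a potential-function or dyadic argument over all of $T'$. Assign each block found a weight $\abs{\text{block}}^{1/8}$ and maintain $\sum\abs{\text{block}}^{1/8}\ge\abs G^{1/8}$, as in the proof of \cref{thm:convsemi}. At each step, replace one current block, which is a node or a merged group, by the pure pieces it decomposes into (chunks along its heavy path, merged series/parallel groups, or big prime children). This refinement never decreases the weight sum, while the number of blocks grows until it reaches $y^{-1/2}$, or some block becomes a prime node with $y^2\abs G$ bottom mass. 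Checking that the exponents $8$ and the thresholds $y^{-1/2}$, $x^{-1}$ and $4x^4\abs G$ fit together is the part I would verify last.
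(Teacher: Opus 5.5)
\textbf{There is a genuine gap.} Your top-level observations are sound. Incomparable nodes are pure, nested modules give one-sided purity, and chunking the heavy path with halving does give a pure blockade when no single $S_i$ is large. However, the real content of the lemma is the part you defer, and the mechanism you sketch for it fails in several places.
\begin{enumerate}
\item The series/parallel step is false as stated. If the side mass $s_i$ lies in fewer than $k$ children, merging yields only one or two groups. For example, take $G$ disconnected with two components of size $\abs G/2$. So this case needs recursion too.
\item The prime step relies on the absolute bound $y^2\abs G$ (on the mass of children of size $<4x^4\abs G$) being negligible against $s_i\ge\abs G/k^2\approx y\abs G$. After you recurse into a child $C$ of size about $y^{3/2}\abs G$, a relative threshold becomes $\abs C/k^2\approx y^{5/2}\abs G\ll y^2\abs G$. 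A level inside $C$ can then have all its side mass in tiny children of a prime node, so the step cannot be repeated. With absolute thresholds instead, $C$ is smaller than $\abs G/k^2$, so chunking inside $C$ yields at most one block.
\item The claim that refinement never decreases $\sum\abs B^{1/8}$ is not justified. You discard halves of chunks and tiny children, and you descend into single children (and you must recurse into every big side child, not just one). With exponent $1/8$ and fewer than $y^{-1/2}$ blocks, the invariant only guarantees the largest block has size about $\abs G/s^8\ge y^4\abs G$. This is below the scale $y^2\abs G$ at which failure of the second outcome says anything. A prime node of size $y^3\abs G$ whose children are all singletons could then be the largest block. It can neither be refined through $\mac D(G)$ nor witness the second outcome.
\item The exponent does not fit. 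A \cref{thm:convsemi}-type argument with weight exponent $1/b$ only yields width about $\abs G/\ell^{2b}$, so $b=8$ cannot reach $\abs G/\ell^8$.
\end{enumerate}

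\textbf{The missing idea is a global budget that never looks inside light nodes.} The paper fixes $w=y^2\abs G$, calls nodes of size $\ge w$ heavy, and treats the light children of heavy nodes as atoms partitioning $V(G)$. It then proves three bounds.
\begin{enumerate}
\item[(i)] Every heavy node $M$ has light-children mass $b(M)\le\frac52y^{3/2}\abs G$. For series/parallel nodes this is \cref{lem:conn}. For prime nodes it comes from dyadically bucketing children of size in $[4x^4\abs G,y^2\abs G)$; this is the only place the failure of the second outcome is used.
\item[(ii)] The leaves of the heavy subtree are pairwise pure of size $\ge w$, so there are fewer than $y^{-1/2}$ of them. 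Hence fewer than $2y^{-1/2}$ heavy nodes have $\ne1$ heavy children.
\item[(iii)] Consider a maximal chain of heavy nodes with exactly one heavy child. Every light child of a non-bottom member is complete or anticomplete to the bottom node of the chain. The anticomplete ones, taken across the chain, are pairwise anticomplete, and likewise for the complete ones. So \cref{lem:conn} bounds the chain's light mass by $7y\abs G$.
\end{enumerate}
Summing gives $\abs G<5y\abs G+14y^{1/2}\abs G$, a contradiction.

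In your language, (i) says a large $S_i$ on the heavy path can only come from a heavy side child, that is, a branching point of the heavy subtree. Bound (ii) is what limits how often your recursion can branch, and your chunking is the analogue of (iii). Without (i) and (ii), or an equivalent carefully designed invariant with the right exponent, the proposal does not prove the lemma.
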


	\begin{proof}
		[Proof of \cref{lem:module}, assuming \cref{lem:decomp}]
		Assume that the first outcome does not hold.
		Then \cref{lem:decomp} gives a prime node $M$ of $\mac D(G)$ with a collection $\mac C$ of its children such that
		\[\abs C<4x^{4}\abs G\quad\text{for all $C\in\mac C$},
		\qquad\text{and}\qquad
		\abs*{\,\bigcup_{C\in\mac C}C\,}\ge y^2\abs G.\]
		For each child $N$ of $M$ that is not in $\mac C$, choose a vertex $v_N\in N$.
		Let $F$ be the subgraph of $G$ induced on $\bigcup_{C\in\mac C}C\cup\{v_N:N\notin \mac C\}$.
		Then $\abs F\ge y^2\abs G$, and $\mac P(F)=\mac C\cup\{\{v_N\}:N\notin \mac C\}$ since $M$ is a prime node of $\mac D(G)$.
		Thus every proper module of $F$ has size $1$ or less than $4x^{4}\abs G$.
		Since $\abs G>x^{-4}$, this proves \cref{lem:module}.
	\end{proof}
	
	We now present a proof of \cref{lem:decomp}, as follows.
	
	\begin{proof}
		[Proof of \cref{lem:decomp}]
		Suppose that both outcomes fail.
		Let $w:=y^2\abs G>y^{-2}>1$. We say that a node $M$ of $\mac D(G)$ is {\em heavy} if it has size at least $w$, and {\em light} if it has size less than $w$.
		For each heavy node $M$, which is necessarily internal, let $\mac L(M):=\{C\in\mac P(G[M]):\abs C<w\}$ be the collection of its light children, and let
		\[b(M):=\sum_{C\in \mac L(M)}\abs C.\]
		
		We claim that $b(M)$ is small for all heavy nodes $M$, as follows.
		\begin{claim}
			\label{claim:light}
			$b(M)\le (5/2)y^{3/2}\abs G$ for all heavy nodes $M$ of $\mac D(G)$.
		\end{claim}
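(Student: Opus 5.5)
We would split according to the type of the node $M$, using two facts throughout: the children of $M$ are pairwise disjoint modules of $G$, so any two of them are pure to each other, and neither outcome of \cref{lem:decomp} holds. Fix a heavy node $M$ and suppose for a contradiction that $b(M)>(5/2)y^{3/2}\abs G$. Recall that every $C\in\mac L(M)$ has $\abs C<w=y^2\abs G$. Set $k_0:=\ceil{y^{-1/2}}$; since $x\le y$ we have $k_0\le x^{-1}$.

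\emph{Case 1: $M$ is not prime.} Then $G[M]$ or $\overline G[M]$ is disconnected. By \cref{lem:gal}, the canonical quotient of $G[M]$ is edgeless or complete. Hence any two disjoint unions of children of $M$ are anticomplete (respectively complete) to each other. We group the light children greedily: add children to the current group until its total size is at least $w$, then start a new group. Each group then has size in $[w,2w)$, so the number of complete groups is at least $b(M)/(2w)-1\ge (5/4)y^{-1/2}-1\ge y^{-1/2}$ (using $y\le 2^{-8}$). Keeping exactly $k_0$ groups gives a pure blockade of length $k_0\in[y^{-1/2},x^{-1}]$. Its width is at least $w=y^2\abs G$, and $y^2\abs G\ge \abs G/k_0^8$ because $k_0^8\ge y^{-4}$. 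This is the first outcome of \cref{lem:decomp}, a contradiction.

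\emph{Case 2: $M$ is prime.} The second outcome fails, so the light children of size less than $4x^4\abs G$ have total size less than $y^2\abs G$. The remaining light children $C_1,C_2,\ldots$ therefore have total size more than $(5/2)y^{3/2}\abs G-y^2\abs G\ge 2y^{3/2}\abs G$. We order them so that $s_1\ge s_2\ge\cdots$, where $s_i:=\abs{C_i}$ and each $s_i\in[4x^4\abs G,w)$. For any $k$, the sets $C_1,\ldots,C_k$ form a pure blockade of width $s_k$. So it suffices to find $k\in[y^{-1/2},x^{-1}]$ with $s_k\ge\abs G/k^8$.
\begin{itemize}
\item If there are at least $\floor{x^{-1}}$ such children, take $k=\floor{x^{-1}}$. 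Then $k\ge x^{-1}/2\ge y^{-1/2}$ and $\abs G/k^8\le 2^8x^8\abs G<4x^4\abs G\le s_k$, so we are done.
\item Otherwise every index is below $x^{-1}$. If $s_k<\abs G/k^8$ for all $k\ge k_0$, then the total size is at most
\[
k_0w+\sum_{k>k_0}\abs G/k^8\le (y^{-1/2}+1)y^2\abs G+y^{7/2}\abs G<2y^{3/2}\abs G,
\]
contradicting the lower bound above. Hence some $k\ge k_0\ge y^{-1/2}$ (automatically $k<x^{-1}$) satisfies $s_k\ge\abs G/k^8$, and the first outcome holds again.
\end{itemize}

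The main obstacle is Case 2. In the prime quotient, unions of children are no longer pure to each other, so we cannot group them. We must instead use individual children as blocks, which is exactly why the failure of the second outcome is needed: it lets us discard the tiny children. The only real care required is choosing $k$ in the window $[y^{-1/2},x^{-1}]$, which is handled by the ordered-sizes argument above.
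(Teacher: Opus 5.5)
Your proof is correct. Your first case is the paper's argument with \cref{lem:conn} unrolled: grouping the light children greedily into unions of size in $[w,2w)$ is exactly what that lemma's proof does, and it gives you width $w$ directly instead of $b(M)/(2k)$. Your second case takes a genuinely different route. The paper splits the children with sizes in $[4x^4\abs{G},w)$ into dyadic classes $\mac C_j$ with sizes in $[4^{-j}w,4^{1-j}w)$. Within each class it uses the failure of the first outcome, with $k_j=\min\{\ceil{q_j},\floor{x^{-1}}\}$, which handles both ends of the window $[y^{-1/2},x^{-1}]$ at once. This gives $\abs{\mac C_j}<q_j=2^{j/2}y^{-1/2}$, and the paper then sums a geometric series. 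You instead sort these children by size and note that the $k$ largest form a pure blockade of width $s_k$. So failure of the first outcome forces $s_k<\abs{G}/k^8$ for every admissible $k\ge k_0$, and the children beyond the first $k_0$ contribute at most $y^{7/2}\abs{G}$. You handle the upper end of the window by a separate case, taking $k=\floor{x^{-1}}$ when there are many children. Your rank argument is shorter and avoids the dyadic scales and the choice of $s$. It also shows directly that, once the tiny children are discarded, essentially all the light mass sits in the $k_0$ largest children, which gives a slightly better constant. The paper's bucketing only compares children of comparable size, which is the more standard pattern, and it treats the whole window uniformly, at the cost of a messier calculation.
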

		\begin{subproof}
			Suppose that $M$ is heavy and $b(M)>(5/2)y^{3/2}\abs G$.
			If one of $G[M]$ or $\overline G[M]$ is disconnected, let $U$ be the union of the members of $\mac L(M)$ and put $k:=\ceil{y^{-1/2}}$.
			Since $y\le2^{-8}$ and $x\le y$, we have $y^{-1/2}\le k\le\frac{17}{16}y^{-1/2}\le x^{-1}$.
			Also $b(M)/k>\frac{40}{17}w>w$.
			The components of $G[U]$ or $\overline G[U]$, respectively, are precisely the members of $\mac L(M)$, and so each has size less than $b(M)/k$.
			By \cref{lem:conn}, the corresponding graph has an anticomplete $(k,b(M)/(2k))$-blockade, which is pure in $G$.
			Its width exceeds $\abs G/k^4\ge\abs G/k^8$, because $b(M)k^3>(5/2)\abs G>2\abs G$.
			This contradicts the failure of the first outcome.
			
			Thus, both $G[M]$ and $\overline G[M]$ are connected. Let $\mac C$ be the collection of the children of $M$ of size less than $4x^4\abs G$.
			Then since the second outcome of the lemma fails, the union of the members of $\mac C$ has size less than $y^2\abs G$.
			It remains to consider the children of $M$ of size at least $4x^4\abs G$ and less than $w=y^2\abs G$, that is, those in $\mac L(M)\setminus\mac C$.
			Let $s:=\floor{\log_2(y/x^2)}$. For every $j\in[s]$, let
			\[\mac C_j:=\{C\in\mac L(M)\setminus\mac C:4^{-j}\cdot w\le \abs C< 4^{1-j}\cdot w\}.\]
			Since $4^{-s}w=4^{-s}y^2\abs G\le 4x^4\abs G$, we have $\mac L(M)\setminus\mac C=\mac C_1\cup\cdots\cup\mac C_s$.
			For each $j\in[s]$, put $q_j:=2^{j/2}y^{-1/2}$ and $k_j:=\min\{\ceil{q_j},\floor{x^{-1}}\}$.
			Then $y^{-1/2}\le k_j\le x^{-1}$.
			If $k_j=\ceil{q_j}$, every member of $\mac C_j$ has size at least $4^{-j}w=\abs G/q_j^4\ge\abs G/k_j^4\ge\abs G/k_j^8$.
			If $k_j=\floor{x^{-1}}$, every member has size at least $4x^4\abs G\ge\abs G/k_j^4\ge\abs G/k_j^8$; here we use $k_j\ge x^{-1}-1>2^{-1/2}x^{-1}$.
			The members of $\mac C_j$ are pairwise pure, so the failure of the first outcome gives $\abs{\mac C_j}<k_j$.
			As $\abs{\mac C_j}$ is an integer and $k_j\le\ceil{q_j}$, we obtain $\abs{\mac C_j}<q_j$.
			Therefore
			\[\sum_{C\in\mac L(M)\setminus \mac C}\abs C =\sum_{j\in[s]}\sum_{C\in\mac C_j}\abs C
			< \sum_{j\ge1}2^{2-3j/2}y^{3/2}\abs G
			=\frac{4}{2^{3/2}-1}y^{3/2}\abs G
			<\frac94y^{3/2}\abs G.\]
			Since $y^{1/2}\le1/16$, it follows that
			\[b(M)<y^2\abs G+\frac94y^{3/2}\abs G
			\le\frac{37}{16}y^{3/2}\abs G<\frac52y^{3/2}\abs G,\]
			a contradiction.
			This proves \cref{claim:light}.
		\end{subproof}
		
		We now explore the structure of $\mac D(G)$. Because every non-root heavy node has a heavy parent, the set of heavy nodes induces a rooted subtree $\mact$ of $\mac D(G)$.
		Since every leaf of $\mac D(G)$ (that is, every singleton of $V(G)$) is light and so is contained in a maximal light node in $\mac D(G)$, we see that
		\begin{equation}
			\label{eq:lightpartition}
			\bigcup_{M\in V(\mact)}\mac L(M)\quad\text{is a partition of $V(G)$.}
		\end{equation}
		
		Let $\ell$ be the number of leaves of $\mact$. Because the leaves in $\mact$ are incomparable nodes in $\mac D(G)$, they form a pure blockade in $G$ of width at least $w=y^2\abs G$.
		Hence, since the first outcome of the lemma fails, we obtain $\ell<y^{-1/2}$.
		
		Now, let $V_1$ be the set of nodes in $\mact$ with exactly one child in $\mact$, and let $V_2$ be the set of nodes with zero or at least two children in $\mact$. 
		Since every rooted tree has at least as many leaves as the number of nodes with at least two children, we have $\abs{V_2}\le 2\ell< 2y^{-1/2}$.
		Hence, \cref{claim:light} yields
		\begin{equation}
			\label{eq:v2}\sum_{M\in V_2} b(M)< 2y^{-1/2}\cdot\frac52y^{3/2}\abs G=5y\abs G.
		\end{equation}
		
		It remains to bound the sum of $b(M)$ over all $M\in V_1$ via the following claim.
		
		\begin{claim}
			\label{claim:v1}
			$\sum_{M\in V_1}b(M)< 14y^{1/2}\abs G$.
		\end{claim}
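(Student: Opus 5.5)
The plan is to decompose $V_1$ into a small number of chains and show that the light mass hanging off any single chain is at most about $7y\abs G$. Otherwise we build a pure blockade directly, violating the first outcome of \cref{lem:decomp}.

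\emph{Step 1 (few chains).} Each node of $V_1$ has exactly one child in $\mact$. Following these unique heavy children from any $M\in V_1$, we eventually reach a node of $V_2$. Hence $V_1$ is partitioned into maximal runs $M_1\supsetneq M_2\supsetneq\cdots\supsetneq M_m$, in which each $M_{i+1}$ is the unique heavy child of $M_i$. Each run is followed by a distinct first node of $V_2$, so the number of runs is at most $\abs{V_2}<2y^{-1/2}$. It then suffices to show that every run satisfies $S:=\sum_{i\in[m]}b(M_i)<7y\abs G$.

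\emph{Step 2 (sign splitting along a run).} Fix a run and let $U_i:=\bigcup_{C\in\mac L(M_i)}C$. Then $U_i\subset M_i\setminus M_{i+1}$, and each $C\in\mac L(M_i)$ is a node incomparable to $M_{i+1}$, hence pure to the module $M_{i+1}$. Split $U_i=U_i^+\cup U_i^-$, where $U_i^+$ is the union of those light children complete to $M_{i+1}$ and $U_i^-$ the union of those anticomplete to it. Since $U_j\subset M_{i+1}$ for all $j>i$, $U_i^+$ is complete and $U_i^-$ is anticomplete to $\bigcup_{j>i}U_j$. Now cut the run greedily into consecutive segments $I_1,I_2,\ldots$, each of total $b$-mass just exceeding a target $T$. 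By \cref{claim:light}, each segment overshoots $T$ by at most $\frac52y^{3/2}\abs G$.

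For each segment $I_p$, let $D_p$ be the larger of $\bigcup_{i\in I_p}U_i^+$ and $\bigcup_{i\in I_p}U_i^-$, so $\abs{D_p}\ge T/2$. For $p<q$, every index of $I_p$ precedes every index of $I_q$, so $D_p$ is complete (if it is a $+$ part) or anticomplete (if a $-$ part) to $D_q$. Thus $(D_p)$ is a pure blockade. Different blocks may have different types; this is fine, because purity is only required pairwise.

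\emph{Step 3 (numerics).} Take $k:=\ceil{y^{-1/2}}\in[y^{-1/2},x^{-1}]$ and $T:=y^3\abs G$, which gives width at least $y^3\abs G/2\ge\abs G/k^8$. The number of full segments is at least $S/(T+\frac52y^{3/2}\abs G)-1$, so if $S\ge 7y\abs G$ this count exceeds $k$. That gives a pure $(k,\abs G/k^8)$-blockade, contradicting the failure of the first outcome. Summing $S<7y\abs G$ over fewer than $2y^{-1/2}$ runs gives $\sum_{M\in V_1}b(M)<14y^{1/2}\abs G$.

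The main obstacle is the constant bookkeeping in Step 3. The overshoot $\frac52y^{3/2}\abs G$ per segment dominates $T$, so each segment has mass about $\frac52y^{3/2}\abs G$. Roughly $y^{-1/2}$ such segments then need $S\approx\frac52 y\abs G$, which leaves room under $7y\abs G$. The point to check is that the factor-$2$ loss from choosing a sign inside each segment, rather than a uniform sign across the whole run, is the only loss we incur. This is why the sign is chosen per segment and not per index.
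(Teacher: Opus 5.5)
Your proof is correct and essentially the paper's argument: the paper also splits $V_1$ into fewer than $2y^{-1/2}$ chains via an injection into $V_2$, and bounds each chain's light mass by $7y\abs G$ because light children along a chain are pure to everything below them, so large mass yields a long pure blockade. The differences are only bookkeeping: the paper uses the common reference $C(P)$, bounds $b(C(P))$ separately by \cref{claim:light}, and bounds the anticomplete and complete parts separately (each by $3y\abs G$ via \cref{lem:conn}), whereas you use the reference $M_{i+1}$ and a greedy segmentation with a per-segment sign choice, which gives a mixed-type pure blockade directly.
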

		\begin{subproof}
			Let $\mac F$ be the collection of the vertex sets of the components of the induced forest $\mact[V_1]$.
			Each member of $\mac F$ is a chain in $\mac D(G)$, and its minimal member (with respect to inclusion) has no child in $V_1$.
			For each chain $P\in \mac F$, let $C(P)$ be the minimal member of $P$; then the unique heavy child of $C(P)$ in $\mac D(G)$ is in $V_2$.
			In particular, this gives an injection from $\mac F$ to $V_2$; and so $\mac F$ has at most $\abs{V_2}< 2y^{-1/2}$ chains.
			
			Now, fix $P\in\mac F$, and let $P':=P\setminus\{C(P)\}$.
			For each $M\in P'$, let $L_0(M)$ be the union of members of $\mac L(M)$ anticomplete to $C(P)$.
			For distinct $M,N\in P'$ such that $N$ lies between $M$ and $C(P)$ along $P$, the set $L_0(M)\subset M\setminus N$ is anticomplete to $C(P)\subset N$ in $G$, and so is anticomplete to $N\supset L_0(N)$ in $G$.
			Therefore, the nonempty members of $\{L_0(M):M\in P'\}$ form an anticomplete blockade in $G$.
			Let $G_0$ be the subgraph of $G$ induced on $\bigcup_{M\in P'}L_0(M)$.
			Because $\abs{L_0(M)}\le b(M)\le(5/2)y^{3/2}\abs G$ for all $M\in P'$ by \cref{claim:light}, every component of $G_0$ has size at most $(5/2)y^{3/2}\abs G$.
			If $\abs{G_0}\ge3y\abs G$, then for $k:=\ceil{y^{-1/2}}\le(17/16)y^{-1/2}$ we have
			\[\frac{\abs{G_0}}{k}\ge\frac{48}{17}y^{3/2}\abs G>\frac52y^{3/2}\abs G.\]
			Thus, by \cref{lem:conn} applied to $G_0$, there is an anticomplete $(k,\abs{G_0}/(2k))$-blockade in $G$.
			Since $k\ge2$ and $k^2\ge y^{-1}$, its width is at least $\abs{G_0}/k^2\ge3y\abs G/k^2\ge\abs G/k^4\ge\abs G/k^8$; also $k\in[y^{-1/2},x^{-1}]$.
			This contradicts the failure of the first outcome.
			Hence
			\[\sum_{M\in P'}\abs{L_0(M)}=\abs{G_0}\le 3y\abs G.\]
			Similarly, for each $M\in P'$, let $L_1(M)$ be the union of the members of $\mac L(M)$ complete to $C(P)$; then
			\[\sum_{M\in P'}\abs{L_1(M)}\le 3y\abs G.\]
			Therefore, since $b(C(P))\le(5/2)y^{3/2}\abs G$ by \cref{claim:light}, we deduce that
			\[\sum_{M\in P}b(M)=b(C(P))+\sum_{M\in P'}\abs{L_0(M)}+\sum_{M\in P'}\abs{L_1(M)}\le \frac52y^{3/2}\abs G+6y\abs G\le 7y\abs G.\]
			
			Because this estimate holds for every $P\in\mac F$ and $\abs{\mac F}< 2y^{-1/2}$, we obtain
			\[\sum_{M\in V_1}b(M)=\sum_{P\in \mac F}\sum_{M\in P}b(M)
			< 2y^{-1/2}\cdot 7y\abs G=14y^{1/2}\abs G.\qedhere\]
		\end{subproof}
		
		Now, \eqref{eq:lightpartition}, \eqref{eq:v2}, and \cref{claim:v1} together imply that
		\[\abs G=\sum_{M\in V_1}b(M)+\sum_{M\in V_2}b(M)< 5y\abs G+14y^{1/2}\abs G<\abs G\]
		where the last inequality holds since $y\le 2^{-8}$.
		This contradiction proves \cref{lem:decomp}.
	\end{proof}

\section*{Declaration of generative AI use}
\subsection*{Mathematical involvement}
	ChatGPT-6 Astra was mathematically involved in \cref{subsec:mainsplit,subsec:switch} and \cref{sec:p3}; the ideas in the rest of this paper are entirely due to the author.
	The involvement of Astra is as follows:
	\begin{itemize}
		\item The ideas of bookkeeping blockades and splitting paths, as described and motivated in \cref{subsec:bookkeep,subsec:split}, are due to the author.
		As a step towards \cref{lem:pathpure} for $P_6$ via \cref{lem:p3}, the author asked Astra to prove a weaker version of \cref{lem:module} (with `$x$-pure' replacing `pure') by using modular decomposition.
		Astra then succeeded in proving the full purity version via \cref{lem:decomp}.
		The author, having noticed the connections between \cref{lem:p3,lem:splitp4,lem:gyarfas}, was then convinced that a statement similar to \cref{lem:split0} should hold, and was also aware that \cref{lem:split0} follows from \cref{lem:split}.
		Following a few more discussions on bookkeeping blockades and splitting paths directed by the author, Astra discovered a proof of \cref{lem:split} for $r=5$.
		The author then observed that the proof can be extended to work for general $r$, which led to the content of \cref{subsec:split}.
		
		\item The ideas of repeated substitution (\`a la the tensor power trick) and extending paths by switching endpoints, as presented in \cref{sec:nicetoeh}, are due to the author. An early version of the argument in \cref{subsec:switch} for semidense blockades was developed solely by the author, which also suffices for \cref{subsec:densify}.
		This was later cleaned up by Astra in the setting of organised blockades, and refined further by the author to its current form with \cref{lem:complend,lem:switching}.
	\end{itemize}
\subsection*{Presentation}
	The writing was done by the author, with  copy-editing help from ChatGPT-6 Luna. All figures were crafted by ChatGPT-6 Sol and revised by the author.
\subsection*{Proofreading and mathematical review}
	ChatGPT-6 Astra assisted with proofreading, notation, and mathematical review of this paper.
	
\section*{Remarks on open access}
	For the purpose of open access, the author has applied a CC BY public copyright licence to
	any author accepted manuscript arising from this submission.
	
\setstretch{1}
\bibliographystyle{abbrv}

\end{document}